\newcommand{\N}{\mathbb{N}}
\newcommand{\Z}{\mathbb{Z}}
\newcommand{\R}{\mathbb{R}}
\newcommand{\C}{\mathbb{C}}
\newcommand{\Q}{\mathbb{Q}}
\newcommand{\bP}{\mathbb{P}}
\newcommand{\Sum}{\displaystyle\sum}
\newcommand\norm[1]{\left\lVert#1\right\rVert}
\newcommand{\information}{{
  \bigskip
  \footnotesize
    
    \textbf{Jamerson Bezerra}:
    \textsc{Faculty of Mathematics and Computer Science, Nicolaus Co\-pernicus University, ul. Chopina 12/18, 87-100 Toruń, Poland.} \par\nopagebreak
    \textit{E-mail:} \texttt{jdbezerra@mat.umk.pl}

    \textbf{Mauricio Poletti}:
    \textsc{
        Departamento de Matemática, Universidade do Ceará, Campus PICI, Bloco 914, 60455-760, Fortaleza-CE, Brasil.
    } \par\nopagebreak
    \textit{E-mail:} \texttt{mpoletti@mat.ufc.br}

}}
\def\supp{\operatorname{supp}}
\def\dim{\operatorname{dim}}
\def\id{\operatorname{Id}}
\def\max{\operatorname{max}}
\def\dist{\operatorname{dist}}
\def\interior{\operatorname{Int}}
\def\SL{SL}
\newtheorem{theorem}{Theorem}
\newtheorem{proposition}[theorem]{Proposition}
\newtheorem{example}{Example}
\newtheorem{problem}{Problem}
\newtheorem{remark}[]{Remark}
\numberwithin{theorem}{section}
\numberwithin{example}{section}
\numberwithin{remark}{section}
\begin{document}

\title{An invitation to $\SL_2(\R)$ cocycles over random dynamics}
\author{Jamerson Bezerra and Mauricio Poletti}

\maketitle

\begin{abstract}
    The purpose of these notes is to discuss the advances in the theory of Lyapunov exponents of linear $\SL_2(\R)$ cocycles over hyperbolic maps. The main focus is around results regarding the positivity of the Lyapunov exponent and the regularity of this function with respect to the underlying data.
\end{abstract}

\section{Continuous Random cocycles}
\label{sec:continuousRandomCocycles}

We start by introducing the context that will be the used throughout these notes and later, we move on to specific models.

\subsection{Continuous random cocycles and Lyapunov exponent}

Let $\Sigma = \{1,\ldots, \kappa\}^{\Z}$ be the space of infinite bilateral sequences in the symbols $\{1,\ldots, \kappa\}$ and let $\sigma:\Sigma\to\Sigma$ be the shift map given by $\sigma((x_n)_n) = (x_{n+1})_n$. Joint with the invariant measure $\mu = p^{\Z}$, $p = (p_1,\ldots, p_{\kappa})$ being a probability vector, $p_i>0$, the ergodic system $(\Sigma, \sigma, \mu)$ will be called \emph{base dynamics}.

Each continuous matrix valued map $A:\Sigma\to \SL_2(\R)$ uniquely determines a skew-product $F_A:\Sigma\times \R^2\to \Sigma\times \R^2$ given by $F_A(x, v) = (\sigma(x),\, A(x)\, v)$. $F_A$ is usually referred to as \emph{Linear cocycle} associated with the base dynamics $(\Sigma, \sigma, \mu)$ and the fiber action $A$.

Fixed the base dynamics, we use the term linear cocycle to refer, not only to the map $F_A$ but also to the fiber action $A$. It is also worth noticing that each map $A$ induces in a standard way a skew-product action on $\Sigma\times\bP^1$ that we still denote by $F_A$.

We also use the following convenient notation (motivated by the chain rule for derivatives):
\begin{align*}
    A^n(x) = \left\{
        \begin{array}{ll}
            A(f^{n-1}(x))\cdots A(f(x))A(x) & \text{ if } n>0 \\
            I & \text{ if } n=0 \\
            A(f^n(x))^{-1}\cdots A(f^{-2}(x))^{-1}A(f^{-1}(x))^{-1} & \text{ if } n<0. 
        \end{array}
    \right.
\end{align*}

The Lyapunov exponent associated with the map $A$ may be defined as the exponential growth rate of the quantities $\norm{A^n(x)}$. More specifically, by Kingman's sub-additive ergodic theory, for $\mu$-a.e. $x\in \Sigma$, the limit
\begin{align*}
    L(A) := \lim_{n\to\pm\infty}\, \frac{1}{n}\log\norm{A^n(x)},
\end{align*}
exists and is constant. The number $L(A)$ is called the \emph{Lyapunov exponent} of the linear cocycle $A$. This quantity measures the amount of hyperbolicity produced by the fiber action $A$ along the orbits of $\mu$ typical points in the basis and so it can be used as a measure of the chaoticity of the system expressed by the skew-product $F_A$. For that reason, the following two problems becomes central in the theory.

\begin{enumerate}
    \item \textbf{Positivity problem:} How big is the set of $A$ with positive Lyapunov exponent? 

    \item \textbf{Regularity problem:} How regular is the map $A\mapsto L(A)$?
\end{enumerate}

\textbf{Disclaimer:} Lyapunov exponents for linear cocycles are known in a much more general framework. For instance, we could consider more general basis dynamics or even higher dimensional fiber actions. However, we restrict ourselves to the case of Bernoulli shift and $\SL_2(\R)$ cocycles for a few reasons. First, we focus on making an exposition of the results that relies on the hyperbolic structure of the base dynamics and the model that best represents such a hyperbolic structure is the full shift $(\Sigma, \sigma, \mu)$. Second, the results for higher dimensional fiber actions are more scarce and among most of those the core idea in the study can be demystified when we restrict ourselves to the two dimensional case. These restrictions are the authors choices and do not reflect any particular extra importance of the chosen exposition.

Before we proceed, we list a few basic properties of the Lyapunov exponent:
\begin{proposition}
\label{prop:basicProperties}
    Consider a continuous map $A:\Sigma\to \SL_2(\R)$. Then
    \begin{enumerate}
        \item \label{prop:basicProperties-item1} $L(A)\geq 0$;

        \item\label{prop:basicProperties-item2} $L(A) = \inf_{n\geq 1}\, \int_{\Sigma}\, \frac{1}{n}\log\norm{A^n(x)}\, d\mu(x)$;

        \item\label{prop:basicProperties-item3} The map $A\mapsto L(A)$ is upper semi-continuous. In particular, the maps $A$ with zero exponent are continuity points of the  function $L$;
        
        \item\label{prop:basicPropertiesContCocycles-Oseledets}(Oseledet's theorem, \cite{Os1968})  If $L(A)> 0$, then for $\mu$-a.e. $x\in \Sigma$, there exist $A$-invariant projective directions $\hat e^u(x), \hat e^s(x)\in \bP^1$, such that
        \begin{align*}
            \lim_{n\to \infty}\, \frac{1}{n}\log\norm{A^n(x)\, v} = \left\{
                \begin{array}{cc}
                    L(A), & v\notin \hat e^s(x)  \\
                    -L(A),  & v\in \hat e^s(x).
                \end{array}
            \right.
        \end{align*}
        \begin{align*}
            \lim_{n\to \infty}\, \frac{1}{n}\log\norm{A^{-n}(x)\, v} = \left\{
                \begin{array}{cc}
                    -L(A), & v\notin \hat e^u(x)  \\
                    L(A),  & v\in \hat e^u(x).
                \end{array}
            \right.
        \end{align*}
        Moreover, the invariant sections in $\Sigma\times \bP^1$, $x\mapsto \hat e^s(x), \hat e^u(x)\in \bP^1$ are measurable;

        \item\label{prop:basicProperties-item5} The direction $\hat e^s(x)$ ($\hat e^u(x)$) only depends on the positive (negative) coordinates of $x$.
    \end{enumerate}
\end{proposition}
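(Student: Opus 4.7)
The first three items are direct consequences of standard ergodic-theoretic facts. For item 1, every $M\in\SL_2(\R)$ has singular values $\sigma_1\geq \sigma_2>0$ with $\sigma_1\sigma_2=1$, so $\|M\|=\sigma_1\geq 1$; applied to $M=A^n(x)$, this gives $\tfrac{1}{n}\log\|A^n(x)\|\geq 0$. Item 2 is Kingman's subadditive ergodic theorem applied to $\varphi_n(x):=\log\|A^n(x)\|$, which is bounded (by continuity of $A$ on the compact space $\Sigma$) and subadditive in the sense $\varphi_{m+n}(x)\leq \varphi_m(x)+\varphi_n(\sigma^m x)$. Ergodicity of $\mu$ makes the limit constant, equal to $L(A)$, and Fekete's lemma applied to the real subadditive sequence $c_n:=\int\varphi_n\,d\mu$ (subadditive by $\sigma$-invariance of $\mu$) identifies $L(A) = \inf_n c_n/n$. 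For item 3, each map $A\mapsto c_n/n$ is $C^0$-continuous by dominated convergence (using the uniform bound $\|A^n\|\leq \|A\|_\infty^n$), so $L=\inf_n c_n/n$ is upper semi-continuous as an infimum of continuous functions; continuity at points with $L(A_0)=0$ is then forced by item 1, since $0\leq \liminf_k L(A_k)\leq \limsup_k L(A_k) \leq L(A_0)=0$ whenever $A_k\to A_0$.

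Items 4 and 5 form the analytic core. For item 4, my plan is to construct $\hat e^s(x)$ as the limit of $e^s_n(x)\in\bP^1$, the most contracted direction of $A^n(x)$ (i.e., the bottom right singular vector). Since $A^n(x)\in\SL_2(\R)$, its singular values are $\|A^n(x)\|$ and $\|A^n(x)\|^{-1}$, so by item 2 the singular value gap satisfies $\tfrac{1}{n}\log(\sigma_1^{(n)}/\sigma_2^{(n)})\to 2L(A)$ for $\mu$-a.e.\ $x$. Comparing the SVDs of $A^n(x)$ and $A^{n+1}(x)=A(\sigma^n x)\,A^n(x)$ through standard perturbation theory yields $d_{\bP^1}(e^s_n(x),e^s_{n+1}(x))\leq C(x)\,e^{-(2L(A)-\varepsilon)n}$ for $\mu$-a.e.\ $x$ and every $\varepsilon>0$, so the sequence is Cauchy in $\bP^1$ and its limit defines $\hat e^s(x)$. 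The unstable direction $\hat e^u(x)$ is obtained symmetrically from the iterates of $A^{-1}$ along the backward orbit (equivalently, as the limit of the most expanded directions of $A^n(\sigma^{-n}x)$ applied to a generic vector). The rates $\pm L(A)$ asserted in the statement are then read off by decomposing $v$ in the SVD-adapted orthonormal basis and applying the singular value asymptotics.

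For item 5, measurability of $\hat e^s$ and $\hat e^u$ is inherited from the measurability of the approximating sequences $(e^s_n)$ and $(e^u_n)$. To show that $\hat e^s(x)$ depends only on the positive coordinates of $x$, I would take $x, y\in\Sigma$ with $x_i=y_i$ for all $i\geq 0$; then $\sigma^k x$ and $\sigma^k y$ coincide on $\{-k,-k+1,\ldots\}$, hence $d(\sigma^k x,\sigma^k y)\to 0$, and by uniform continuity of $A$ on the compact space $\Sigma$ we have $\|A(\sigma^k x)-A(\sigma^k y)\|\to 0$. Feeding this into the projective contraction estimate of item 4, the discrepancy $d_{\bP^1}(e^s_n(x),e^s_n(y))$ is absorbed by the $e^{-2nL(A)}$ contraction toward the attracting direction, so both sequences converge to the same limit and $\hat e^s(x)=\hat e^s(y)$. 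The analogous assertion for $\hat e^u$ and the negative coordinates follows by time reversal.

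The main obstacle I anticipate is item 4, in particular the quantitative Cauchy estimate for the sequence of most contracted directions: this is the heart of the Oseledets theorem in dimension two and requires a careful analysis of how the SVD of $A^n(x)$ is altered by multiplying by one further factor $A(\sigma^n x)$. Once this rate $e^{-2nL(A)}$ is in hand, both the existence of $\hat e^s,\hat e^u$ and their dependence on only one ``half'' of the coordinates (item 5) become essentially corollaries of the same exponential contraction in $\bP^1$; the subtlety in item 5 is ensuring that the perturbation in the factors $A(\sigma^k x)$ vs.\ $A(\sigma^k y)$, which may decay arbitrarily slowly along the orbit for merely continuous $A$, is nonetheless dominated in the limit by the projective contraction.
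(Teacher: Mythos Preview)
Your treatment of items 1--4 matches the paper's outline exactly: item~1 from $\det=1$, item~2 from Kingman and Fekete, item~3 as an infimum of continuous functions, and item~4 via convergence of the most-contracted singular directions $\hat s(A^n(x))$. The paper simply cites Oseledets for item~4; your sketch of the Cauchy estimate for $(\hat s(A^n(x)))_n$ is the standard two-dimensional argument and is fine.

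The genuine gap is in item~5. The paper's one-line justification, that $\hat e^s(x)=\lim_n \hat s(A^n(x))$ with $A^n(x)$ a product along the forward orbit, is only a complete argument when $A$ is locally constant, since then $A^n(x)$ literally depends on $x_0,\dots,x_{n-1}$ alone. You correctly notice that for merely continuous $A$ this is insufficient and try to repair it by arguing that for $x^+=y^+$ the discrepancies $\|A(\sigma^k x)-A(\sigma^k y)\|\to 0$ are ``absorbed by the $e^{-2nL(A)}$ contraction''. This fails at the very first factor: writing $A^n(x)=M_n\cdot A(x)$ with $M_n=A(\sigma^{n-1}x)\cdots A(\sigma x)$ very hyperbolic, one has $\hat s(M_n A(x))\approx A(x)^{-1}\hat s(M_n)$, so replacing $A(x)$ by $A(y)$ shifts the most-contracted direction by an amount comparable to $\|A(x)-A(y)\|$, which for $x^+=y^+$ is only bounded by the modulus of continuity of $A$ at scale $\zeta^{-1}$ and is \emph{not} damped by any later contraction. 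In fact item~5 is false in the generality stated: conjugating a strongly irreducible locally constant cocycle $\tilde A$ by $C(x)=C_{x_{-1}}$ yields a continuous cocycle $A(x)=C_{x_0}^{-1}\tilde A_{x_0}C_{x_{-1}}$ with $\hat e^s_A(x)=C_{x_{-1}}^{-1}\hat e^s_{\tilde A}(x^+)$, which genuinely depends on $x_{-1}$. The assertion (and the paper's outline) becomes correct for locally constant $A$, or more generally once one uses stable holonomies to conjugate to a cocycle depending only on $x^+$ as in Proposition~\ref{prop:reducitionToOneSided}; without such a reduction the absorption mechanism you invoke simply does not exist, and the ``subtlety'' you flag at the end is in fact an obstruction.
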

\begin{remark}[Notation]
\normalfont
    We use the notation $\hat v$ to denote elements of the projective space $\bP^1$. If $\hat v$ and $v$ appears in the same expression, $v$ is (one) unitary vector in $\R^2$ in the direction determined by $\hat v$.
\end{remark}
\begin{proof}[Outline of the proof:]
    Item 1, is obtained using the fact that $A$ takes values in $\SL_2(\R)$. Item 2, is due to the characterization of the value of the Lyapunov exponent given by Kingman's sub-additive ergodic theorem. Item 3, is a direct consequence of Item 2. The item 4 is the Oseledets Theorem and item 5 comes from the proof of Oseledets theorem using that
    \begin{align*}
        \hat e^s(x) = \lim \hat s(A^n(x))
        \quad
        \text{and}
        \quad
        \hat e^u(x)
        = \lim A^n(\sigma^{-n}(x))\, \hat u(A^n(\sigma^{-n}(x))),
    \end{align*}
    where for a matrix $B\in \SL_2(\R)$ with $\norm{B} > 1$, $\hat u(B)$ and $\hat s(B)$, denotes the singular directions associated to $B$ associated respectively with the biggest and smallest singular values.
\end{proof}

\subsection{Uniform hyperbolicity}

Item 3 of the Proposition \ref{prop:basicProperties} is the start point of the study of the soft regularity of the Lyapunov exponent function. For cocycles with positive Lyapunov exponent, however, the regularity of $L$ may have a very nasty behavior. In order to properly discuss this matter it is essential to introduce the class of uniformly hyperbolic cocycles. 

We say that $A$ is \emph{uniformly hyperbolic}, if there exist continuous $A$-invariant sections $\Sigma\ni x\mapsto \hat e^u(x), \hat e^s(x)\in \bP^1$ (defined everywhere in $\Sigma$), and constants $C, \lambda > 0$, such that for every $x\in \Sigma$,
\begin{align*}
    \norm{A^n(x)|_{\hat e^s(x)}} \leq C\, e^{-\lambda n}
    \quad
    \text{and}
    \quad
    \norm{A^{-n}(x)|_{\hat e^u(x)}} \leq C\, e^{-\lambda n}.
\end{align*}
For a matrix $B\in \SL_2(\R)$ and a direction $\hat v\in \bP^1$, $B|_{\hat v}$ denotes the restriction of $B$ to the subspace determined by $\hat v$. Every uniformly hyperbolic cocycle $A$ has positive Lyapunov exponent. Indeed, using the definition we see that $L(A)\geq \lambda > 0$.
\begin{remark}
\normalfont
    The directions $\hat e^u, \hat e^s$, defined above, coincide with the directions given by Oseledets Theorem (see Item \ref{prop:basicPropertiesContCocycles-Oseledets} of Proposition \ref{prop:basicProperties}).
\end{remark}
An alternative characterization of the uniform hyperbolicity for cocycles comes from the \emph{cone field criteria}: for every $x\in \Sigma$ there exists a pair of disjoint closed projective intervals,  $ \hat J^s(x), \hat J^u(x)\subset \bP^1$, such that
\begin{align*}
    A(x)(\hat J^u(x)) \subset \interior(\hat J^u(\sigma(x)))
    \quad
    \text{and}
    \quad
    A^{-1}(x)(\hat J^s(x)) \subset \interior(\hat J^s(\sigma(x))).
\end{align*}
\begin{figure}[h!]
    \centering
    \includegraphics[width=100mm]{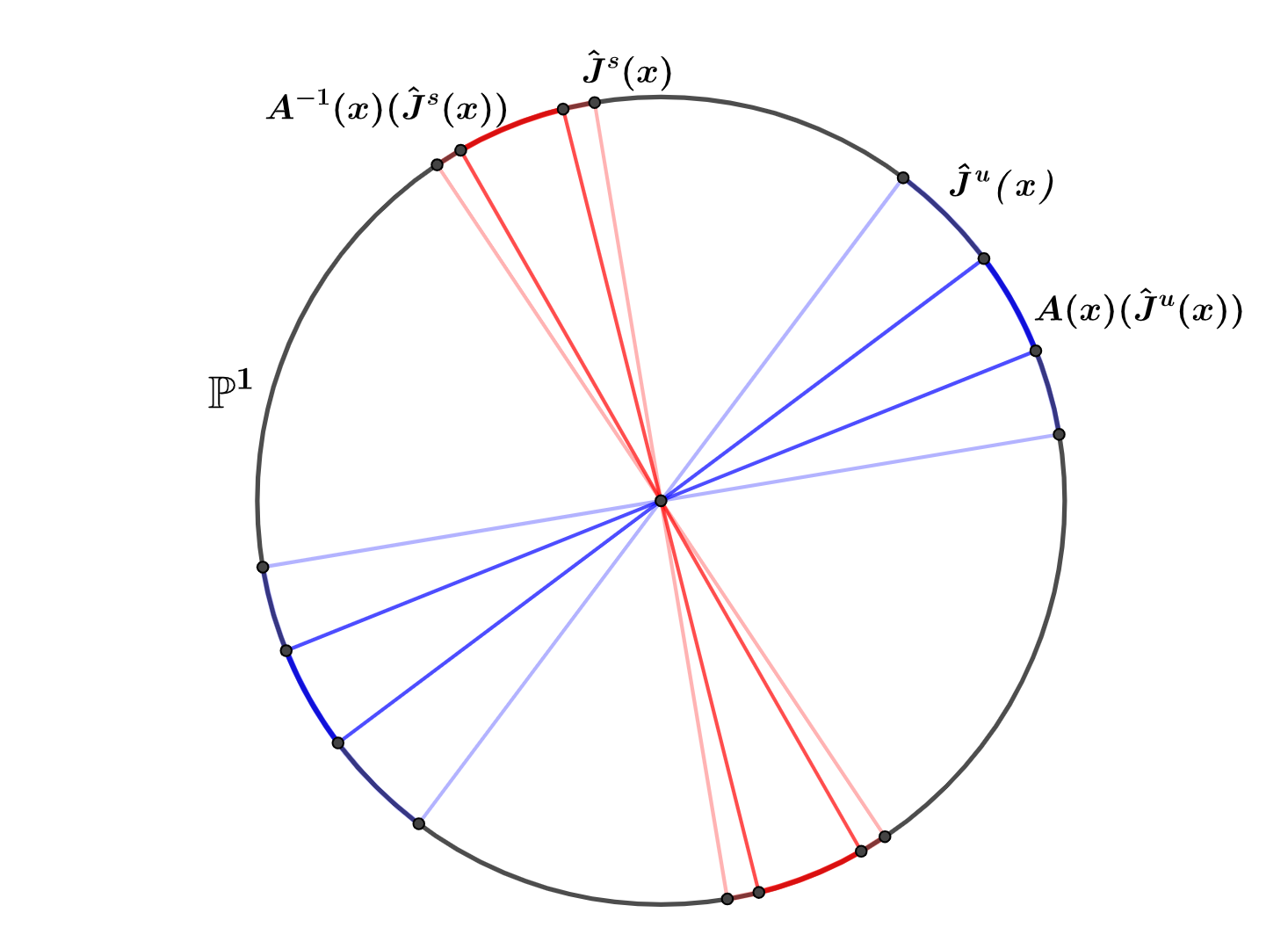}
    \caption{Projective cone field.}
    \label{pic:projectConeField}
\end{figure}
Using the cone field criteria, it is clear that uniform hyperbolicity is a stable property among the maps $A\in C^0(\Sigma,\, \SL_2(\R))$.
\begin{remark}
\label{re:criteriaUH}
\normalfont
    For $\SL_2(\R)$ cocycles a more handy criterion to determine if a cocycle is uniformly hyperbolic is available: it is enough to find constants $C, \lambda >0$ satisfying that for every $x\in \Sigma$ and $n\geq 1$,
    \begin{align*}
        \norm{A^n(x)} \geq C\, e^{\lambda n}.
    \end{align*}
    For a proof see \cite{Vi2014}.
\end{remark}
\begin{example}[Positive cocycles]
\normalfont
    Consider $A:\Sigma\to\SL_2(\R)$ continuous such that for all $x\in \Sigma$ all of the entries of the matrix $A(x)$ are strictly positive. Then, $A$ is uniformly hyperbolic. Indeed, $A$ preserves the constant cone field formed by $I$ ($\bP^1\backslash I$ is preserved by the backwards action), with $I$ being the projectivization of the first quadrant in $\R^2$ i.e., $\{(s,t)\in \R^2\colon\, s,t\geq 0 \}$.
\end{example}

\begin{example}[Triangular cocycles]
\normalfont
    Consider continuous functions $a,b,d:\Sigma\to\R$, with $\zeta := \inf_{\Sigma} |a| > 1$, and define $A:\Sigma\to \SL_2(\R)$ by $ A(x) = \begin{pmatrix}
            a(x) & b(x) \\
            0 & d(x)
        \end{pmatrix}$.
    then $\norm{A^n(x)}\geq \zeta^n$, for every $x\in \Sigma$ and every $n\geq 1$. So, using the equivalent definition mentioned in Remark \ref{re:criteriaUH}, we see that $A$ is uniformly hyperbolic.
\end{example}

\begin{example}[Schr\"odinger cocycles]
\label{ex:schrodingerCocycles}
\normalfont
    Another family of example of uniformly hyperbolic cocycles is provided by the so called \emph{Schr\"odinger cocycles} that we describe now. Let $\varphi:\Sigma\to\R$ be a continuous function. Consider the family of self-adjoint bounded operators $\{H_x:\ell^2(\Z)\to\ell^2(\Z)\}_{x\in\Sigma}$ given by
    \begin{align*}
        (H_x\, u)_n
        := (- \Delta u + \Phi_x\cdot u)_n
        = -u_{n+1} - u_{n-1} + \varphi(\sigma^n(x))\, u_n.
    \end{align*}
    The operator $H_x$, for each $x\in \Sigma$, is called the (discrete) \emph{dynamically defined Schr\"odinger operator} associated with the potential function $\varphi$ at the point $x\in \Sigma$. 
    
    The ergodicity of our base dynamics implies that the spectrum of the operator $H_x$ does not depend on the choice of point $x$ for $\mu$-a.e. $x\in \Sigma$ (see \cite{Da2017}). In a naive attempt to solve the eigenvalue-eigenvector equation, for a vector $u = (u_n)_n$ and $E\in \R$, we end up with a second order linear recursive equation that may be described in a matrix form as follows: for every $n\geq 0$,
    \begin{align}
    \label{eq:300423.1}
        \begin{pmatrix}
            \varphi(\sigma^{n-1}(x)) - E & -1 \\
            1 & 0 
        \end{pmatrix}\cdots
        \begin{pmatrix}
            \varphi(x) - E & -1 \\
            1 & 0
        \end{pmatrix}\,
        \begin{pmatrix}
            u_0\\
            u_{-1}
        \end{pmatrix}
        =\begin{pmatrix}
            u_n \\
            u_{n-1}
        \end{pmatrix}.
    \end{align}
    The \emph{Schr\"odinger cocycle} $A_E:\Sigma\to\SL_2(\R)$ associated to the \emph{energy} $E\in \R$ is defined by
    \begin{align*}
        A_E(x) = \begin{pmatrix}
            \varphi(x) - E & -1 \\
            1 & 0
        \end{pmatrix}.
    \end{align*}
    Notice that the right-hand side of the equation \eqref{eq:300423.1} described the orbit of the vector $\bar{u}=(u_0,\, u_{-1})\in \R^2$ by the cocycle $A_E$. This action produces a formal eigenvector $u = (u_n)_{n}$ associated to the eigenvalue $E\in \R$. However, any growth rate of the sequence $n\mapsto \norm{A^n_E(x)\,\bar{u}}$ is incompatible with the chance of $u\in \ell^2(\Z)$. This is the first indication of the close relationship of study of Schr\"odinger spectrum and the Lyapunov exponent of the Schr\"odinger cocycle $A_E$ as a function of the energy $E$. This close relation is confirmed by the next result.
    \begin{proposition}[R. Johnson, \cite{Jo1986}]
        The real number $E$ is not in the spectrum of the Schr\"odinger operator $\{H_x\}_{x\in \Sigma}$ if and only if $A_E$ is uniformly hyperbolic.
    \end{proposition}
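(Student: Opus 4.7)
My plan is to prove the two implications separately.

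For the forward direction (UH implies $E$ lies in the resolvent set of every $H_x$), I would use the invariant sections from UH to write down the Green's function of $H_x-E$ directly. Let $e^s(x),e^u(x)\in\R^2$ be unit representatives of $\hat e^s(x),\hat e^u(x)$, and define formal solutions $\psi^\pm$ of $(H_x-E)\psi=0$ by the recursion
\begin{align*}
(\psi^+_n,\psi^+_{n-1})=A_E^n(x)\,e^s(x)\quad (n\geq 0),\qquad (\psi^-_{-n},\psi^-_{-n-1})=A_E^{-n}(x)\,e^u(x)\quad (n\geq 0).
\end{align*}
By UH, $|\psi^+_n|\leq Ce^{-\lambda n}$ for $n\geq 0$ and $|\psi^-_{-n}|\leq Ce^{-\lambda n}$ for $n\geq 0$; a direct computation using the recursion shows the Wronskian $W=\psi^+_0\psi^-_{-1}-\psi^+_{-1}\psi^-_0$ is independent of the index and, because $\hat e^s(x),\hat e^u(x)$ are transverse continuous sections on the compact space $\Sigma$, uniformly bounded below, $|W|\geq c>0$. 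The Green's kernel $G_x(m,n)=W^{-1}\psi^+_{\max(m,n)}\psi^-_{\min(m,n)}$ then satisfies $|G_x(m,n)|\leq C'e^{-\lambda|m-n|}$, and the Schur test guarantees it defines a bounded operator on $\ell^2(\Z)$. It inverts $H_x-E$ by construction, so $E$ is in the resolvent set of $H_x$ for \emph{every} $x\in\Sigma$ (not merely $\mu$-a.e.).

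For the reverse direction I would argue by contraposition. Assume $A_E$ is not uniformly hyperbolic. By Remark \ref{re:criteriaUH}, for every $k\geq 1$ there exist $x_k\in\Sigma$ and $n_k\geq 1$ with $\|A_E^{n_k}(x_k)\|\leq e^{n_k/k}$; compactness of $\Sigma$ and the uniform bound on $A_E$ force $n_k\to\infty$. Let $v_k\in\R^2$ be a unit vector in the direction of smallest singular value of $A_E^{n_k}(x_k)$, so $\|A_E^{n_k}(x_k)v_k\|=\|A_E^{n_k}(x_k)\|^{-1}$. Iterating yields a finite sequence $(u_n)_{-1\leq n\leq n_k}$ satisfying the eigenvalue equation in the interior, with $\|(u_0,u_{-1})\|=1$ and $\|(u_{n_k},u_{n_k-1})\|\leq e^{-n_k/k}$. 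Truncating to this window and extending by zero produces $u^{(k)}\in\ell^2(\Z)$; the residual $(H_{x_k}-E)u^{(k)}$ is supported in a bounded neighborhood of the two endpoints, with entries controlled by the boundary values of $u$. Once suitably normalized, this is the candidate Weyl sequence, and $E\in\spec(H_{x_k})$; a limiting argument using shift-invariance of the almost-sure spectrum and density of typical points then moves $E$ into the common spectrum.

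The main obstacle is precisely the final estimate: the naive truncation gives $\|u^{(k)}\|_{\ell^2}$ of order $1$ while the left-endpoint residual is already of order $1$, so the ratio does not vanish. One remedy is to center the truncation at a point where the orbit norm $\|(u_n,u_{n-1})\|$ is maximal, using the two-sided propagation bound $M^{-|m-n|}\|(u_n,u_{n-1})\|\leq\|(u_m,u_{m-1})\|\leq M^{|m-n|}\|(u_n,u_{n-1})\|$ (with $M=\sup_x\|A_E(x)\|$) to guarantee a block of indices of length comparable to $n_k$ on which the orbit norm stays within a constant factor of the peak, forcing $\|u^{(k)}\|_{\ell^2}\gtrsim\sqrt{n_k}$ against boundary contributions of order $1$. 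An alternative route, probably cleaner, is to bypass the Weyl sequence entirely by using the Combes–Thomas estimate on the other side: if $E\notin\spec(H_x)$, then $(H_x-E)^{-1}$ has exponentially decaying matrix elements with rate depending only on the spectral gap, and this decay transfers back to an exponential dichotomy, i.e.\ uniform hyperbolicity, for $A_E$ through the standard equivalence. Either way, this is the step where the real work lies.
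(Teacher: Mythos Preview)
The paper does not prove this proposition; it is merely stated with a citation to Johnson \cite{Jo1986}. So there is no ``paper's proof'' to compare against, and I will assess your argument on its own merits.

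Your forward direction (uniform hyperbolicity $\Rightarrow$ $E$ in the resolvent) is correct and is the standard Green's function construction.

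In the reverse direction you correctly locate the difficulty, but remedy~1 as written does not close the gap. From the two--sided propagation bound you only get $\|(u_m,u_{m-1})\|\geq M^{-|m-j^\ast|}\|(u_{j^\ast},u_{j^\ast-1})\|$, so the orbit norm is within a constant factor of the peak only on a window of \emph{bounded} length, not of length comparable to $n_k$; nothing prevents a sharp spike with $\sum_j\|(u_j,u_{j-1})\|^2$ of order~$1$. The centering idea is still the right one, but the correct continuation is to normalise by the peak value, pass to a subsequential limit using compactness of $\Sigma\times\bP^1$, and obtain a point $y\in\Sigma$ and a nonzero vector $z$ with $\sup_{n\in\Z}\|A_E^n(y)z\|<\infty$ (this is the Sacker--Sell/Selgrade characterisation of failure of exponential dichotomy, and checking that both $j_k\to\infty$ and $n_k-j_k\to\infty$ along the subsequence, or handling the complementary cases, requires a short extra argument). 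From a genuinely bounded two--sided solution $u$ the Weyl construction is then clean: either $u\in\ell^2(\Z)$ and $E$ is an eigenvalue of $H_y$, or $\|u|_{[-N,N]}\|_2\to\infty$ and the truncations give a Weyl sequence. Your final step, transferring $E\in\spec(H_y)$ to the almost--sure spectrum, is fine since $\mu$ has full support and $x\mapsto H_x$ is strongly continuous.

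Your remedy~2 via Combes--Thomas is a legitimate and arguably cleaner route for the contrapositive: one only needs to observe that $E\notin\Sigma_{\mathrm{as}}$ implies $E\notin\spec(H_x)$ for \emph{every} $x\in\Sigma$ with a uniform spectral gap (again by full support and strong resolvent convergence), after which Combes--Thomas yields uniform exponential off--diagonal decay of $(H_x-E)^{-1}$ and hence the continuous hyperbolic splitting.
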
    
\end{example}

Using the cone field criteria, it is not hard to see that the set of uniformly hyperbolic cocycles form an open set inside of the set of continuous functions $C^0(\Sigma,\, \SL_2(\R))$ endowed with the sup norm. Back to problems 1 and 2, we see directly from the definition, that if $A$ is uniformly hyperbolic, then $L(A)>0$. Actually, in this set of cocycles we have the best regularity that we can expect. That is the content of the next result.
\begin{theorem}[D. Ruelle \cite{Ru1979}]
    The Lyapunov exponent function $L$ restricted to the set of uniformly hyperbolic cocycles is a real analytic function.
\end{theorem}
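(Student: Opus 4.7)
The strategy is to write $L(A)$ as the integral of a continuous observable that depends real-analytically on $A$, using the $A$-invariant unstable direction supplied by uniform hyperbolicity. Fix $A_0 \in C^0(\Sigma, \SL_2(\R))$ uniformly hyperbolic. By the stability of the cone field criterion, $A_0$ admits an open neighborhood $\cU$ of uniformly hyperbolic cocycles that share a common pair of invariant cone fields $\hat J^u(x), \hat J^s(x)$. The core step is to show that $A \mapsto \hat e^u_A$ is real-analytic from $\cU$ into $C^0(\Sigma, \bP^1)$.

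For this, I would complexify by extending each $\hat J^u(x)$ to an open neighborhood $\hat U^u(x) \subset \bP^1(\C)$ so that, for every $A$ in a complex neighborhood $\cV \subset C^0(\Sigma, \SL_2(\C))$ of $\cU$, one has $A(x)\, \hat U^u(x) \Subset \hat U^u(\sigma(x))$ for every $x \in \Sigma$. By the Schwarz--Pick lemma, each $A(x)$ then acts as a uniform contraction in the Kobayashi metric of $\hat U^u(\sigma(x))$. The graph transform
$$
\Phi(A, s)(x) \;=\; A(\sigma^{-1}(x))\, s(\sigma^{-1}(x)),
$$
acting on pairs $(A, s)$ with $A \in \cV$ and $s$ a continuous section satisfying $s(x) \in \hat U^u(x)$, is jointly holomorphic and a uniform contraction in the $s$-variable. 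The analytic implicit function theorem delivers a unique holomorphic fixed point $s_A$; by uniqueness and the fact that $\Phi$ commutes with complex conjugation, $s_A$ is real-valued for $A \in \cU$, where it coincides with $\hat e^u_A$. Hence $A \mapsto \hat e^u_A$ is real-analytic.

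To finish, choose a continuous unit vector field $u_A(x) \in \R^2$ lifting $\hat e^u_A(x)$ and depending real-analytically on $A$ (possible because $\Sigma$ is totally disconnected, allowing a global continuous choice of sign, with analytic $A$-dependence through local slope charts on $\bP^1$). Invariance yields $A(x)\, u_A(x) = \lambda_A(x)\, u_A(\sigma(x))$ with $|\lambda_A(x)| = \norm{A(x)\, u_A(x)}$; iterating and applying Birkhoff's theorem to the continuous, bounded function $\log|\lambda_A|$ gives $\tfrac{1}{n} \log \norm{A^n(x)\, u_A(x)} \to \int_\Sigma \log \norm{A(x)\, u_A(x)}\, d\mu(x)$ for $\mu$-a.e. $x$. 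Since $u_A(x) \notin \hat e^s_A(x)$ by transversality of the cones, Oseledets (item \ref{prop:basicPropertiesContCocycles-Oseledets}) identifies this limit with $L(A)$, giving
$$
L(A) \;=\; \int_\Sigma \log \norm{A(x)\, u_A(x)}\; d\mu(x).
$$
The integrand varies real-analytically in $A$ and is uniformly bounded on $\cU$, and $\int \cdot\, d\mu$ is a bounded linear functional, so $L$ is real-analytic on $\cU$. The main obstacle is the construction of the complex cone field $\hat U^u(x)$ and the verification that the graph transform is a uniform Kobayashi contraction; once these technical but classical ingredients are in place, the conclusion is a straightforward application of the analytic implicit function theorem and bounded integration.
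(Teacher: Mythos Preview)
Your proposal is correct and follows essentially the same strategy the paper attributes to Ruelle: the regularity of $L$ is inherited from the regularity of the invariant direction $\hat e^u_A$, which in turn comes from the cone contraction provided by uniform hyperbolicity. The paper's one-paragraph sketch mentions that Ruelle derived asymptotic formulas for the derivative of $L$ in terms of the invariant directions; your complexification/Kobayashi--Schwarz--Pick argument for the analyticity of $A\mapsto \hat e^u_A$ is a clean modern packaging of the same contraction mechanism, and your integral formula $L(A)=\int_\Sigma \log\norm{A(x)\,u_A(x)}\,d\mu(x)$ is exactly the representation that makes the final step immediate.
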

Ruelle obtained asymptotic formulas for the derivative of the Lyapunov exponent in terms of the invariant directions of the uniformly hyperbolic cocycle. Therefore, the regularity of the Lyapunov exponent is a consequence of the nice behaviour of the invariant directions with respect to the cocycle due to the contraction properties (cone contractions) provided by the uniform hyperbolicity.

\subsection{Regularity for continuous cocycles}

So far, we have seen that cocycles with zero Lyapunov exponent are continuity points of $L$ and we have also discussed the regularity among the open class of uniformly hyperbolic cocycles. Now, we move forward to analyze what happens in the complement of these two sets. In other words, what can be said about cocycles that have positive Lyapunov exponent but fail to be uniformly hyperbolic? See Example \ref{ex:BockerVianaExample}. This issue was addressed first by R. Ma\~ne and later formalized by J. Bochi.

\begin{theorem}[J. Bochi, R. Ma\~ne \cite{Bo2002}]
\label{thm:BoMa}
    If $A\in C^0(\Sigma,\, \SL_2(\R))$ is not uniformly hyperbolic and $L(A)>0$, then there exists a sequence $(A_n)_n\subset C^0(\Sigma,\, \SL_2(\R))$ converging to $A$ such that $L(A_n)=0$.
\end{theorem}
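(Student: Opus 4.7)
The plan is to show that every $C^0$-neighborhood of $A$ contains some $B\in C^0(\Sigma,\SL_2(\R))$ with $L(B)=0$. The strategy rests on three pillars: (i) using the failure of uniform hyperbolicity to locate near-tangencies between the Oseledets directions; (ii) a local rotational perturbation that reroutes expansion into contraction at such a tangency; and (iii) a global construction that iterates (ii) along $\mu$-typical orbits to kill the exponential growth.

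For step (i), since $L(A)>0$, item~\ref{prop:basicPropertiesContCocycles-Oseledets} of Proposition~\ref{prop:basicProperties} provides measurable $A$-invariant sections $\hat e^s,\hat e^u\colon\Sigma\to\bP^1$. The key claim is that the angle $\theta(x):=\angle(\hat e^s(x),\hat e^u(x))$ cannot be bounded below by any positive constant almost surely; otherwise, using the continuity of $A$ and the ergodicity of $\mu$, the measurable splitting would upgrade to a continuous invariant dominated splitting, which for $\SL_2(\R)$ cocycles is equivalent to uniform hyperbolicity via the cone field criteria, contradicting our assumption. Hence there are points $x_0\in\Sigma$ at which $\theta(x_0)$ is as small as we wish.

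For step (ii), fix $\epsilon>0$ and $x_0$ with $\theta(x_0)$ sufficiently small (of order $\epsilon$). Choose a rotation $R\in SO(2)$ with $\norm{R-I}<\epsilon$ satisfying $(RA(x_0))\,\hat e^u(x_0)=\hat e^s(\sigma(x_0))$; this rotation \emph{folds} the expanding direction at $x_0$ onto the contracting direction at $\sigma(x_0)$. For step (iii), one executes such rotations on a family of disjoint cylinder neighborhoods of near-tangencies, organized by a Rokhlin tower and interpolated to the identity outside by a partition of unity, producing $B\in C^0(\Sigma,\SL_2(\R))$ with $\norm{B-A}_\infty<\epsilon$. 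Along a $\mu$-generic orbit visiting this modification set with positive density, the iterated expansions are repeatedly cancelled; by the variational characterization $L(B)=\inf_n \tfrac{1}{n}\int \log\norm{B^n(x)}\,d\mu$ from item~\ref{prop:basicProperties-item2}, one then concludes $L(B)=0$.

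The main obstacle is step (iii): upgrading the local cancellation to a \emph{single} perturbation with $L=0$ rather than merely $L<L(A)$. A naive Cauchy iteration $A_0\mapsto A_1\mapsto\cdots\to B$ with $L(A_k)\to 0$ does not imply $L(B)=0$, because $L$ is only upper semicontinuous by item~\ref{prop:basicProperties-item3} and the resulting inequality $L(B)\geq\limsup L(A_k)=0$ is vacuous. The remedy is to design the global perturbation in one shot by a careful Rokhlin-tower construction exploiting the Bernoulli product structure of $(\Sigma,\sigma,\mu)$, so that a full-measure set of orbits accumulates enough rotational cancellations to force the iterates $B^n(x)$ to stay of subexponential size, whence $L(B)=0$ on the nose.
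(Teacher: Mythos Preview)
Your overall architecture---Oseledets directions, a local rotation that reroutes $\hat e^u$ into $\hat e^s$, and a global Rokhlin-tower construction to kill the growth---matches the paper's sketch, and your discussion of the subtlety in step~(iii) (getting $L(B)=0$ rather than merely $L(B)<L(A)$) is well taken and corresponds to the coboundary issue the paper flags in its remark.

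There is, however, a genuine gap in step~(i). Your key claim that ``$\theta(x)$ bounded below almost surely forces the measurable splitting to upgrade to a continuous dominated splitting'' is false as stated. Take the locally constant diagonal cocycle on $\{1,2\}^{\Z}$ with $A_1=\operatorname{diag}(2,1/2)$, $A_2=\operatorname{diag}(1/2,2)$ and $p_1>p_2$ (this is essentially item~4 of Example~\ref{ex:irreducibiltiyConditions}, or the Bocker--Viana family with $\alpha=\beta$). Here $L(A)=(p_1-p_2)\log 2>0$, the Oseledets directions are the constant sections $\hat e_1,\hat e_2$, so $\theta\equiv\pi/2$ everywhere, yet along the periodic sequence $(1,2,1,2,\ldots)$ we have $A^2=I$ and hence $A$ is not uniformly hyperbolic by the criterion in Remark~\ref{re:criteriaUH}. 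Thus a uniform angle bound does \emph{not} by itself yield domination, and you cannot locate near-tangencies in the original cocycle this way.

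The paper's sketch avoids this trap with the phrase ``up to small $C^0$ local perturbations'': what non-hyperbolicity actually gives you (and what Bochi's argument exploits) is the existence of arbitrarily long orbit segments along which the domination estimate fails, so that a chain of small $C^0$ perturbations along such a segment can bring $\hat e^u$ and $\hat e^s$ together (in the diagonal example, segments with roughly balanced symbol counts have product close to the identity, and a tiny rotation there swaps the axes). Your step~(i) needs to be replaced by this segment-based mechanism; once that is in place, your steps~(ii) and~(iii) proceed as you describe.
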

As a consequence, we have the following dichotomy: for cocycles $A$ with non-vanish Lyapunov exponent, either $A$ is uniformly hyperbolic and $L$ is analytic in a neighborhood of $A$, or $A$ can be approximated by cocycles with zero Lyapunov exponent. Notice that Theorem \ref{thm:BoMa} finishes the Problem 2 for cocycles in $C^0(\Sigma,\SL_2(\R))$.

Theorem \ref{thm:BoMa} relies on the flexibility to design local perturbations provided by the $C^0$-topology. This is essential in the proof and the strategy goes as follows: the fact that the Lyapunov exponent of $A$ is positive guarantees the existence of the measurable Oseledets sections $\hat e^s, \hat e^u$ such as in item 3 of Proposition \ref{prop:basicProperties}. However, the fact that the cocycle is not uniformly hyperbolic implies that up to small $C^0$ local perturbations the distance between $\hat e^s$ and $\hat e^u$ is arbitrarily close to zero in positive measure sets (if it is bounded away from zero, it would be possible to build cone fields).

Once we have that the distance between $\hat e^s(x)$ and $\hat e^u(x)$ is small, we may, again, perform a local $C^0$-perturbation (which can not be achieved in higher regularity) of the cocycle $A$ obtaining a new cocycle $B$ with $B(x)\cdot \hat e^s(x) = \hat e^u(x)$ for this set of points. Changing the Oseledets directions kills the exponential growth rate of the norms $\norm{B^n(x)}$ which is enough to guarantee that the Lyapunov exponent vanishes.
\begin{remark}
\normalfont
    Formally speaking, the procedure described above could be occurring over a coboundary set. In this situation, the Oseledets directions could be swapped keeping the positivity of the Lyapunov exponent. But this technical issue can always be solved in aperiodic system such as our base dynamics. See \cite[Section~9.2.2]{Vi2014} for a precise discussion of this issue.
\end{remark}
The previous strategy, however, does not work if we try to perform the same type of perturbation in higher regularity $C^{\gamma}$ with $\gamma > 0$. Here, the size of the support of a local perturbation must to be related with the amount that we are allowed to perturb. The regularity of the Lyapunov exponent function is still open in the $C^{\gamma}$-topology. We will come back to this discussion later in Section \ref{sec:holderRandomCocycles}.

\subsection{Continuous cocycles with positive Lyapunov exponent}
Before addressing the regularity of the Lyapunov exponent associated with more restrictive topologies, we highlight that a byproduct of Theorem \ref{thm:BoMa} is the fact that, in $C^0(\Sigma, \SL_2(\R))$, outside of uniformly hyperbolic cocycles, there is no open set formed only by positive Lyapunov exponent. However, we can still have plenty of points with that property. This is exactly the content of the next result.
\begin{theorem}[A. Avila, \cite{Av2011}]
\label{thm:Av2011}
    The set $\{A\in C^{\gamma}(\Sigma,\, \SL_2(\R))\colon\, L(A)>0\}$ is dense in $C^{\gamma}(\Sigma,\, \SL_2(\R))$ for any $\gamma\geq 0$.
\end{theorem}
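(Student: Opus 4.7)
The strategy is to reduce the density question to a finite-dimensional perturbation of a locally constant cocycle, where positivity of the Lyapunov exponent can be controlled via Furstenberg's criterion for random matrix products, and then to arrange that the perturbation is $C^\gamma$-small. I sketch the plan first in the case $\gamma = 0$, where the reduction is cleanest, and then indicate the modification for $\gamma > 0$.

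For $\gamma = 0$, given $A \in C^0(\Sigma, \SL_2(\R))$ and $\epsilon > 0$, I would approximate $A$ uniformly by a locally constant cocycle $B$ of some depth $N$; such approximations exist by uniform continuity of $A$ on the compact space $\Sigma$. A depth-$N$ locally constant cocycle is encoded by a finite list of matrices $(B_{\mathbf{j}})_{\mathbf{j} \in \{1,\ldots,\kappa\}^{2N+1}}$, and its Lyapunov exponent coincides with that of an associated Markov random matrix product. By Furstenberg's theorem (in its version for Markov products), $L(B) > 0$ provided the semigroup generated by the matrices is non-compact and does not preserve any finite family of directions in $\bP^1$. Both failure conditions cut out a proper real-algebraic subvariety of $\SL_2(\R)^{\kappa^{2N+1}}$ (common invariant eigendirection, or simultaneous conjugation into $SO(2)$), so an arbitrarily small perturbation of the matrices gives a locally constant cocycle $\tilde B$ with $L(\tilde B) > 0$, still within $\epsilon$ of $A$ in $C^0$.

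For $\gamma > 0$, locally constant cocycles are no longer $C^\gamma$-dense (their Hölder seminorm does not approximate that of $A$), so we cannot merely approximate and perturb. Instead, I would perturb $A$ itself by a locally constant multiplicative factor: for each $P = (P_1,\ldots,P_\kappa) \in \SL_2(\R)^\kappa$, set $\tilde A_P(x) := P_{x_0}\, A(x)$. Since $x \mapsto P_{x_0}$ is locally constant of depth zero, and $d(x,y) = 1$ in the standard metric on $\Sigma$ whenever $x_0 \neq y_0$, a direct computation gives $\|\tilde A_P - A\|_{C^\gamma} = O(\|P - (I,\ldots,I)\|)$. Thus $\{\tilde A_P\}$ provides a $3\kappa$-dimensional family of $C^\gamma$-small perturbations of $A$, and the theorem reduces to finding $P$ arbitrarily close to the identity with $L(\tilde A_P) > 0$.

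The main obstacle is this last step for $\gamma > 0$: the cocycle $\tilde A_P$ is the product of the fixed Hölder cocycle $A$ with the locally constant perturbation, so Furstenberg's theorem does not apply directly. A plausible route is to extend $P \mapsto L(\tilde A_P)$ to a plurisubharmonic function on a neighborhood of the identity in $\SL_2(\C)^\kappa$ (using Avila's theory of complexified cocycles and subharmonic bounds on Lyapunov exponents), and to argue that identical vanishing of $L(\tilde A_P)$ on a real neighborhood of the identity would force $A$ to carry a rigid algebraic structure, such as a measurable $A$-invariant conformal structure on $\Sigma \times \bP^1$ or a measurable invariant finite family of directions. Such rigidity is exceptional and can itself be destroyed by a preliminary $C^\gamma$-small perturbation of $A$; in the generic case the subharmonic argument yields $L(\tilde A_P) > 0$ for some $P$ arbitrarily close to the identity. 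Making this rigidity/perturbation dichotomy rigorous is the technical heart of the theorem.
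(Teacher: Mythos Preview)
Your $\gamma=0$ argument via locally constant approximation and Furstenberg's criterion is workable, though the claim that the failure locus is a ``proper real-algebraic subvariety'' needs care: failure of strong irreducibility is a \emph{countable union} of algebraic sets (one for each cardinality of invariant family), and compactness of the generated semigroup is not an algebraic condition. Genericity still holds, so this is a cosmetic issue.

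For $\gamma>0$ your route diverges from the paper's and the gap you yourself flag is real. The paper does not attempt any rigidity dichotomy. Instead it invokes an explicit \emph{regularizing formula} of Herman--Avila--Bochi type,
\[
    \int_0^1 L(A\, R_{2\pi\theta})\, d\theta \;=\; \int_\Sigma \log\frac{\|A\|+\|A^{-1}\|}{2}\, d\mu,
\]
which for $\SL_2(\R)$ cocycles equals $\int_\Sigma \log\|A\|\, d\mu\geq 0$, strictly unless $A$ is $SO(2)$-valued. The perturbation family is the single-parameter curve $\theta\mapsto A\, R_{2\pi\theta}$, which is $C^\gamma$-continuous in $\theta$ since $R_{2\pi\theta}$ is constant in $x$. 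Avila then produces \emph{local} versions of this identity, valid over arbitrarily short arcs in $\theta$, giving a positive lower bound for the average of $L$ over arbitrarily small $C^\gamma$-balls around $A$. Density is immediate; no classification of zero-exponent cocycles is needed.

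Your plurisubharmonicity instinct is correct---subharmonicity in the complexified rotation parameter is precisely what underlies the regularizing formulas---but you stop short of any concrete lower bound, and the proposed dichotomy (identical vanishing forces an invariant conformal structure or finite direction set) is not established and would be delicate to prove directly. The explicit integral identity is the missing idea: it converts the qualitative subharmonicity into a quantitative positivity statement and replaces your $3\kappa$-parameter family with a one-parameter family where everything can be computed.
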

The proof of this result is a consequence of the so called \emph{regularization expressions} for the Lyapunov exponent, the simplest of which is a (global) expression provided in \cite{AvBo2002},
\begin{align}
\label{eq:AvBoFormula}
    \int_{0}^{1} L(A\, R_{2\pi\theta})\, d\, \theta = \int_{\Sigma} \log\left(
        \frac{
            \norm{A} + \norm{A^{-1}}        
        }{2}
    \right)\, d\, \mu,
\end{align}
where $R_{2\pi\theta}$ is the rigid rotation of angle $2\pi\theta$.  In order to obtain Theorem \ref{thm:Av2011}, local regularizing expressions such as \eqref{eq:AvBoFormula} are used to guarantee the positivity of the Lyapunov exponent under small perturbations.

% In order to obtain Theorem \ref{thm:Av2011}, it is necessary a refinement of the expression in \eqref{eq:AvBoFormula}, to obtain local regularization expressions for the Lyapunov exponent. Using these formulas produce perturbations of the reference cocycles to guarantee positivity of the Lyapunov exponent.  

An adaption of the proof of Theorem \ref{thm:Av2011}  is provided in \cite{Av2011} to guarantee positivity of the Lyapunov exponent with respect of the energy $E$ for Schr\"odinger cocycles $A_{E,\varphi}$ (see Example \ref{ex:schrodingerCocycles}). More precisely, the result says that for a dense set of potentials $\varphi\in C^0(\Sigma)$, we have $L(A_{E,\varphi}) > 0$ for a a dense set of energies $E\in \R$.

\begin{remark}
\normalfont
    Up to now, all of the mentioned results do not use the hyperbolic structure of the base dynamics $(\Sigma, \sigma, \mu)$.
\end{remark}

%%%%%%%%%%%%%%%%%%%%%%%%%%%%%%%%%%%%%%%%%%%%%%%%%%%%%%%%%%%%%%%%%%%%%%%%

\section{Locally constant cocycles}

To understand the properties resulting from the hyperbolic structure of the base dynamics, we focus our attention on a specific finite dimensional subspace of $C^0(\Sigma, \SL_2(\R))$ in which the base dynamics has a direct influence on the fiber action. These cocycles, known as \emph{locally constant cocycles}, are maps $A:\Sigma\to\SL_2(\R)$ which are constant in cylinders of the form $[0;\, i] := \{x\in \Sigma\colon x_0 = i\}$, $i=1,\ldots, \kappa$.

The theory of Lyapunov exponents of locally constant cocycles has been actively developed over the past 60 years.  Because many of the main ideas used in modern techniques to study the regularity and positivity of the Lyapunov exponents come from the locally constant setting, it is worth discussing it in a bit more detail.

\subsection{Random product of matrices}
% Originally, locally constant cocycles were introduced as a non-commutative version of the law of large numbers in probability theory.

In order to properly introduce this probabilistic setup, let $\{A_1,\ldots, A_{\kappa}\}$ be a finite set of matrices in $\SL_2(\R)$. Consider a sequence of independent random matrices $L_1,\ldots, L_n,\ldots$ on $\SL_2(\R)$ with a common Bernoulli distribution given by the probability measure 
$p_1\delta_{A_1}+\ldots + p_{\kappa}\delta_{A_{\kappa}}$, associated to the data $(p,A)$, where $p = (p_1,\ldots, p_{\kappa})$ and $A = (A_1,\ldots, A_{\kappa})\in \SL_2(\R)^{\kappa}$. Furstenberg and Kesten in \cite{FuKe1960}, obtained that the limit
\begin{align*}
    L(p, A) = \lim_{n\to\infty}\, \frac{1}{n}\log\norm{L_n\cdot\ldots\cdot L_1}.
\end{align*}
exists $p$-almost surely. The quantity $L(p,A)$ is called the Lyapunov exponent of the random sequence $L_1,\ldots, L_n,\ldots$. It is also called the \emph{Lyapunov exponent of the product of the random matrices} $(A_1,\ldots, A_{\kappa})$. This Lyapunov exponent only depends on the data $(p,A)$. So, the focus of the study is to analyze the properties of the real function which associate the data $(p,A)$ to the Lyapunov exponent $L(p, A)$.

Before proceeding with the comparison of this probabilistic model with the previously discussed linear cocycles, it is important to highlight that the dependence of the quantity $L(p,A)$ on the probability vector $p = (p_1,\ldots, p_{\kappa})$ is in general more regular than the dependence on the matrix vector $A = (A_1,\ldots, A_{\kappa})\in \SL_2(\R)^{\kappa}$. In fact, the next result due to Y. Peres shows that the dependence of $L(p,A)$ on the probability vector is highly regular.
\begin{theorem}[Y. Peres, \cite{Pe2006}]
\label{thm:Peres2006}
    If $p_i\neq 0$, for every $i$ and $L(p, A) >0$, then there exists a neighborhood of $p$, formed by probability vectors $q$, such that the function $q\mapsto L(q, A)$ is real analytic. 
\end{theorem}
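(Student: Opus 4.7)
The plan is to reduce the analyticity of $L$ to the analyticity of the Furstenberg stationary measure on $\bP^1$, and then to establish the latter via analytic perturbation theory for a transfer operator with spectral gap. Recall Furstenberg's integral representation
\begin{align*}
    L(q, A) = \sum_{i=1}^{\kappa} q_i \int_{\bP^1} \log\frac{\norm{A_i\, v}}{\norm{v}}\, d\nu_q(\hat v),
\end{align*}
where $\nu_q$ is a stationary measure on $\bP^1$ for the Markov operator $\cP_q \nu = \sum_i q_i (A_i)_*\nu$. Because the integrand is (real-)analytic in $\hat v$, once the map $q\mapsto \nu_q$ is shown to be analytic as a functional on a suitable Banach space of test functions on $\bP^1$, the Lyapunov exponent will be an analytic function of $q$.

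\textbf{Step 1: spectral gap at $q=p$.} Consider the transfer operator acting on the space $C^{\alpha}(\bP^1)$ of $\alpha$-H\"older functions (for $\alpha>0$ small enough):
\begin{align*}
    \cL_q f(\hat v) = \sum_{i=1}^{\kappa} q_i\, f(A_i\, \hat v).
\end{align*}
The hypothesis $L(p,A) > 0$ is the classical trigger for Le Page's contraction estimates: iteration of $\cL_p$ on H\"older balls contracts exponentially in the $\alpha$-seminorm for $\alpha$ small, which via a Lasota--Yorke-type inequality and compactness of $\bP^1$ yields a quasi-compact operator. The fixed point $\nu_p$ of $\cP_p$ (which is unique because $L(p,A)>0$, by Furstenberg's theorem) corresponds to a simple isolated eigenvalue $1$ of $\cL_p$, with the rest of the spectrum contained in a disk of radius strictly less than $1$.

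\textbf{Step 2: analytic perturbation.} The map $q \mapsto \cL_q$ is linear in $q$, hence trivially real-analytic as a family of bounded operators on $C^{\alpha}(\bP^1)$. The spectral gap is stable under small perturbations, and Kato's analytic perturbation theory then gives a neighborhood $U$ of $p$ on which the leading eigenprojection $\Pi_q$ is a real-analytic function of $q\in U$. The stationary measure $\nu_q$ is identified via $\langle \nu_q, f\rangle = \langle \nu_p, \Pi_q\, f\rangle / \langle \nu_p, \Pi_q\, 1\rangle$ (say), so $q\mapsto \nu_q$ is real-analytic as a $C^{\alpha}(\bP^1)^{*}$-valued map. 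Plugging this into the Furstenberg formula, the fact that each $\hat v\mapsto \log(\norm{A_i v}/\norm v)$ belongs to $C^{\alpha}(\bP^1)$ yields that $q\mapsto L(q,A)$ is a (finite) sum of products of real-analytic functions of $q$, hence real-analytic on $U$.

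\textbf{Main obstacle.} The non-routine ingredient is Step~1: proving the Lasota--Yorke / Doeblin--Fortet inequality for $\cL_p$ on $C^{\alpha}(\bP^1)$ under the sole assumption $L(p,A) > 0$, since we are not assuming Furstenberg's strong irreducibility and unboundedness directly. One has to invoke Le~Page's theorem in the form that positivity of the Lyapunov exponent alone produces exponential contraction on projective space (via the Oseledets direction $\hat e^s$ from Proposition \ref{prop:basicProperties}, which gives the attractor of the backwards random walk), and convert this probabilistic contraction into a quantitative norm estimate for iterates of $\cL_p$ acting on H\"older functions. Once this quasi-compactness is secured, Kato's theory does the rest automatically.
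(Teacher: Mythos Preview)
Your route via spectral gap plus Kato's analytic perturbation theory is a genuine alternative to the paper's argument. The paper instead works directly with the complex extension $z\mapsto Q_z^n(\log\norm{A_i(\cdot)})(\hat v)$, observes that these are degree-$n$ polynomials in $z$, and uses Le~Page's contraction estimate \eqref{eq:220323.4} to prove local uniform convergence of this sequence near $p$; the limit is then automatically holomorphic, and for real probability vectors it equals $L(q,A)$. Your approach packages the same contraction phenomenon as quasi-compactness of $\cL_p$ on $C^{\alpha}(\bP^1)$ and then lets abstract perturbation theory do the bookkeeping. Both arguments rest on the identical hard ingredient (Le~Page's estimate), so neither is more elementary; yours is perhaps cleaner conceptually, while the paper's avoids invoking Kato.

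There is, however, a real gap in your Step~1. The claim that $L(p,A)>0$ alone forces a \emph{unique} stationary measure---and hence a simple isolated eigenvalue $1$ for $\cL_p$---is false. A diagonal cocycle with $L>0$ (item~4 of Example~\ref{ex:irreducibiltiyConditions}) has two ergodic stationary measures $\delta_{\hat e_1}$ and $\delta_{\hat e_2}$, so $1$ is a double eigenvalue and there is no spectral gap in your sense. Le~Page's contraction estimate, and the resulting quasi-compactness, require quasi-irreducibility; this is precisely the ``generic assumption'' the paper itself flags in parentheses just after \eqref{eq:220323.4}, and it is what is invoked in Step~1 of Theorem~\ref{thm:DuKl2016}. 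Your ``Main obstacle'' paragraph gestures at this but does not resolve it: positivity of $L$ does not by itself produce the needed projective contraction when there are two invariant directions.

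The fix is to split off the reducible case by hand. If $A$ is not irreducible, then up to conjugation it is triangular and $L(q,A)=\left|\sum_i q_i\log|a_i|\right|$, which is visibly real-analytic in $q$ on the open set where it is positive. If $A$ is irreducible and $L(p,A)>0$, then by item~\ref{prop:propertiesofTypicalCocycles-item4} of Proposition~\ref{prop:propertiesofTypicalCocycles} it is strongly irreducible, hence quasi-irreducible, and now Le~Page's estimate does give the spectral gap; your Kato argument then runs as written.
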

The strategy to obtain this result is to study the so called \emph{Markov operator} associated to $(p, A)$. This is defined as the linear operator $Q_p = Q_{(p,A)}:C^0(\bP^1)\to C^0(\bP^1)$ with
\begin{align*}
    Q_p(\varphi)(\hat v)
    := \sum_{i=1}^{\kappa}\, p_i\, \varphi(A_i\, \hat v),
\end{align*}
(the notation only highlights the dependence on $p$ once $A$ is fixed). The idea of the proof of Theorem \ref{thm:Peres2006} is that, for every $\hat v\in \bP^1$, we can extend the maps $F_i:= p\mapsto Q^n_p(\log\norm{A_i\,(\,\cdot\,)})(\hat v)$ to complex variable functions $z\mapsto Q^n_z(\log\norm{A_i\, (\cdot)})(\hat v)$ which are homogeneous polynomials of degree $n$ in the variables of $z$. Using the fact that $L(p, A)>0$, its is possible to show that for points $z$ close to $p$, the sequence of functions
\begin{align*}
    z\mapsto Q^n_z(\log\norm{A_i\, (\,\cdot\,)})(\hat v),
\end{align*}
converges and hence the limit is a holomorphic function of the variable $z$. The main tool for the convergence of this sequence is a uniform contraction on average provided by the positivity of the Lyapunov exponent (and a generic assumption, see Step 1 in the proof of Theorem \ref{thm:DuKl2016} below) proved first in \cite{LePage1989}: for every sufficiently small $\theta >0$ there exists $\lambda>0$ such that for every $n$ large enough, we have
\begin{align}
\label{eq:220323.4}
    \sup_{\hat u, \hat v\in \bP^1}\int\left(
        \frac{
            d(A^n(x)\, \hat u,\, A^n(x)\, \hat u)
        }{d(\hat u, \hat v)}
    \right)^{\theta}\, d\mu(x)
    \leq e^{-\lambda\, n}.
\end{align}
To finish, it is enough to notice that, for each probability vector $q$ close to $p$ we have that the sum $\sum_i  Q^n_q(\log\norm{A_i\, (\,\cdot\,)})(\hat v)$ converges to the Lyapunov exponent $L(q, A)$.

From Theorem \ref{thm:Peres2006}, to fully understand the function $(p,A)\mapsto L(p, A)$, we may fix the probability vector $p$ and restrict our attention to the map $A= (A_1,\ldots, A_\kappa)\mapsto L(A)$. This is exactly the dynamical context that we had before, once we make the natural identification of  $A = (A_1,\ldots, A_{\kappa})\in \SL_2(\R)^{\kappa}$ with the locally constant cocycle cocycle $A:\Sigma\to \SL_2(\R)$ given by $A((x_n)_n) = A_{x_0}$. So, as described in Section \ref{sec:continuousRandomCocycles}, we are interested in the positivity and regularity of the function that associates each vector of matrices $A$ in the $3\kappa$-dimensional manifold, $\SL_2(\R)^{\kappa}$, to the real number $L(A)$.

\subsection{Stationary measures and criteria for positivity of the LE}

An essential tool to understand the Lyapunov exponent of locally constant cocycles are the so called \emph{stationary measures}. These projective probability measures carry all the asymptotic information of the fiber action $A$ and are defined as the fixed points of the dual of the Markov operator $Q_A$. More precisely, we say that a probability measure $\eta$ on $\bP^1$ is \emph{forward stationary} (or simply stationary) for the cocycle $A$ if $\eta = \sum_{i=1}^{\kappa} p_i(A_i)_*\eta$. We say that $\eta$ is \emph{backward stationary} if $\sum_{i=1}^{\kappa} p_i(A_i^{-1})_*\eta$

The next proposition provides a few properties of the stationary measures useful for later discussion.
\begin{proposition}
\label{prop:propertiesStationryMeasures}
    Let $A\in \SL_2(\R)^{\kappa}$ be a locally constant cocycle. Consider $\Sigma^+$ the set of positive sequences $(x_n)_{n\geq 0}$ and let $F^+_A:\Sigma^+\times \bP^1\to \Sigma^+\times \bP^1$ be the cocycle in $\Sigma^+\times \bP^1$ induced by $A$.
    \begin{enumerate}
        \item\label{prop:propertiesStationryMeasures-item1} The set of stationary measures $\text{Stat}(A)$ is non-empty compact and convex;
        
        \item\label{prop:propertiesStationryMeasures-item2} It holds that, $\eta \in \text{Stat}(A)$ if and only if $\mu\times\eta$ is $F_A^+$-invariant. Moreover, $\eta$ is extremal in $\text{Stat}(A)$ if and only the system $(F^+_A, \mu\times \eta)$ is ergodic;

        \item\label{prop:propertiesStationryMeasures-item3} For any extremal stationary measure $\eta$ we have that
        \begin{align*}
            \int_{\Sigma\times\bP^1} \log\norm{A(x)\, v}\, d(\mu\times\eta)(x,\hat v)\in \{L(A), - L(A)\};
        \end{align*}
        
        \item\label{prop:propertiesStationryMeasures-item4} (Furstenberg's formula): It holds that
        \begin{align}
            L(A) = \sup_{\eta\in \text{Stat}(A)}\, \int_{\Sigma\times\bP^1} \log\norm{A(x)\, v}\, d(\mu\times\eta)(x,\hat v).
        \end{align}
    \end{enumerate}
\end{proposition}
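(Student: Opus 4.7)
The plan is to handle the four items sequentially, each building on the preceding ones.

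For Item \ref{prop:propertiesStationryMeasures-item1}, I will study the averaging operator $T\eta := \sum_i p_i(A_i)_*\eta$ acting on the compact convex space $\cP(\bP^1)$ of Borel probability measures endowed with the weak-$*$ topology. Since $T$ is continuous and affine, Schauder--Tychonoff gives a fixed point, so $\text{Stat}(A) = \fix(T)\neq\emptyset$, and closedness and convexity of the fixed-point set are immediate. For Item \ref{prop:propertiesStationryMeasures-item2}, the identity
\begin{align*}
    (F_A^+)_*(\mu\times\eta) = \mu\times(T\eta)
\end{align*}
follows from a direct computation on test functions, using $A(x)=A_{x_0}$ and the Bernoulli identity $\sigma_*(\mu|_{[0;i]}) = p_i\mu$; this yields the equivalence between stationarity and $F_A^+$-invariance. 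The non-extremal $\Rightarrow$ non-ergodic direction is then immediate from linearity. For the converse, I will decompose $\mu\times\eta$ ergodically: by uniqueness of the ergodic decomposition of $\mu$, each component has base marginal $\mu$; disintegrating such a component gives the cocycle relation $(A_{x_0})_*\eta_x = \eta_{\sigma x}$, and exactness of the one-sided Bernoulli shift forces $\eta_x$ to be $\mu$-a.s.\ constant, producing a product component $\mu\times\tilde\eta$ with $\tilde\eta\in\text{Stat}(A)$.

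For Item \ref{prop:propertiesStationryMeasures-item3}, extremality of $\eta$ combined with Item \ref{prop:propertiesStationryMeasures-item2} gives ergodicity of $(F_A^+,\mu\times\eta)$. Birkhoff applied to $g(x,\hat v) := \log\norm{A(x)v}$ identifies the integral in question with the $(\mu\times\eta)$-a.s.\ limit of $\frac{1}{n}\log\norm{A^n(x)v}$, and Oseledets (Item \ref{prop:basicPropertiesContCocycles-Oseledets} of Proposition \ref{prop:basicProperties}) computes this same limit as $L(A)$ on $\{v\notin\hat e^s(x)\}$ and as $-L(A)$ on $\{v\in\hat e^s(x)\}$; by ergodicity the a.s.\ value must be one of these two, proving the claim.

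For Item \ref{prop:propertiesStationryMeasures-item4}, the upper bound $\sup\leq L(A)$ follows from Item \ref{prop:propertiesStationryMeasures-item3} together with a Choquet decomposition of any stationary measure over the extremals. For attainment, assuming $L(A)>0$ I set $\eta^u := (\hat e^u)_*\mu$. By Item \ref{prop:basicProperties-item5} of Proposition \ref{prop:basicProperties}, $\hat e^u(x)$ depends only on the negative coordinates of $x$ and is therefore $\mu$-independent of $x_0$; combined with $\sigma$-invariance of $\mu$ and the equivariance $\hat e^u(\sigma x) = A(x)\hat e^u(x)$, this gives $T\eta^u = \eta^u$. Local constancy of $A$ and the same independence then yield
\begin{align*}
    \int g\, d(\mu\times\eta^u) = \int \log\norm{A(x)\hat e^u(x)}\, d\mu(x),
\end{align*}
and the telescoping identity $\log\norm{A^n(x)\hat e^u(x)} = \sum_{k=0}^{n-1}\log\norm{A(\sigma^k x)\hat e^u(\sigma^k x)}$, together with Birkhoff for $(\sigma,\mu)$ and Oseledets, identifies this integral with $L(A)$. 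The case $L(A)=0$ is trivial.

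The main obstacle I expect is the converse direction of Item \ref{prop:propertiesStationryMeasures-item2}: identifying every ergodic $F_A^+$-invariant measure with base marginal $\mu$ as a product $\mu\times\tilde\eta$. This structural step rests on the exactness (tail-triviality) of the one-sided Bernoulli shift, which forces the disintegrated cocycle relation to admit only constant $\mu$-a.e.\ solutions; once this is in place, the remaining items follow cleanly.
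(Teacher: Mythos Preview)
Your handling of Items~\ref{prop:propertiesStationryMeasures-item1}, \ref{prop:propertiesStationryMeasures-item3} and \ref{prop:propertiesStationryMeasures-item4} is correct and more detailed than the paper's outline. For Item~\ref{prop:propertiesStationryMeasures-item4} the paper simply invokes Item~\ref{prop:propertiesStationryMeasures-item3} together with the ergodic decomposition of stationary measures, while you explicitly construct the maximizer $\eta^u=(\hat e^u)_*\mu$ via the independence of $\hat e^u$ from $x_0$; both routes work, and yours has the merit of actually exhibiting the optimal stationary measure.

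The genuine gap lies in the converse direction of Item~\ref{prop:propertiesStationryMeasures-item2}. Two problems: first, $F_A^+$-invariance on the \emph{one-sided} space does not give the pointwise relation $(A_{x_0})_*\eta_x=\eta_{\sigma x}$ that you claim; because $\sigma$ is non-invertible on $\Sigma^+$, the disintegration of an invariant measure only satisfies the averaged identity $\eta_y=\sum_i p_i(A_i)_*\eta_{(i,y)}$. Second, and more seriously, your strategy of showing that every ergodic component of $\mu\times\eta$ is itself a product measure is doomed, because that statement is false: when $L(A)>0$ and $\hat e^s$ is non-constant, the measure $\int\delta_{\hat e^s(x^+)}\,d\mu(x^+)$ is $F_A^+$-invariant, ergodic, projects to $\mu$, yet has non-constant fibre disintegration $x^+\mapsto\delta_{\hat e^s(x^+)}$. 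No amount of exactness of the base can rule this out.

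The argument the paper defers to (\cite[Section~5]{Vi2014}) does not go through the ergodic decomposition at all. One works directly with an $F_A^+$-invariant set $E$ of intermediate $(\mu\times\eta)$-measure and shows that $E$ agrees mod null with a product set $\Sigma^+\times D$. The mechanism is: for $\phi=1_E$, the i.i.d.\ structure of $\mu$ gives $E[\phi\mid x_0,\dots,x_{n-1},v]=(P\phi)(A^n(x)v)$ where $P\phi(v)=\int\phi(x,v)\,d\mu(x)$; this is a bounded martingale converging to $\phi\in\{0,1\}$, forcing $P\phi=1_D$ for some $D$; the commutation $PU=Q_AP$ makes $1_D$ a $Q_A$-fixed point, hence $D$ is $A_i$-invariant for every $i$; and then $\eta|_D$, $\eta|_{D^c}$ decompose $\eta$ nontrivially within $\text{Stat}(A)$. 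Note that this uses stationarity of $\eta$ (through the distributional invariance $A^n(x)v\sim\eta$) in an essential way---precisely what is unavailable when one passes to an arbitrary ergodic component.
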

\begin{proof}[Outline of the proof of Proposition \ref{prop:propertiesStationryMeasures}]
         Item 1 is a classical argument similar to Bogoliouboff's Theorem. The first part of Item 2 is a direct computation and in the second part we use Item 1. 
         
         For item 3. Take $\eta\in \text{Stat}(A)$, extremal, and consider the function $\Phi_A:\Sigma\times\bP^1\to \R$ given by $\Phi_A(x,\hat v) = \log\norm{A(x)\, v}$. Then, by the ergodic theorem,
         \begin{align*}
             \lim_{n\to\infty}\frac{1}{n}\log\norm{A^n(x)\, v}
             &= \lim_{n\to\infty}\frac{1}{n}\Sum_{j=0}^{n-1} \Phi_A((F_A^+)^j(x,\hat v)) \\
             &=  \int_{\Sigma\times\bP^1} \log\norm{A(x)\, v}\, d(\mu\times\eta)(x,\hat v),
         \end{align*}
         for $(\mu\times\eta)$-a.e. $(x,\hat v)$. Using item \ref{prop:basicPropertiesContCocycles-Oseledets} of Proposition \ref{prop:basicProperties} and the fact that $\eta$ is extremal, we see that for $\mu$-a.e. $x\in \Sigma$, the left-hand-side above is either $L(A)$ or $-L(A)$. Item 4, is a consequence of item 3 and the ergodic decomposition for stationary measures. See \cite[Section~5]{Vi2014} for detailed proofs.
\end{proof}
\begin{remark}
\normalfont
    One may wonder why in the item 3 above we need to restrict our attention to the cocycle $F_A^+$ generated by the \emph{one-sided shift} $(\Sigma^+, \sigma)$ and not the usual cocycle $F_A$. The main reason is because the measure $\mu\times \eta$ is $F^+_A$-invariant but may not be $F_A$-invariant.
        
    To see this, take the set $[-1;\, j]\times \hat V$. Then, $F_A^{-1}([-1;\, j]\times \hat V) = [0;\, j]\times A_j^{-1}(\hat V)$ and thus, $\mu\times \eta(F_A^{-1}([-1;\, j]\times \hat V)) = p_j\cdot(A_j)_*\eta(V)$. So, $\mu\times \eta$ is  $F_A$-invariant if and only if $(A_j)_*\eta = \eta$ for every $j=1,\ldots, \kappa$. 
\end{remark}
Item \ref{prop:propertiesStationryMeasures-item3} and \ref{prop:propertiesStationryMeasures-item4} of Proposition \ref{prop:propertiesStationryMeasures} are very important tools relating the Lyapunov exponent and the stationary measures of the cocycle. This will be used many times in the rest of these notes, starting with the following example:
\begin{example}[Kifer's example, \cite{Ki1982}]
\label{ex:Kifer}
\normalfont
    Take $\alpha > 1$ and consider the matrices
    \begin{align*}
        A_1 = \begin{pmatrix}
            \alpha & 0 \\
            0 & \alpha^{-1}
        \end{pmatrix}
        \quad
        \text{and}
        \quad
        A_2 = R_{\pi/2}.
    \end{align*}
    Let $A$ be the locally constant cocycle generated by $A_1$ and $A_2$ with probabilities $p_1,p_2>0$. The measure $\eta = 1/2\delta_{\hat e_1} + 1/2\delta_{\hat e_2}$, where $e_1$ and $e_2$ denote respectively the unitary horizontal and vertical direction, is a stationary measure for $A$. Furthermore, a direct computation shows that
    \begin{align*}
        \int_{\Sigma\times \bP^1} \log\norm{A(x)\, v}\, d(\mu\times \eta)(x,\hat v) = 0.
    \end{align*} 
    Later (see Proposition \ref{prop:propertiesofTypicalCocycles}), we are going to see that $\eta$ is indeed the unique stationary measure for $A$ and so, using Furstenberg's formula (item \ref{prop:propertiesStationryMeasures-item4} of Proposition \ref{prop:propertiesStationryMeasures}), $L(A) = 0$.
\end{example}
The last example shows that sometimes we have an easy description of the stationary measures of our cocycle. In general that is not the case, even if we assume that our cocycle has very good properties (with respect to the regularity of the Lyapunov exponent) such as being uniformly hyperbolic:
\begin{example}[Bernoulli convolutions]
\label{ex:bernoulliConvolutions}
\normalfont
    In this example, we fix the probability vector of the base dynamics as $p = (1/2,1/2)$. Let $\lambda \in (0,1)$ and set
    \begin{align*}
        A_1 = \begin{pmatrix}
            \sqrt{\lambda} & \frac{1}{\sqrt{\lambda}} \\
            0 & \frac{1}{\sqrt{\lambda}}
        \end{pmatrix}
        \quad
        \text{and}
        \quad
        A_2 = \begin{pmatrix}
            \sqrt{\lambda} & -\frac{1}{\sqrt{\lambda}} \\
            0 & \frac{1}{\sqrt{\lambda}}
        \end{pmatrix}.
    \end{align*}
    Let $A_{\lambda}$ be the locally constant cocycle generated by $A_1$ and $A_2$. For the purpose of this example, we parameterize the projective space $\bP^1$ as $\R\cup \{\infty\}$. In these coordinates, we can write the action of a matrix $B = \begin{pmatrix} a & b \\ c & d \end{pmatrix}$ as $B(t) = (at + b)/(ct+d)$. Thus, in these coordinates,
    \begin{align*}
        A_1(t) = \lambda\,t + 1
        \quad
        \text{and}
        \quad
        A_2(t) = \lambda\, t - 1.
    \end{align*}
    In other words, $A_1, A_2$ are similarities (see \cite[Section~8.3]{Ma2015} for a precise definition and properties). See Picture \ref{pic:bernoulliConvolutions} for a graphical description of the orbit of $0$ by this iterated function system.
    \begin{figure}[ht]
        \centering
        \includegraphics[width=\textwidth]{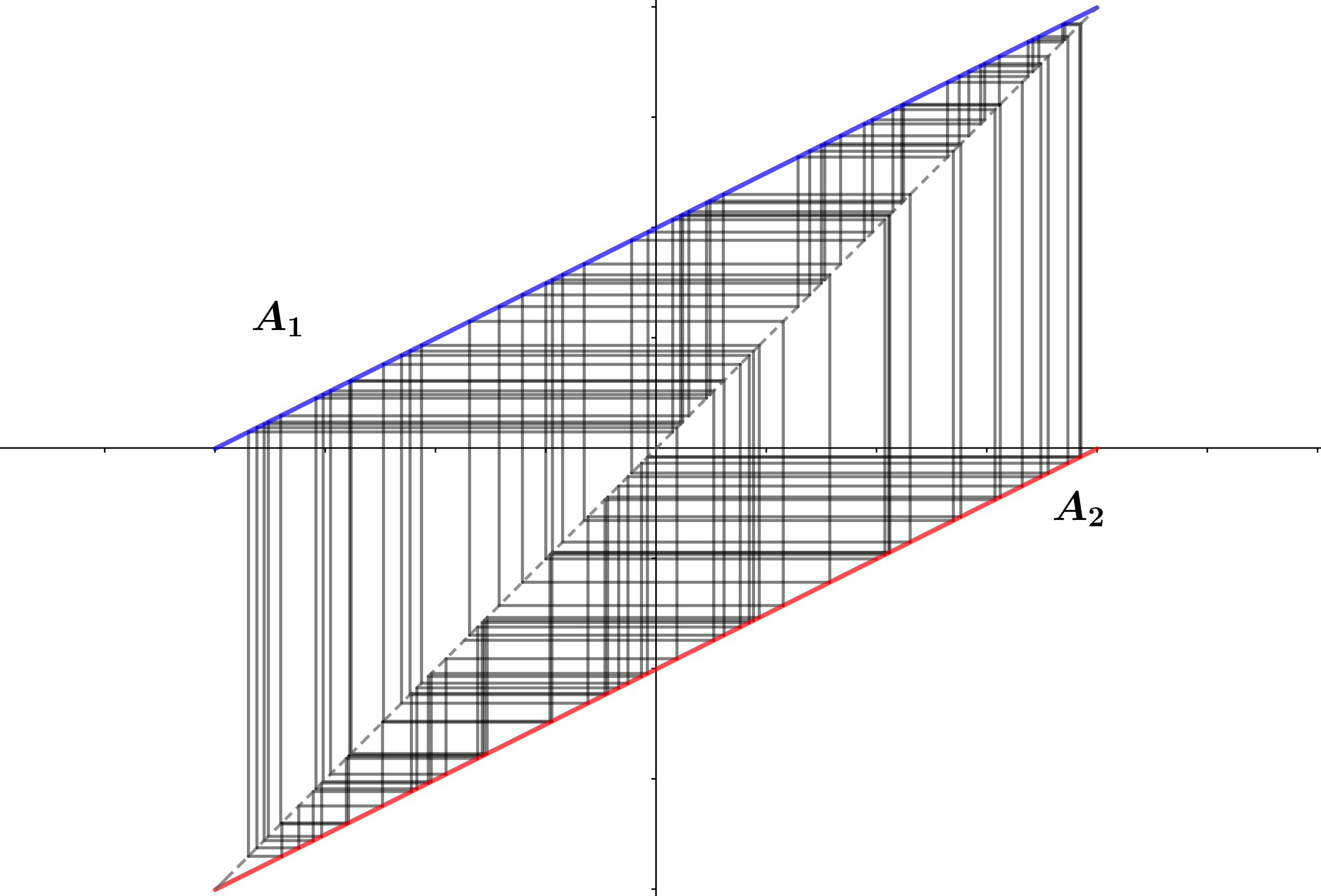}
        \caption{Random orbit of $0$ by $A_{\lambda}^j(x)(0)$ with $0\leq j \leq 100$ and $\lambda = 1/2$.}
        \label{pic:bernoulliConvolutions}
    \end{figure}
    
    Let $\eta_{\lambda}$ be the (unique) self-similar measure associated with the affine contractions $A_{\lambda} := (A_1,A_2)$. Observe that $\eta_{\lambda}$ is supported at $[-1/(1-\lambda), 1/(1-\lambda)]$. It is clear by definition of self similar measure that $\eta_{\lambda}$ is stationary for $A_{\lambda}$. Now, consider the functions $a:\Sigma\to\{+1,-1\}$ and $\tau:\Sigma\to\R$ given by
    \begin{align*}
        a(x) = \left\{
            \begin{array}{cc}
                +1, & x_0 = 1 \\
                -1, & x_0 = 2.
            \end{array}
        \right.
        \quad
        \text{and}
        \quad
        \tau(x) = \sum_{j=0}^{\infty}\, a(\sigma^j(x))\, \lambda^j.
    \end{align*}
    We claim that $\eta_{\lambda}$ coincides with the distribution of the random variable $\tau$, $\tau_*\mu$ (recall that $\mu$ is the fixed measure on $\Sigma$). Indeed, notice that, for any measurable set $B\subset \R$ we have
    \begin{align*}
        \mu(\tau^{-1}(B))
        &= \mu\left(
            [0;\, 1]\cap \tau^{-1}(B)
        \right) + \mu\left(
            [0;2]\cap \tau^{-1}(B)
        \right)\\
        &= \mu\left(
            [0;\, 1]\cap \sigma^{-1}\circ\tau^{-1}\circ (A_1)^{-1}(B)
        \right) + \mu\left(
            [0;\, 2]\cap \sigma^{-1}\circ\tau^{-1}\circ (A_2)^{-1}(B)
        \right)\\
        &= \frac{1}{2}(A_1)_*\tau_*\mu(B) + \frac{1}{2}(A_2)_*\tau_*\mu(B).
    \end{align*}
    Hence, $\tau_*\mu$ is a self similar measure supported in $[-1/(1-\lambda), 1/(1-\lambda)]$ and thus by uniqueness of the self similar measure for $(A_1,\, A_2)$,  $\tau_*\mu = \eta_{\lambda}$. In other words, the stationary measure $\eta_{\lambda}$ is the distribution measure of the Bernoulli convolutions
    \begin{align*}
        \sum_{j=0}^{\infty} \pm \lambda^j,
    \end{align*}
    where the signs $+$ and $-$ are chosen with probability $1/2$.
    
    A classical problem that goes back to Erd\"os, in \cite{Er1940}, is to determine the fractal properties of the measures $\eta_{\lambda}$. More specifically, the goal is to describe, for each value of $\lambda$, if the measure $\eta_{\lambda}$ is singular or absolutely continuous with respect to Lebesgue:
    \begin{problem}
        Describe precisely the set of $\lambda\in (0,1)$ such that $\eta_{\lambda}$ is absolutely continuous or singular with respect to Lebesgue.
    \end{problem}
    There has been a great progress regarding this problem. See \cite{Ma2015} for details about the statements and references therein.
    \begin{itemize}
        \item For $\lambda\in (0,1/2)$, $\eta_{\lambda}$ is a self similar measure supported on a Cantor set, thus singular with respect to Lebesgue;

        \item If $\lambda = 1/2$ we have that $\eta_{\lambda} = \text{Leb}|_{[-2,2]}$;

        \item For $\lambda \in (1/2, 1)$ the situation is much trickier. Only a countable set of such $\lambda$ are known where $\eta_{\lambda}$ is singular with respect to Lebesgue. Those are given by the inverse of the so called \emph{Pisot's numbers}: solutions of polynomial equations with integers coefficients such that all other complex solutions have modulus less than one.
        
        \item The breakthrough result was provided by B. Solomyak in \cite{So1995} proving that for almost every $\lambda\in (1/2, 1)$, $\eta_{\lambda}$ is absolutely continuous with respect to Lebesgue.
    \end{itemize}
    Notice that for every $\lambda$ the cocycle $A_{\lambda}$ is uniformly hyperbolic. Indeed, it is enough to observe that for every $x\in \Sigma$, $\norm{A^n(x)} \geq \lambda^{-n/2}$ and use the criteria provided in \ref{re:criteriaUH}.

    To finish the discussion, observe that Dirac measure on the fixed point $\infty$, $\delta_{\infty}$, is also a stationary measure for $A_{\lambda}$. Hence $\text{Stat}(A_{\lambda}) = [\eta_{\lambda},\, \delta_{\infty}]$. Moreover,
    \begin{align*}
        L(A_{\lambda}|_{\delta_{\infty}}) = - L(A) = \frac{1}{2}\log\lambda
        \quad
        \text{and}
        \quad
        L(A_{\lambda}|_{\eta_{\lambda}}) = L(A) = -\frac{1}{2}\log\lambda.
    \end{align*}
\end{example}

Example \ref{ex:bernoulliConvolutions} above shows that even among the uniformly hyperbolic cocycles (family in which Lyapunov exponent varies regularly) it is a hard task to understand the properties of the stationary measures. 

Nevertheless, the situation is not entirely hopeless. There is a large class of locally constant cocycles $A$ where soft analysis can be used to determine properties of the set $\text{Stat}(A)$. Below, we give a list of concepts which will be useful for our discussion. The concepts are sorted from the strongest to the weakest. We say that the vector (or cocycle) $A = (A_1,\ldots, A_{\kappa})\in \SL_2(\R)^{\kappa}$ is 
\begin{enumerate}
    \item \emph{Strongly-irreducible}, if there is no finite collection of projective directions $\hat V = \{\hat v_1, \ldots, \hat v_m\}\subset \bP^1$ such that $A_i(\hat V) = \hat V$ for every $i=1,\ldots, \kappa$;

    \item \emph{Irreducible}, if there is no projective direction $\hat v\in \bP^1$ with $A_i\, \hat v = \hat v$ for every $i=1,\ldots, \kappa$;
    \item \emph{Quasi-irreducible}, if the unique possible direction $\hat v\in \bP^1$ such that $A_i \hat v = \hat v$ for every $i=1,\ldots, \kappa$ must to satisfy $L(A|_{\hat v}) = L(A)$.    
\end{enumerate}

\begin{example}
\label{ex:irreducibiltiyConditions}
\normalfont
    Here, we mention a few examples satisfying the conditions listed above.
    \begin{enumerate}
        \item\label{ex:irreducibiltiyConditions-item1}  \emph{Strongly irreducible:} for any $\beta>1$ and $\theta\in \R\backslash\Q$,
        \begin{align*}
            A_1 = \begin{pmatrix}
                \beta & 0\\
                0 & \beta^{-1}
            \end{pmatrix}
            \quad
            \text{and}
            \quad
            A_2 = R_{2\pi\theta};
        \end{align*}
        Another interesting example is provided by the \emph{Anderson's model}. That is, the family of Schr\"odinger cocycles $A_E$ over the shift (see Example \ref{ex:schrodingerCocycles}) for any energy $E\in \R$, given by 
        \begin{align*}
            A_{1,E} = \begin{pmatrix}
                a_1 - E & -1\\
                1 & 0
            \end{pmatrix}
            \quad
            \text{and}
            \quad
            A_{2,E} = \begin{pmatrix}
                a_2 - E & -1\\
                1 & 0
            \end{pmatrix}.
        \end{align*}
        Assuming $a_1\neq a_2$, $A_E$ is strongly irreducible for every energy $E\in \R$. For an example with zero Lyapunov exponent is enough to consider the constant cocycle $A\equiv R_{2\pi\theta}$ with $\theta\in \R\backslash \Q$.
        
        \item \emph{Irreducible but not strongly irreducible:} That is provided by Kifer's example (see Example~\ref{ex:Kifer}).

        \item \emph{Quasi-irreducible but not irreducible:} consider the triangular cocycle $A \in \SL_2(\R)^{2}$ given by
        \begin{align*}
            A_1 = \begin{pmatrix}
                a_1 & b_1 \\
                0 & d_1
            \end{pmatrix}
            \quad
            \text{and}
            \quad
            A_2 = \begin{pmatrix}
                a_2 & b_2 \\
                0 & d_2
            \end{pmatrix},
        \end{align*}
        with $p_1\log|a_1| + p_2\log|a_2|>0$ and $b_j \neq 0$, for some $j=1,2$.
        The inverse $A_{\lambda}^{-1}$ of the cocycle introduced in Example \ref{ex:bernoulliConvolutions} provides a particular case of the example discussed above.

        \item \emph{Not quasi-irreducible:} Consider the diagonal cocycle $A\in \SL_2(\R)$ given by
        \begin{align*}
            A_1 = \begin{pmatrix}
                \beta & 0\\
                0 & \beta^{-1}
            \end{pmatrix}
            \quad
            \text{and}
            \quad
            A_2 = \begin{pmatrix}
                \beta^{-1} & 0 \\
                0 & \beta
            \end{pmatrix},
        \end{align*}
        with $\beta>1$. Notice that $L(A)>0$ if and only if $p_1\neq p_2$.
    \end{enumerate}
\end{example}

\begin{remark}
\normalfont
    We mentioned before the relation between the Lyapunov exponent of Schr\"odinger cocycles and the spectrum of the respective Schr\"odinger operator. In the Anderson model presented above, the spectrum of Schr\"odinger operator is completely determined and is given by the set
    \begin{align*}
       \text{Spec}(H_x) =  [a_1 - 2, a_1 + 2] \cup [a_2 - 2, a_2 + 2].
    \end{align*}
    See \cite{Da2017} for more details.
\end{remark}

Now we are ready to collect properties of the stationary measures for most $\SL_2(\R)$ cocycles which are described in some of the items of the next proposition. In what follows, for every matrix $B\in \SL_2(\R)$, we denote by $B^t$ its transpose matrix.
\begin{proposition}
\label{prop:propertiesofTypicalCocycles}
    Let $A = (A_1,\ldots, A_{\kappa})\in \SL_2(\R)^{\kappa}$ be a locally constant cocycle.
    \begin{enumerate}
        \item\label{prop:propertiesofTypicalCocycles-item1} The set of strongly irreducible cocycles is a countable intersection of open and dense subsets of $\SL_2(\R)^{\kappa}$;

        \item\label{prop:propertiesofTypicalCocycles-item2} The cocycle $A$ is strongly irreducible if and only if all the stationary measures are non atomic;

        \item\label{prop:propertiesofTypicalCocycles-item3} (Furstenberg's criterion) If the semigroup generated by $A_1,\ldots, A_{\kappa}$ is unbounded and $A$ is strongly irreducible, then $L(A) > 0$;

        \item\label{prop:propertiesofTypicalCocycles-item4} Assume that $A$ is irreducible and $L(A)>0$, then $A$ is strongly irreducible;
        
        \item\label{prop:propertiesofTypicalCocycles-item5} If $L(A)>0$ and $A$ is quasi-irreducible, then there exists a unique stationary measure.
        
        \item\label{prop:propertiesofTypicalCocycles-item6} If $L(A)>0$, then for $\mu$-a.e. $x\in \Sigma$, 
        \begin{align*}
            A^n(x)^t_*(\text{Leb})
            = \left(
                A(x)^t\cdots A(\sigma^{n-1}(x))^t
            \right)_*(\text{Leb})
            \overset{\ast}{\rightharpoonup} \delta_{e^-(x)^{\perp}}.
        \end{align*}
        Moreover, there exists at most one non-atomic stationary measure. If $\eta$ is a non-atomic stationary measure, then for $\mu$-a.e. $x\in \Sigma$,
        \begin{align*}
            A^n(x)^t_*\eta
            \overset{\ast}{\rightharpoonup} \delta_{e^-(x)^{\perp}}
            \quad
            \text{and}
            \quad
            \eta = \int_{\Sigma}\, \delta_{e^-(x)^{\perp}}\, d\mu(x).
        \end{align*}

        \item\label{prop:propertiesofTypicalCocycles-item7} If $L(A)>0$ and $A$ is not irreducible, then either $A$ (or $A^{-1}$) is quasi-irreducible or $A$ is conjugated to a diagonal cocycle.

        \item\label{prop:propertiesofTypicalCocycles-item8} If $L(A)>0$, then there exists at most two ergodic stationary measures.

        \item\label{prop:propertiesofTypicalCocycles-item9} Assume that $A$ is strongly irreducible and let $\eta$ be the stationary measure. Then $L(A)>0$ if and only if there exists $y\in \Sigma$ and $\hat v\in \bP^1$ such that
        \begin{align*}
            A^n(y)_*(\text{Leb})
            \overset{\ast}{\rightharpoonup} \delta_{\hat v}.
        \end{align*}
        
        In this case $\eta$ is unique and there exists a measurable map $\xi:\Sigma\to\bP^1$ such that for $\mu$-a.e. $x\in \Sigma$
        \begin{align*}
            A^n(x)_*\eta
            \overset{\ast}{\rightharpoonup} \delta_{\xi(x)}
            \quad
            \text{and}
            \quad
            \eta = \int_{\Sigma}\, \delta_{\xi}\, d\mu.
        \end{align*}
        Denoting by $\hat u_n(x)$ the singular direction of $A^n(x)$ associated to the largest singular value, we have that for $\mu$-a.e. $x\in \Sigma$, $\xi(x)$ is determined by
        \begin{align*}
             \xi(x) = \lim_{n\to\infty} A^n(x)\, \hat u_n(x).
        \end{align*}
        We also have that the set $\supp\eta$ is the unique minimal set for the action of the semi-group generated by $(A_1,\ldots,A_{\kappa})$.
    \end{enumerate}
\end{proposition}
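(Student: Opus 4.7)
The plan is to bundle the nine items into four thematic groups that share common machinery rather than proceeding in order. The two workhorse tools throughout are (i) item \ref{prop:propertiesStationryMeasures-item3} of Proposition \ref{prop:propertiesStationryMeasures} — every extremal stationary measure realizes $\pm L(A)$ through Furstenberg's integral — and (ii) the rank-one asymptotic behavior of $A^n(x)$ provided by Oseledets whenever $L(A)>0$.

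\textbf{Algebraic items (\ref{prop:propertiesofTypicalCocycles-item1}) and (\ref{prop:propertiesofTypicalCocycles-item2}).} For (\ref{prop:propertiesofTypicalCocycles-item1}), for each $m\geq 1$ the set $\cF_m\subset \SL_2(\R)^\kappa$ of tuples preserving some $m$-element subset of $\bP^1$ is a closed semi-algebraic set of positive codimension (parametrize by the $m$ invariant directions and the permutations of them induced by the $A_i$), hence its complement is open and dense, and the countable intersection over $m$ yields the dense $G_\delta$ of strongly irreducible cocycles. For (\ref{prop:propertiesofTypicalCocycles-item2}), if $\eta$ is atomic, then the finite set of points of maximum $\eta$-mass must be permuted by each $A_i$ (since $(A_i)_*\eta$ is again a probability measure, no mass can exceed the maximum), violating strong irreducibility; conversely, the uniform measure on any finite invariant set is stationary and atomic.

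\textbf{Convergence and counting (\ref{prop:propertiesofTypicalCocycles-item6}), (\ref{prop:propertiesofTypicalCocycles-item8}), (\ref{prop:propertiesofTypicalCocycles-item9}).} Using the singular value decomposition $A^n(x) = U_n(x)\Sigma_n(x)V_n(x)^t$, Oseledets gives exponentially fast convergences $U_n(x)\hat e_1 \to \hat e^u(\sigma^n(x))$ and $V_n(x)\hat e_1 \to \hat e^s(x)^\perp$. Dualizing via $A^n(x)^t = V_n(x)\Sigma_n(x)U_n(x)^t$, any direction $\hat v \neq \hat e^u(\sigma^n(x))$ is collapsed by $A^n(x)^t$ onto $\hat e^s(x)^\perp =: e^-(x)^\perp$, which immediately gives the Lebesgue convergence in (\ref{prop:propertiesofTypicalCocycles-item6}). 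For a non-atomic stationary $\eta$ the exceptional single direction is $\eta$-null, so the same limit holds $\eta$-a.e.; combined with stationarity this forces the integral representation $\eta = \int \delta_{e^-(x)^\perp} d\mu$, and hence uniqueness of the non-atomic stationary measure. Item (\ref{prop:propertiesofTypicalCocycles-item8}) follows by combining (\ref{prop:propertiesofTypicalCocycles-item6}) with (\ref{prop:propertiesofTypicalCocycles-item2}): an ergodic stationary measure is either non-atomic (at most one by (\ref{prop:propertiesofTypicalCocycles-item6})) or atomic, and in the atomic case the $\pm L(A)$ sign together with Oseledets pins the support onto $\{\hat e^u(x)\}_x$ or $\{\hat e^s(x)\}_x$, allowing at most one of each type. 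For (\ref{prop:propertiesofTypicalCocycles-item9}), the forward direction uses the same SVD computation applied to $A^n(y)$ itself, yielding $\xi(x)=\lim A^n(x)\hat u_n(x)$ as the limiting direction.

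\textbf{Positivity criteria and structure (\ref{prop:propertiesofTypicalCocycles-item3}), (\ref{prop:propertiesofTypicalCocycles-item4}), (\ref{prop:propertiesofTypicalCocycles-item5}), (\ref{prop:propertiesofTypicalCocycles-item7}).} For (\ref{prop:propertiesofTypicalCocycles-item3}), the classical Furstenberg argument: if $L(A) = 0$, every extremal stationary $\eta$ has a vanishing Furstenberg integral; the unbounded semigroup yields products with $\|A^{n_k}(x_k)\| \to \infty$ whose contracted directions would have to carry $\eta$-mass, forcing $\eta$ to be atomic and contradicting strong irreducibility via (\ref{prop:propertiesofTypicalCocycles-item2}). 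For (\ref{prop:propertiesofTypicalCocycles-item5}), if $\eta_1 \neq \eta_2$ are extremal stationary measures with signs $\epsilon_j\in\{+,-\}$, the sign $-L(A)$ places $\eta_j$ on the stable Oseledets direction for $\mu$-a.e. $x$; measurability and stationarity collapse this to $\eta_j = \delta_{\hat v}$ for an $A$-invariant $\hat v$, and quasi-irreducibility forces $L(A|_{\hat v}) = L(A) \neq -L(A)$, a contradiction. Hence both extremal measures carry sign $+L(A)$, and convexity of $\text{Stat}(A)$ combined with (\ref{prop:propertiesofTypicalCocycles-item6}) gives $\eta_1 = \eta_2$. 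For (\ref{prop:propertiesofTypicalCocycles-item4}), if $A$ is irreducible but admits a minimal invariant finite set $\hat V$ of size $\geq 2$, the uniform measure $\eta_V$ is ergodic stationary with sign $\pm L(A)$; this sign forces $\hat e^u$ or $\hat e^s$ to take $\mu$-essentially values in $\hat V$, so the level sets $\{\hat e^u(x) = \hat v_j\}$ form a nontrivial $\sigma$-invariant measurable partition, and ergodicity of the base singles out one $\hat v_j$ of full measure, producing an individually invariant direction and contradicting irreducibility. For (\ref{prop:propertiesofTypicalCocycles-item7}), if $A$ admits two invariant directions it is conjugated to diagonal in the basis they determine; with a unique invariant direction $\hat v$, Oseledets forces $L(A|_{\hat v})\in\{+L(A),-L(A)\}$ — the first case is the definition of quasi-irreducibility of $A$, and the second translates, using $L(A^{-1}|_{\hat v}) = -L(A|_{\hat v}) = L(A) = L(A^{-1})$ together with the fact that the invariant directions of $A$ and $A^{-1}$ coincide, into quasi-irreducibility of $A^{-1}$. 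The main technical obstacle I expect is the reverse implication in (\ref{prop:propertiesofTypicalCocycles-item9}): converting a pointwise Lebesgue-concentration statement $A^n(y)_*\text{Leb}\rightharpoonup \delta_{\hat v}$ into strict positivity of the Furstenberg integral against the (unique by (\ref{prop:propertiesofTypicalCocycles-item6}), (\ref{prop:propertiesofTypicalCocycles-item2})) stationary measure — essentially the Guivarc'h contraction estimate \eqref{eq:220323.4} in reverse form — without any a priori control on the exponent. Once this is secured, the rest of the argument is tightly controlled by the Oseledets/extremality dictionary laid out in the earlier items.
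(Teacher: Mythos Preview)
Your overall architecture is sound and for most items (\ref{prop:propertiesofTypicalCocycles-item1}), (\ref{prop:propertiesofTypicalCocycles-item2}), (\ref{prop:propertiesofTypicalCocycles-item3}), (\ref{prop:propertiesofTypicalCocycles-item5}), (\ref{prop:propertiesofTypicalCocycles-item6}), (\ref{prop:propertiesofTypicalCocycles-item9}) you give essentially the arguments the paper has in mind (the paper mostly just cites references). Your treatment of item (\ref{prop:propertiesofTypicalCocycles-item7}) is in fact cleaner than the paper's: you simply split on the number of common invariant directions and read off quasi-irreducibility of $A$ or $A^{-1}$ from the sign of $L(A|_{\hat v})$. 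The paper instead runs a longer argument analyzing the maximal-atom set of an ergodic stationary measure; the payoff of that longer route is that it actually proves more, namely $\text{Stat}(A)=[\delta_{\hat v},\eta]$ with $\eta$ non-atomic in the case $L(A|_{\hat v})=-L(A)$, and this extra information is exactly what the paper uses to deduce item (\ref{prop:propertiesofTypicalCocycles-item8}).

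There is, however, a genuine gap in your handling of item (\ref{prop:propertiesofTypicalCocycles-item4}), and it propagates to your item (\ref{prop:propertiesofTypicalCocycles-item8}). You claim that for the uniform measure $\eta_V$ on a finite invariant set $\hat V$, the sign $\pm L(A)$ ``forces $\hat e^u$ or $\hat e^s$ to take $\mu$-essentially values in $\hat V$''. The sign $-L(A)$ would indeed force $\eta_V(\{\hat e^s(x)\})=1$ for $\mu$-a.e.\ $x$, which is impossible since $\eta_V$ has no atom of full mass when $|\hat V|\ge 2$; so the sign must be $+L(A)$. But the sign $+L(A)$ only gives $\eta_V(\{\hat e^s(x)\})=0$, i.e.\ $\hat e^s(x)\notin \hat V$, and says nothing about $\hat e^u(x)$ landing in $\hat V$. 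Even if it did, your next step fails: the level sets $\{x:\hat e^u(x)=\hat v_j\}$ are \emph{not} $\sigma$-invariant, because $\hat e^u(\sigma(x))=A(x)\,\hat e^u(x)$ and the $A_i$ permute $\hat V$ nontrivially (irreducibility guarantees some $A_i$ acts as a genuine swap). So ergodicity of the base does not single out one $\hat v_j$. The standard route --- the one the paper cites from \cite{BoLa2012} --- instead uses that $L(A)>0$ forces a hyperbolic element $B$ in the semigroup, whose only finite orbits in $\bP^1$ lie inside its two eigendirections, pinning $|\hat V|\le 2$, and then a separate argument rules out $|\hat V|=2$. Your argument for (\ref{prop:propertiesofTypicalCocycles-item8}) inherits the same problem: an atomic ergodic stationary measure with sign $+L(A)$ is not ``pinned onto $\{\hat e^u(x)\}_x$'' by anything you have established, so the ``at most one of each type'' count is unjustified. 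The paper obtains (\ref{prop:propertiesofTypicalCocycles-item8}) instead by combining (\ref{prop:propertiesofTypicalCocycles-item5}) with the full stationary-measure classification extracted during its proof of (\ref{prop:propertiesofTypicalCocycles-item7}).
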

\begin{proof}[Outline of the proof and references:] Item \ref{prop:propertiesofTypicalCocycles-item1} is a consequence of the fact that the set
\begin{align*}
    \mathcal{V}_n = \left\{
        A\in \SL_2(\R)^{\kappa}\colon\,
        \nexists\, \hat V\subset \bP^1,\,
        |\hat V| \leq  n\,
        \text{ with }
        A_i(\hat V) = \hat V,
        \text{ for all } 
        1\leq i \leq \kappa
    \right\},
\end{align*}
is open and dense in $\SL_2(\R)$. The union of $\mathcal{V}_n$ for all $n\geq 1$ is the set of strong irreducible cocycle. Notice in particular that the set $\mathcal{V}_1$, which is the set of irreducible cocycles, is open and dense in $\SL_2(\R)^{\kappa}$.

For item \ref{prop:propertiesofTypicalCocycles-item2}, notice that if $A$ preserves a finite set, say $\hat V$, with minimal cardinality, then we can build the atomic stationary measure $\eta := \sum_{\hat v\in \hat V}1/|\hat V|\cdot \delta_{\hat v}$. For the other implication, see \cite[Lemma~6.9]{Vi2014}. Item \ref{prop:propertiesofTypicalCocycles-item3} can be found in \cite[Theorem~6.11]{Vi2014}. 
For a proof of \ref{prop:propertiesofTypicalCocycles-item4}, see \cite[Theorem~6.1]{BoLa2012}.
Item \ref{prop:propertiesofTypicalCocycles-item5} can be found in \cite[Proposition~4.2]{DuKl2017}. For a proof of item \ref{prop:propertiesofTypicalCocycles-item6} consult \cite[Section~6.3.2]{Vi2014} and \cite[Section~1.8]{Fu2002}.

To see item \ref{prop:propertiesofTypicalCocycles-item7}, assume that $A$ is not irreducible and $L(A)>0$. Then $A$ preserves a direction $\hat v\in \bP^1$ and so $\delta_{\hat v}$ is an ergodic stationary measure for $A$. If $L(A|_{\hat v}) = L(A)$, then either $A$ is quasi-irreducible or admits another invariant direction and so in this case is conjugated to a diagonal cocycle. Therefore, we may assume that $L(A|_{\hat v}) = -L(A)$. Let $\eta$ be an ergodic stationary measure for $A$, with,
\begin{align*}
    L(A) = \int_{\Sigma\times\bP^1}\, \log\norm{A(x)\, v}\, d(\mu\times\eta)(x,\hat v),
\end{align*}
and $\eta \neq \delta_{\hat v}$. Consider the quantity $t = \max\{\eta(\hat v)\colon\,\hat v \in \bP^1\}$, and define $\hat V_t = \{\hat v\in \bP^1\colon\, \eta(\hat v) = t\}$. Using that $\eta$ is stationary is easy to see that $\hat V_t$ is preserved by $A_i$, for every $i=1,\ldots, \kappa$. Since $L(A)>0$, we see that there exists a hyperbolic matrix in the semi-group generated by $A_1,\ldots, A_{\kappa}$. This imposes a restriction on the number of elements of $\hat V_t$, i.e., $|\hat V_t|\leq 2$ (one of the invariant directions of the hyperbolic matrix is $\hat v$). Notice that $\hat v\notin \hat V_t$ since otherwise, by ergodicity of $\eta$, we would have $\eta = \delta_{\hat v}$. Then, $|\hat V_t| = 1$ and so $A$ is conjugated to a diagonal. The conclusion, therefore is that if $A$ is not conjugated to a diagonal, then $\eta$ is non-atomic. But, using item \ref{prop:propertiesofTypicalCocycles-item6} we see that $\eta$ is unique with such property. In particular, $\text{Stat}(A) = [\delta_{\hat v},\, \eta]$ and $A^{-1}$ is quasi-irreducible.

Item \ref{prop:propertiesofTypicalCocycles-item8} may be obtained from \ref{prop:propertiesofTypicalCocycles-item5}, the proof of item \ref{prop:propertiesofTypicalCocycles-item7} and the observation that for diagonal cocycles there are only two ergodic stationary measures. Item \ref{prop:propertiesofTypicalCocycles-item9} is due to Guivarc'h and Raugi \cite{GuRa1986} and can be recovered as combinations of a few results in \cite{Fu2002} (see Theorem 1.23, Lemma 1.30 and Theorem 1.34 in \cite{Fu2002} and references therein).

\end{proof}

\begin{example}
\normalfont
    It is easy to see that the set of strongly irreducible cocycles is not open. Indeed, let $\theta\in \R\backslash \Q$ and consider $p_k/q_k$ sequence of rationals converging to $\theta$. Then the sequence of cocycles $A_{1,k} = R_{2\pi p_k/q_k} $ are not strongly irreducible and it converges to the cocycle $A_1= R_{2\pi\theta}$ which is strongly irreducible.
\end{example}

Lets summarize the content of the above proposition. Assume that $L(A) > 0$. If $A$ is irreducible, then $A$ is strongly irreducible and thus there exists a unique stationary measure (see item \ref{ex:irreducibiltiyConditions-item1} of Example \ref{ex:irreducibiltiyConditions}).

In other direction, if $A$ is \emph{reducible}, i.e., there exists an invariant direction, then $A$ is conjugated to a triangular cocycle. So, we may assume that $A\in \SL_2(\R)^{\kappa}$ is given by matrices, $A_i = \begin{pmatrix} a_i & b_i \\ 0 & d_i\end{pmatrix}$, for every $i=1,\ldots, \kappa$. The measure $\delta_{\hat e_1}$ is clearly a stationary measure for $\hat A$. Assume first that $\delta_{\hat e_1}$ is the only stationary measure. So, by Furstenberg's formula, $L(A|_{\hat e_1}) = L(A)$ and in particular $A$ is quasi-irreducible. Now, if there are two ergodic stationary measures, then we have the following situations: either the other ergodic stationary measure is atomic and so $A$ is a diagonal cocycle, or there is a non-atomic ergodic stationary measure $\eta$ satisfying
\begin{align}
\label{eq:220323.1}
    \int_{\Sigma\times\bP^1}\, \log\norm{A(x)\, v}\, d(\mu\times\eta)(x,\hat v) = L(A),
\end{align}
and $L(A|_{\hat e_1}) = - L(A)$ and so $A^{-1}$ is quasi-irreducible (that is the case, for instance, of the Example \ref{ex:bernoulliConvolutions}).

\begin{example}
\normalfont
    All quasi-irreducible uniformly hyperbolic cocycles have a unique forward and a unique backward stationary measure. These measures have disjoint support on $\bP^1$ and are not necessarily singular with respect to Lebesgue (see Example \ref{ex:bernoulliConvolutions}). In \cite{AvBoYo2010} we have an explicit description of the \emph{multicones} containing the support of these measures.
\end{example}

The situation for zero Lyapunov exponent is trickier. For example, we could consider the cocycle given by the matrices $A_i = I$ for every $i=1,\ldots, \kappa$, where $I\in \SL_2(\R)$ is the identity matrix. Here, every measure on $\bP^1$ is stationary.

On the other hand, we have the situation in which the stationary measures are unique. That is the case for example when $A$ is strongly irreducible and the semi-group generated by $A_1,\ldots, A_{\kappa}$ is bounded. Then $L(A) = 0$ and there is only one stationary measure. This measure is indeed equivalent to the Lebesgue measure on $\bP^1$. 

Another interesting example of a cocycle with zero Lyapunov exponent is given by Kifer's example (Example \ref{ex:Kifer}). This cocycle is unbounded and irreducible, but it is not strongly irreducible (the set of directions $\{\hat e_1, \hat e_2\}$ is preserved by the action of $A$). The unique stationary measure is given by the measure $\eta = 1/2\cdot \delta_{\hat e_1} + 1/2\cdot \delta_{\hat e_2}$.

Cocycles with zero Lyapunov exponent present some type of rigidity. That is the content of the so called \emph{Invariance Principle:}
\begin{theorem}[F. Ledrappier, \cite{Le2006}]
\label{thm:LedrappierInvariancePrinciple}
    Let $A = (A_1,\ldots, A_{\kappa})$ be a locally constant cocycle. If $L(A) = 0$ and $\eta$ is a stationary measure for $A$, then $(A_i)_*\eta = \eta$, for every $i=1\ldots, \kappa$.
\end{theorem}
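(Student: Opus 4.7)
The plan is to follow Ledrappier's invariance principle strategy: construct an $F_A$-invariant extension of $\mu^+\times\eta$ on the two-sided system, disintegrate it along the fibers $\{x\}\times\bP^1$, and show that the disintegration is the constant measure $\eta$ using the hypothesis $L(A)=0$; the desired invariance then drops out immediately from the cocycle relation.

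First, by item \ref{prop:propertiesStationryMeasures-item2} of Proposition \ref{prop:propertiesStationryMeasures} and the ergodic decomposition of stationary measures, I may reduce to the case where $\eta$ is extremal; item \ref{prop:propertiesStationryMeasures-item3} then also gives $\int\log\norm{A(x)v}\,d(\mu^+\times\eta)=\pm L(A)=0$. Next, I take a weak-* accumulation point $\hat m$ of the Cesaro averages $\tfrac{1}{N}\sum_{n=0}^{N-1}(F_A)^n_*(\mu\times\eta)$: this is an $F_A$-invariant probability measure on $\Sigma\times\bP^1$ whose pushforward under $(\pi,\mathrm{id}):\Sigma\times\bP^1\to\Sigma^+\times\bP^1$ (with $\pi$ the natural projection) is the $F_A^+$-invariant measure $\mu^+\times\eta$. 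Disintegrating $\hat m=\int\hat m_x\,d\mu(x)$, the $F_A$-invariance becomes
\begin{align*}
    (A_{x_0})_*\hat m_x=\hat m_{\sigma(x)}\quad\text{for $\mu$-a.e.~$x$,}
\end{align*}
while the projection property implies, for every bounded Borel $\phi:\bP^1\to\R$,
\begin{align*}
    E_\mu\left[\hat m_x(\phi)\mid\mathcal{G}^+\right](x)=\eta(\phi),
\end{align*}
where $\mathcal{G}^+=\sigma(x_0,x_1,\ldots)$ is the sub-$\sigma$-algebra generated by the positive coordinates.

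Iterating the cocycle relation backwards yields $\hat m_x=(B_n(x))_*\hat m_{\sigma^{-n}(x)}$, where $B_n(x):=A_{x_{-1}}A_{x_{-2}}\cdots A_{x_{-n}}$ is $\sigma(x_{-n},\ldots,x_{-1})$-measurable. Applying the projection identity above to the point $\sigma^{-n}(x)$ (whose positive coordinates are $x_{-n},x_{-n+1},\ldots$) one computes
\begin{align*}
    E_\mu\left[\hat m_x(\phi)\mid\sigma(x_{-n},x_{-n+1},\ldots)\right](x)=((B_n(x))_*\eta)(\phi),
\end{align*}
and since these conditioning $\sigma$-algebras increase to the full Borel $\sigma$-algebra of $\Sigma$, Doob's martingale convergence theorem provides the key representation
\begin{align*}
    \hat m_x=\lim_{n\to\infty}(B_n(x))_*\eta\quad\text{weakly, for $\mu$-a.e.~$x$.}
\end{align*}
Once the right-hand side is identified with $\eta$, substitution into the cocycle relation forces $(A_{x_0})_*\eta=\eta$ for $\mu$-a.e.~$x$, which together with $p_i>0$ gives $(A_i)_*\eta=\eta$ for every $i=1,\ldots,\kappa$, as required.

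The genuine difficulty is therefore showing that the weak-* limit above equals $\eta$, and this is where the hypothesis $L(A)=0$ becomes essential. Applying Birkhoff's theorem to the ergodic system $(F_A^+,\mu^+\times\eta)$ together with item \ref{prop:propertiesStationryMeasures-item3} yields $\frac{1}{n}\log\norm{B_n(x)v}\to 0$ for $(\mu\times\eta)$-a.e.~$(x,\hat v)$, so the backward products act asymptotically neutrally on $\bP^1$. To conclude, I would follow Ledrappier's convex-functional strategy: introduce a strictly convex lower semi-continuous functional on probability measures on $\bP^1$ (for example, a logarithmic or Riesz-type energy) whose variation under pushforward by $B\in\SL_2(\R)$ is controlled by $\int\log\norm{Bv}\,d(\cdot)$, and use the asymptotic neutrality together with the stationarity-based identity $E_\mu[(B_n(x))_*\eta]=\eta$ to force this functional to be preserved in the limit. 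Strict convexity then excludes every weak-* accumulation point other than $\eta$. This convex-analytic step is the core of the proof; everything else reduces to formal manipulations with disintegrations and conditional expectations.
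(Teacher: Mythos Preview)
Your setup --- lifting $\mu^+\times\eta$ to an $F_A$-invariant measure $\hat m$ on the two-sided system, disintegrating, and obtaining the martingale representation $\hat m_x=\lim_n (B_n(x))_*\eta$ --- is correct and standard. The paper, however, takes a shorter and more direct route: it introduces the \emph{Furstenberg entropy}
\[
    h_A(\eta) = \sum_{i=1}^{\kappa} p_i \int_{\bP^1} -\log\frac{d(A_i^{-1})_*\eta}{d\eta}\, d\eta,
\]
observes that $h_A(\eta)=0$ if and only if $(A_i)_*\eta=\eta$ for every $i$, and then invokes the inequality $0\le h_A(\eta)\le 2L(A)$ (proved in \cite[Theorem~7.2]{Vi2014}). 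No lift or martingale is needed at the level of the exposition.

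Your proposal has a genuine gap at exactly the step you flag as ``the core of the proof.'' The functional you suggest --- a logarithmic or Riesz energy $I(\nu)$ --- does satisfy a transformation rule of the type $I(B_*\nu)=I(\nu)+2\int\log\norm{Bv}\,d\nu$, but asymptotic neutrality only gives $\tfrac{1}{n}\int\log\norm{B_n(x)v}\,d\eta\to 0$, which says nothing about the boundedness of $\int\log\norm{B_n(x)v}\,d\eta$ itself. So $I((B_n(x))_*\eta)$ can perfectly well diverge, and lower semicontinuity then yields no information on $I(\hat m_x)$. Moreover the energy is only finite for non-atomic $\eta$ and its convexity on the full space of probability measures is not the clean strict convexity you need. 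The functional that actually makes this argument work is the \emph{relative entropy} $H(\hat m_x\mid \eta)$: using the cocycle relation and Jensen one obtains $\int H(\hat m_x\mid\eta)\,d\mu \le 2L(A)$, and strict convexity of $H(\cdot\mid\eta)$ together with $\int\hat m_x\,d\mu=\eta$ forces $\hat m_x=\eta$ a.e.\ when $L(A)=0$. This is precisely the Furstenberg entropy computation the paper cites --- in fact $h_A(\eta)$ is the one-step increment of $\int H(\hat m_x\mid\eta)\,d\mu$ --- so once you repair the final step your argument collapses onto the paper's.
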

One approach to obtain such a result is through the notion of \emph{Furstenberg's entropy} of a given stationary measure $\eta$ which for a cocycle $A$ is defined by the quantity
\begin{align}
\label{eq:FurstenbergEntropy}
    h_A(\eta) := \sum_{i=1}^{k}p_i\int_{\bP^1}-\log\frac{d(A^{-1}_i)_* \eta}{d\eta}d\eta.
\end{align}
This quantity is a measurement of the the lack of invariance of the stationary measure $\eta$ by the $A_i$-action: $h_A(\eta) = 0$ if and only if $(A_i)_*\eta = \eta$ for every $i=1,\ldots, \kappa$. Theorem \ref{thm:LedrappierInvariancePrinciple} is therefore a consequence of the following inequality
\begin{align*}
    0\leq h_A(\eta) \leq 2L(A). 
\end{align*}
See \cite[Theorem~7.2]{Vi2014} for more details about this proof.

\begin{remark}
\normalfont
    The rigidity presented in Theorem \ref{thm:LedrappierInvariancePrinciple} is not a sufficient condition to guarantee zero Lyapunov exponent. Indeed, if $A=(A_1,\ldots, A_{\kappa})\in \SL_2(\R)^{\kappa}$ is a diagonal cocycle, meaning that $A_iA_j = A_jA_i$ for all $i,j=1\ldots, \kappa$, then all the stationary measures are preserved by all the matrices $A_i$, but we could have positive Lyapunov exponent (see for item 4 of Example \ref{ex:irreducibiltiyConditions}). 
\end{remark}

We can now say a bit more about the locally constant cocycles with zero Lyapunov exponent.
\begin{proposition}
\label{prop:propertiesofZeroLEofLCC}
    Let $A\in \SL_2(\R)^{\kappa}$ and assume that $L(A) = 0$.
    \begin{enumerate}
        \item\label{prop:propertiesofZeroLEofLCC-item1} If $A$ is strongly irreducible, then there exists a unique stationary measure. This measure is absolutely continuous with respect to Lebesgue;

        \item\label{prop:propertiesofZeroLEofLCC-item2} If $A$ is irreducible and the semi-group generated by $A$ is unbounded, then there is a unique stationary measure $\eta = 1/2\delta_{\hat u} + 1/2\delta_{\hat v}$ for some $\hat u, \hat v\in \bP^1$. Moreover, up to conjugation, $A_i$ is either diagonal or a rotation by $\pi/2$ and both cases must to occur;

        \item\label{prop:propertiesofZeroLEofLCC-item3} If $A$ is reducible, then, up to conjugation,
        $A_i = \begin{pmatrix} a_i & b_i \\ 0 & d_i\end{pmatrix}$ for every $i=1,\ldots,\kappa$ and
        \begin{align*}
            a_1^{p_1}\cdots a_{\kappa}^{p_k} = 1.
        \end{align*}
        If $b_1=\cdots = b_{\kappa} = 0$, i.e., $A_i$ is diagonal for every $i$ and not always equal to identity, then we have two ergodic stationary measures, namely $\eta_1 = \delta_{\hat e_1}$ and $\eta_2 = \delta_{\hat e_2}$ and so $\text{Stat}(A) = [\eta_1,\, \eta_2]$. If, otherwise, there exists $b_i\neq 0$, then $\eta = \delta_{\hat e_1}$ is the unique stationary measure.
    \end{enumerate}
\end{proposition}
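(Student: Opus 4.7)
My plan is to apply Theorem \ref{thm:LedrappierInvariancePrinciple} throughout: since $L(A)=0$, any stationary measure $\eta$ is individually $(A_i)_*$-invariant, reducing the problem to the description of the common invariant probability measures of $\{A_1,\dots,A_\kappa\}$ on $\bP^1$. For Item \ref{prop:propertiesofZeroLEofLCC-item1}, I would combine strong irreducibility with the contrapositive of Furstenberg's criterion (item \ref{prop:propertiesofTypicalCocycles-item3} of Proposition \ref{prop:propertiesofTypicalCocycles}) to conclude that the semigroup generated by the $A_i$ is bounded, hence conjugate into the maximal compact subgroup $SO(2)$. After this conjugation each $A_i$ is a rotation; strong irreducibility rules out a finite group (which would produce a finite orbit on $\bP^1$ and hence an atomic stationary measure, violating item \ref{prop:propertiesofTypicalCocycles-item2} of Proposition \ref{prop:propertiesofTypicalCocycles}), so the closure of the generated subgroup is all of $SO(2)$, Haar measure is the only $SO(2)$-invariant probability on $\bP^1$, and its pullback under the original conjugation is absolutely continuous with respect to Lebesgue.

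For Item \ref{prop:propertiesofZeroLEofLCC-item2}, I would pick a sequence $B_n$ in the semigroup with $\norm{B_n}\to\infty$; after passing to subsequences, $B_n/\norm{B_n}$ converges to a rank-one matrix with image direction $\hat u$ and kernel direction $\hat z$, while $B_n\hat z\to\hat w$. Since $\eta$ is $B_n$-invariant, passing to the weak-$*$ limit gives $\eta = (1-\eta(\{\hat z\}))\delta_{\hat u} + \eta(\{\hat z\})\delta_{\hat w}$, so $\eta$ is carried by at most two directions. A single atom would be fixed by every $A_i$, violating irreducibility; two atoms of unequal weight would similarly be fixed individually, forcing $\eta=\tfrac12(\delta_{\hat u}+\delta_{\hat w})$. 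Uniqueness follows since averaging two such measures with distinct supports would produce a stationary measure of support $\geq 3$. Each $A_i$ then permutes $\{\hat u,\hat w\}$: those fixing both are diagonal in the basis $(\hat u,\hat w)$, and those swapping them are antidiagonal, square to $-I$, and are individually conjugate to $R_{\pi/2}$. At least one antidiagonal matrix must occur (else irreducibility fails), and the unboundedness hypothesis prevents all of them from being antidiagonal, since common invariance of $\{\hat u,\hat w\}$ would then force the antidiagonal entries to have constant modulus and yield a bounded semigroup.

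For Item \ref{prop:propertiesofZeroLEofLCC-item3}, I would conjugate the common invariant direction to $\hat e_1$ so that $A_i=\begin{pmatrix} a_i & b_i \\ 0 & 1/a_i\end{pmatrix}$. For triangular cocycles the norm of products is comparable to the maximum of the diagonal entries, so $L(A)=|\sum_i p_i\log|a_i||$, and $L(A)=0$ gives $\prod_i |a_i|^{p_i}=1$. If all $b_i=0$ and some $A_j\neq I$, that $A_j$ is hyperbolic with fixed directions $\{\hat e_1,\hat e_2\}$ and any $A_j$-invariant probability is supported there; both Diracs are clearly ergodic stationary, so $\text{Stat}(A)=[\delta_{\hat e_1},\delta_{\hat e_2}]$. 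If some $b_i\neq 0$ and the matrices are not simultaneously diagonalizable, suppose a stationary $\eta\neq\delta_{\hat e_1}$ existed; by the Invariance Principle it is $A_i$-invariant. Picking $i$ with $b_i\neq 0$: either $A_i$ is parabolic (forcing $\eta=\delta_{\hat e_1}$, contradiction), or $A_i$ is hyperbolic with a second fixed direction $\hat v$, yielding $\eta = c\delta_{\hat e_1}+(1-c)\delta_{\hat v}$ with $c<1$; invariance under every $A_j$ combined with injectivity of $A_j$ then forces $A_j\hat v=\hat v$ for all $j$, so the $A_j$ share a second eigendirection and are simultaneously diagonalizable, a contradiction.

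The main obstacle I anticipate is Item \ref{prop:propertiesofZeroLEofLCC-item2}: one has to leverage the unboundedness of the semigroup through a rank-one limit argument to pin $\eta$ down to the precise two-atom form, and then combine this with irreducibility to extract the algebraic classification of the $A_i$ and verify that both the diagonal and the antidiagonal cases genuinely appear. The case analysis in Item \ref{prop:propertiesofZeroLEofLCC-item3} is routine but requires care in separating the simultaneously-diagonalizable sub-case from its complement so that the two sub-cases match the upper-triangular form used in the statement.
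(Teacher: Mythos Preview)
Your approach tracks the paper's outline closely: for item~\ref{prop:propertiesofZeroLEofLCC-item1} both you and the paper use the contrapositive of Furstenberg's criterion to land in a compact group and extract Haar measure; for item~\ref{prop:propertiesofZeroLEofLCC-item2} both pass to a rank-one (quasi-projective) limit of an unbounded sequence $B_n$ to force $\eta$ to be a two-atom measure, then use irreducibility to equalize the weights; for item~\ref{prop:propertiesofZeroLEofLCC-item3} both invoke the Invariance Principle and the fact that a parabolic element has $\delta_{\hat e_1}$ as its only invariant measure. Your write-up is more detailed than the paper's outline (e.g.\ the explicit case split hyperbolic/parabolic in item~\ref{prop:propertiesofZeroLEofLCC-item3}), but the ideas coincide.

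There is, however, a genuine gap in your last step of item~\ref{prop:propertiesofZeroLEofLCC-item2}. Your claim that ``common invariance of $\{\hat u,\hat w\}$ would then force the antidiagonal entries to have constant modulus'' is false: in the basis $(u,w)$ an antidiagonal matrix $\begin{pmatrix}0&c\\-1/c&0\end{pmatrix}$ swaps $\hat u$ and $\hat w$ for \emph{every} $c\neq 0$, so nothing pins down $|c|$. Concretely, with $\kappa=2$, $p_1=p_2=1/2$, and
\[
A_1=\begin{pmatrix}0&c\\-1/c&0\end{pmatrix},\qquad A_2=\begin{pmatrix}0&d\\-1/d&0\end{pmatrix},\qquad |c|\neq |d|,
\]
both $A_i$ are antidiagonal (no diagonal $A_i$ at all), the cocycle is irreducible (each $A_i$ has eigenvalues $\pm i$, hence no real fixed direction), the semigroup is unbounded since $A_2A_1=\mathrm{diag}(-d/c,-c/d)$ is hyperbolic, and a direct check with Furstenberg's formula on $\eta=\tfrac12\delta_{\hat e_1}+\tfrac12\delta_{\hat e_2}$ gives $L(A)=0$. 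So your justification does not go through. The paper's own proof is equally brief at this point (``since the cocycle $A$ is unbounded, another matrix $A_j$ must be hyperbolic and diagonal''), and the same example shows that the clause ``both cases must occur'' in the statement should be read with care: what one can honestly conclude is that at least one $A_i$ swaps $\{\hat u,\hat w\}$ (hence is individually conjugate to $R_{\pm\pi/2}$), while the others are diagonal in that basis---but the diagonal class may be empty.
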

\begin{proof}[Outline of the proof:]
    If $A$ is strongly irreducible, by Furstenberg's criteria we have that the group $G$ generated by $A_1,\ldots, A_{\kappa}$ is contained in a compact subgroup of $\SL_2(\R)$. This implies that, up to change of the norm in $\R^2$, we may assume that the matrices in $G$ are orthogonal. In particular, Lebesgue measure on $\bP^1$ is the unique stationary measure. This finishes item \ref{prop:propertiesofZeroLEofLCC-item1}. 

    For item \ref{prop:propertiesofZeroLEofLCC-item2}, observe that the semi-group being unbounded implies that there exists a sequence of elements $B_n$ in the semi-group such that $\norm{B_n}\to \infty$. By Theorem \ref{thm:LedrappierInvariancePrinciple}, $(B_n)_*\eta=\eta$. So, up to taking a subsequence, the projective action of $B_n$ converges to a quasi projective map $P$ (see \cite{BoV04}) that has a one dimensional kernel $\hat{v}$ and one dimensional image $\hat{u}$. Thus $\eta$ must have the form $a\,\delta_{\hat{v}}+b\,\delta_{\hat{u}}$. This implies that $\{\hat{v},\hat{u}\}$ is invariant. 
   
    As $A$ is irreducible, there exists some matrix $A_i$ that exchanges $\hat{u}$ and $\hat{v}$, so $a=b=1/2$. Up to changing the canonical basis to $u$ and $v$, $A_i$ are diagonal or exchange both (rotation of $\pi/2$).  Thus, one of the matrices $A_i$ is a rotation and since the cocycle $A$ is unbounded, another matrix $A_j$ must to be hyperbolic and diagonal.
    
    % Thus, there exist $i,j=1,\ldots \kappa$ such that $A_i$ is a rotation and $A_j$ is a hyperbolic diagonal matrix.
    
    % {\color{red}check} By irreducibility there must exists some rotation matrix and as the group is unbounded it must exists a diagonal matrix.
    
    For item \ref{prop:propertiesofZeroLEofLCC-item3}, the first part is a direct consequence of the existence of an invariant direction and the fact that the exponent is equal to $\sum p_i \log |a_i|$. The second part is consequence of the invariance of $\eta$ for every element of the group and the fact that for a parabolic matrix there exists a unique invariant measure.   
\end{proof}

\begin{remark}
\normalfont
    In the case that $A$ is irreducible, $A$ can have infinitely many stationary measures. That is the case, for example, for the groups generated by a single matrix $A = R_{2\pi\theta}$, with $\theta\in \Q\backslash(\Z\cup \Z/2)$.
\end{remark}

\subsection{Regularity for locally constant}
We have seen a few results with criteria for the positivity of the Lyapunov exponent (Problem 1) in the context of locally constant cocycles. Now, we turn our attention to the problem of regularity of the function $L$. 

We start with the simplest question: is the function that associates each $A\in \SL_2(\R)^{\kappa}$ its Lyapunov exponent $L(A)$ continuous?

The approach to handle this question goes as follows: let $(A_n)_n\subset \SL_2(\R)^{\kappa}$ be a sequence of locally constant cocycles converging to $A\in \SL_2(\R)^{\kappa}$. Since the points with zero Lyapunov exponents are continuity points of $L$, we may assume that $L(A)>0$.

For each $k\geq 1$, consider an (ergodic) stationary measure $\eta_k$ for $A_k$ satisfying that
\begin{align}
\label{eq:220323.2}
    \int_{\Sigma\times\bP^1}\, \log\norm{A_k(x)\, v}\, d(\mu\times\eta_k)(x,\hat v) = L(A_k).
\end{align}
Up to taking a subsequence, the limit of $\eta_k$ in the weak$^*$ topology exists and is a stationary measure $\eta$ for the limit cocycle $A$ (not necessarily ergodic). The problem is then reduced to prove that
\begin{align}
\label{eq:220323.3}
    \int_{\Sigma\times\bP^1}\, \log\norm{A(x)\, v}\, d(\mu\times\eta)(x,\hat v) = L(A).
\end{align}
Using the above strategy and the uniqueness of the stationary measure when $A$ is quasi-irreducible we have the following result.
\begin{theorem}[Furstenberg, Kifer, \cite{FuKi1983}]
    If $A$ (or $A^{-1}$) is quasi-irreducible, then $A$ is a continuity point of $L$.
\end{theorem}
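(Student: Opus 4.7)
The plan follows the outline preceding the statement. Fix a sequence $A_n \to A$ in $\SL_2(\R)^{\kappa}$; the goal is $L(A_n) \to L(A)$. By upper semi-continuity (item \ref{prop:basicProperties-item3} of Proposition \ref{prop:basicProperties}) one has $\limsup_n L(A_n) \le L(A)$, so if $L(A) = 0$ there is nothing more to prove. Assume therefore $L(A) > 0$, and first handle the case in which $A$ itself is quasi-irreducible.

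For each $n$ I would choose an ergodic stationary measure $\eta_n$ for $A_n$ attaining the Furstenberg supremum (item \ref{prop:propertiesStationryMeasures-item4} of Proposition \ref{prop:propertiesStationryMeasures}), so that
\[
    \int_{\Sigma \times \bP^1} \log\|A_n(x)\, v\|\, d(\mu \times \eta_n)(x, \hat v) = L(A_n).
\]
Pass to a subsequence realising $\liminf_n L(A_n)$ and, by weak$^\ast$ compactness of $\mathcal{P}(\bP^1)$, extract a further subsequence with $\eta_n \rightharpoonup \eta$. A routine argument using the uniform convergence $A_n \to A$ shows that $\eta$ is stationary for $A$. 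Here the hypothesis enters decisively: because $A$ is quasi-irreducible with $L(A) > 0$, item \ref{prop:propertiesofTypicalCocycles-item5} of Proposition \ref{prop:propertiesofTypicalCocycles} guarantees that $A$ has a unique stationary measure, which must therefore coincide with $\eta$. Furstenberg's formula applied to this unique measure then gives $\int \log\|A(x)\, v\|\, d(\mu \times \eta) = L(A)$.

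The last step is to pass to the limit in the integral. The functions $(x, \hat v) \mapsto \log\|A_n(x)\, v\|$ converge uniformly to $(x, \hat v) \mapsto \log\|A(x)\, v\|$ and are uniformly bounded, since $A_n \in \SL_2(\R)$ and $\|A_n\|_{\infty}$ is bounded above (by $A_n \to A$), which confines $\|A_n(x)v\|$ for unit $v$ to a fixed compact subinterval of $(0, \infty)$. Weak$^\ast$ convergence $\mu \times \eta_n \rightharpoonup \mu \times \eta$ together with uniform convergence of the integrands yields
\[
    \liminf_n L(A_n) \;=\; \int_{\Sigma \times \bP^1} \log\|A(x)\, v\|\, d(\mu \times \eta) \;=\; L(A),
\]
and combined with upper semi-continuity this gives $L(A_n) \to L(A)$. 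For the remaining case in which $A^{-1}$ is quasi-irreducible, note that $\|M^{-1}\| = \|M\|$ for $M \in \SL_2(\R)$ forces $L(B) = L(B^{-1})$ for every cocycle $B \in \SL_2(\R)^{\kappa}$; since $B \mapsto B^{-1}$ is a homeomorphism of $\SL_2(\R)^{\kappa}$, continuity of $L$ at $A^{-1}$ (just established) transfers to continuity at $A$.

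The step I expect to be the real obstacle is the identification of $\eta$ with the stationary measure realizing Furstenberg's formula for $A$. Without some uniqueness statement on $\text{Stat}(A)$, the weak$^\ast$ limit of the $\eta_n$ could a priori concentrate on an extremal giving integral $-L(A)$, producing a strict drop and an honest discontinuity of $L$; quasi-irreducibility together with $L(A) > 0$ is precisely the hypothesis that rules this out via item \ref{prop:propertiesofTypicalCocycles-item5} of Proposition \ref{prop:propertiesofTypicalCocycles}. This is the same mechanism underlying the discontinuities exploited in Theorem \ref{thm:BoMa}.
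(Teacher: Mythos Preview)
Your proposal is correct and follows essentially the same route as the paper: reduce to $L(A)>0$ via upper semi-continuity, pick ergodic stationary measures $\eta_n$ realising $L(A_n)$ through Furstenberg's formula, pass to a weak$^\ast$ limit $\eta\in\text{Stat}(A)$, and then invoke the uniqueness of the stationary measure for quasi-irreducible $A$ (item \ref{prop:propertiesofTypicalCocycles-item5} of Proposition \ref{prop:propertiesofTypicalCocycles}) to force the limiting integral to equal $L(A)$. Your handling of the $A^{-1}$ case via $L(B)=L(B^{-1})$ is a clean addition that the paper leaves implicit.
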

We proceed with the analysis of continuity of the Lyapunov exponent of cocycles $A$ such that neither $A$ or $A^{-1}$ are quasi-irreducible. Then, up to a change of coordinates, we may assume that $A$ is diagonal, i.e., $A_i = \begin{pmatrix} a_i & 0\\ 0 & d_i\end{pmatrix}$ for every $i=1,\ldots, \kappa$. Hence, in this case, $\text{Stat}(A) = [\delta_{\hat e_1},\, \delta_{\hat e_2}]$ with, say, $L(A|_{\hat e_1}) = L(A) = - L(A|_{\hat e_2})$. Write
\begin{align}
\label{eq:100423.1}
    \eta = q_1\delta_{\hat e_1} + q_2\delta_{\hat e_2}.
\end{align}
If $q_2 = 0$, then equality \eqref{eq:220323.3} is satisfied. So, we just need to deal with the case $q_2\neq 0$. The fact that $L(A|_{\hat e_2}) = -L(A) < 0$ implies that $\hat e_2$ is a $p$-\emph{expanding} fixed point for the projective action of $A$, i.e, it expands on average around $\hat e_2$. Furthermore, this behavior passes to nearby cocycles (although $\hat e_2$ will not be fixed by them) $A_k$, for $k$ large. This indicates that $\eta_k$ concentrates mass around $\hat e_2$, so we expect that $\eta_k$ has an atom close to $\hat e_2$. That is exactly the case, and the formalization is provided by the so called \emph{energy argument}: if $\eta_k$ is non-atomic for a subsequence of $k$'s  and $\hat e_2$ is $p$-expanding, then we should have $\eta(\{\hat e_2\}) = 0$ (see \cite[Section~10.4]{Vi2014}).

So, we may assume that $\eta_k$ has an atom for every $k$. In particular, $A_k$ is not strongly irreducible (see Item \ref{prop:propertiesofTypicalCocycles-item2} of Proposition \ref{prop:propertiesofTypicalCocycles}). Let $\hat V_k$ be a finite set of projective directions which is invariant by the coordinates of $A_k$. The fact that $L(A)>0$ implies that there exists $z\in \Sigma$ and $m\in \N$ such that $A^m(z)$ is a hyperbolic matrix. Then $A^m_k(z)$ for every $k$ (sufficiently large) is also hyperbolic. This guarantees that $|\hat V_k| \leq 2$. Assume that for every $k\geq 1$, $\hat V_k = \{\hat v_k\}$. Up to taking a subsequence, $\hat v_k$ converges to $\hat v\in \{\hat e_1,\, \hat e_2\}$. 

Using that $\delta_{\hat v_k}=\eta_k\to \eta = \delta_{\hat v}$ and equation \eqref{eq:100423.1}, jointly with the fact that $q_2\neq 0$, we see that we must to have $q_1 = 0$, i.e., $\hat v = \hat e_2$. Hence $L(A|_{\hat v}) = -L(A)$. But, since $\eta_k = \delta_{\hat v_k}$ for every $k$, by equation \eqref{eq:220323.2} we have that $L(A_k|_{\hat v_k}) \geq 0$. This contradiction implies that $q_2 = 0$. The case where there are infinitely many $k$ such that $|\hat V_k| = 2$ is handled similarly.

The next result due to Bocker and Viana summarizes the above discussion.
\begin{theorem}[Bocker, M. Viana, \cite{BoVi2017}]
\label{thm:BoVi2017}
    The function $L:\SL_2(\R)^{\kappa}\to \R$ is continuous.
\end{theorem}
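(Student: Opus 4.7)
The plan is to upgrade upper semi-continuity (Proposition \ref{prop:basicProperties}(\ref{prop:basicProperties-item3})) to continuity by controlling the weak-$\ast$ limits of the stationary measures that realize the Lyapunov exponent via Furstenberg's formula. Concretely, fix $A_n\to A$ in $\SL_2(\R)^{\kappa}$. Since zero-exponent cocycles are already continuity points, I may assume $L(A)>0$; upper semi-continuity already gives $\limsup L(A_n)\le L(A)$, so the task reduces to $\liminf L(A_n)\ge L(A)$. For each $n$, Proposition \ref{prop:propertiesStationryMeasures}(\ref{prop:propertiesStationryMeasures-item4}) supplies an ergodic stationary measure $\eta_n$ with
\begin{align*}
    \int_{\Sigma\times\bP^1}\log\|A_n(x)\,v\|\,d(\mu\times\eta_n)(x,\hat v) = L(A_n).
\end{align*}
By compactness of $\cP(\bP^1)$, pass to a subsequence with $\eta_n\rightharpoonup\eta$; one checks directly that $\eta$ is stationary for $A$. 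The only thing preventing us from passing to the limit in the integral is the unboundedness of $\log\|A(x)v\|$ near directions killed by $A(x)$; the strategy is to argue that the limit $\eta$ cannot charge such bad loci.

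In the first case, where $A$ or $A^{-1}$ is quasi-irreducible, Proposition \ref{prop:propertiesofTypicalCocycles}(\ref{prop:propertiesofTypicalCocycles-item5}) says the stationary measure is unique, so $\eta$ is completely determined; using the explicit description in Proposition \ref{prop:propertiesofTypicalCocycles}(\ref{prop:propertiesofTypicalCocycles-item6}) as a barycenter of Dirac masses at Oseledets directions, a Fatou-type lower bound yields $\int \log\|A(x)v\|\,d(\mu\times\eta)=L(A)$, which is the Furstenberg-Kifer theorem already cited.

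In the remaining case, Proposition \ref{prop:propertiesofTypicalCocycles}(\ref{prop:propertiesofTypicalCocycles-item7}) lets me conjugate $A$ to a diagonal cocycle, say $A_i=\operatorname{diag}(a_i,d_i)$ with $L(A|_{\hat e_1})=L(A)>0>L(A|_{\hat e_2})$, so every stationary measure has the form $\eta = q_1\delta_{\hat e_1}+q_2\delta_{\hat e_2}$ with $\int \log\|A(x)v\|\,d(\mu\times\eta) = (q_1-q_2)L(A)$. Continuity therefore reduces to showing $q_2=0$. This I would split according to the structure of $\eta_n$. If $\eta_n$ is non-atomic along a subsequence, invoke the energy argument: since $\hat e_2$ is $p$-expanding for $A$ (because $L(A|_{\hat e_2})<0$) and this expansion survives small perturbations, the stationary equation prevents non-atomic measures from concentrating near $\hat e_2$, forcing $\eta(\{\hat e_2\})=0$. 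If instead $\eta_n$ is atomic along a subsequence, then $A_n$ preserves a finite set $\hat V_n\subset \bP^1$ by Proposition \ref{prop:propertiesofTypicalCocycles}(\ref{prop:propertiesofTypicalCocycles-item2}); since $L(A)>0$ some iterate $A^m(z)$ is hyperbolic, and by continuity so is $A_n^m(z)$ for large $n$, pinning $|\hat V_n|\le 2$. Passing to a sub-subsequence, the atoms of $\eta_n$ converge to points $\hat v\in\{\hat e_1,\hat e_2\}$, and the identity $L(A_n|_{\hat v_n})\ge 0$ combined with $L(A|_{\hat e_2})<0$ forces $\hat v=\hat e_1$, whence $q_2=0$.

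The main obstacle in this plan is the energy argument ruling out mass accumulation at the expanding fixed point $\hat e_2$: it requires a quantitative estimate showing that any stationary measure $\eta_n$, combined with the average expansion near $\hat e_2$ inherited from $A_n\to A$, must either place no mass in a small neighborhood of $\hat e_2$ or place a pure atom there, with nothing in between. Everything else is soft compactness plus Furstenberg's formula; the atomic branch is essentially a careful bookkeeping of finite invariant sets under perturbation, exploiting the openness of the existence of hyperbolic periodic fibers.
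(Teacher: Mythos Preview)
Your proposal follows essentially the same route as the paper's proof: reduce to $L(A)>0$, take ergodic stationary measures $\eta_n$ realizing $L(A_n)$ via Furstenberg's formula, pass to a weak-$\ast$ limit $\eta$, handle the quasi-irreducible case by uniqueness (Furstenberg--Kifer), and in the residual diagonal case split into the non-atomic branch (energy argument at the $p$-expanding fixed point $\hat e_2$) and the atomic branch (bounded cardinality of invariant sets coming from a nearby hyperbolic product, then a sign contradiction between $L(A_n|_{\hat v_n})\ge 0$ and $L(A|_{\hat e_2})<0$). One small correction to your framing: since each $A(x)\in\SL_2(\R)$ is invertible, the integrand $(x,\hat v)\mapsto\log\|A(x)v\|$ is bounded and continuous on $\Sigma\times\bP^1$, so the convergence $\int\log\|A_n(x)v\|\,d(\mu\times\eta_n)\to\int\log\|A(x)v\|\,d(\mu\times\eta)$ is automatic and no Fatou-type argument is needed---the obstruction is not integrability but the possibility that $\eta$ charges the \emph{wrong} invariant direction $\hat e_2$, where the integral equals $-L(A)$ rather than $L(A)$; your subsequent analysis correctly targets exactly this.
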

Surprisingly, when compared with the general Problem 2 for continuous cocycles (see Theorem \ref{thm:BoMa}), in the world of locally constant we always have continuity of the Lyapunov exponent. That finishes the soft analysis of the function $L$.

Now we discuss the hard analysis version of the Problem 2 for locally constant cocycles. In other words, we study the modulus of continuity of the Lyapunov exponent. The first result to be mentioned in this direction is due E. Le Page.
\begin{theorem}[E. Le Page, \cite{LePage1989}]
\label{thm:LePage}
    Let $I\subset \R$ be a compact interval and let $\lambda\mapsto A_{\lambda}\in \SL_2(\R)^{\kappa}$ be a $\gamma$-H\"older continuous one-parameter family of locally constant cocycles, $\gamma>0$. Assume that for every $\lambda\in I$, $A_{\lambda}$ is strongly irreducible and generates an unbounded semi-group. Then, the map
    \begin{align*}
        I\ni \lambda\mapsto L(A_{\lambda})
    \end{align*}
    is locally H\"older continuous.
\end{theorem}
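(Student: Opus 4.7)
The plan is to pass through the unique stationary measures and Le Page's spectral-gap estimate for the Markov operator. By Furstenberg's criterion (item \ref{prop:propertiesofTypicalCocycles-item3} of Proposition \ref{prop:propertiesofTypicalCocycles}), every $A_\lambda$ satisfies $L(A_\lambda)>0$, and item \ref{prop:propertiesofTypicalCocycles-item9} then provides a unique stationary measure $\eta_\lambda$. Furstenberg's formula (item \ref{prop:propertiesStationryMeasures-item4} of Proposition \ref{prop:propertiesStationryMeasures}) gives
\begin{align*}
    L(A_\lambda) = \int_{\bP^1}\phi_\lambda(\hat v)\,d\eta_\lambda(\hat v),\qquad \phi_\lambda(\hat v) := \sum_{i=1}^{\kappa}p_i\log\norm{A_{i,\lambda}\, v},
\end{align*}
and since each $A_{i,\lambda}$ is invertible, $\phi_\lambda$ is smooth on $\bP^1$ with Lipschitz constant uniformly bounded on compact sets; the H\"older hypothesis on $\lambda\mapsto A_\lambda$ immediately gives $\|\phi_{\lambda_1}-\phi_{\lambda_2}\|_{C^0}\leq C|\lambda_1-\lambda_2|^{\gamma}$. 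What remains is to bound $|\int \phi_{\lambda_2}\,d(\eta_{\lambda_1}-\eta_{\lambda_2})|$ by a positive power of $|\lambda_1-\lambda_2|$.

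Here I would invoke \eqref{eq:220323.4} with exponent $\theta_0$ chosen small enough that the contraction holds \emph{uniformly} with constants $c_0,\alpha_0$ for $\lambda$ in a neighborhood $U$ of any fixed $\lambda_0\in I$. Iterating the Markov operator $Q_\lambda\psi(\hat v)=\sum_i p_i\psi(A_{i,\lambda}\hat v)$ yields $Q_\lambda^n\psi(\hat v)=\int\psi(A^n_\lambda(x)\hat v)\,d\mu(x)$, and \eqref{eq:220323.4} translates into the spectral-gap estimate
\begin{align*}
    \bigl\| Q_\lambda^n\psi - \textstyle\int\psi\,d\eta_\lambda\bigr\|_{C^0} \leq c_0\, e^{-\alpha_0 n}\,[\psi]_{\theta_0}.
\end{align*}
For any $\theta_0$-H\"older $\psi$ and any base point $\hat v_0\in\bP^1$, applying this bound at $\hat v_0$ for $\lambda=\lambda_1,\lambda_2$ and using the triangle inequality give
\begin{align*}
    \left|\int\psi\,d(\eta_{\lambda_1}-\eta_{\lambda_2})\right| \leq 2c_0\, e^{-\alpha_0 n}[\psi]_{\theta_0} + \|Q_{\lambda_1}^n\psi - Q_{\lambda_2}^n\psi\|_{C^0},
\end{align*}
and the last term can be controlled by the telescoping identity $Q_{\lambda_1}^n - Q_{\lambda_2}^n = \sum_{k=0}^{n-1}Q_{\lambda_1}^{n-1-k}(Q_{\lambda_1}-Q_{\lambda_2})Q_{\lambda_2}^k$ combined with $C^0$-non-expansiveness of $Q_{\lambda_1}$, the uniform $C^{\theta_0}$-boundedness of the iterates $Q_{\lambda_2}^k$ (another consequence of the contraction), and the one-step bound
\begin{align*}
    |(Q_{\lambda_1}-Q_{\lambda_2})\phi(\hat v)| \leq [\phi]_{\theta_0}\sum_i p_i\,d(A_{i,\lambda_1}\hat v, A_{i,\lambda_2}\hat v)^{\theta_0} \leq C[\phi]_{\theta_0}|\lambda_1-\lambda_2|^{\gamma\theta_0}.
\end{align*}
Putting these together yields $\|Q_{\lambda_1}^n\psi - Q_{\lambda_2}^n\psi\|_{C^0}\leq Cn|\lambda_1-\lambda_2|^{\gamma\theta_0}\|\psi\|_{C^{\theta_0}}$; optimising in $n\asymp\alpha_0^{-1}\log(1/|\lambda_1-\lambda_2|)$ produces $|\int\psi\,d(\eta_{\lambda_1}-\eta_{\lambda_2})|\leq C|\lambda_1-\lambda_2|^{\beta}$ for any $\beta<\gamma\theta_0$, and taking $\psi=\phi_{\lambda_2}$ completes the argument.

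The main obstacle I anticipate is securing the uniform version of \eqref{eq:220323.4} on the neighborhood $U$, rather than only at the single cocycle $A_{\lambda_0}$ as originally stated. Inspecting the Laplace-transform argument of \cite{LePage1989}, the constants $\theta_0,c_0,\alpha_0$ depend only on (i) a uniform lower bound for $L(A_\lambda)$, (ii) a uniform quantitative form of strong irreducibility for the family $(A_\lambda)_{\lambda\in U}$, and (iii) a uniform upper bound on $\max_i\norm{A_{i,\lambda}^{\pm 1}}$. All three follow in a neighborhood of $\lambda_0$: (i) from the continuity asserted in Theorem \ref{thm:BoVi2017} together with $L(A_{\lambda_0})>0$, (ii) from persistence of strong irreducibility plus a compactness argument on $\bP^1$, and (iii) from the $\gamma$-H\"older dependence of $\lambda\mapsto A_\lambda$. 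Once this uniform contraction is available, the remainder of the proof is the bookkeeping sketched above.
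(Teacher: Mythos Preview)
The paper does not give its own proof of Theorem~\ref{thm:LePage}; it states the result, remarks that the output H\"older exponent may be smaller than $\gamma$, and then sketches a proof only for the more general Theorem~\ref{thm:DuKl2016}. That sketch proceeds in three steps: (1) spectral gap for the Markov operator on $C^{\theta}(\bP^1)$ via the contraction~\eqref{eq:220323.4}, (2) uniform large deviation estimates, and (3) the avalanche principle applied to the finite products $\norm{B^n(x)}$.

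Your argument is correct and shares Step~1 with the paper, but then takes a genuinely different and more direct route: instead of passing through large deviations and the avalanche principle, you express $L(A_\lambda)$ via Furstenberg's formula against the unique stationary measure $\eta_\lambda$ and control $\int\psi\,d(\eta_{\lambda_1}-\eta_{\lambda_2})$ by combining the spectral-gap convergence $Q_\lambda^n\psi\to\int\psi\,d\eta_\lambda$ with a telescoping bound on $Q_{\lambda_1}^n-Q_{\lambda_2}^n$, then optimising in $n$. This is essentially Le~Page's original strategy and is more elementary in the $\SL_2$ locally constant setting; the large-deviation/avalanche machinery in the paper's sketch of Theorem~\ref{thm:DuKl2016} is heavier but is designed to transport to contexts (higher rank, base dynamics that are not Bernoulli, cocycles depending on infinitely many coordinates as in Theorem~\ref{thm:DuKlPo2022}) where the Furstenberg formula and the perturbation-of-stationary-measure argument are less accessible.

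One comment on the point you flag as the main obstacle. Strong irreducibility is \emph{not} open in $\SL_2(\R)^\kappa$ (the paper gives a counterexample just after Proposition~\ref{prop:propertiesofTypicalCocycles}), so ``persistence of strong irreducibility'' cannot be invoked as a general principle. In your setting this is not an issue because strong irreducibility is \emph{assumed} for every $\lambda\in I$, and the compactness of $I$ together with the continuity of $\lambda\mapsto L(A_\lambda)$ (Theorem~\ref{thm:BoVi2017}) then gives the uniform positive lower bound on $L$ needed to make the constants in~\eqref{eq:220323.4} uniform on a neighbourhood of each $\lambda_0$. Your diagnosis that this uniformity is the crux, and your list of what it depends on, are accurate.
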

\begin{remark}
\normalfont
    The H\"older exponent of the conclusion may be smaller than the $\gamma$ in the assumption.
\end{remark}
One reason for the choice of a one parameter family in the previous result is the immediate application of the theorem to guarantee continuity of the Lyapunov exponent with respect to the energy in the Anderson model (see item \ref{ex:irreducibiltiyConditions-item1} in the  Example \ref{ex:irreducibiltiyConditions} above). This result was later generalized by Duarte and Klein, where a machinery to obtain a modulus of continuity of the Lyapunov exponent was developed.
\begin{theorem}[P. Duarte, S. Klein, \cite{DuKl2016}]
\label{thm:DuKl2016}
    Let $A\in \SL_2(\R)^{\kappa}$ be a quasi-irreducible cocycle with positive Lyapunov exponent. Then, there exist $\theta>0$ and a neighborhood $\mathcal{U}(A)$ of $A$ in $\SL_2(\R)^{\kappa}$ such that $L:\mathcal{U}(A)\to \R$ is $\theta$-H\"older continuous.
\end{theorem}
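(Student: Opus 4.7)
My plan is to follow the Duarte--Klein strategy based on uniform large deviations estimates (LDT) together with an abstract continuity theorem that converts such estimates into Hölder regularity of $L$. I proceed in four steps: (i) establish a Le Page type contraction on $\bP^1$ for the cocycle $A$ and extend it uniformly to a neighborhood; (ii) derive from this contraction a uniform spectral gap for the Markov operators $Q_B$, $B \in \mathcal{U}(A)$, acting on a Hölder space $C^{\theta}(\bP^1)$; (iii) use the uniform spectral gap to prove a uniform LDT for the norms $\norm{B^n(x)}$; (iv) apply an abstract continuity theorem to convert the uniform LDT into local Hölder continuity of $L$.

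First, I would establish the Le Page contraction \eqref{eq:220323.4} at the cocycle $A$. In the strongly irreducible case this is Le Page's classical theorem; for a quasi-irreducible cocycle one must argue via the uniqueness of the stationary measure $\eta_A$ (item \ref{prop:propertiesofTypicalCocycles-item5} of Proposition \ref{prop:propertiesofTypicalCocycles}) and the defining property $L(A|_{\hat v}) = L(A)$ along any invariant direction, which together yield exponential contraction on average of projective distances. Continuity of the Lyapunov exponent (Theorem \ref{thm:BoVi2017}) and weak-$*$ continuity of the stationary measures would then let me fix $\theta, \lambda > 0$ and a neighborhood $\mathcal{U}(A)$ such that
$$\sup_{\hat u, \hat v \in \bP^1} \int \left(\frac{d(B^n(x)\,\hat u,\, B^n(x)\,\hat v)}{d(\hat u,\, \hat v)}\right)^{\theta} d\mu(x) \leq e^{-\lambda n}$$
uniformly in $B \in \mathcal{U}(A)$ for every $n \geq n_0$.

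From this contraction I would deduce that $Q_B$ has a uniform spectral gap on $C^{\theta}(\bP^1)$ (simple eigenvalue at $1$, the rest of the spectrum inside a disk of radius strictly less than one), with the spectral data depending continuously on $B$. Nagaev--Guivarc'h type perturbation arguments then produce a uniform LDT: there exist $c, \epsilon_0 > 0$ such that
$$\mu\!\left\{x \in \Sigma : \left|\tfrac{1}{n}\log\norm{B^n(x)} - L(B)\right| > \epsilon\right\} \leq e^{-c\,\epsilon^{2}\, n}$$
for every $B \in \mathcal{U}(A)$, every $\epsilon \in (0, \epsilon_0)$, and every $n$ large enough. Finally, I would apply the abstract continuity theorem: uniform LDT combined with Lipschitz dependence of the finite-time averages $B \mapsto \frac{1}{n}\int \log\norm{B^n(x)} d\mu$ yields Hölder continuity of $L$. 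Concretely, for $B$ close to $A$ one selects $n$ of order $|\log\norm{B-A}|/\epsilon$ and compares $L(A)$ and $L(B)$ by inserting and subtracting these finite-time averages, balancing the LDT error against the $n$-dependent Lipschitz estimate; the resulting Hölder exponent $\theta'$ is explicitly computable in terms of $\theta$ and $c$.

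The principal obstacle is the combination of quasi-irreducibility with the required uniformity. Classical Le Page theory exploits strong irreducibility so that iterates mix every pair of projective directions and produce clean exponential contraction of projective distances. Under mere quasi-irreducibility, orbits concentrate near the invariant direction (as in the triangular example of item 3 of Example \ref{ex:irreducibiltiyConditions}), and the averaged contraction becomes delicate precisely near that direction. Propagating this contraction to perturbations $B$, whose invariant directions may have been destroyed, requires a careful perturbative analysis of $Q_B$ and of the associated stationary measures, and is the core technical input of \cite{DuKl2016}.
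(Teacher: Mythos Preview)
Your four-step outline is exactly the paper's strategy: spectral analysis of the Markov operator on a H\"older space, uniform large deviation estimates, and an abstract continuity argument. Two places where the paper's sketch is sharper than yours are worth noting.

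For the contraction in your step (i), the paper does not go through weak-$*$ continuity of stationary measures or Theorem~\ref{thm:BoVi2017}. Instead it rewrites the averaged projective contraction via the identity
\[
\sup_{\hat u\neq\hat v}\int_{\Sigma}\left(\frac{d(A^n(x)\,\hat u,\,A^n(x)\,\hat v)}{d(\hat u,\hat v)}\right)^{\theta}d\mu(x)
=\sup_{\hat v\in\bP^1}\int_{\Sigma}\frac{1}{\norm{A^n(x)\,v}^{2\theta}}\,d\mu(x),
\]
and then invokes the Furstenberg--Kifer non-random filtration theorem \cite{FuKi1983}: quasi-irreducibility together with $L(A)>0$ forces $\tfrac{1}{n}\int\log\norm{A^n(x)\,v}\,d\mu\to L(A)$ \emph{uniformly} in the unit vector $v$. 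This uniform convergence is precisely what delivers the exponential decay of the right-hand side, and it is exactly where quasi-irreducibility enters. Your proposed route via uniqueness of $\eta_A$ and weak-$*$ limits is softer and would not by itself produce a uniform exponential rate. For your step (iv), the paper names the specific inductive tool: the \emph{avalanche principle}, which controls how concatenations $B^n(y)B^n(x)$ can fail to grow even when each block has exponentially large norm. Your ``balance the LDT error against the $n$-step Lipschitz estimate'' captures the spirit, but the avalanche principle is the actual mechanism in the Duarte--Klein machinery.
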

Now we discuss the strategy to obtain Theorem \ref{thm:DuKl2016}. The proof contains three main steps:
\paragraph{Step 1:} Analysis of spectral properties of the Markov operator $Q_A:C^0(\bP^1)\to C^0(\bP^1)$,
\begin{align*}
    Q_A(\varphi)(\hat v) = \sum_{i=1}^{\kappa}\, p_i\, \varphi(A_i\hat v).
\end{align*}
Let us elaborate a bit more on that. We consider for each $\theta\in (0,1)$ the space of $\theta$-H\"older continuous functions $C^{\theta}(\bP^1)$ with the norm
\begin{align*}
    \norm{\varphi}_{\theta} := \norm{\varphi}_{\infty} + [\varphi]_{\theta}
    \quad
    \text{where}
    \quad
    [\varphi]_{\theta}
    := \sup_{\hat v \neq \hat u}\, \frac{|\varphi(\hat v) - \varphi(\hat u)|}{d(\hat v, \hat u)^{\theta}}.
\end{align*}
The general idea is to prove that there exists $\theta\in (0,1)$ such that the operator $Q_A$ preserves $C^{\theta}(\bP^1)$ and when restricted to this space is \emph{quasi-compact}. In other words, denoting by $\eta$ the (unique) stationary measure for $A$ and
\begin{align*}
    F_{\eta} = \left\{
        \varphi\in C^{\theta}(\bP^1)\colon\,
        \int_{\bP^1}\, \varphi\, d\eta = 0
    \right\}.
\end{align*}
There exists a number $\rho\in (0,1)$ such that
\begin{align*}
    \text{Spec}(Q_A) = \{1\}\, \cup\, \text{Spec}(Q_A|_{F_\eta})
    \quad
    \text{and}
    \quad
    |\text{Spec}(Q_A|_{F_\eta})|<\rho < 1
\end{align*}
(the eigenvalue $1$ is associated with the constant functions). That is a consequence of the fact that for $n$ sufficiently large $Q_A^n$ contracts the $\theta$-H\"older semi-norm $[\cdot]_{\theta}$. To see that, we observe that for every $\varphi\in C^{\theta}(\bP^1)$,
\begin{align*}
    [Q_A^n(\varphi)]_{\theta}
    \leq  [\varphi]_{\theta}\, \sup_{\hat u\neq\hat v}\int_{\Sigma}\left(
        \frac{
            d(A^n(x)\, \hat u,\, A^n(x)\, \hat u)
        }{d(\hat u, \hat v)}
    \right)^{\theta}\, d\mu(x).
\end{align*}
So, the contraction property will be a consequence of the (exponential) decay of the following quantities:
\begin{align}
\label{eq:220323.7}
    \sup_{\hat u\neq\hat v}\int_{\Sigma}\left(
        \frac{
            d(A^n(x)\, \hat u,\, A^n(x)\, \hat u)
        }{d(\hat u, \hat v)}
    \right)^{\theta}\, d\mu(x)
    = \sup_{\hat v\in \bP^1}\, \int_{\Sigma}\, \frac{1}{\norm{A^n(x)\, v}^{2\theta}}\, d\mu(x).
\end{align}
It is at this point that the assumptions that $L(A)>0$ and $A$ is quasi-irreducible are used. These guarantee that the limit
\begin{align}
\label{eq:230323.1}
    \lim_{n\to\infty}\, \frac{1}{n}\int_{\Sigma}\log\norm{A^n(x)\, v}\, d\mu(x) = L(A),
\end{align}
is \emph{uniform} in the unitary vector $v\in \R^2$ (compare with Oseledet's Theorem). This interesting fact comes from a more general result due to Furstenberg and Kifer in \cite{FuKi1983} which is a non-random version of Oseledet's Theorem (see also \cite{Ki2012}). That is the main tool to ensure the exponential decay in $n$ of the quantity in \eqref{eq:220323.7}.
\begin{remark}
\normalfont
    This type of decay was used in Y. Perez result \cite{Pe2006}. See the discussion just after the statement of Theorem \ref{thm:Peres2006}.
\end{remark}

\paragraph{Step 2:} Establish uniform large deviation estimates: once we have the quasi-com\-pactness operator we may use standard techniques of additive random process to prove the following type of estimate: there exist constants $\delta,\, C,\, \kappa,\, \varepsilon_0>0$ such that for every cocycle $B\in \SL_2(\R)^{\kappa}$ with $\norm{A-B}< \delta$, for every $\varepsilon\in (0,\varepsilon_0)$ and for every $n \in \N$
\begin{align*}
    \mu\left(\left\{
        x\in \Sigma\colon\,
        \left|
            \frac{1}{n}\log\norm{B^n(x)} - L(B)
        \right|>\varepsilon
    \right\}\right)\leq C\, e^{-\kappa\varepsilon^2\, n}.
\end{align*}

\paragraph{Step 3:} Combine uniform large deviation estimates with accurate analysis of the geometry of the projective action of ``very hyperbolic'' matrices $A^n(x)$. The idea here is to use a process of exclusion of sequences $x, y\in \Sigma$ such that $B^n(x)$ and $B^n(y)$ are very hyperbolic (exponentially large norm) but the product $B^n(x)\, B^n(y)$ is small. Using a tool called \emph{avalanche principle} (see \cite{DuKl2017}) combined with the uniform large deviation estimate it is possible to show that this process of exclusion of sequences only eliminates a small probability subset of $\Sigma$. This is enough to have very good control of the finitary differences $|\log\norm{B^n(x)} - \log\norm{A^n(x)}|$ for suitable scales $n$ and most of the sequences $x\in \Sigma$. This provides the desired modulus of continuity for the Lyapunov exponent function.

The next example indicates that, even though, under the assumption of $L(A)>0$ and $A$ strongly irreducible, we have H\"older regularity in a neighborhood of $A$, the optimal H\"older exponent can get arbitrarily close to zero.
\begin{example}[Halperin, Simon and Taylor, \cite{SiTa1985}]
\normalfont
    Here, we fix $\kappa = 2$. Consider real numbers $0< a_0 < a_1$ and define
    \begin{align*}
        A_0 = \begin{pmatrix}
            a_0 & -1 \\
            1 & 0
        \end{pmatrix}
        \quad
        \text{and}
        \quad
        A_1 = \begin{pmatrix}
            a_1 & -1 \\
            1 & 0
        \end{pmatrix}.
    \end{align*}
    Let $A$ be the locally constant cocycle generated from $A_0$ and $A_1$. $A$ is unbounded and strongly irreducible. So, by Furstenberg's criteria $L(A)>0$. Thus, applying Theorem \ref{thm:DuKl2016}, we conclude that there exists $\theta\in (0,1)$ such that $L$ is $\theta$-H\"older continuous in a neighborhood of $A$. However, Halperin/Simon and Taylor showed that if
    \begin{align*}
        \theta_0 > \frac{2\log 2}{
            \cosh^{-1}\left(
                1 + \frac{a_1-a_0}{2}
            \right)
        },
    \end{align*}
    then $L$ is not $\theta_0$-H\"older continuous. In particular, making $|a_1 - a_2|\to\infty$ we can build examples of cocycles such that the H\"older exponent converges to $0$.
\end{example}
The case of zero Lyapunov exponent is different. It is possible that the Lyapunov exponent is not even H\"older continuous.
\begin{example}[P. Duarte, S. Klein, M. Santos \cite{DuKlSa2018}]
\normalfont
    Take $A$ given by Kifer's example (Example \ref{ex:Kifer}) with $p = (1/2,1/2)$ and $\alpha = 2$. This is an example with zero Lyapunov exponent. Using Halperin/Simon and Taylor's strategy Duarte, Klein and Santos showed that the Lyapunov exponent is not even $\beta$-H\"older continuous at $A$ for any $\beta>0$. Actually, they proved that the best regularity that we could expect is a weak version of H\"older regularity called $\log$-H\"older regularity (see conclusion of item 2 of Theorem \ref{thm:EHYTVi2020}).
\end{example}

An improvement in understanding the behavior of the Lyapunov exponent for locally constant cocycles was provided by E. Tall and M. Viana as described in the next result. Notice that there is no irreducibility assumption of any kind.
\begin{theorem}[EHY Tall, M. Viana \cite{EHYTVi2020}]
\label{thm:EHYTVi2020}
     It holds that,
    \begin{enumerate}
        \item (Pointwise H\"older) Assume that $L(A)>0$. Then, there exist a neighborhood $\mathcal{U}(A)\subset \SL_2(\R)^{\kappa}$ of $A$, $C>0$ and $\theta>0$ such that for every $B\in \mathcal{U}(A)$,
        \begin{align*}
            \left|
                L(A) - L(B)
            \right|
            \leq C\, \norm{A - B}^{\theta}.
        \end{align*}

        \item (Pointwise log-H\"older) For every $A\in \SL_2(\R)^{\kappa}$, there exist a neighborhood $\mathcal{U}(A)\subset \SL_2(\R)^{\kappa}$ of $A$, $C>0$ and $\theta>0$ such that for every $B\in \mathcal{U}(A)$,
        \begin{align*}
            \left|
                L(A) - L(B)
            \right|
            \leq C\, \log\left(
                \frac{1}{\norm{A - B}}
            \right)^{-\theta}.
        \end{align*}
    \end{enumerate}
\end{theorem}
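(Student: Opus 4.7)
The plan is to establish item 1 first and then deduce item 2 from it. For item 1, I would case-split on the structure of stationary measures provided by Proposition \ref{prop:propertiesofTypicalCocycles}: when $L(A)>0$, either $A$ is irreducible (hence strongly irreducible and in particular quasi-irreducible, by item \ref{prop:propertiesofTypicalCocycles-item4}), or $A$ is reducible, in which case item \ref{prop:propertiesofTypicalCocycles-item7} gives that $A$ or $A^{-1}$ is quasi-irreducible, or else $A$ is (conjugate to) a diagonal cocycle. When $A$ itself is quasi-irreducible, Theorem \ref{thm:DuKl2016} applies directly and yields a H\"older neighborhood. When instead $A^{-1}$ is quasi-irreducible, I would exploit the smoothness of the inversion map $A\mapsto A^{-1}$ on $\SL_2(\R)^{\kappa}$ together with the identity $L(A)=L(A^{-1})$ to transfer the H\"older estimate from a neighborhood of $A^{-1}$ to one of $A$, with the same exponent.

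The diagonal case is the main obstacle. Here, in a suitable basis, $A_i=\diag(a_i,a_i^{-1})$ with $\bigl|\sum_i p_i\log|a_i|\bigr|=L(A)>0$, and generic perturbations $B$ become strongly irreducible; so Theorem \ref{thm:DuKl2016} applies near each such $B$, but a priori with H\"older constants that degenerate as $B\to A$. To obtain a uniform H\"older control, I would revisit the three-step proof of Theorem \ref{thm:DuKl2016} (spectral gap of $Q_B$ on $C^{\theta}(\bP^1)$, uniform large deviations, avalanche principle) and verify that the constants can be taken uniform over an entire neighborhood of $A$. The key input is that for $B$ close to $A$ the contraction average in the analogue of \eqref{eq:220323.7} is dominated by the positive quantity $\bigl|\sum_i p_i\log|a_i|\bigr|$, providing a spectral gap for $Q_B$ whose size does not depend on the precise location of $B$.

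For item 2, since H\"older continuity in a neighborhood implies log-H\"older continuity there, item 1 already handles $L(A)>0$, and only $L(A)=0$ remains. In that case the lower bound $L(B)\geq 0=L(A)$ is automatic from item \ref{prop:basicProperties-item1} of Proposition \ref{prop:basicProperties}, so one needs only an upper bound on $L(B)$. Combining the subadditive characterization in item \ref{prop:basicProperties-item2} of Proposition \ref{prop:basicProperties} with the telescoping bound $\norm{B^n(x)-A^n(x)}\leq nM^{n-1}\norm{A-B}$ (for $M$ a uniform norm bound in a neighborhood) and $\norm{A^n(x)}\geq 1$, one obtains
\begin{align*}
    L(B)\;\leq\;\frac{1}{n}\int_{\Sigma}\log\norm{A^n(x)}\,d\mu(x)\;+\;M^{n-1}\norm{A-B}
\end{align*}
for every $n\geq 1$. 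Choosing $n\sim\log(1/\norm{A-B})/\log M$ balances the two terms, and the log-H\"older bound follows provided the sequence $f_n(A):=\tfrac{1}{n}\int\log\norm{A^n}\,d\mu$ converges to $L(A)$ at a polynomial rate in $1/n$. Establishing such a rate, so that one may take $\theta>0$, is the delicate second point; I would aim to derive it from a quantitative Kingman-type estimate exploiting the Bernoulli structure of the base together with the $\SL_2(\R)$ structure on the fibers.
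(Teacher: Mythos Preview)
Your reduction to the quasi-irreducible case via Theorem \ref{thm:DuKl2016} and the inversion trick $L(A)=L(A^{-1})$ is fine. The gap is in the diagonal case.

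You claim that for $B$ close to a diagonal $A$ the contraction average on the right of \eqref{eq:220323.7} is controlled by $\bigl|\sum_i p_i\log|a_i|\bigr|$, uniformly in $B$. This fails already at $B=A$: with $A_i=\diag(a_i,a_i^{-1})$ and (say) $\sum_i p_i\log|a_i|>0$, taking $\hat v=\hat e_2$ gives $\|A^n(x)e_2\|=\prod_j|a_{x_j}|^{-1}$, hence
\begin{align*}
    \int_\Sigma \frac{1}{\|A^n(x)\,e_2\|^{2\theta}}\,d\mu(x)
    =\Bigl(\sum_{i=1}^\kappa p_i\,|a_i|^{2\theta}\Bigr)^n
    \approx e^{2\theta n\sum_i p_i\log|a_i|},
\end{align*}
which \emph{grows} exponentially for small $\theta>0$. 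Thus the supremum in \eqref{eq:220323.7} does not decay for $A$, and for nearby quasi-irreducible $B$ the decay rate degenerates as $B\to A$. This non-uniformity is exactly the obstruction flagged in the discussion of Theorem \ref{thm:DuKl2020}: the convergence in \eqref{eq:230323.1} is not uniform in $v$ in the diagonal case, which is why the spectral-gap route there yields only weak-H\"older. Your argument, if it worked, would give \emph{local} H\"older continuity near diagonal cocycles, strictly more than the pointwise statement of Theorem \ref{thm:EHYTVi2020}.

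The actual proof, as the paper indicates, does not pass through a uniform spectral gap. It refines the Bocker--Viana analysis preceding Theorem \ref{thm:BoVi2017} (energy argument, classification of the approximating cocycles by the structure of their atomic stationary measures) and quantifies it using probabilistic tools such as the central limit theorem and diffusion power laws, obtaining direct control on $|L(A)-L(B)|$ without uniform large deviations for $B$.

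For item 2 at $L(A)=0$, your scheme $L(B)\le f_n(A)+M^{n-1}\|A-B\|$ with optimized $n$ is sound in outline, but the ``delicate second point'' you acknowledge --- a polynomial rate $f_n(A)\lesssim n^{-\theta}$ --- is again where the CLT enters (in Kifer's example, $\log\|A^n(x)\|$ behaves like the absolute value of a simple random walk, so $f_n(A)\sim n^{-1/2}$). Note that for the strongly irreducible sub-case the paper supplies, just after the statement, a much simpler Lipschitz argument based on compactness of the generated group, which sidesteps any rate estimate on $f_n$.
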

The proof of this result is a consequence of a careful analysis of the phenomena presented in the discussion just before Theorem \ref{thm:BoVi2017}. The techniques are based on many classical probabilistic results such as the central limit theorem and the diffusion power law.

Due to the technical level of the proof of Theorem \ref{thm:EHYTVi2020}, we do not discuss its details here. Instead, we provide a result of similar flavour which is the pointwise Lipschitz continuity of the Lyapunov exponent at a strongly irreducible cocycle $A\in \SL_2(\R)^{\kappa}$ with $L(A)=0$ (the argument can be found in \cite{DuKlSa2018} or in \cite{EHYTVi2020}). Indeed, by Furstenberg's criterion, the closure $G$ of the group generated by the coordinates of $A$ is a compact subgroup of $\SL_2(\R)$. So, there exists a norm on $\R^2$ such that the matrices on $G$ are orthogonal. Let $\norm{\cdot}'$ be the associated operator norm. Notice that for any cocycle $B\in \SL_2(\R)^{\kappa}$,
\begin{align*}
    0\leq L(B)
    = \lim_{n\to\infty}\, \frac{1}{n}\int_{\Sigma}\log\norm{B^n}'\, d\mu
    \leq \int_{\Sigma}\, \log\norm{B}'\, d\mu
    \quad
    \text{and}
    \quad
    \int_{\Sigma}\, \log\norm{A}'\, d\mu = 0.
\end{align*}
Then,
\begin{align*}
    |L(B) - L(A)|
    &= L(B)
    \leq \int_{\Sigma}\, \log\norm{B}'\, d\mu
    \leq \int_{\Sigma}\, \log\left(
        1 + \norm{B - A}'
    \right)\, d\mu\\
    &\leq \int_{\Sigma}\, \norm{B - A}'\, d\mu
    \leq C\norm{A-B}.
\end{align*}

The non-pointwise continuity was treated by Duarte and Klein as described by the next result. 
\begin{theorem}[P. Duarte, S. Klein, \cite{DuKl2020}]
\label{thm:DuKl2020}
    Assume that $L(A)>0$. Then, there exists a neighborhood $\mathcal{U}(A)\subset \SL_2(\R)$ of $A$ such that $L:\mathcal{U}(A)\to\R$ is weak-H\"older continuous. More precisely, there exist constants $C,\, \alpha,\, \beta>0$ such that for every $B_1, B_2\in \mathcal{U}(A)$ we have
    \begin{align*}
        |L(B_1) - L(B_2)|
        \leq C\,  \exp\left(
            -\alpha\left(
                \log\frac{1}{
                    \norm{B_1 - B_2}
                }
            \right)^{\beta}
        \right).
    \end{align*}
\end{theorem}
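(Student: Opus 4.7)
The plan is to follow the three-step architecture used to prove Theorem~\ref{thm:DuKl2016} (spectral analysis of the Markov operator, uniform large deviation estimates, avalanche principle) but to weaken its first step, since without quasi-irreducibility the Le~Page contraction~\eqref{eq:220323.4} is not uniform over a neighborhood $\mathcal{U}(A)$. The consequent absence of quasi-compactness of $Q_A$ on $C^{\theta}(\bP^1)$ forces the LDT from exponential to sub-exponential, and this precise loss is what shows up as the weak-H\"older modulus in the conclusion.

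The first reduction is to invoke Item~\ref{prop:propertiesofTypicalCocycles-item7} of Proposition~\ref{prop:propertiesofTypicalCocycles}: under $L(A)>0$, either $A$ or $A^{-1}$ is already quasi-irreducible (in which case Theorem~\ref{thm:DuKl2016} applies directly and gives an even better, genuine H\"older modulus), or $A$ is conjugated to a diagonal cocycle $A_i=\mathrm{diag}(a_i,a_i^{-1})$ with $\sum_i p_i\log|a_i|\neq 0$. In the diagonal regime, replace the missing Le~Page contraction by an inductive bootstrapping scheme. Initiate with a soft LDT at some scale $n_0$ obtained from a second-moment/Chebyshev estimate exploiting the fact that $L(A)$ stays strictly bounded away from zero under small perturbations. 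Then iteratively upgrade to scales $n_{k+1}=\rho\,n_k$ using the avalanche principle of \cite{DuKl2017}, gluing $\lfloor n_{k+1}/n_k\rfloor$ consecutive blocks of length $n_k$ and producing a new LDT estimate
\begin{align*}
    \mu\!\left(\left\{x\in\Sigma:\ \left|\tfrac{1}{n_k}\log\norm{B^{n_k}(x)}-L(B)\right|>\varepsilon_k\right\}\right)\leq p_k,
\end{align*}
uniformly in $B\in\mathcal{U}(A)$, with $\varepsilon_k$ and $p_k$ decaying in a controlled but sub-exponential way.

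The final step matches finite-scale information to the regularity estimate: using the bound $\norm{B_1^n-B_2^n}\leq n\,\norm{B_1-B_2}\,\max_i\norm{B_i}^{n-1}$ together with the LDT at scale $n_k$, one derives $|L(B_1)-L(B_2)|\leq \varepsilon_k + C\,n_k\,\norm{B_1-B_2}\,e^{O(n_k)}$; optimizing by choosing $n_k\asymp(\log(1/\norm{B_1-B_2}))^{\beta}$ for a suitable $\beta\in(0,1)$ produces the stated stretched-exponential modulus. The main obstacle I foresee is ensuring that the avalanche-principle constants (which are governed by the geometry of the most expanding singular directions of $B^{n_k}(x)$) stay uniform across $\mathcal{U}(A)$ even when a perturbation of the diagonal cocycle destroys the invariant directions $\hat e_1,\hat e_2$ and deforms the stationary measures $\eta_B$ away from convex combinations of $\delta_{\hat e_1}$ and $\delta_{\hat e_2}$; this is what requires the stretched exponent $\beta<1$ rather than a genuine H\"older exponent, and is the precise place where the non-quasi-compactness of $Q_A$ leaves its quantitative footprint.
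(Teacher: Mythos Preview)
Your reduction to the diagonal case via Proposition~\ref{prop:propertiesofTypicalCocycles}\,(\ref{prop:propertiesofTypicalCocycles-item7}) is correct and matches the paper. However, from that point on your mechanism diverges from the paper's and, as written, has a gap.

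The paper does \emph{not} run a single inductive bootstrap starting from a Chebyshev estimate. Its key idea is a \emph{bridge argument} between two distinct sources of LDE for a cocycle $B$ near the diagonal $A$. First, because $B$ is close to a diagonal cocycle, the finite-scale exponents $\tfrac{1}{n}\log\norm{B^n}$ inherit classical (exponential) LDE from $A$ up to some scale $n_1$ depending on $\norm{A-B}$. Second, a generic such $B$ is itself quasi-irreducible, so by the machinery of Theorem~\ref{thm:DuKl2016} it enjoys its \emph{own} exponential LDE, but only after a possibly much larger scale $n_2$. The decisive quantitative input is that these scales satisfy $n_2-n_1\leq O(\log\delta^{-1})$, where $\delta$ is the distance from $B$ to the set of diagonal cocycles. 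This bridge between the two regimes is what produces a \emph{sub}-exponential LDE uniform over $\mathcal{U}(A)$, and the loss from exponential to sub-exponential across the gap $[n_1,n_2]$ is exactly the origin of the stretched exponent $\beta<1$.

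Your proposed scheme --- Chebyshev at a base scale, then avalanche-principle iteration --- does not obviously work because the avalanche principle needs, in addition to large norms of the blocks $B^{n_k}(\sigma^{jn_k}(x))$, \emph{angular} control: the most expanded singular direction of one block must avoid the most contracted direction of the next. A second-moment/Chebyshev bound gives you no handle on these angles, and for perturbations of a diagonal cocycle this is precisely where uniformity fails (the convergence in \eqref{eq:230323.1} is not uniform in $\hat v$). The paper circumvents this by obtaining angular control from the quasi-irreducibility of $B$ itself after scale $n_2$, and by showing the uncontrolled window $[n_1,n_2]$ is only logarithmically long. Your outline does not supply a substitute for this step.
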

The proof of this result follows the same general strategy as described in the proof of Theorem \ref{thm:DuKl2016}. The idea is to establish some version of uniform large deviation estimates in a neighborhood of $A$. The result only provides a weak-H\"older regularity. This is a consequence of the fact that large deviation estimates obtained in Theorem \ref{thm:DuKl2020} are no longer of exponential type but only sub-exponential. These worse estimates are due the lack of uniformity of the convergence in the limit \eqref{eq:230323.1} in the diagonal case (the only case that still needs analysis).

The idea to proof Theorem \ref{thm:DuKl2020} goes as follows. If $A$ is diagonal with $L(A)>0$, then we have classical large deviation estimates (LDE) to $A$ which is uniform in a neighborhood of $A$ among the diagonal cocycles. If $B\in \SL_2(\R)^{\kappa}$ is a quasi-irreducible cocycle near $A$, then until certain scale $n_1$ the finite scale Lyapunov exponent, $\frac{1}{n_1}\log\norm{B^{n_1}}$, absorb the property from the diagonal and satisfy LDE. By the quasi-irreducibility of $B$, this cocycle also satisfy a LDE, but that could only be seen after a different scale $n_2$ with possibly $n_2\gg n_1$. From here, the main idea is to observe that this scales are not that far from each other. Indeed, $n_2 - n_1 \leq O(\log\delta^{-1})$, where $\delta$ is a distance from $B$ to the diagonal cocycles. This is enough to save the LDE between scales $n_1$ and $n_2$ (bridge argument) although the exponential type could be lost in the process.

\subsection{Sharp modulus of continuity}
\label{subsection:sharpModulusContinuity}

As we could see earlier, with the exception of uniformly hyperbolic cocycles, the Lyapunov exponent can have a rather bad  modulus of continuity. For irreducible $A\in \SL_2(\R)^{\kappa}$ with positive Lyapunov exponent (open and dense set of cocycles) we have seen (Theorem \ref{thm:DuKl2016}) that $L$ is $\theta$-H\"older continuous in a neighborhood of $A$. In this subsection, we discuss geometric obstructions that provides bounds for how large the H\"older exponent $\theta$ can be.

Let $A=(A_1,\ldots, A_{\kappa})\in \SL_2(\R)^{\kappa}$ and denote by $\langle A \rangle^+$ the semi-group generated by the matrices $A_1,\ldots, A_{\kappa}$. We say that the cocycle $A$ admits a \emph{heteroclinic tangency} if there exist matrices $B,T,D\in \langle A\rangle^+$ such that $B, D$ are hyperbolic and $T\, \hat v_+(B) = \hat v_-(D)$, where $\hat v_+(B), \hat v_-(D)$ denotes the eigen-directions associated respectively to the largest eigenvalue of $B$ and the smallest eigenvalue of $D$. In this case, we say that the triple $(B,\, T,\, D)$ is a heteroclinic tangency for the cocycle $A$. 

\begin{example}[Heteroclinic tangecies]
\normalfont
    Consider the following examples.
    \begin{enumerate}
        \item Let $A$ be the Kifer's example (Example \ref{ex:Kifer}). Then, the triple, $(A_1, A_2, A_1)$ is a tangency for $A$;

        \item Let $A_{\alpha,\beta}$ be the cocycle in the Example \ref{ex:BockerVianaExample}. Then, the triple $(A_1, A_1, A_2)$ is a heteroclinic tangency for $A_{\alpha,\beta}$.

        \item Fix $\kappa = 3$ and consider the cocycle $A\in \SL_2(\R)^{3}$ given by the matrices
        \begin{align*}
            A_1 =\begin{pmatrix}
                \beta & 0 \\
                0 & \beta^{-1}
            \end{pmatrix},
            \quad
            A_2 = R_{2\pi\theta}\, A_1\, R_{-2\pi\theta}
            \quad
            \text{and}
            \quad
            A_3=R_{\pi(4\theta + 1)/2}.
        \end{align*}
        where $\theta\in (0,1/2)$ with $\theta\neq 1/4$. Then $A$ is strongly irreducible and the triple $(A_1, A_3, A_2)$ a tangency for $A$.
    \end{enumerate}   
\end{example}

\begin{proposition}
\label{prop:propertiesTangecies}
    It holds that,
    \begin{enumerate}
        \item\label{prop:propertiesTangecies-item1} If $A$ is uniformly hyperbolic, then $A$ does not admits a heteroclinic tangency;

        \item\label{prop:propertiesTangecies-item2} If $(B,\, T,\, D)$ is a tangency for $A$, then $(B^n,\, T,\, D^m)$ is also a tangency for $A$;

        \item\label{prop:propertiesTangecies-item3} If $A\in \SL_2(\R)^{\kappa}$ is the boundary of the uniformly hyperbolic cocycles and $\langle A\rangle^+$ does not contain a parabolic matrix, then $A$ admits a heteroclinic tangency;

        \item\label{prop:propertiesTangecies-item4} Cocycles $A\in \SL_2(\R)^{\kappa}$ admitting a heteroclinic tangency  are  dense outside the set of the uniformly hyperbolic cocycles.

        \item\label{prop:propertiesTangecies-item5} If $(B,\, T,\, B)$ is a tangency for $A$, then $\langle A\rangle^+$ contains an elliptic element.
    \end{enumerate}
\end{proposition}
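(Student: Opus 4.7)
The plan is to tackle the five items in the order (1), (2), (5), (3), (4), since the first three are essentially direct and the last two require a more delicate geometric argument. For item (1), I would invoke the multicone form of the cone field criterion stated in the excerpt: uniform hyperbolicity produces a multicone $\hat J^u \subset \bP^1$ with $A_i(\hat J^u) \subset \interior(\hat J^u)$ for every $i$. Setting $\hat J^s := \bP^1 \setminus \interior(\hat J^u)$, one checks that $A_i^{-1}(\hat J^s) \subset \interior(\hat J^s)$ and that $\interior(\hat J^u) \cap \interior(\hat J^s) = \emptyset$. Every $M \in \langle A\rangle^+$ then also strictly contracts $\hat J^u$; the attracting fixed point of a hyperbolic $M$ thus lies in $\interior(\hat J^u)$, so $\hat v_+(B) \in \interior(\hat J^u)$ and, symmetrically, $\hat v_-(D) \in \interior(\hat J^s)$. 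For any $T \in \langle A\rangle^+$, $T\hat v_+(B) \in \interior(\hat J^u)$, which is disjoint from $\interior(\hat J^s) \ni \hat v_-(D)$, ruling out tangencies. Item (2) is immediate: $B^n, D^m \in \langle A\rangle^+$ are hyperbolic for $n,m\geq 1$ and share eigendirections with $B, D$, so $\hat v_+(B^n) = \hat v_+(B)$ and $\hat v_-(D^m) = \hat v_-(D)$, preserving the tangency identity.

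For item (5), I would express $B$ in its eigenbasis so that $B = \diag(\lambda,\lambda^{-1})$ with $|\lambda|>1$. The assumption $T\hat v_+(B) = \hat v_-(B)$ forces the $(1,1)$ entry of $T$ to vanish; together with $\det T = 1$ this gives
\begin{align*}
    T = \begin{pmatrix} 0 & -1/a \\ a & c \end{pmatrix}
    \quad\text{for some } a \neq 0.
\end{align*}
A direct computation yields
\begin{align*}
    T B^n = \begin{pmatrix} 0 & -\lambda^{-n}/a \\ a\lambda^n & c\lambda^{-n} \end{pmatrix},
    \qquad
    \tr(T B^n) = c\lambda^{-n} \xrightarrow[n\to\infty]{} 0,
\end{align*}
so $|\tr(T B^n)| < 2$ for all $n$ large enough, making $T B^n \in \langle A\rangle^+$ elliptic, as desired.

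For item (3), I would exploit the failure of strict invariance at the boundary of UH. Pick uniformly hyperbolic $A^{(k)} \to A$ with associated multicones $\hat J^{(k)}$, and pass to a Hausdorff limit $\hat J \subset \bP^1$. Then each $A_i$ satisfies $A_i(\hat J) \subset \hat J$, but strict invariance must fail somewhere (otherwise $A$ would itself be UH), producing $i_0$ and $\hat w \in \partial \hat J$ with $A_{i_0}\hat w \in \partial \hat J$. Iterating and using compactness, one extracts a finite word realizing a periodic boundary direction, i.e.\ a hyperbolic $B \in \langle A\rangle^+$ with $\hat v_+(B) \in \partial\hat J$; the no-parabolic hypothesis rules out the degenerate alternative of a neutral fixed point along $\partial\hat J$. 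A symmetric construction on the backward-invariant multicone $\bP^1\setminus\interior(\hat J)$ yields a hyperbolic $D \in \langle A\rangle^+$ with $\hat v_-(D)$ on the corresponding boundary. Finally, a transitivity-type argument on $\partial\hat J$ (the forward semigroup acts minimally on its limit set on the boundary) supplies some $T \in \langle A\rangle^+$ with $T\hat v_+(B) = \hat v_-(D)$. Item (4) is then a corollary: starting from any non-UH cocycle $A$, connect it by a continuous path in $\SL_2(\R)^\kappa$ to a cocycle well inside UH; the path must cross $\partial(\mathrm{UH})$, and an arbitrarily small perturbation clears parabolic elements (a nowhere-dense condition) at the crossing, so item (3) yields a tangency arbitrarily close to $A$.

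The main obstacle is item (3): converting the geometric failure of strict cone invariance into the algebraic equality $T\hat v_+(B) = \hat v_-(D)$ requires a careful extraction of actual periodic expanding/contracting directions sitting exactly on $\partial \hat J$, plus a transitivity argument on this boundary. The role of the no-parabolic hypothesis is precisely to prevent the contact of forward and backward limit sets from being accounted for by a single neutral fixed point, which would not correspond to any $\hat v_\pm$ of a hyperbolic element. Items (1), (2), (5) are essentially linear-algebraic and will not pose difficulties once the cone picture is in place.
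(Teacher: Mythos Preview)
Your arguments for items (1), (2), and (5) are correct and essentially match what the referenced literature does; the trace computation for (5) is exactly the content of \cite[Remark~4.2]{AvBoYo2010}.

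Your approach to item (4), however, has a genuine gap. The path argument produces a cocycle with a tangency near the \emph{crossing point} of $\partial(\mathrm{UH})$, not near the original $A$. If $A$ lies deep inside the complement of the uniformly hyperbolic locus, the first boundary crossing along your path may be far from $A$, so you have not shown that tangencies are dense near $A$. The actual proof in \cite[Section~7]{BeDu2022} works differently: one shows that for \emph{any} non-UH cocycle $A$ (not just those on $\partial(\mathrm{UH})$), the failure of uniform hyperbolicity forces the existence of words $B,D\in\langle A\rangle^+$ whose attracting and repelling directions can be brought arbitrarily close by some $T\in\langle A\rangle^+$, and then a small perturbation of the generators turns this near-tangency into an exact one. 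The density statement is local at $A$, and your reduction to item (3) loses that locality.

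For item (3), your outline has the right geometric picture (Hausdorff limit of multicones, failure of strict invariance on the boundary), which is indeed the starting point in \cite[Theorem~4.1]{AvBoYo2010}. But two steps are not justified as written: first, the passage from a boundary touching $A_{i_0}\hat w\in\partial\hat J$ to a \emph{periodic} boundary direction $\hat v_+(B)\in\partial\hat J$ requires more than compactness---one needs the combinatorial structure of the multicone to force recurrence to the same boundary component; second, your minimality claim for the semigroup action on $\partial\hat J$ is not true in general (the boundary can have several orbits), and the actual argument matches specific boundary components rather than invoking transitivity. You correctly flag this item as the main obstacle; the full proof is substantially more delicate than your sketch suggests.
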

\begin{proof}[Outline of the proof and references:]
    Item \ref{prop:propertiesTangecies-item1} is a consequence of the multicone characterization of the uniformly hyperbolic locally constant cocycles in \cite[Theorem~2.2]{AvBoYo2010}. Item \ref{prop:propertiesTangecies-item2} is a direct consequence of the definition. Item \ref{prop:propertiesTangecies-item3} can be found in \cite[Theorem~4.1]{AvBoYo2010}. For a proof of item \ref{prop:propertiesTangecies-item4} see \cite[Section~7]{BeDu2022}. For item \ref{prop:propertiesTangecies-item5} see \cite[Remark 4.2]{AvBoYo2010}.
\end{proof}

The existence of tangencies will be the main tool to design  perturbations that cause a drastic drop in the value of the Lyapunov exponent. These perturbations are carried out within carefully chosen one parameter families  of cocycles $\{A_t\}_{t\in I}\subset \SL_2(\R)^{\kappa}$ where the Lyapunov exponent variation can be measured in terms of the so called \emph{fibered rotation number}  of the family.

More precisely, we say that a family of locally constant cocycles $\{A_t\}_{t\in I}\subset \SL_2(\R)^{\kappa}$ is (strictly) \emph{positively winding} (or \emph{monotone}) if there exists $c_0>0$ and $n_0\in \N$ such that for every $n>n_0$, $x\in \Sigma$, $\hat v\in \bP^1$ and every $t\in I$,
\begin{align*}
    \frac{d}{dt}A^n_t(x)\, \hat v
    \geq c_0 > 0 > -c_0
    \geq \frac{d}{dt}A^{-n}_t(x)\, \hat v.
\end{align*}
For such family and for any subinterval $J\subset I$, the limit
\begin{align}
\label{eq:130423.1}
    \rho(J) = \lim_{n\to\infty}\frac{1}{n\pi}\ell_J(A^n_t(x)\, \hat v),
\end{align}
exists and is constant for $\mu$-a.e. $x\in \Sigma$ and $\hat v\in \bP^1$. Here, $\ell_J(A^n_t(x)\, \hat v)$ denotes the length of the projective curve $J\ni t \mapsto A^n_t(x)\, \hat v$. The expression \eqref{eq:130423.1} above defines a measure $\rho$ called \emph{fibered rotation measure} of the family $\{A_t\}_{t\in I}\subset \SL_2(\R)^{\kappa}$ (for more details see \cite{GoKl2021}, \cite{Go2020} or \cite{BeDuCaFrKl2022} and references therein).
\begin{example}
\label{ex:SchrodingerWinding}
\normalfont
    Consider the family of cocycles $\{A_E\}_{E\in \R}\subset\SL_2(\R)^{\kappa}$ provided by the Anderson model, i.e., for each $i=1,\ldots,\kappa$, $A_{i,E} = \begin{pmatrix} a_i - E & -1\\ 1 & 0 \end{pmatrix}$. By a direct computation, this family is positively winding (with $n_0=2$). In this case, up to a normalization constant, the fibered rotation number coincides with the integrated density of states which is the distribution measure of the spectrum of the Schrodinger operators $H_x$ (see Example \ref{ex:schrodingerCocycles}).
\end{example}

In what follows, $H(\mu) := \sum_{i=1}^{\kappa}\, -p_i\log p_i$ denotes the \emph{Shannon entropy} associated to the measure $\mu$.
\begin{theorem}[\cite{BeDu2022}, \cite{BeDuCaFrKl2022}]
\label{thm:upperBoundRegularityRotationNumber}
    Let $\{A_t\}_{t\in I}\subset \SL_2(\R)^{\kappa}$ be a positive winding family of locally constant cocycles. Assume that for some $t_0\in I$,
    \begin{enumerate}
        \item $A_{t_0}$ is strongly irreducible and $\langle A\rangle^+$ is unbounded;

        \item $A_{t_0}$ admits a heteroclinic tangency.
    \end{enumerate}
    Then, $\rho$ is not $\theta$-H\"older continuous for any $\theta > H(\mu)/L(A_{t_0})$.
\end{theorem}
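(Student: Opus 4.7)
The plan is to use the heteroclinic tangency at $t_0$ to construct, at every large scale $n$, a family of cylinders on which the cocycle is nearly parabolic, and then to exploit positive winding to force small parameter perturbations to produce many simultaneous discrete jumps in the rotation number. Matching the cylinder size (controlled by $H(\mu)$) against the parameter sensitivity (controlled by $L(A_{t_0})$) will yield the desired upper bound on any H\"older exponent for $\rho$.

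Concretely, I would take the tangency triple $(B, T, D)$ at $t_0$ and write $B, T, D$ as finite words in the generators $A_1, \ldots, A_\kappa$. By Proposition \ref{prop:propertiesTangecies} item \ref{prop:propertiesTangecies-item2}, $(B^n, T, D^n)$ is also a tangency. Setting $\ell_n = n(|B|+|D|) + |T|$, the cylinder $[w_n] \subset \Sigma$ whose defining word realizes the product $B^n T D^n$ has Bernoulli measure
\[
\mu([w_n]) \asymp \exp\bigl(-\ell_n H(\mu)\bigr).
\]
For $x \in [w_n]$, $A_{t_0}^{\ell_n}(x) = D^n T B^n$: the expansion by $B^n$ sends any generic direction near $\hat v_+(B)$, the transfer matrix $T$ takes it onto $\hat v_-(D)$, and $D^n$ then contracts along $\hat v_-(D)$. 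The resulting matrix therefore has its stable and unstable directions collapsing towards one another exponentially in $n$, i.e.\ it becomes \emph{nearly parabolic}. Using strong irreducibility, $L(A_{t_0}) > 0$, and the uniform large deviation estimates behind Theorem \ref{thm:DuKl2016}, we may compose $B$, $T$, $D$ with further elements of $\langle A\rangle^+$ so that the expansion/contraction rates of the tangency become as close to $L(A_{t_0})$ as desired.

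Next, I would activate the positive-winding hypothesis. By the chain rule, the derivative $\partial_t (A_t^{\ell_n}(x)\hat v)\vert_{t=t_0}$ is amplified through the hyperbolic factors by a product of squared norms growing like $e^{2n L(A_{t_0})}$, while the uniform positive winding at each generator step provides a positive seed. Consequently, a parameter increment $\delta$ with $\delta \gtrsim e^{-n L(A_{t_0})}$ suffices to drive the direction $t\mapsto A_t^{\ell_n}(x)\hat v$ through a full half-turn whenever $x \in [w_n]$, adding one unit to the winding count in formula \eqref{eq:130423.1}. By Birkhoff's theorem, the cylinder $[w_n]$ is visited with asymptotic frequency $\mu([w_n])$, and the monotonicity of the winding forbids cancellations between different visits. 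Summing,
\[
\rho\bigl([t_0, t_0+\delta]\bigr) \gtrsim \mu([w_n]) \asymp \exp\bigl(-\ell_n H(\mu)\bigr).
\]
Choosing $n = n(\delta)$ so that $\delta \asymp e^{-n L(A_{t_0})}$, and letting the word lengths of the tangency approach their optimal ratio with the hyperbolicity rate tending to $L(A_{t_0})$, this rewrites as $\rho([t_0, t_0+\delta]) \gtrsim \delta^{H(\mu)/L(A_{t_0})}$, contradicting any $\theta$-H\"older bound with $\theta > H(\mu)/L(A_{t_0})$ for $\delta$ small.

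The main obstacle is the quantitative version of the derivative bound $\partial_t (A_t^{\ell_n}(x)\hat v) \asymp e^{2n L(A_{t_0})}$ and of its consequence that a perturbation of size $e^{-nL(A_{t_0})}$ produces a full half-turn uniformly across $[w_n]$. This requires decomposing the chain-rule derivative and tracking how elementary winding contributions get amplified through the hyperbolic stretches of $B^n$ and $D^n$, while using large deviations to discard a negligible subset of atypical orbits within $[w_n]$. A secondary difficulty is arranging the hyperbolicity rate of the tangency to match $L(A_{t_0})$ in the limit; this is where one needs the density of periodic Lyapunov exponents near $L(A_{t_0})$, a standard consequence of strong irreducibility and the uniqueness of the stationary measure (Proposition \ref{prop:propertiesofTypicalCocycles} items \ref{prop:propertiesofTypicalCocycles-item4}--\ref{prop:propertiesofTypicalCocycles-item5}).
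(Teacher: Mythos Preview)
Your overall strategy is the right one and parallels the paper's, but there is a concrete error that makes the final comparison collapse. You write
\[
\mu([w_n]) \asymp \exp\bigl(-\ell_n\, H(\mu)\bigr),
\]
but for a \emph{specific} word $w_n$ realizing $D^n T B^n$ the Bernoulli measure is $\prod_j p_{(w_n)_j}$, whose exponential rate is the empirical letter-frequency entropy of the fixed words $B,T,D$, not the Shannon entropy $H(\mu)=-\sum_i p_i\log p_i$. These coincide only if the letter frequencies in the tangency word happen to equal $(p_1,\ldots,p_\kappa)$, which there is no reason to expect. The paper makes precisely this distinction: to a tangency word $\tau$ it attaches an intrinsic pair
\[
H(\tau)=-\frac{1}{|\tau|}\log\mu([0;\tau]),\qquad L(\tau)=\frac{1}{|\tau|}\log\norm{A_{t_0}^{|\tau|}(y)},
\]
and the mechanism first yields only $\theta\le H(\tau)/L(\tau)$, which a priori says nothing about $H(\mu)/L(A_{t_0})$. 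A separate, nontrivial step then manufactures \emph{typical} tangencies of arbitrarily large size with $(H(\tau),L(\tau))\to(H(\mu),L(A_{t_0}))$, by essentially the same matching construction applied recursively. You correctly flag the need to align $L(\tau)$ with $L(A_{t_0})$, but you overlook that the entropy side requires the same treatment; without it your inequality $\rho([t_0,t_0+\delta])\gtrsim \delta^{H(\mu)/L(A_{t_0})}$ simply does not follow.

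There is also a structural difference worth noting. The paper does not iterate the tangency as $(B^n,T,D^n)$. Instead it fixes a (typical, balanced) tangency word $\tau$ and sandwiches it between \emph{arbitrary} long typical blocks $B_n,D_n$ of length $n\gg|\tau|$, obtaining almost-matchings $(T\,B\,B_n,\,D_n\,D)$ at $t_0$; the positive winding is then used to locate an exact matching parameter $t_n^\ast$ with $|t_n^\ast-t_0|\lesssim e^{-|\tau|\,L(\tau)}$. This decoupling is what makes the measure of the matching event of order $e^{-|\tau|\,H(\tau)}$ (independent of $n$): only the central $|\tau|$ symbols are prescribed, while the flanking blocks are free up to a large-deviations exclusion. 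Your scheme prescribes the entire word of length $\ell_n$, forcing the much smaller cylinder probability and making the scale comparison come out as $H(\tau)/L(\tau)$ rather than $H(\mu)/L(A_{t_0})$.
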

We stress here that, by Theorem \ref{thm:DuKl2016}, the above assumptions guarantee that $L$ is $\theta$-H\"older continuous in a neighborhood of $A_{t_0}$ in $\SL_2(\R)^{\kappa}$ for some $\theta\in (0,1)$. The above result provides a limitation for such $\theta$ when $H(\mu)/L(A_{t_0}) < 1$.
\begin{remark}
\normalfont
    In \cite{BeDu2022}, Theorem \ref{thm:upperBoundRegularityRotationNumber} is proved, but in the context of families Schr\"odinger cocycles and the integrated density of states (see Example \ref{ex:SchrodingerWinding}). After careful adaptation of the techniques in \cite{BeDu2022} the more general version of the result, Theorem \ref{thm:upperBoundRegularityRotationNumber}, was obtained in \cite{BeDuCaFrKl2022}.
\end{remark}

Before we discuss the ideas behind the proof of Theorem \ref{thm:upperBoundRegularityRotationNumber}, we mention how to build families of cocycles satisfying the assumptions of this theorem and how to use these families to have a similar result for the Lyapunov exponent function. This type of relation was first established by Thouless, in \cite{Th1972}, for Schr\"odinger cocycles in the Anderson model (see Example \ref{ex:SchrodingerWinding}). For more general affine families locally constant cocycles, we have the following result.
\begin{theorem}[Dynamical Thouless formula, \cite{BeDuCaFrKl2022}, \cite{BerDuj2012}]
\label{thm:dynamicalThoulessFormula}
    Let $A\in \SL_2(\R)^{\kappa}$. Assume that there exists a vector of matrices $(B_1,\ldots, B_{\kappa})$ such that
    \begin{enumerate}
        \item\label{thm:dynamicalThoulessFormula-item1} $\text{rank}(B_j) = 1$ and $A_j(\text{Ker}(B_j)) = \text{Im}(B_j)$;

        \item\label{thm:dynamicalThoulessFormula-item2} For every $i,j=1,\ldots, \kappa$, $\text{Im}(B_i) \neq \text{Ker}(B_j)$.
    \end{enumerate}
    Then, the affine family $\{A_t := A + tB\}_{t\in \R}\subset \SL_2(\R)$ satisfies, for all $t\in \C$,
    \begin{align}
    \label{eq:ThoulessFormula}
        L(A_t) = L(B) + \int^{\infty}_{-\infty}\log|t - s|\, d\rho(s).
    \end{align}
\end{theorem}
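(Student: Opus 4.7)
The plan is to view both sides of \eqref{eq:ThoulessFormula} as subharmonic functions of $t \in \C$, identify their Riesz measures with the fibered rotation measure $\rho$, and match their behavior at $\infty$. Liouville's theorem then forces equality up to an additive constant that is read off at infinity as $L(B)$.

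First, one checks that $A_t = A + tB$ defines an $\SL_2(\C)$-cocycle for every $t \in \C$. Writing each rank-one $B_j$ as $u_j v_j^{\top}$, condition (\ref{thm:dynamicalThoulessFormula-item1}) translates into $v_j^{\top} A_j^{-1} u_j = 0$, and the matrix-determinant lemma yields $\det(A_j + tB_j) = \det(A_j)(1 + t\, v_j^{\top} A_j^{-1} u_j) = 1$ identically in $t$. Hence $t \mapsto A_t^n(x)$ is polynomial of degree $\leq n$, so $\tfrac{1}{n}\log \norm{A_t^n(x)}$ is plurisubharmonic in $t$. By item \ref{prop:basicProperties-item2} of Proposition \ref{prop:basicProperties}, $u(t) := L(A_t)$ is the decreasing limit in $n$ of the subharmonic functions $\tfrac{1}{n}\int_{\Sigma} \log \norm{A_t^n(x)}\, d\mu(x)$, which are locally bounded above (by $\log^+\!|t| + C$); therefore $u$ is subharmonic on $\C$. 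Factoring $A_{j,t} = t(B_j + A_j/t)$ and using continuity of the Lyapunov exponent (Theorem \ref{thm:BoVi2017}, extended to a complex neighborhood), one obtains the asymptotics $u(t) = \log|t| + L(B) + o(1)$ as $|t| \to \infty$.

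The central step is identifying the Riesz measure $\tfrac{1}{2\pi}\Delta u$ with $\rho$. For fixed $x \in \Sigma$ and generically chosen $v_0, w_0 \in \R^2$, consider the scalar polynomial
\[
p_{n,x}(t) := \langle w_0^{\perp},\, A_t^n(x)\, v_0 \rangle \in \C[t],
\]
which has degree $n$ and leading coefficient $c_n(x) = \langle w_0^{\perp}, B^n(x)\, v_0\rangle$. Its real roots are precisely the parameters at which $A_t^n(x)\,\hat v_0 = \hat w_0$; hypotheses (\ref{thm:dynamicalThoulessFormula-item1})--(\ref{thm:dynamicalThoulessFormula-item2}) are exactly what is needed to make $\{A_t\}_{t \in \R}$ positively winding, so by \eqref{eq:130423.1} the number of such roots in a real interval $J$ equals $\ell_J(A_t^n(x)\,\hat v_0)/\pi + O(1)$, and the empirical distribution $\mu_{n,x} := \tfrac{1}{n}\sum_{j=1}^n \delta_{z_j}$ of the roots of $p_{n,x}$ converges weakly to $\rho$. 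Writing $\tfrac{1}{n}\log|p_{n,x}(t)| = \tfrac{1}{n}\log|c_n(x)| + \int \log|t-s|\, d\mu_{n,x}(s)$ and applying Kingman's theorem to the cocycle $B$ gives $\tfrac{1}{n}\log|c_n(x)| \to L(B)$ for $\mu$-a.e.\ $x$. Combined with the generic estimate $|p_{n,x}(t)| \asymp \norm{A_t^n(x)}$ and the upgrade of pointwise convergence to convergence in $L^1_{\mathrm{loc}}$ provided by subharmonicity, passing to the limit yields $u(t) = L(B) + \int \log|t-s|\, d\rho(s)$, which is \eqref{eq:ThoulessFormula}. The main obstacle will be this Riesz-measure identification: controlling the contribution of non-real roots of $p_{n,x}$ (so that the full empirical measure $\mu_{n,x}$, not merely its real projection, converges to $\rho$), and upgrading the a.e.\ pointwise limits to the distributional convergence required to match Laplacians.
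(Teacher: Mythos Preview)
Your approach is essentially the paper's own: both factor the scalar polynomial $p_{n,x}(t)=\langle w_0^\perp, A_t^n(x)\,v_0\rangle$ through its roots, identify the leading coefficient with $\langle w_0^\perp, B^n(x)\,v_0\rangle$ (whence the term $L(B)$), and read the empirical root distribution as the fibered rotation measure $\rho$ via the winding property. The potential-theoretic wrapping you add (subharmonicity of $t\mapsto L(A_t)$, Riesz measure $\tfrac{1}{2\pi}\Delta L = \rho$, matching at infinity) is not in the paper's sketch but is exactly the viewpoint of the other cited reference \cite{BerDuj2012}, which the paper mentions in the remark immediately following the theorem; so you are really interpolating between the two proofs the paper attributes the result to.

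One point worth noting: your stated ``main obstacle'' about non-real roots of $p_{n,x}$ is not actually an obstacle. The positively winding hypothesis forces the projective curve $\R\ni t\mapsto A_t^n(x)\,\hat v_0$ to be strictly monotone with common limit $\widehat{B^n(x)v_0}$ at $t\to\pm\infty$, so it winds exactly $n$ times around $\bP^1$ and therefore crosses $\hat w_0$ exactly $n$ times; since $\deg p_{n,x}=n$, \emph{all} roots are real (this is what the paper means by ``polynomial of degree $n$ (with real roots)''). Once that is observed, the convergence $\mu_{n,x}\rightharpoonup\rho$ is a direct consequence of \eqref{eq:130423.1}, and the only remaining analytic work is the routine upgrade from a.e.\ convergence of $\tfrac{1}{n}\log|p_{n,x}|$ to $L^1_{\mathrm{loc}}$, which subharmonicity and the uniform bound $\tfrac{1}{n}\log\|A_t^n\|\le \log^+|t|+C$ handle.
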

\begin{remark}
\normalfont
    The assumption \ref{thm:dynamicalThoulessFormula-item1} guarantees that indeed, $A_t\in \SL_2(\R)^{\kappa}$, for every $t\in \R$. Assumption \ref{thm:dynamicalThoulessFormula-item2} ensures that the family $\{A_t\}_{t\in \R}$ is positively winding and that $L(B)>-\infty$. 

    The statement of Theorem \ref{thm:dynamicalThoulessFormula} presented here, is a particular case of the result in \cite{BeDuCaFrKl2022}. The result \cite{BerDuj2012} also provides a good description of the fibered rotation number $\rho$ as the  Laplacian of the Lyapunov exponent in the sense of distributions.
\end{remark}
The basic idea explored in the proof of Theorem \ref{thm:dynamicalThoulessFormula} is the following: it is possible to recover the Lyapunov exponent of the cocycle $A_t$ using the expression
\begin{align*}
    L(A_t) =  \max_{i,j=1,2}\{
        \lim_{n\to\infty}\,\frac{1}{n}\log|\langle A^n_t(x)\, u_i,\, u_j\rangle|
    \},
\end{align*}
where $\{u_1,\, u_2\}$ is any basis of $\R^2$. Notice that for each $n\in \N$ and each $x\in \Sigma$ the function $t\mapsto \langle A^n_t(x)\, u_i,\, u_j\rangle$ is a polynomial of degree $n$ (with real roots) and leading coefficient given by $\langle B^n(x)\, u_i,\, u_j\rangle$. So, factoring this polynomial through its roots we have
\begin{align*}
    \lim_{n\to\infty}\,\frac{1}{n}\log|\langle A^n_t(x)\, u_i,\, u_j\rangle|
    = \lim_{n\to\infty}\frac{1}{n}|\langle B^n(x)\, u_i,\, u_j\rangle| + \lim_{n\to\infty}\frac{1}{n}\sum_{t^*}\log| t - t^*|.
\end{align*}
Choosing the basis $\{u_1,\, u_2\}$ appropriately such that the first term on the right-hand side above converges a.s. to $L(B)$, gives us that for any $i,j=1,2$,
\begin{align*}
    \lim_{n\to\infty}\,\frac{1}{n}\log|\langle A^n_t(x)\, u_i,\, u_j\rangle| = L(B) + \lim_{n\to\infty}\frac{1}{n}\sum_{t^*}\log| t - t^*|.
\end{align*}
Now, using the winding property we see that the second term in the right-hand side above converges to the desired integral. 

Notice that the formula in \eqref{eq:ThoulessFormula} gives a relation between the regularity of the function $t\mapsto L(A_t)$ and the regularity of the fibered rotation number $\rho$. Indeed, the integral on the right-hand side of the formula can be rewritten, using integration by parts, as the Hilbert transform of $\rho$. By a result of Goldstein and Schlag \cite[Lemma~10.3]{GoSc2001} the Hilbert transform preserves the H\"older modulus of continuity. Therefore, the following result is a consequence of theorems \ref{thm:upperBoundRegularityRotationNumber}, \ref{thm:dynamicalThoulessFormula} and item \ref{prop:propertiesTangecies-item4} of Proposition \ref{prop:propertiesTangecies}.
\begin{theorem}[J. Bezerra, P. Duarte, A. Cai, C. Freijo, S. Klein, \cite{BeDuCaFrKl2022}]
\label{thm:BeDuCaFrKl2022}
    Assume that $A\in \SL_2(\R)^{\kappa}$ is not uniformly hyperbolic and $L(A)>0$. If $\theta > H(\mu)/L(A)$, then $L$ is not $\theta$-H\"older continuous in a neighborhood of $A$ is $\SL_2(\R)^{\kappa}$.
\end{theorem}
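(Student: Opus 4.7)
The plan is to combine the upper bound on the H\"older regularity of the fibered rotation measure from Theorem \ref{thm:upperBoundRegularityRotationNumber} with the dynamical Thouless formula of Theorem \ref{thm:dynamicalThoulessFormula}, in order to transport the failure of H\"older regularity from $\rho$ to the Lyapunov exponent $L$, along an affine one-parameter family passing through a cocycle close to $A$.

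\textbf{Reduction to a good cocycle near $A$.} Given any prescribed neighborhood $\mathcal{U}$ of $A$ in $\SL_2(\R)^{\kappa}$, I would first produce $A^{*}\in \mathcal{U}$ that is strongly irreducible, not uniformly hyperbolic, admits a heteroclinic tangency, and still satisfies $\theta > H(\mu)/L(A^{*})$. Density of tangencies outside the uniformly hyperbolic set (item \ref{prop:propertiesTangecies-item4} of Proposition \ref{prop:propertiesTangecies}), combined with the $G_{\delta}$-density of strongly irreducible cocycles (item \ref{prop:propertiesofTypicalCocycles-item1} of Proposition \ref{prop:propertiesofTypicalCocycles}), allows one to arrange both conditions simultaneously: small deformations in the $3\kappa$-dimensional manifold $\SL_2(\R)^{\kappa}$ have enough parameters to meet the codimension-one tangency condition while landing in the strongly irreducible set. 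Continuity of $L$ in the locally constant setting (Theorem \ref{thm:BoVi2017}) ensures that the strict inequality $\theta > H(\mu)/L(A)$ persists as $\theta > H(\mu)/L(A^{*})$ once $A^{*}$ is close enough to $A$; unboundedness of the semigroup is automatic from $L(A^{*})>0$.

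\textbf{Affine family and Thouless formula.} Next, I would pick a vector $B = (B_1,\ldots, B_{\kappa})$ of rank-one matrices meeting the algebraic conditions of Theorem \ref{thm:dynamicalThoulessFormula}, namely $A^{*}_i(\text{Ker}(B_i)) = \text{Im}(B_i)$ and $\text{Im}(B_i)\neq \text{Ker}(B_j)$ for all $i,j$. These are finitely many equalities plus open non-degeneracies, easily realized by a generic choice of rank-one perturbations tailored to $A^{*}$. The resulting family $\{A^{*}_t := A^{*}+ tB\}_{t\in \R}$ lies in $\SL_2(\R)^{\kappa}$, is positively winding, and Theorem \ref{thm:dynamicalThoulessFormula} gives
\begin{align*}
    L(A^{*}_t) = L(B) + \int_{-\infty}^{\infty}\log|t-s|\, d\rho(s),
\end{align*}
where $\rho$ is the fibered rotation measure of the family. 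Since $A^{*}_0 = A^{*}$ is strongly irreducible, unbounded, and admits a heteroclinic tangency, Theorem \ref{thm:upperBoundRegularityRotationNumber} applied at $t_0 = 0$ implies that the distribution function of $\rho$ is not $\theta$-H\"older continuous.

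\textbf{Transfer to $L$ and conclusion.} The right-hand side of the Thouless formula is, up to integration by parts and a smooth correction, the Hilbert transform of the distribution function of $\rho$. The Goldstein-Schlag lemma \cite[Lemma~10.3]{GoSc2001} asserts that the Hilbert transform, equivalently the logarithmic potential, preserves H\"older moduli of continuity on compact intervals in both directions. Thus, if $t\mapsto L(A^{*}_t)$ were $\theta$-H\"older in a neighborhood of $t = 0$, the distribution of $\rho$ would inherit $\theta$-H\"older regularity, contradicting the previous step. Hence $L$ is not $\theta$-H\"older continuous in any neighborhood of $A^{*}$, and since $A^{*}\in \mathcal{U}$ while $\mathcal{U}$ was arbitrary, the stated failure in every neighborhood of $A$ follows. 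The main obstacle is the \emph{converse} direction of the Goldstein-Schlag estimate: recovering the modulus of continuity of a measure from that of its logarithmic potential requires careful scale matching between the regularity loss extracted from Theorem \ref{thm:upperBoundRegularityRotationNumber} and the candidate exponent $\theta$. Arranging the tangency, strong irreducibility, and Thouless compatibility jointly at $A^{*}$ is technical but follows from the open or generic character of most of the relevant constraints.
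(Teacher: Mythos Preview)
Your proposal is correct and follows essentially the same route as the paper, which derives the theorem from Theorem~\ref{thm:upperBoundRegularityRotationNumber}, Theorem~\ref{thm:dynamicalThoulessFormula}, item~\ref{prop:propertiesTangecies-item4} of Proposition~\ref{prop:propertiesTangecies}, and the Goldstein--Schlag lemma on the Hilbert transform. Your treatment is in fact more explicit than the paper's one-line deduction: you spell out the perturbation to a nearby strongly irreducible cocycle with a tangency and the use of continuity of $L$ to preserve the inequality $\theta > H(\mu)/L(A^{*})$. The worry you flag about the ``converse'' direction of Goldstein--Schlag is not a real obstacle: via the Thouless formula $L$ and $\pi$ times the distribution of $\rho$ are boundary values of harmonic conjugates, so each is the Hilbert transform of the other up to sign, and \cite[Lemma~10.3]{GoSc2001} applies symmetrically.
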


We now make a few comments about the proof of Theorem \ref{thm:upperBoundRegularityRotationNumber}. Naively speaking, the presence of a tangency allows us to produce sequences $x\in \Sigma$,  called \emph{matchings} of size $k$, for any $k$ sufficient large, such that moving the parameter $t$ inside a small interval $I_k$ of size $ e^{-k\,(L(A)-\varepsilon)}$  centered at $t_0$ the projective curve $I_k \ni t\mapsto A^k(x)\, \hat v$  goes one full circle around $\bP^1$. In particular, $\ell_{I_k}(A^k(x)\hat v)\geq \pi$. This behavior is additive in the sense that factoring $A^{km}(x)=\prod_j A^k(\sigma^{jk}(x))$, the numbers of full circles that the block curves $I_k\ni t\mapsto A^k(\sigma^{jk}(x)) \hat v$ go around $\bP^1$ essentially add up. Hence
\begin{align*}
    \frac{1}{km\pi}\ell_{I_k}(A^{km}(x))
    \geq \frac{1}{km\pi}\sum_{j=0}^{k-1} \ell_{I_k}(A^k(\sigma^{jk}(x))\, \hat v)
    \geq \frac{1}{km\pi}\, \sum_{j=0}^{k-1}\, \mathcal{X}_{\mathcal{M}(n,J)}(\sigma^{jk}(x)),
\end{align*}
where $\mathcal{M}(k,J)\subset \Sigma$ is the set of matchings of size $k$ in the interval $J$ (definition below). Therefore, we may use Birkhoff's ergodic theorem to guarantee that
\begin{align}
\label{eq:140423.1}
    \frac{\rho(I_k)}{|I_k|^{\theta}}
    \geq \frac{1}{k\pi} \frac{\mu(\mathcal{M}(k, J))}{|I_k|^{\theta}}.
\end{align}
In particular, if the right-hand side (RHS) of the above expression explodes, we obtain that $\rho$ cannot be $\theta$-H\"older continuous. So, a precise study of the set of matchings is required in order to give a lower bound for its probability.

Formally, we say that a sequence $x\in \Sigma$ is a $\gamma$-\emph{matching} of size $k$ at $t_0$ if there exist directions $\hat v, \hat w\in \bP^1$ and a natural number $1\leq m\leq k$ such that 
\begin{enumerate}
    \item $A^k_{t_0}(x)\, \hat v = \hat w$;

    \item $e^{\gamma} \leq \norm{A^m_{t_0}(x)\, \hat v} \leq \norm{A^m_{t_0}(x)} \leq 2e^{\gamma}$;

    \item $e^{\gamma}
    \leq \norm{A^{-(k-m)}_{t_0}(\sigma^k(x))\, \hat w}
    \leq \norm{A^{-(k-m)}_{t_0}(\sigma^k(x))} \leq 2e^{\gamma}$;
\end{enumerate}
In this case, we say that the pair of matrices $(B, D)$, where $B = A^m(x)$ and $D = A^{k-m}(\sigma^m(x))$,  connects $\hat v$ and $\hat w$ and produces the $\gamma$-matching (the value used for $\gamma$ is  $\gamma=k\, (L(A)-\varepsilon)$ for some conveniently small $\varepsilon$).
\begin{figure}[ht]
    \centering
    \includegraphics[width=\textwidth]{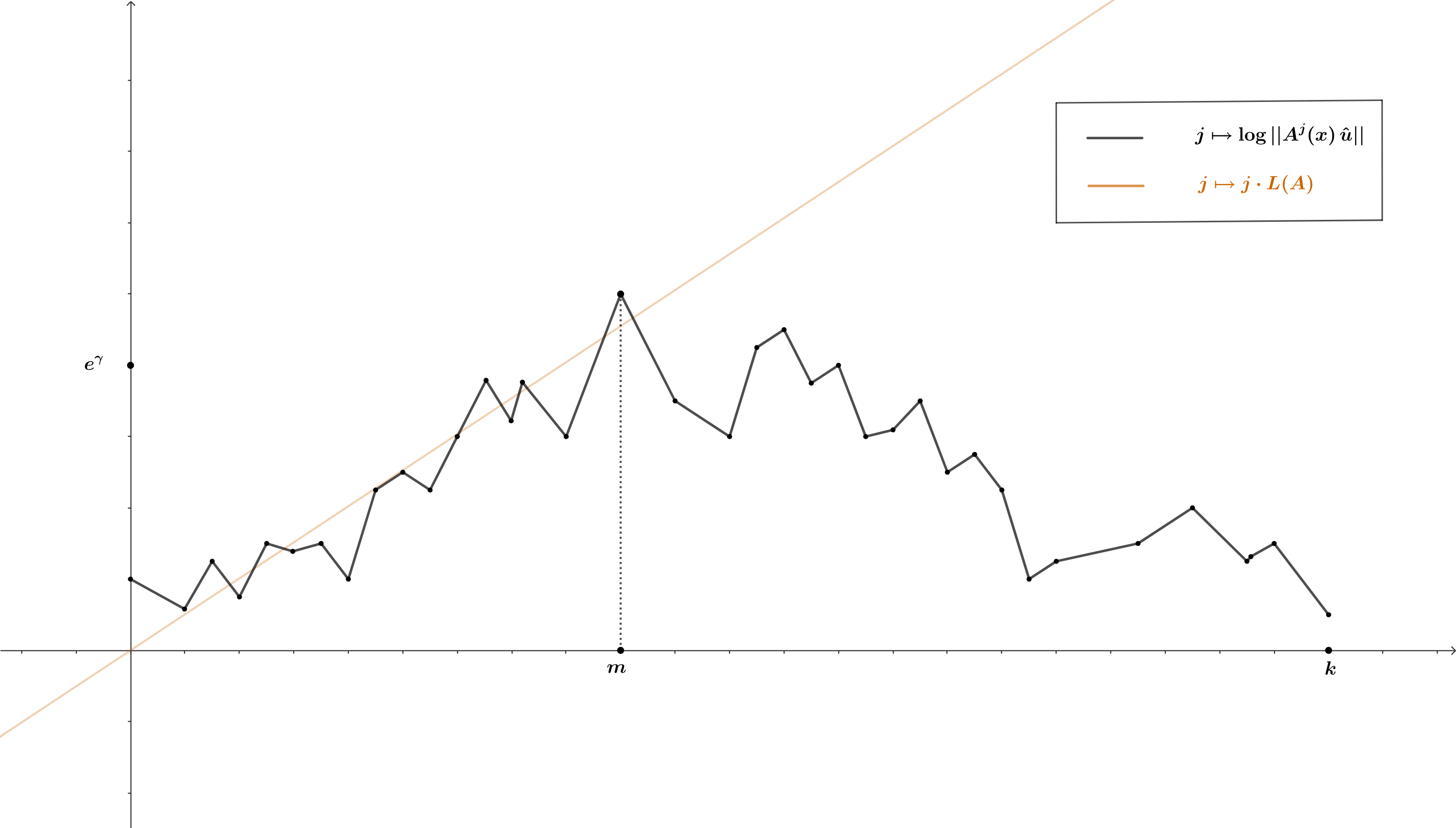}
    \caption{
        Growth of the map $j\mapsto \log\norm{A^j(x)\, \hat u}$ for a $\gamma$-matching.
    }
    \label{pic:matchingGrowth}
\end{figure}

If $(B, T, D)$ is a \emph{ balanced  tangency }  for $A_{t_0}$, i.e., both matrices $B$ and $D$ correspond to hyperbolic words  with eigenvalues $\sim e^{\pm c}$, then we can easily find many other matchings for $A_{t_0}$. Indeed, the pair $(T\,B^m,\, D^m)$ produces a $mc$-matching for every $m\geq 1$. However, this way of finding matchings only gives us a set of zero probability, which is not suitable for our purposes. 

To obtain matchings notice that for positively winding families, picking any  directions $\hat v, \hat w\in \bP^1$, the points $A_t^m(x)\, \hat v$ and $A^{-(k-m)}_t(\sigma^k(x))\, \hat w$ move (w.r.t. $t$) in \emph{opposite directions} at a speed that is  uniformly bounded away from zero. Assuming that the norms of $A_t^k(x)$ and $A^{-(k-m)}_t(\sigma^k(x))\, \hat w$ are large (which can be provided by the  uniform large deviation estimates) this process eventually gives us a matching at some moment $t^*\in \R$, i.e.,
\begin{align*}
    A_{t^*}^k(x)\, \hat v
    = A^{-(k-m)}_{t^*}(\sigma^k(x))\, \hat w.
\end{align*}
\begin{figure}[ht]
    \centering
    \includegraphics[width=\textwidth]{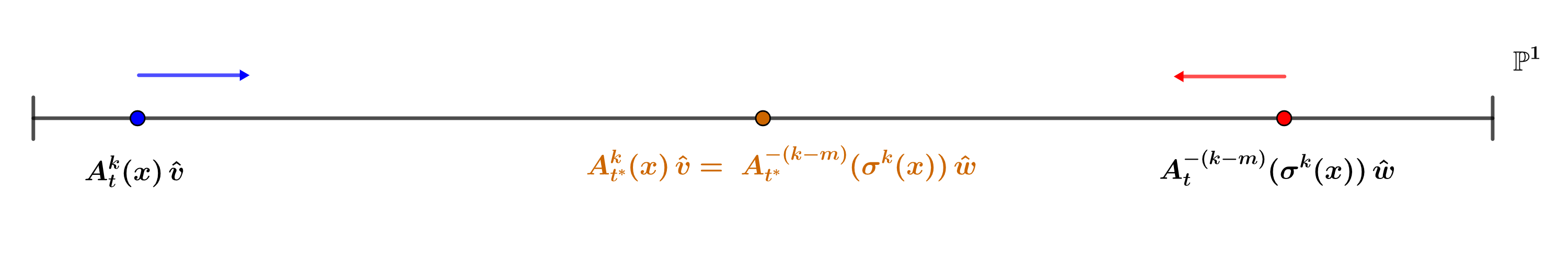}
    \caption{
        Moving the parameter $t$ to obtain matchings.
    }
    \label{pic:ideaForMathcings}
\end{figure}

To ensure that $t^\ast$ occurs in a  small neighborhood of $t_0$ the strategy is to use a tangency to force that at the initial time $t_0$ the points $A_{t_0}^k(x)\, \hat v$ and $A^{-k}_{t_0}(\sigma^k(x))\, \hat w$ are already close to each other.

Let $\tau$ be a finite word
associated to a \emph{balanced heteroclinic tangency} 
$(B,T,D)$ with $A_{t_0}^{|\tau|}(y) = D\, T\, B$
for some $y\in [0;\, \tau]$. More precisely let $|\tau|=m_B + m_D+m_T$ where $B=A_{t_0}^{m_B}(y)$,
$T=A_{t_0}^{m_T}(\sigma^{m_B} (y))$ and
$D= A_{t_0}^{m_D}(\sigma^{m_B+m_T} (y))$.
The size of the neighborhood $I$ where we will look for matchings is determined by
the \emph{Lyapunov exponent of the tangency} $\tau$  
    \begin{align*}
        L(\tau) := \frac{1}{|\tau|}\log\norm{A^{|\tau|}_{t_0}(y)}
        \approx \frac{1}{m_D}\log\norm{D}
        \approx \frac{1}{m_B}\log\norm{B}
    \end{align*}
in the sense that $|I| \sim e^{- |\tau| (  L(\tau)-\varepsilon))}$ for some conveniently and arbitrarily small $\varepsilon>0$. On the other hand, the measure of the matching occurrence event $\mathcal{M}(|\tau|,I)$  will be determined by the \emph{entropy of the tangency} $\tau$:
\begin{align*}
    H(\tau) := -\frac{1}{|\tau|}\log\mu([0;\, \tau]) ,
\end{align*}
in the sense that $\mu(\mathcal{M}(|\tau|,I))\geq e^{-|\tau|(H(\tau)-\varepsilon)}$ for some small $\varepsilon>0$.

The process to create multiple matchings goes as follows. Fix $\tau$ as before and take any points $\hat u, \hat w\in\bP^1$. Consider matrix products $B_n$ and $D_n$ associated with much longer words of size $n\gg |\tau|$. For most choices of these words we will have
\begin{enumerate}
	\item $\norm{B_n\, u} \gtrsim  e^{n\, (L(A)-\varepsilon)}$, $\norm{D_n^{-1}\, w} \gtrsim  e^{n\, (L(A)-\varepsilon)}$,
	\item $B_n\, \hat u$ is  at some distance  bounded away from the stable eigen-direction of $B$,
	\item $D_n^{-1}\, \hat w$ is   at a distance  bounded away from the unstable eigen-direction of $D$.
\end{enumerate}
    
Then the pairs $(T\, B\, B_n,\, D_n\, D)$ are almost matchings at $t_0$ because
    \begin{align*}
        d(T\, B\, B_n\, \hat u, D^{-1} \, D_n^{-1}\hat w)\lesssim e^{-|\tau|(L(\tau)-\varepsilon)}.
    \end{align*}
See picture \ref{pic:matchings} below for a graphical explanation of this process of creation of matchings.

% \red{ The matching is produced composing these matrices and moving the parameter $t^*$, obtaining:
% \begin{align*}
%     T_{t^*}\, B_{t^*}\, A^n_{t^*}(x)\, \hat u = D^{-1}_{t^*}\, A^n_{t^*}(y)^{-1}\hat w.
% \end{align*}
% }
True matchings  occur at nearby parameters $t^\ast_n$ with $|t_n^\ast-t_0|\lesssim e^{-|\tau|\,(L(\tau) -\varepsilon)}$, in the sense that the pair $(T\, B\, B_n,\, D_n\, D)$ 
will have a matching occurring at $t^\ast_n$.
\begin{figure}[ht]
    \centering
    \includegraphics[width=\textwidth]{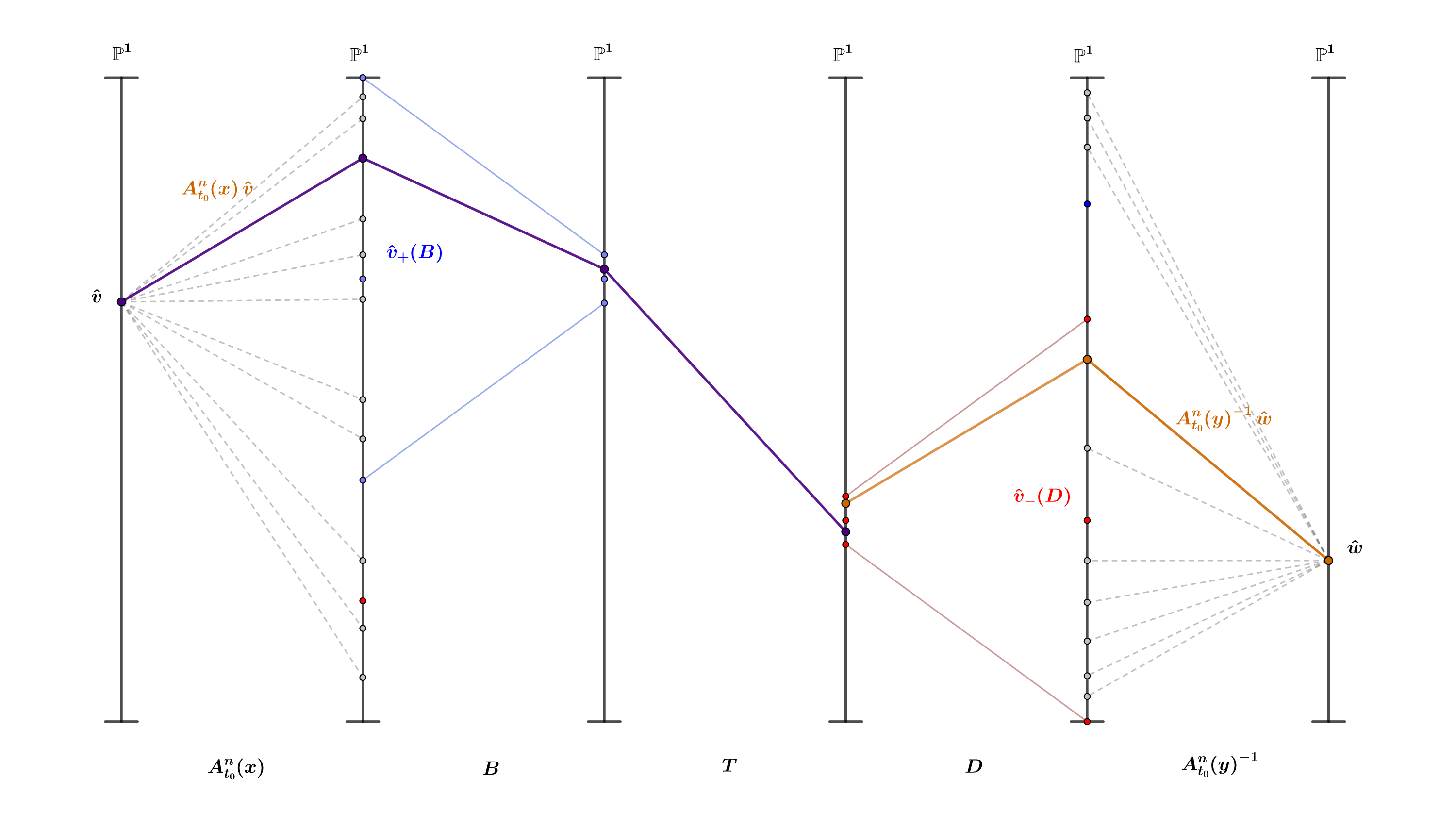}
    \caption{Construction of matchings. The vertical bars represent different copies of the projective space and between them the respective matrix action is depicted.}
    \label{pic:matchings}
\end{figure}
In particular, the right-hand side of \eqref{eq:140423.1} is bounded from below by  $e^{-|\tau|(H(\tau) -  \theta\, L(\tau)-2\varepsilon)}$.

Since we are able to produce tangencies $\tau$ such that $|\tau|\to\infty$, if we could prove that  $\theta > H(\tau)/L(\tau)$ for every such tangency, then  $\theta > (H(\tau)-\varepsilon)/(L(\tau)-\varepsilon)$  for $\varepsilon$ small enough, and we would conclude our result.

To overcome the possibility that the Lyapunov exponent and entropy of the tangency could be very different from $H(\mu)$ and  $L(A_{t_0})$ we can produce  many \emph{typical tangencies} of arbitrarily large size, i.e., tangencies $\tau$ where  $(H(\tau), L(\tau))$ is arbitrarily close to $(H(\mu),L(A_{t_0}))$,  by essentially the same process that we are using to produce matchings. This concludes the sketch of the proof of Theorem \ref{thm:upperBoundRegularityRotationNumber}.

\subsection{Regularity and dimension of the stationary measures}
The relation between the entropy and the Lyapunov exponent obtained in the bound for the regularity of $L$ in Theorem \ref{thm:BeDuCaFrKl2022} resembles Ledrappier-Young type of formulas (see \cite{LeYoI1985} and \cite{LeYoII1985}). In the case of linear cocycles this quocient is related with the dimension of the forward and backward stationary measures.

The definition of upper and lower local dimensions of a projective probability measure $\eta$ goes as follows:
\begin{align*}
    \overline{\dim}\, (\eta;\, \hat v)
    = \varlimsup_{r\to 0^+}\, \frac{\log\eta(B_r(\hat v))}{\log r}
    \quad
    \text{and}
    \quad
    \underline{\dim}\, (\eta;\,\hat v)
    = \varliminf_{r\to 0^+}\, \frac{\log\eta(B_r(\hat v))}{\log r}.
\end{align*}
We say that $\eta$ is \emph{exact dimensional} if there exists a number $\alpha \geq 0$ such that for $\eta$-a.e. every $\hat v\in \bP^1$ we have $\overline{\dim}\, (\eta;\, \hat v) = \underline{\dim}\, (\eta;\,\hat v) = \alpha$. In this case we write $\alpha=\dim \eta$.

\begin{theorem}[M. Hochman, Solomyak, \cite{HoSo2017}]
    Assume that $A$ is irreducible with positive Lyapunov exponent. Then the unique stationary measure is exact dimensional. Moreover,
    \begin{align}
    \label{eq:140423.2}
        \dim \eta = \frac{h_A(\eta)}{2\, L(A)},
    \end{align}
    where $h_A(\eta)$ is the Furstenberg entropy introduced in \eqref{eq:FurstenbergEntropy}.
\end{theorem}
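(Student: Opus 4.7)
The plan is to combine (i) a Birkhoff identity for the Radon--Nikodym derivative of the iterated push-forward $A^n(x)_*\eta$, and (ii) the conformal contraction of the projective action of $\SL_2(\R)$, which shrinks small projective distances around $\hat v$ by the reciprocal factor $\|A^n(x)v\|^{-2}$.

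First, I would read off from items \ref{prop:propertiesofTypicalCocycles-item4} and \ref{prop:propertiesofTypicalCocycles-item9} of Proposition~\ref{prop:propertiesofTypicalCocycles} that the hypotheses force strong irreducibility, a unique non-atomic stationary measure $\eta$, and ergodicity of $(F_A^+,\mu\times\eta)$. Stationarity $\eta=\sum_i p_i (A_i)_*\eta$ yields $(A_i)_*\eta \ll \eta$, so the densities $\rho_i := d(A_i)_*\eta/d\eta$ are well defined, and the elementary identity $\frac{d(A_i^{-1})_*\eta}{d\eta}(\hat v) = \rho_i(A_i\hat v)^{-1}$ together with a change of variables shows $h_A(\eta) = \sum_i p_i \int \rho_i\, \log \rho_i\, d\eta$. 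Next I would set $\tilde\psi(x,\hat v) := \log \rho_{x_0}(A(x)\hat v)$; the chain rule for push-forwards under a composition produces the additive-cocycle identity
\begin{align*}
\log\frac{d A^n(x)_*\eta}{d\eta}\big(A^n(x)\hat v\big) \;=\; \sum_{k=0}^{n-1}\tilde\psi(F_A^k(x,\hat v)),
\end{align*}
and another change of variables gives $\int\tilde\psi\,d(\mu\times\eta) = h_A(\eta)$. Birkhoff's theorem then supplies the key Radon--Nikodym convergence
\begin{align*}
\lim_{n\to\infty}\frac{1}{n}\log\frac{d A^n(x)_*\eta}{d\eta}\big(A^n(x)\hat v\big) \;=\; h_A(\eta) \qquad (\mu\times\eta)\text{-a.s.}
\end{align*}

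Next I would exploit the conformal formula $d(g\hat v, g\hat w) = d(\hat v,\hat w)/(\|gv\|\,\|gw\|)$, which holds because $\det g = 1$ preserves parallelogram area in $\R^2$. Setting $\hat w = A^n(x)\hat v$ and $r_n = \delta\, e^{-2nL(A)}$ for a small fixed $\delta>0$, and using that non-atomicity of $\eta$ gives $\hat v \neq \hat e^s(x)$ almost surely (hence $\tfrac{1}{n}\log\|A^n(x)v\|\to L(A)$ by Oseledets), this formula shows that $A^n(x)^{-1}$ maps $B(\hat w,r_n)$ onto a region comparable to $B(\hat v,\delta)$ for all large $n$. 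A further change of variables plus the reciprocal identity $\frac{d (A^n(x))^{-1}_*\eta}{d\eta}(\hat v) = \big(\frac{d A^n(x)_*\eta}{d\eta}(A^n(x)\hat v)\big)^{-1}$ then yields, once the density is seen to be essentially constant over $B(\hat v,\delta)$,
\begin{align*}
\eta(B(\hat w, r_n)) \;\asymp\; e^{-n h_A(\eta) + o(n)}\cdot \eta(B(\hat v,\delta)).
\end{align*}
Dividing $\log \eta(B(\hat w, r_n))$ by $\log r_n = -2nL(A)+O(1)$ and letting $n\to\infty$ gives the limit $h_A(\eta)/(2L(A))$. Monotonicity of $r\mapsto \eta(B(\hat w,r))$ together with the geometric spacing of the $r_n$'s upgrades this to a genuine limit as $r\to 0^+$. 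Finally, since the distribution of $\hat w = A^n(x)\hat v$ under $\mu\times\eta$ is $\eta$ by stationarity, the above would establish exact dimensionality of $\eta$ with $\dim\eta = h_A(\eta)/(2 L(A))$.

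The hard part will be precisely the subexponential oscillation claim for the density $\frac{d (A^n(x))^{-1}_*\eta}{d\eta}$ across the fixed-scale ball $B(\hat v,\delta)$, which amounts to a delicate regularity statement for the stationary measure. The natural approach is a bounded-distortion analysis: establish local H\"older regularity of each $\rho_i$, then combine it with a large-deviation estimate ensuring that, outside an exponentially small set of $x$, the pulled-back ball $A^n(x)^{-1} B(\hat w, r_n)$ lies in a region on which the density varies by at most subexponential factors. An alternative is to bypass the pointwise density step by invoking Shannon--McMillan--Breiman for a generating partition of $\Sigma^+\times\bP^1$ tuned to the scale $e^{-2nL(A)}$, letting the Ledrappier--Young ratio ``entropy per step over Lyapunov exponent'' emerge directly.
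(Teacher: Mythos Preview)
The paper does not give a proof of this theorem; it is stated as a cited result of Hochman and Solomyak with no accompanying argument or outline. There is therefore nothing in the paper to compare your proposal against.

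On the substance of your attempt: the skeleton you lay out---Birkhoff for the Radon--Nikodym cocycle combined with the conformal distortion identity $d(g\hat v,g\hat w)=d(\hat v,\hat w)/(\|gv\|\,\|gw\|)$---is the correct heuristic, and you have honestly identified where the argument is incomplete. But the gap you flag is not a technicality to be patched by bounded distortion; it is essentially the whole theorem. The densities $\rho_i=d(A_i)_*\eta/d\eta$ are in general neither bounded nor H\"older: for typical irreducible cocycles the stationary measure $\eta$ is purely singular, and there is no reason for $\rho_i$ to have any pointwise regularity. So the first route you propose (local H\"older regularity of $\rho_i$ plus large deviations) is not available. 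Your second suggestion, going through a Shannon--McMillan--Breiman argument tuned to the contraction scale $e^{-2nL(A)}$, is much closer to what is actually done, but turning it into a proof of \emph{exact} dimensionality---as opposed to, say, coincidence of the entropy dimension with $h_A(\eta)/(2L(A))$---requires the scale-entropy machinery developed by Hochman and adapted to this setting in \cite{HoSo2017}, not a direct covering estimate. In short, your outline recovers the Ledrappier--Young heuristic but does not supply the ingredient that makes the Hochman--Solomyak theorem a theorem.
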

Notice that the $h_A(\eta) \leq H(\mu)$. In \cite{HoSo2017} the authors proved that if the group generated by the matrices $A_1,\ldots, A_{\kappa}$ is a free group and the set of cocycle generators $\{A_1,\ldots, A_\kappa\}$ is \emph{Diophantine} (see \cite{HoSo2017}) then the formula \eqref{eq:140423.2} improves to
\begin{align*}
    \dim \eta^+ = \dim\eta^- = \frac{H(\mu)}{2\, L(A)},
\end{align*}
where $\eta^+$ and $\eta^-$ denotes respectively the forward and backward stationary measure of the cocycle $A$. So, the upper bound for the regularity of the Lyapunov exponent is in these cases is nothing more than $\dim \eta^+ + \dim \eta^-$.

\begin{problem}
    Is $\dim \eta^+ + \dim \eta^-$ always an upper bound for the regularity of the Lyapunov exponent?
\end{problem}

A related problem concerns the symmetry of the forward and backward systems.
\begin{problem}
    Is there an example of irreducible cocycle with $L(A)>0$ where $\dim \eta^+ \neq \dim \eta^-$?
\end{problem}

We could also wonder what is the lower bound for the regularity of typical cocycles. For a cocycle $A\in \SL_2(\R)^{\kappa}$ define the quantity
\begin{align*}
    \theta_A = \sup\{\theta\geq 0\colon
        A \text{ is } \theta-\text{H\"older around } A 
    \}.
\end{align*}
\begin{problem}
\label{problem:lowerBoundRegularityLCC}
    Assume $A$ irreducible with $L(A)>0$. What is the natural lower bound for $\theta_A$?
\end{problem}
As we have seen in the discussion of the proof of Theorem \ref{thm:DuKl2016}, the quantity $\theta_A$ is associated to the contraction properties of the Markov operator. This could be the path to obtain an interesting answer to problem \ref{problem:lowerBoundRegularityLCC}.

\begin{remark}
\normalfont
    The topics discussed above for locally constant cocycles, i.e., maps $A:\Sigma\to\SL_2(\R)$ that depend only on the zero-th coordinate, can be generalized to maps that depends on a finite (but fixed, say $m$) number of coordinates. Indeed such   cocycles can always be regarded  as  locally constant maps over a full  shift in $\kappa^m$ symbols.
\end{remark}

A full understanding of the Lyapunov exponent function among the locally constant is far from complete. One example of the lack of precise knowledge about this function is given by the next open problem
\begin{problem}
    Is there an example of locally constant cocycle which is a $C^r$-continuity point of the Lyapunov exponent for some $r\geq 1$?
\end{problem}
\begin{remark}
\normalfont
    We would expect that such a cocycle admits a very regular stationary measure. One example of a cocycle with $C^r$, for $r$ sufficiently large,  stationary measure was provided by J. Bourgain \cite{Bo2000}. See \cite{BeQu2018} for a better discussion on the regularity of stationary measures in the context of group actions.
\end{remark}

%%%%%%%%%%%%%%%%%%%%%%%%%%%%%%%%%%%%%%%%%%%%%%%%%%%%%%%%%%%%%%%%%%%%%%%

\section{Holder random cocycles}
\label{sec:holderRandomCocycles}
Now we go back to the initial discussion for continuous linear cocycles $A:\Sigma\to\SL_2(\R)$ possibly depending on infinitely many coordinates. To fully extract the hyperbolic properties of the basic dynamics it will be convenient to establish some notation.

For each $x = (x_n)_{n\in\Z}\in \Sigma$ we may write $x = (x^-,x^+)$, where $x^- = (x_{-n})_{n\leq0}$ and $x^+ = (x_n)_{n\geq 0}$. We define the local stable and local unstable set of $x$ respectively by
\begin{align*}
    W^s_{\text{loc}}(x) = \left\{
        y\in \Sigma\colon\,
        x^+ = y^+
    \right\}
    \quad
    \text{and}
    \quad
    W^u_{\text{loc}}(x) = \left\{
        y\in \Sigma\colon\,
        x^- = y^-
    \right\}.
\end{align*}
For each $x,y\in \Sigma$ with $x_0 = y_0$, $W^s_{\text{loc}}(x)$ and $W^u_{\text{loc}}(y)$ intersect each other at a single point, namely, $[x,y] := (y^-, x^+)$.

We consider the space $C^{\gamma}(\Sigma,\, \SL_2(\R))$ endowed with the $\gamma$-H\"older topology $C^{\gamma}$: $A\in C^{\gamma}(\Sigma,\, \SL_2(\R))$, $\norm{A}_{C^{\gamma}} = \norm{A}_{\infty} + [A]_{\gamma}$, where
\begin{align*}
    [A]_{\gamma} = \sup_{x,y\in \Sigma}\frac{\norm{A(x) - A(y)}}{d(x,y)^{\gamma}},
\end{align*}
and $d$ denotes  is  the usual distance on $\Sigma$,
\begin{equation}
	\label{distance def}
	d(x,y) := \zeta^{-N(x,y)}, \quad \text{ with }\;  N(x,y) = \inf\{\vert n\vert \colon\, x_n\neq y_n\},\quad   \zeta>1.
\end{equation}

Take $A\in C^{\gamma}(\Sigma,\, \SL_2(\R))$ and let $F_A:\Sigma\times\bP^1\to\Sigma\times\bP^1$ be the projective linear cocycle associated with $A$. Let $\mathcal{M}_\mu(F_A)$ be the space of $F_A$ invariant probability measures that project on $\mu$ in the first coordinate.

The analysis of $F_A$-invariant measures in the study of Lyapunov exponent for cocycles, now depending on infinitely many coordinates, plays a very similar role to the analysis of stationary measures measures for locally constant cocycles. So, we start listing a few properties of this class of measures.
\begin{proposition}
\label{prop:propertiesMeasuresProductSpace}
    It holds that,
    \begin{enumerate}
        \item\label{prop:propertiesMeasuresProductSpace-item1} $\mathcal{M}_\mu(F_A)$ is non-empty, compact and convex. The extremal points of $\mathcal{M}_{\mu}(F_A)$ consist of ergodic measures for $F_A$;

        \item\label{prop:propertiesMeasuresProductSpace-item2} For each $m\in \mathcal{M}_\mu(F_A)$, there exists an (essentially unique) measurable family of measures $\{m_x\}_{x\in \Sigma}$ on $\bP^1$, the disintegration of $m$, such that for every measurable product $X\times\hat V\subset \Sigma\times\bP^1$,
        \begin{align*}
            m(X\times\hat V) = \int_X\, m_x(\hat V)\, d\mu(x).
        \end{align*}
        Moreover, $A(x)_*\, m_x = m_{\sigma(x)}$, for $\mu$ almost every $x\in \Sigma$;

        \item\label{prop:propertiesMeasuresProductSpace-item3} Assume that for every $x\in \Sigma$, $A(x) = A(x^+)$, i.e., $A$ only depends on the positive coordinates. The projection of an (ergodic) measure $m\in \mathcal{M}_\mu(F_A)$ on $\Sigma^{+}\times\bP^1$ is an (ergodic) probability measure invariant by $F_A^{+}$ (the natural map). Conversely, if $m^{+}$ is an (ergodic) $F_A^{+}$-invariant probability measure on $\Sigma^{+}\times\bP^1$ projecting on $\mu$, then there exists a unique (ergodic) measure $m\in \mathcal{M}_\mu(F_A)$ such that the projection is $m^{+}$. The disintegration of $m$ and $m^+$ are related by
        $$
            m^+_{x^+}=\int_{W^s_{loc}(x^+)} m_x\, d\mu^s_{x^+}(x),
            \quad 
            m_x=\lim_{n\to \infty}A^n(\sigma^{-n}(x))\, m^+_{{\sigma^{-n}(x)}^+},
        $$
        where $\{\mu^s_{x^+}\}_{x^+\in\Sigma^+}$ is the disintegration of $\mu$ with respect to the partition on local stable sets. A similar property holds when $A$ only depends on the negative coordinates;

        \item\label{prop:propertiesMeasuresProductSpace-item4} If $L(A)>0$, denote by $\hat e^s, \hat e^u$ the Oseledets directions (Proposition \ref{prop:basicProperties}). Write 
        \begin{align*}
            m^u = \int\, \delta_{\hat e^u}\, d\mu
            \quad
            \text{and}
            \quad
            m^s = \int\, \delta_{\hat e^s}\, d\mu.
        \end{align*}
        Then, $[m^u,\, m^s] = \mathcal{M}_\mu(F_A)$, i.e., any $m\in \mathcal{M}_\mu(F_A)$ can be written as
        \begin{align*}
            m = q_1\, m^u + q_2\, m^s = \int\, q_1\delta_{\hat e^u} + q_2\, \delta_{\hat e^s}\, d\mu.
        \end{align*}
        with $q_1\geq 0, q_2\geq 0$, $q_1 + q_2 = 1$.
    \end{enumerate}
\end{proposition}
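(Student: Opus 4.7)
The plan is to treat items 1--3 as bookkeeping via Krylov--Bogoliubov, Rokhlin disintegration, and the natural-extension construction, and to focus the real work on item \ref{prop:propertiesMeasuresProductSpace-item4}, where the dynamics enters through Oseledets. For item \ref{prop:propertiesMeasuresProductSpace-item1}, $\mathcal{M}_\mu(F_A)$ is closed and convex inside the weak-$\ast$ compact space of Borel probabilities on $\Sigma\times\bP^1$, hence itself compact. Non-emptiness comes from Ces\`aro-averaging the $F_A$-pushforwards of any initial lift $\mu\times\nu$; all accumulation points project on $\mu$ because the projection is equivariant. The identification of extremal points with ergodic measures is standard Choquet/ergodic decomposition. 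For item \ref{prop:propertiesMeasuresProductSpace-item2}, I would invoke Rokhlin's disintegration theorem on the projection to $\Sigma$ to produce $\{m_x\}$, and then deduce $A(x)_\ast m_x = m_{\sigma(x)}$ by testing $F_A$-invariance of $m$ against product functions $\varphi(x)\psi(\hat v)$ and using essential uniqueness of the disintegration.

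For item \ref{prop:propertiesMeasuresProductSpace-item3}, the hypothesis $A(x)=A(x^+)$ turns the projection $\pi\colon \Sigma\times\bP^1\to \Sigma^+\times\bP^1$ into a measurable factor conjugating $F_A$ with $F_A^+$, because the iterates of $F_A$ coincide along $W^s_{\text{loc}}(x)\times\{\hat v\}$. The direct implication is then just pushforward by $\pi$. The converse is a natural-extension construction: given $m^+$, define $m_x$ as the limit $\lim_n A^n(\sigma^{-n}x)_\ast m^+_{(\sigma^{-n}x)^+}$ (well-defined because $A$ is constant along local stable sets, so the iterates stabilize) and assemble $m = \int \delta_x\otimes m_x\,d\mu(x)$. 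The two displayed formulas then drop out of disintegrating $\mu$ along the local stable partition and interchanging integrals; ergodicity passes both ways because the base system is ergodic.

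The heart of the proof is item \ref{prop:propertiesMeasuresProductSpace-item4}. First I would verify $m^s,m^u\in \mathcal{M}_\mu(F_A)$: they project on $\mu$ by construction, and invariance follows from $A(x)\hat e^s(x)=\hat e^s(\sigma x)$ (and similarly for $u$); each is ergodic because it is measurably isomorphic via the graph to the ergodic factor $(\Sigma,\sigma,\mu)$. By the ergodic decomposition from item \ref{prop:propertiesMeasuresProductSpace-item1}, it then suffices to show that every ergodic $m\in\mathcal{M}_\mu(F_A)$ equals either $m^s$ or $m^u$. For such an $m$, Birkhoff applied to the observable $\psi(x,\hat v)=\log\|A(x)v\|$ yields, for $m$-a.e. $(x,\hat v)$,
\begin{align*}
    \lambda_+ := \int\psi\,dm = \lim_{n\to\infty}\tfrac{1}{n}\log\|A^n(x)v\|,
\end{align*}
while item \ref{prop:basicPropertiesContCocycles-Oseledets} of Proposition~\ref{prop:basicProperties} forces this limit to be $+L(A)$ precisely when $\hat v\neq \hat e^s(x)$ and $-L(A)$ precisely when $\hat v=\hat e^s(x)$. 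If $\lambda_+=-L(A)$, ergodicity gives $m$-a.e.\ $\hat v=\hat e^s(x)$ and hence $m=m^s$. If instead $\lambda_+=+L(A)$, I would apply the same mechanism to $F_A^{-1}$, using the identity $\int\log\|A^{-1}(x)v\|\,dm = -\lambda_+$ (a change of variable plus $F_A$-invariance) together with the backward half of Oseledets to force $m$-a.e.\ $\hat v=\hat e^u(x)$, i.e.\ $m=m^u$.

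The step I expect to be the most delicate is this two-sided Birkhoff argument in item \ref{prop:propertiesMeasuresProductSpace-item4}: one application alone only identifies $\hat v$ up to the complement of a single measurable section, which is \emph{not} enough to pin $m_x$ down to a Dirac mass. It is the combination of the forward and backward applications, using $F_A$-invariance of $m$ symmetrically and the fact that $\hat e^u(x)$ and $\hat e^s(x)$ are \emph{distinct} for a.e.\ $x$ when $L(A)>0$, that collapses $m_x$ onto exactly one of the two Oseledets directions. Once that dichotomy is established, all remaining assertions either reduce to standard measure theory or follow by direct computation.
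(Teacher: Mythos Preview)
Your proposal is correct and matches the paper's own outline, which for items~1--4 simply invokes Krylov--Bogoliubov, Rokhlin's disintegration theorem, and the references \cite[Proposition~3.4]{AvVi10} and \cite[Lemma~5.25]{Vi2014}; in particular, your forward/backward Birkhoff--Oseledets dichotomy for item~\ref{prop:propertiesMeasuresProductSpace-item4} is precisely the argument behind the cited lemma. The only imprecision worth flagging is in item~\ref{prop:propertiesMeasuresProductSpace-item3}: the limit $\lim_n A^n(\sigma^{-n}x)_\ast m^+_{(\sigma^{-n}x)^+}$ does not ``stabilize'' in the sense of becoming eventually constant---its existence for $\mu$-a.e.\ $x$ is a martingale convergence statement (this is exactly what \cite[Proposition~3.4]{AvVi10} supplies), so you should replace that parenthetical justification.
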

\begin{proof}[Outline of the proof and references:]
    Item \ref{prop:propertiesMeasuresProductSpace-item1} follows a similar strategy as in the proof of item \ref{prop:propertiesStationryMeasures-item1} in Proposition \ref{prop:propertiesStationryMeasures}. Item \ref{prop:propertiesMeasuresProductSpace-item2} is an application of Rokhlin's disintegration theorem with respect to the partition $\{\{x\}\times\bP^1\colon\, x\in \Sigma\}$ of $\Sigma\times\bP^1$. The invariance of the conditional measures $m_x$ comes from the invariance of $m$ by $F_A$. For a proof of item \ref{prop:propertiesMeasuresProductSpace-item3} see \cite[Proposition~3.4]{AvVi10}. Item \ref{prop:propertiesMeasuresProductSpace-item4} can be found in \cite[Lemma~5.25]{Vi2014}.   
\end{proof}

\subsection{Fiber bunched cocycles}
As mentioned before, the complete description of the regularity of the Lyapunov exponent among cocycles in $C^{\gamma}(\Sigma,\, \SL_2(\R))$ is still open, but there is a class of cocycles where significant progress has been made. This is the class of maps $A$ such that the growth rate of the quantities $\norm{A^n}$ occurs slower than the expansion provided by the base dynamics. More precisely, we say that a cocycle $A\in C^{\gamma}(\Sigma, \SL_2(\R))$ is $\gamma$-\emph{fiber bunched} if there exists $n\in \N$ such that
\begin{align*}
    \sup_{x\in \Sigma}\norm{A^n(x)}^2
    = \sup_{x\in \Sigma}\norm{A^n(x)}\,\norm{(A^n(x))^{-1}}^{-1}
    < \zeta^{n \gamma},
\end{align*}
where $\zeta>1$ appears in the  definition~\eqref{distance def}  of the distance on $\Sigma$. It is important to highlight that the class of $\gamma$-fiber bunched cocycles $FB^{\gamma}(\Sigma)$ is an open subset of $C^{\gamma}(\Sigma,\SL_2(\R))$.
\begin{remark}
\normalfont
    A direct computation shows that the derivative of the projective actions $A^n(x):\bP^1\to\bP^1$ satisfies that, for every $\hat w\in \bP^1$
    \begin{align}
    \label{eq:fiberBunchedInequality}
        \norm{D\, [A^n(x)](\hat w)}
        \leq \frac{1}{
            \norm{A^n(x)\, w}^2
        }\leq  \norm{A^n(x)}^2.
    \end{align}
    Thus, assuming $A$  $\gamma$-fiber bunched, we have
    \begin{align}
    \label{eq:110423.1}
        \sup_{x\in \Sigma}\,\norm{D[A^n(x)]}_{\infty} \leq \zeta^{n\gamma},
    \end{align}
    In particular, the system $F_A:\Sigma\times\bP^1\to\Sigma\times\bP^1$ can be seen as a basic model of a \emph{partial hyperbolic system} (with compact fibers). Indeed, if we interpret $\sigma:\Sigma\to\Sigma$ as the basic model for hyperbolic system, then equation \eqref{eq:110423.1} means that the fiber bunched condition guarantees that the growth rate of the fiber action $A^n(x)$ is controlled by the the growth rate of the hyperbolic basis $\sigma$.
\end{remark}
\begin{example}[Bocker-Viana's Example]
\label{ex:BockerVianaExample}
\normalfont
\label{ex:fiberBunchedFamily}
    Take $\Sigma = \{1,2\}^{\Z}$, i.e., $\kappa = 2$. For any $\beta \geq \alpha \geq 1$ consider the map $A_{\alpha, \beta}:\Sigma\to \SL_2(\R)$ that for each sequence $x = (x_n)_{n\in \Z}$ is given by 
    \begin{align*}
        A_{\alpha, \beta}(x) = \left\{
            \begin{array}{cc}
                A_1=\begin{pmatrix}
                    \beta & 0 \\
                    0 & \beta^{-1}
                \end{pmatrix}, & x_0 = 1 \\
                A_2=\begin{pmatrix}
                    \alpha^{-1} & 0 \\
                    0 & \alpha
                \end{pmatrix}, & x_0 = 2
            \end{array}
        \right.
    \end{align*}
    Thus, it is clear that $A_{\alpha,\beta}\in C^{\gamma}(\Sigma,\, \SL_2(\R))$ ($A_{\alpha,\beta}$ is constant into cylinders of the form $[0;\, i]$, $i=1,2$). Notice that $\sup_{x\in\Sigma}\norm{A^n_{\alpha,\beta}(x)}^2 = \beta^{2n}$ and so, $A_{\alpha,\beta}$ is $\gamma$-fiber bunched as long as $\beta < \zeta^{\gamma/2}$. Observe that this class of cocycles is not uniformly hyperbolic. Moreover, by Birkhoff's ergodic theorem,
    \begin{align*}
        L(A_{\alpha,\beta}) = |p_1\,\log\beta - p_2\, \log\alpha|.
    \end{align*}
    and so $L(A_{\alpha,\beta})>0$, except for the case where $\beta^{p_1}=\alpha^{p_2}$.
\end{example}

\subsection{Holonomies}

Before discussing the main results of this section, we elaborate on one of the most important features of fiber bunched cocycles, namely, the existence of linear holonomies. We say that a family of matrices $\{H^{s}_{x,y}\in \SL_2(\R)\colon\, x,y\in \Sigma,\, y\in W^s_{\text{loc}}(x)\}$ is a family of \emph{stable linear holonomies} if
\begin{enumerate}
    \item  $H^{s}_{\sigma^j(x),\sigma^j(y)}=A^j(y) \circ H^s_{x,y} \circ A^j(x)^{-1}$ for every $j\geq 1$;
    
    \item $H^{s}_{x,x}=\id$ and $H^s_{x,y}=H^s_{z,y}\circ H^s_{x,z}$ for any $z\in W^{s}_{loc}(x)$;
    
    \item  $(x,y) \mapsto H^s_{x,y}$ is uniformly continuous on
           $\lbrace (x,y): y \in W^{s}_{\text{loc}}(x)\rbrace \subset M\times M$.

    % \item[(d)] There exists $L>0$ such that $\|H^s_{x,y}-\id\| \leq L \dist(x,y)^{\gamma}$;
\end{enumerate}
The family of \emph{unstable linear holonomies} $\{H^{u}_{x,y}\in \SL_2(\R)\colon\, y\in W^u_{\text{loc}}(x)\}$ is defined analogously. 
\begin{remark}
\label{remark:stable/unstableSetsPartHyp}
\normalfont
    Existence of families of stable and unstable holonomies for $A$ is also related with the existence of strong stable/unstable sets for the (partially hyperbolic in the fiber bunched case) skew-product map $F_A:\Sigma\times\bP^1\to \Sigma\times\bP^1$: for every $x\in \Sigma$, $\hat v\in \bP^1$, define
    \begin{align*}
        W^{ss}(x,\hat v) = \left\{
            (y,\, H^s_{x,y}\, \hat v)\colon\,
            y\in W^s_{\text{loc}}(x)
        \right\}
        \quad
        \text{and}
        \quad
        W^{uu}(x,\hat v) = \left\{
            (y,\, H^u_{x,y}\, \hat v)\colon\,
            y\in W^u_{\text{loc}}(x)
        \right\}.
    \end{align*}
    Similarly to what happens for smooth partially hyperbolic systems, the map $F_A$ restricted to these stable/unstable sets satisfy contraction/expansion asymptotic properties (see \cite[Lemma~2.10]{Vi2008} for a precise statement).
\end{remark}
\begin{example}
\label{ex:holonomiesDiagonal}
\normalfont
\item Assume that $A\in C^{\gamma}(\Sigma,\, \SL_2(\R))$ is a diagonal cocycle, i.e.,
\begin{align*}
    A(x) = \begin{pmatrix}
        a(x) & 0 \\
        0 & a(x)^{-1}
    \end{pmatrix}.
\end{align*}
Consider $x,y\in \Sigma$, $y\in W^s_{\text{loc}}(x)$. For every $n\geq 1$,
\begin{align*}
    A^n(y)^{-1}\, A^n(x) =
    \begin{pmatrix}
        \varphi_n(x,y) & 0\\
        0 &  \varphi_n(x,y)^{-1}
    \end{pmatrix}
    \quad
    \text{with}
    \quad
    \varphi_n(x,y) = \frac{a(x)}{a(y)}\cdots \frac{a(\sigma^{n-1}(x))}{a(\sigma^{n-1}(y))}.
\end{align*}
We claim that the limit $\varphi(x,y) := \lim_{n\to\infty}\, \varphi_n(x,y)$ exists and so using this claim,
\begin{align*}
H^s_{x,y} := \begin{pmatrix}
    \varphi(x,y) & 0 \\
    0 & \varphi(x,y)^{-1}
\end{pmatrix}.
\end{align*}
defines a family of stable holonomies for $A$ (the conditions in the definition can be easily verified). To prove the claim, first consider $C>0$ such that for every $z,w\in \Sigma$,
\begin{align*}
    |\log a(z) - \log a(w)| \leq C d(z,w)^{\gamma}.
\end{align*}
Then, since $y\in W^s_{\text{loc}}(x)$
\begin{align*}
    |\log\varphi_{n+1}(x,y) - \log\varphi_n(x,y)|
    &= |\log a(\sigma^n(x)) - \log a(\sigma^n(y))|\\
    &\leq C\, d(\sigma^n(x),\, \sigma^n(y))^{\gamma}
    \leq C\, \zeta^{-n\gamma}.
\end{align*}
So, $(\varphi_n(x,y))_n$, is a (uniform) Cauchy sequence. This proves the claim.
\end{example}

\begin{example}
\label{ex:honolomiesRotations}
\normalfont
    Let $\xi:\Sigma\to\R$ be a $\gamma$-H\"older function and define $A(x) = R_{2\pi\xi(x)}$. For $x,y\in \Sigma$, $y\in W^s_{\text{loc}}(x)$ and for every $n\geq 1$,
    \begin{align*}
        A^n(y)^{-1}\, A^n(x) = R_{S_n\xi(x) - S_n\xi(y)},
    \end{align*}
    where $S_n\xi$ denotes the Birkhoff's sum of the function $\xi$. We claim that $(S_n\xi(x) - S_n\xi(y))_n$ forms a Cauchy sequence. Indeed, to see that is enough to notice that
    \begin{align*}
        |S_{n+1}\xi(x) - S_{n+1}\xi(y) - (S_n\xi(x) - S_n\xi(y))|
        \leq |\xi(\sigma^n(x)) - \xi(\sigma^n(y))| \leq C\, \zeta^{-n\gamma}.
    \end{align*}
    Now, let $\xi_0(x,y) = \lim_{n\to\infty}\, (S_n\xi(x) - S_n\xi(y))$. Then, the $H^s_{x,y} := R_{\xi_0(x,y)}$ defines a family of $s$-holonomies for $A$ (the properties in the definition are easily verified).
\end{example}
\begin{remark}
\normalfont
    Notice that the cocycle $A$ in the Example \ref{ex:honolomiesRotations} is always $\gamma$-fiber bunched since it is bounded. On the other hand, Example \ref{ex:holonomiesDiagonal} may not be  $\gamma$-fiber bunched, but can always be seen as a ``direct product of fiber bunched'' $1$-dimensional cocycles. Indeed, in this case the existence of the limits $A^n(y)^{-1}\, A^n(x)$ is ensured by the conformality of the maps $A|_{\hat e_1}$ and $A|_{\hat e_2}$.
\end{remark}

\begin{proposition}
\label{prop:HolonomiesFB}
    If $A\in C^{\gamma}(\Sigma,\, \SL_2(\R)^{\kappa}$ is $\gamma$-fiber-bunched, then the families
    \begin{itemize}
        \item $\{\lim_{n\to\infty}\, A^n(y)^{-1}\, A^n(x)\colon\,
                y\in W^s_{\text{loc}}(x)\}$;

        \item $\{\lim_{n\to\infty}\, A^n(\sigma^{-n}(y))\, A^n(\sigma^{-n}(x))^{-1}\colon\,
                y\in W^u_{\text{loc}}(x)\}$
    \end{itemize}
    are respectively the unique family of stable and unstable linear holonomies such that for some $L>0$
    \begin{align}
    \label{eq:betaHolderHolonomies}
        \|H^s_{x,y}-\id\| \leq L \dist(x,y)^{\gamma}
        \quad
        \text{and}
        \quad
        \|H^u_{x,y}-\id\| \leq L \dist(x,y)^{\gamma}.
    \end{align}
\end{proposition}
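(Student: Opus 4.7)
The plan is to prove convergence of the candidate limits via a telescoping argument that simultaneously delivers the H\"older estimate, and then deduce uniqueness from the equivariance relation combined with that estimate. I focus on the stable holonomy $H^s$; the unstable case is entirely symmetric, obtained by replacing $\sigma$ and $A$ with $\sigma^{-1}$ and $A^{-1}$.

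\textbf{Convergence and H\"older bound.} For $y \in W^s_{\text{loc}}(x)$ I would start from the telescoping identity
\[
A^{n+1}(y)^{-1} A^{n+1}(x) - A^n(y)^{-1}A^n(x) \;=\; A^n(y)^{-1}\bigl[A(\sigma^n y)^{-1} A(\sigma^n x) - \id \bigr]\, A^n(x).
\]
Since $A$ is $\gamma$-H\"older with bounded inverse, the bracketed factor has norm at most $C_0\, \dist(\sigma^n x, \sigma^n y)^{\gamma}$ for a fixed $C_0 = \|A^{-1}\|_\infty\, [A]_\gamma$. Because $y \in W^s_{\text{loc}}(x)$ means $x^+ = y^+$, a direct computation using \eqref{distance def} gives $\dist(\sigma^n x, \sigma^n y) = \zeta^{-n}\dist(x,y)$, so this factor is $C_0\, \zeta^{-n\gamma}\, \dist(x,y)^{\gamma}$. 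The outer factors are controlled by $\sup_\Sigma\|A^n\|^2$; applying the fiber-bunching inequality at the prescribed iterate $n_0$ and iterating via submultiplicativity of $n \mapsto \sup_\Sigma \|A^n\|$ yields constants $\tau\in(0,1)$ and $C_1>0$ with $\sup_\Sigma\|A^n\|^2 \le C_1\tau^n\zeta^{n\gamma}$ for every $n\ge 1$. The $n$-th summand is therefore bounded by $C_0 C_1 \tau^n \dist(x,y)^\gamma$, which is summable; telescoping gives Cauchy convergence of $A^n(y)^{-1}A^n(x)$ to a limit I call $H^s_{x,y}$, together with the estimate $\|H^s_{x,y} - \id\| \le L\,\dist(x,y)^\gamma$ for $L = C_0 C_1/(1-\tau)$.

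\textbf{Holonomy axioms and uniqueness.} The normalization $H^s_{x,x} = \id$ and the transitivity $H^s_{x,y} = H^s_{z,y}\circ H^s_{x,z}$ are immediate from the limit definition. For the equivariance $H^s_{\sigma^j x, \sigma^j y} = A^j(y) H^s_{x,y} A^j(x)^{-1}$ I would rewrite the $n$-th approximant using the cocycle relation $A^n(\sigma^j z) = A^{n+j}(z)A^j(z)^{-1}$ as $A^j(y)\, A^{n+j}(y)^{-1} A^{n+j}(x)\, A^j(x)^{-1}$ and pass to the limit in $n$. Uniform continuity on $\{(x,y):y\in W^s_{\text{loc}}(x)\}$ then follows from the H\"older estimate combined with continuity of the finite-length partial products. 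For uniqueness, suppose $\widetilde H^s$ is another family satisfying \eqref{eq:betaHolderHolonomies} with constant $\widetilde L$. Then $D_{x,y} := H^s_{x,y} - \widetilde H^s_{x,y}$ inherits the equivariance, so $D_{x,y} = A^j(y)^{-1} D_{\sigma^j x, \sigma^j y} A^j(x)$. Combining the H\"older bounds for both families with $\dist(\sigma^j x, \sigma^j y) = \zeta^{-j}\dist(x,y)$ and the bound on $\|A^j\|^2$ yields $\|D_{x,y}\| \le C_1(L+\widetilde L)\tau^j\dist(x,y)^\gamma$, which tends to $0$ as $j\to\infty$, forcing $H^s \equiv \widetilde H^s$.

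\textbf{Main obstacle.} The only substantive technical point is transferring the single-iterate fiber-bunching hypothesis into a geometric bound on $\sup_\Sigma\|A^n\|^2\zeta^{-n\gamma}$ valid for every $n\ge 1$; writing $n = k n_0 + r$ with $0\le r < n_0$ and controlling the remainder block uniformly is where one has to be careful. Everything else reduces to telescoping plus a geometric series, and notably the argument is purely deterministic — no Oseledets theorem, stationary measure, or invariant measure enters — which matches the intuition that the holonomies are structural objects attached directly to the cocycle.
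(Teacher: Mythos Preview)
Your argument is correct and is precisely the standard telescoping proof; the paper itself does not give an independent argument but simply refers to \cite[Lemma~1.12]{BoGoVi2003}, where essentially the same computation appears. The only point worth flagging is cosmetic: in the equivariance check you should note that $\sigma^j(y)\in W^s_{\text{loc}}(\sigma^j(x))$ so the limit defining $H^s_{\sigma^j x,\sigma^j y}$ is indeed available, but this is immediate from $x^+=y^+$.
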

\begin{proof}[Outline of the proof and references:]
    For a proof see \cite[Lemma~1.12]{BoGoVi2003}.
\end{proof}
\begin{remark}
\normalfont
    In general, the family of stable and unstable holonomies are not unique even for a constant fiber bunched cocycle (see the discussion after 4.9 in \cite{KaSa2014} and Theorem 5.5.5 in \cite{KaNi2011}). However, families of holonomies satisfying the conditions in \eqref{eq:betaHolderHolonomies} are unique even without fiber bunched assumptions (See Proposition 3.2 in \cite{KaSa2014}).
\end{remark}

\begin{remark}
\normalfont
    Any $C^{\gamma}(\Sigma,\, \SL_2(\R))$ cocycle , $C^0$-close to a constant one, has families of stable and unstable linear holonomies satisfying \eqref{eq:betaHolderHolonomies}, possibly with some $\gamma'<\gamma$ instead of $\gamma$ (see \cite[Proposition~3.6]{KaSa2014}).
\end{remark}
\begin{example}
\normalfont
    If $A\in \SL_2(\R)^{\kappa}$ is locally constant then the identity map forms a family of stable/unstable holonomies. Assume that there exists another family of linear stable holonomies, say $\{H^s_{x,y}\}$. Then, for every $x,y\in \Sigma$, $y\in W^s_{\text{loc}}(x)$ notice that $A^n(x) = A^n(y)$ and so we have
    \begin{align*}
        H^s_{\sigma^n(x),\sigma^n(y)} = A^n(x)\, H^s_{x,y}\, A^n(x)^{-1}.
    \end{align*}
    In particular, $H^s_{\sigma^n(x),\sigma^n(y)}$ and $H^s_{x,y}$ share the same spectrum for every $n$ ($A^n(x)$ is a conjugation). Since $d(\sigma^n(x),\sigma^n(y))$ converges to zero, we have that the spectrum of the matrix $H^s_{x,y}$ is a singleton with only $1$. So, either $H_{x,y}^s$ is the identity matrix or else is conjugated to a parabolic matrix of the form $\begin{pmatrix}1 & a
    
    \\ 0 & 1 \end{pmatrix}$.
\end{example}

One of the most important uses of the existence of holonomies is the possibility to study the problem from the point of view of one-sided shift. 
\begin{proposition}
\label{prop:reducitionToOneSided}
    If $A$ admits a family of linear stable holonomies, then $A$ is conjugated to a cocycle $A^+$ such that for every $x\in \Sigma$, $A^+(x) = A^+(x^+)$. Similar property holds for unstable linear holonomies.
\end{proposition}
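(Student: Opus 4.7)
The plan is to construct an explicit cohomology $P: \Sigma \to \SL_2(\R)$ that eliminates the dependence on negative coordinates, built directly out of the stable holonomies. Fix a reference negative tail $z^- \in \{1,\ldots,\kappa\}^{\Z_{<0}}$ and define a section $s: \Sigma^+ \to \Sigma$ by $s(x^+) := (z^-, x^+)$. By construction $s(x^+)$ shares its positive coordinates with $x$, so it lies in $W^s_{\text{loc}}(x)$ and $P(x) := H^s_{s(x^+),\, x}$ is well defined. The candidate is the conjugate $A^+(x) := P(\sigma(x))^{-1}\, A(x)\, P(x)$, which is cohomologous to $A$ by construction and hence shares its Lyapunov exponent.

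First I would apply the equivariance property of stable holonomies, $H^s_{\sigma(a),\, \sigma(b)} = A(b)\, H^s_{a,b}\, A(a)^{-1}$, with $a = s(x^+)$ and $b = x$, to obtain
\[
A(x)\, P(x) = H^s_{\sigma(s(x^+)),\, \sigma(x)} \cdot A(s(x^+)).
\]
Next I would observe that the points $\sigma(s(x^+))$ and $s(\sigma(x)^+)$ both have positive part equal to $\sigma(x)^+$, so they lie in a common local stable set together with $\sigma(x)$. The concatenation identity $H^s_{a,b} = H^s_{c,b} \circ H^s_{a,c}$ with $c = s(\sigma(x)^+)$ then yields
\[
H^s_{\sigma(s(x^+)),\, \sigma(x)} = P(\sigma(x)) \cdot H^s_{\sigma(s(x^+)),\, s(\sigma(x)^+)}.
\]
Substituting into the previous display and cancelling $P(\sigma(x))$ leaves
\[
A^+(x) = H^s_{\sigma(s(x^+)),\, s(\sigma(x)^+)} \cdot A(s(x^+)).
\]

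Finally I would verify that every factor on the right-hand side depends only on $x^+$: the three points $s(x^+)$, $\sigma(s(x^+))$ and $s(\sigma(x)^+)$ are explicit functions of $x^+$ alone, and hence so are the values of $A$ and of $H^s$ at them. This gives $A^+(x) = A^+(x^+)$. I do not anticipate any real obstacle — the identity is formal once the correct $P$ is chosen; the only mildly delicate point is checking that after one application of $\sigma$ the two endpoints appearing in the residual holonomy factor still lie in a common local stable set, which uses nothing more than the fact that $\sigma$ sends positive coordinates to positive coordinates. The unstable case is entirely symmetric: one uses a section $s(x^-) := (x^-,\, z^+)$ built from a fixed positive tail $z^+$, defines $P(x) := H^u_{s(x^-),\, x}$, and the same calculation produces a conjugate cocycle depending only on $x^-$.
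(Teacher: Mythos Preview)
Your argument is correct and is precisely the standard construction: the paper does not give its own proof but simply cites \cite[Corollary~1.15]{BoGoVi2003}, and what you have written is exactly the argument found there (fix a reference negative tail, use the stable holonomy from the section $s(x^+)$ to $x$ as the conjugating map, and verify via the cocycle and equivariance relations that the residual factor depends only on $x^+$). There is nothing to add.
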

\begin{proof}
    See \cite[Corollary~1.15]{BoGoVi2003}.
\end{proof}

\begin{remark}
\normalfont
    One could think that once the cocycle admits both families of holonomies, then it would be conjugated to a locally constant cocycle. Unfortunately, this is not the case in general. 
    Being conjugate to a locally constant cocycle is related to the problem of accessibility of partially hyperbolic systems.

    A map $F_A:\Sigma\times \mathbb{P}^1\to \Sigma\times \mathbb{P}^1$ is  said to be accessible if for every $(x,\,\hat{v}),(y,\,\hat{u})\in \Sigma\times \mathbb{P}^1$ there exists a path of strong stable and strong unstable sets, introduced in Remark \ref{remark:stable/unstableSetsPartHyp}, that goes from $(x,\, \hat v)$ to $(y,\, \hat u)$.
    
    Stable and unstable sets of a cocycle are mapped to the corresponding ones for the conjugated cocycle via the conjugation map. This implies that accessibility is preserved by conjugation. However, locally constant cocycles are never accessible. This shows that $\gamma$-fiber bunched accessible maps can not be conjugated to locally constant cocycles even though can be conjugated to cocycles depending only on the positive (or negative) coordinates.
\end{remark}

\subsection{su-States}

Holonomies allows us to transport components of the disintegration $\{m_x\}_{x\in \Sigma}$ of a given measure $m\in \mathcal{M}_{\mu}(F_A)$ throughout different points in the same stable set. Taking $x, y\in \Sigma$, $y\in W^s_{\text{loc}}(x)$, we are able to compare the structure of $m_y$ with the structure of $(H^s_{x,y})_*\, m_x$.

One especial case of this transport property stands out. We say that $m$ is an $s$-\emph{state} if for $\mu$ almost every $x,y\in \Sigma$, $y\in W^s_{\text{loc}}(x)$, $(H^s_{x,y})_*\, m_x = m_y$. Analogously, $m$ is an $u$-\emph{state} if for $\mu$ almost every $x,y\in \Sigma$, $y\in W^u_{\text{loc}}(x)$, $(H^u_{x,y})_*\, m_x = m_y$. $m$ is an $su$-\emph{state} when it is both an $s$-state and 
an $u$-state.
\begin{example}
\normalfont
    Assume that $A\in \SL_2(\R)^{\kappa}$ is locally constant. Then, it is easy to see that the identity matrix forms a family of stable linear holonomies for $A$. In this case, a measure $m\in \mathcal{M}_\mu(F_A)$ is an $s$-state if an only if it has a disintegration $\{m_x\}_x$ which is constant along stable sets. In particular, the projection of $m$ on $\Sigma^+\times \bP^1$ is the product measure $\mu\times \eta$, where $\eta$ is a forward stationary measure for the cocycle $A$. Reciprocally, if $\eta$ is a forward stationary measure, then the lift $\mu\times\eta$ (see Proposition \ref{prop:propertiesMeasuresProductSpace}) is constant along stable sets and so is an $s$-state. 
\end{example}
The previous example indicates that $s$ and $u$ states are the natural generalization of the concept of stationary measure when dealing with cocycles which (possibly) depend on infinitely many coordinates.
\begin{proposition}
\label{prop:properties-suStates}
    Let $A\in C^{\gamma}(\Sigma,\, \SL_2(\R))$ be a $\gamma$-fiber bunched cocycle.
    \begin{enumerate}
        \item\label{prop:properties-suStates-item1} The set of measures $m\in \mathcal{M}_{\mu}(F_A)$ which are $s$-states (respect. $u$-states) is non-empty, compact and convex;

        \item\label{prop:properties-suStates-item2} If $L(A)>0$, then the measure $m^s$ is an $s$-state and $m^u$ is an $u$-state;

        \item\label{prop:properties-suStates-item3} Assume that $L(A)>0$. Either $m^s$ (respect. $m^u$) is the unique $s$-state ($u$-state) or else all the measures in $\mathcal{M}_{\mu}(F_A)$ are $s$-states ($u$-states);

        \item\label{prop:properties-suStates-item4} Assume that $L(A)>0$, we have (the following consequence of Oseledets Theorem - Proposition \ref{prop:basicProperties})
        \begin{align*}
            L(A) = \int_{\Sigma\times\bP^1}\, \log \norm{A(x)\, v}\, d\,m^u(x,\hat v)
            = - \int_{\Sigma\times\bP^1}\, \log\norm{A(x)\, v}\, d\,m^s(x,\hat v);
        \end{align*}
    \end{enumerate}
\end{proposition}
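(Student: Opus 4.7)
For item \ref{prop:properties-suStates-item1}, convexity follows at once since the defining relation $(H^s_{x,y})_{\ast}m_x = m_y$ is linear in the disintegration $\{m_x\}$. Weak-$\ast$ closedness uses the continuity of $(x,y)\mapsto H^s_{x,y}$ on $\{(x,y): y \in W^s_{\text{loc}}(x)\}$ to transfer the condition to a weak-$\ast$ limit, after extracting disintegrations that converge on a set of full $\mu$-measure. For non-emptiness, I would apply Proposition \ref{prop:reducitionToOneSided} to conjugate $A$ to a cocycle $\hat A$ depending only on positive coordinates, where the identity is a family of stable holonomies. Krylov-Bogoliubov produces an $F^+_{\hat A}$-invariant measure on the compact space $\Sigma^+\times \bP^1$ projecting to $\mu^+$; its lift via Proposition \ref{prop:propertiesMeasuresProductSpace} item \ref{prop:propertiesMeasuresProductSpace-item3} is then an $s$-state for $\hat A$, and pulling back by the conjugation yields an $s$-state for $A$. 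The $u$-state case is symmetric.

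The crux is item \ref{prop:properties-suStates-item2}. Since $m^s_x = \delta_{\hat e^s(x)}$ and $\hat e^s(x) = \hat e^s(y)$ whenever $y \in W^s_{\text{loc}}(x)$ (by item \ref{prop:basicProperties-item5} of Proposition \ref{prop:basicProperties}), what I must prove is $H^s_{x,y}\hat e^s(x) = \hat e^s(x)$. Using the formula $H^s_{x,y} = \lim_n A^n(y)^{-1}A^n(x)$ from Proposition \ref{prop:HolonomiesFB}, the $A$-invariance $A^n(x)\hat e^s(x) = \hat e^s(\sigma^n(x))$, and the fact that $\sigma^n(x)^+ = \sigma^n(y)^+$ forces $\hat e^s(\sigma^n(x)) = \hat e^s(\sigma^n(y))$, I obtain
\begin{align*}
A^n(y)^{-1}A^n(x)\,\hat e^s(x) = A^n(y)^{-1}\hat e^s(\sigma^n(y)) = \hat e^s(y) = \hat e^s(x)
\end{align*}
for every $n \geq 1$, and passing to the limit gives the claim. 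The statement for $m^u$ is symmetric, using that $\hat e^u$ depends only on negative coordinates and the analogous limit formula for unstable holonomies.

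For item \ref{prop:properties-suStates-item3}, suppose $m$ is an $s$-state with $m \neq m^s$. Then item \ref{prop:propertiesMeasuresProductSpace-item4} of Proposition \ref{prop:propertiesMeasuresProductSpace} gives $m = q_1 m^u + q_2 m^s$ with $q_1 > 0$, so $m_x = q_1\delta_{\hat e^u(x)} + q_2\delta_{\hat e^s(x)}$ with distinct atoms almost surely (as $L(A)>0$). The identity $(H^s_{x,y})_{\ast}m_x = m_y$ combined with item \ref{prop:properties-suStates-item2}, which already matches the $q_2$-atoms, forces $H^s_{x,y}\hat e^u(x) = \hat e^u(y)$ almost surely, meaning that $m^u$ itself is an $s$-state. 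Since the $s$-state set is convex, it then contains the entire segment $[m^u, m^s]= \mathcal{M}_\mu(F_A)$, establishing the dichotomy. The $u$-state case is symmetric.

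Item \ref{prop:properties-suStates-item4} reduces cleanly to Birkhoff and Oseledets. Using $A(x)\hat e^u(x) = \hat e^u(\sigma(x))$, Birkhoff sums of $\Phi(x,\hat v) := \log\|A(x)v\|$ along $F_A$ telescope to $\log\|A^n(x)\,e^u(x)\|$, which divided by $n$ tends to $L(A)$ $\mu$-a.e.\ by Oseledets (item \ref{prop:basicPropertiesContCocycles-Oseledets} of Proposition \ref{prop:basicProperties}), since $\hat e^u(x) \neq \hat e^s(x)$ almost surely. Hence $\int \Phi\, dm^u = L(A)$, and the parallel argument with $\hat e^s$ yields $\int \Phi\, dm^s = -L(A)$. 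I expect the main obstacle to be item \ref{prop:properties-suStates-item1}, where weak-$\ast$ compactness of the set of $s$-states requires careful control of disintegrations under weak-$\ast$ limits; the remaining items are largely driven by the rigidity of the Oseledets decomposition and the structural result for $\mathcal{M}_\mu(F_A)$.
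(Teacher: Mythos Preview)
Your proposal is correct and fills in the details that the paper leaves to references: the paper simply cites \cite{BoVi2004} for item~\ref{prop:properties-suStates-item1}, \cite{Vi2014} for item~\ref{prop:properties-suStates-item3}, and invokes Oseledets for items~\ref{prop:properties-suStates-item2} and~\ref{prop:properties-suStates-item4}. Your arguments for items~\ref{prop:properties-suStates-item2}--\ref{prop:properties-suStates-item4} are exactly in that spirit; the computation for item~\ref{prop:properties-suStates-item2} via the limit formula $H^s_{x,y}=\lim_n A^n(y)^{-1}A^n(x)$ together with the positive-coordinate dependence of $\hat e^s$ is clean and correct, and your use of $\mathcal{M}_\mu(F_A)=[m^u,m^s]$ for item~\ref{prop:properties-suStates-item3} matches the structural route in \cite{Vi2014}.

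The only place your sketch is genuinely incomplete is the one you flag yourself: weak-$\ast$ closedness of the set of $s$-states in item~\ref{prop:properties-suStates-item1}. Weak-$\ast$ convergence of $m^k$ does not give pointwise convergence of disintegrations $m^k_x$, so ``extracting disintegrations that converge on a full-measure set'' is not available in general. The argument in \cite{BoVi2004} bypasses this by reformulating the $s$-state condition as a closed condition on the measure itself (e.g.\ via integration against continuous test functions that are constant along the strong-stable foliation of $F_A$, or equivalently by passing to the one-sided quotient where the condition becomes ordinary $F_A^+$-invariance of the projection), which is manifestly weak-$\ast$ closed. Your non-emptiness argument through Proposition~\ref{prop:reducitionToOneSided} is the right idea and is essentially how it is done.
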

\begin{proof}[Outline of the proof and references:]
    For a proof of item \ref{prop:properties-suStates-item1} see \cite[Proposition~4.4]{BoVi2004}. Item \ref{prop:properties-suStates-item2} and \ref{prop:properties-suStates-item4} are consequences of Oseledets theorem (see item \ref{prop:basicPropertiesContCocycles-Oseledets} of Proposition \ref{prop:basicProperties}). For a proof of item \ref{prop:properties-suStates-item3}, see \cite[Corollary~5.27]{Vi2014}.
\end{proof}

The next proposition shows the strength of the concept of $su$-states. It allows us to bypass the issues associated with the lack of regularity of the map $x\mapsto m_x$ which is only measurable in general. 
\begin{proposition}
\label{prop:continuity-SUStates}
    Let $A\in C^{\gamma}(\Sigma,\, \SL_2(\R))$ be a $\beta$ fiber bunched cocycle.
    \begin{enumerate}
        \item\label{prop:continuity-SUStates-item1} If $m\in \mathcal{M}_{\mu}(F_A)$ is an $u$-state and $A(x) = A(x^+)$, then the projection of $m$, $m^+$, on $\Sigma^+\times\bP^1$ admits a H\"older continuous disintegration $x^+\mapsto m^+_{x^+}$.

        \item\label{prop:continuity-SUStates-item2} If $m\in \mathcal{M}_{\mu}(F_A)$ is a $su$-state, then $m$ admits a continuous disintegration. 

        \item\label{prop:continuity-SUStates-item3} Assume that $L(A)>0$. If there exists more than one $s$-state in $\mathcal{M}_{\mu}(F_A)$, then $A$ is continuously conjugated to a triangular cocycle, i.e., there exists $C:\Sigma\to\SL_2(\R)$ continuous such that
        \begin{align*}
            C(\sigma^{-1}(x))\, A(x)\, C(x) = \begin{pmatrix}
                a(x) & b(x) \\
                0 & d(x)
            \end{pmatrix}.
        \end{align*}
        The same conclusion holds if there exists more than one $u$-state in $\mathcal{M}_{\mu}(F_A)$.

        \item\label{prop:continuity-SUStates-item4} Assume that $L(A)>0$. If there exists a non-ergodic measure $m\in \mathcal{M}_{\mu}(F_A)$ which is an $su$-state, then $A$ is continuously conjugated to a diagonal cocycle, i.e., there exists $C:\Sigma\to\SL_2(\R)$ continuous such that
        \begin{align*}
            C(\sigma^{-1}(x))\, A(x)\, C(x) = \begin{pmatrix}
                a(x) & 0 \\
                0 & d(x)
            \end{pmatrix}.
        \end{align*}
    \end{enumerate}
\end{proposition}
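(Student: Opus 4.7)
The plan is to prove the four items in order, so that each later one can use its predecessors. For item \ref{prop:continuity-SUStates-item1}, the hypothesis $A(x) = A(x^+)$ lets me reduce to the one-sided system $F_A^+$ on $\Sigma^+\times\bP^1$ via item \ref{prop:propertiesMeasuresProductSpace-item3} of Proposition \ref{prop:propertiesMeasuresProductSpace}, which expresses $m^+_{x^+}$ as an average of $m_x$ over $x$ with future $x^+$. The $u$-state relation $(H^u_{x,y})_* m_x = m_y$ on $W^u_{\text{loc}}(x)$ (valid $\mu$-a.e.) combined with the $\gamma$-Hölder bound $\|H^u_{x,y}-\id\| \leq L\,\dist(x,y)^{\gamma}$ from Proposition \ref{prop:HolonomiesFB} should yield Hölder control on $x^+\mapsto m^+_{x^+}$. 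Concretely, for futures $x^+, y^+$ agreeing on the first $N$ coordinates, the points $(z^-, x^+)$ and $(z^-, y^+)$ lie in a common local unstable set, so their fiber measures differ by a holonomy close to the identity; averaging over $z^-$ (with respect to the product measure on $\Sigma^-$, which is independent of the future) produces the Hölder estimate, and invariance under $F_A^+$ is used to pick canonical pointwise representatives.

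For item \ref{prop:continuity-SUStates-item2}, since $A$ is fiber bunched it admits both holonomies, and by Proposition \ref{prop:reducitionToOneSided} it is conjugate (via $s$-holonomies) to a cocycle $A^+$ depending only on positive coordinates and (via $u$-holonomies) to a cocycle $A^-$ depending only on negative coordinates. Applying item \ref{prop:continuity-SUStates-item1} to $A^+$ with the $u$-state property of $m$ yields a Hölder disintegration on $\Sigma^+\times\bP^1$; a symmetric argument applied to $A^-$ with the $s$-state property yields one on $\Sigma^-\times\bP^1$. Combining the two via the local product structure $[x,y]=(y^-,x^+)$ produces a continuous disintegration for $m$ on $\Sigma\times\bP^1$.

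For item \ref{prop:continuity-SUStates-item3}, assume $L(A)>0$ and there exist two distinct $s$-states. By item \ref{prop:properties-suStates-item3} of Proposition \ref{prop:properties-suStates}, every element of $\mathcal{M}_\mu(F_A)$ is an $s$-state; in particular $m^u = \int\delta_{\hat e^u}\,d\mu$ is, and since $m^u$ is automatically a $u$-state (item \ref{prop:properties-suStates-item2} of the same proposition), it is an $su$-state. The just-proved item \ref{prop:continuity-SUStates-item2} gives $m^u$ a continuous disintegration, forcing $x\mapsto \hat e^u(x)$ to be a continuous $A$-invariant section. The desired conjugation is then any continuous $C:\Sigma\to\SL_2(\R)$ whose first column is a unit vector along $\hat e^u(x)$; in this frame the first coordinate axis is $A$-invariant, so $A$ becomes triangular.

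For item \ref{prop:continuity-SUStates-item4}, the ergodic components of a non-ergodic $su$-state $m$ are themselves $su$-states, and by item \ref{prop:propertiesMeasuresProductSpace-item4} of Proposition \ref{prop:propertiesMeasuresProductSpace} the extreme points of $\mathcal{M}_\mu(F_A)$ are exactly $m^u$ and $m^s$. Hence both are $su$-states, so both $\hat e^u$ and $\hat e^s$ are continuous $A$-invariant sections, distinct everywhere since $L(A)>0$ forces distinctness on a dense set and continuity extends this. The continuous matrix $C(x)$ with columns unit vectors along $\hat e^u(x)$ and $\hat e^s(x)$ then simultaneously diagonalizes $A$. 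The main obstacle I expect is in item \ref{prop:continuity-SUStates-item1}: upgrading the $\mu$-a.e.\ equality $(H^u_{x,y})_* m_x = m_y$ to a genuinely Hölder continuous disintegration defined on \emph{all} of $\Sigma^+$ requires carefully ruling out measurable-but-discontinuous behaviour on a null set, which is accomplished by combining the Hölder regularity of the holonomies with the $F_A^+$-invariance of $m^+$ to pin down canonical pointwise representatives.
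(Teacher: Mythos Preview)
Your proposal is correct and follows essentially the same route as the paper, which simply cites \cite{BoVi2004} and \cite{AvVi10} for items \ref{prop:continuity-SUStates-item1} and \ref{prop:continuity-SUStates-item2} and then states that items \ref{prop:continuity-SUStates-item3} and \ref{prop:continuity-SUStates-item4} are consequences of item \ref{prop:continuity-SUStates-item2}; your sketches for items \ref{prop:continuity-SUStates-item3} and \ref{prop:continuity-SUStates-item4} are exactly how one extracts them from item \ref{prop:continuity-SUStates-item2} via Proposition \ref{prop:properties-suStates} and Proposition \ref{prop:propertiesMeasuresProductSpace}. One small simplification for item \ref{prop:continuity-SUStates-item2}: a single conjugation to $A^+$ already suffices, since after conjugating by $s$-holonomies the $s$-state property becomes ``$\tilde m_x$ depends only on $x^+$'' (trivial $s$-holonomies), and then item \ref{prop:continuity-SUStates-item1} applied to the $u$-state property gives H\"older continuity of $x^+\mapsto \tilde m^+_{x^+}=\tilde m_x$; no separate reduction to $A^-$ is needed.
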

\begin{proof}[Outline of the proof and references:]
    For a proof of item \ref{prop:continuity-SUStates-item1} see \cite[Proposition~4.3]{BoVi2004}. Item \ref{prop:continuity-SUStates-item2} is proved in \cite[Proposition~4.8]{AvVi10}. Items \ref{prop:continuity-SUStates-item3} and \ref{prop:continuity-SUStates-item4} are consequences of the item \ref{prop:continuity-SUStates-item2}.
\end{proof}

The above proposition is an indication of the rigidity of the space of cocycles admitting $su$-states. These measures appear naturally when the Lyapunov exponent of the cocycle in consideration vanishes. This is the content of the next result.
\begin{theorem}[Invariance principle, \cite{AvVi10}]
\label{thm:invariancePrincipleFB}
    If $A\in C^{\gamma}(\Sigma,\, \SL_2(\R))$ is $\gamma$-fiber bunched and $L(A) = 0$, then any measure $m\in \mathcal{M}_{\mu}(F_A)$ is a $su$-state.
\end{theorem}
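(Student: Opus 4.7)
The strategy extends Ledrappier's invariance principle (Theorem \ref{thm:LedrappierInvariancePrinciple}) from the locally constant case to H\"older fiber-bunched cocycles, using as a bridge the holonomies guaranteed by Proposition \ref{prop:HolonomiesFB}. I will prove the $s$-state claim; the $u$-state claim follows at once by applying the same argument to the inverse cocycle $A^{-1}$, which is again $\gamma$-fiber bunched and satisfies $L(A^{-1})=0$.

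\emph{Step 1 (reduction to a one-sided cocycle).} Fiber bunching gives canonical $\gamma$-H\"older stable holonomies for $A$, and Proposition \ref{prop:reducitionToOneSided} then furnishes a continuous conjugacy identifying $A$ with a cocycle $A^{+}$ depending only on $x^{+}$. Continuous conjugacy transports families of stable holonomies into families of stable holonomies and transforms disintegrations continuously, so $s$-states are mapped to $s$-states. For $A^{+}$, the canonical stable holonomies are trivial, because $(A^{+})^n(x)^{-1}(A^{+})^n(y)=I$ whenever $y\in W^s_{\text{loc}}(x)$. Hence the $s$-state property for $A^+$ reduces to the statement that any disintegration $\{m_x\}$ of an $F_{A^+}$-invariant measure $m$ is a.e.\ constant on each local stable set: $m_x=m_y$ for $\mu$-a.e.\ $y\in W^s_{\text{loc}}(x)$.

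\emph{Step 2 (Furstenberg-type entropy functional).} Project $m$ to an $F_{A^+}^+$-invariant measure $m^{+}$ on $\Sigma^{+}\times\bP^{1}$ (item \ref{prop:propertiesMeasuresProductSpace-item3} of Proposition \ref{prop:propertiesMeasuresProductSpace}). The fiberwise relation $A^{+}(x^{+})_{*}\, m^{+}_{x^{+}}=m^{+}_{\sigma(x^{+})}$ lets me define the Radon--Nikodym density
\begin{align*}
    J(x^{+},\hat v):=\frac{d\,(A^{+}(x^{+}))^{-1}_{*}\, m^{+}_{\sigma(x^{+})}}{d\, m^{+}_{x^{+}}}(\hat v),
\end{align*}
and the Furstenberg-type entropy $h(m^{+}):=-\int\log J\, dm^{+}$. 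Jensen's inequality applied fiberwise gives $h(m^{+})\ge 0$, with equality iff $(A^{+}(x^{+}))_{*}\, m^{+}_{x^{+}}=m^{+}_{\sigma(x^{+})}$ holds in the strict sense that the pushforward equals the target measure, i.e.\ the disintegration is genuinely invariant under the projective action.

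\emph{Step 3 (Ledrappier inequality and conclusion).} Using the change-of-variable identity $\operatorname{Jac}_{\mathrm{vol}} B(\hat v)=\norm{B\, v}^{-2}$ for $B\in\SL_2(\R)$ and $v$ a unit vector in direction $\hat v$, together with item \ref{prop:properties-suStates-item4} of Proposition \ref{prop:properties-suStates} (or a direct fiberwise Jensen computation parallel to the sketch following \eqref{eq:FurstenbergEntropy}), one obtains the bound
\begin{align*}
    0\le h(m^{+})\le 2\, L(A^{+}) = 2\, L(A) = 0.
\end{align*}
Therefore $h(m^{+})=0$, so the disintegration $\{m^{+}_{x^{+}}\}$ is invariant under the cocycle action in the strict sense above. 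Combining this a.e.\ invariance with the limit formula
$m_{x}=\lim_{n\to\infty}(A^{+})^{n}(\sigma^{-n}x)_{*}\, m^{+}_{(\sigma^{-n}x)^{+}}$ from item \ref{prop:propertiesMeasuresProductSpace-item3} of Proposition \ref{prop:propertiesMeasuresProductSpace}, one verifies that the limit is independent of $x^{-}$; equivalently $m_{x}=m_{y}$ whenever $y\in W^s_{\text{loc}}(x)$, which is precisely the $s$-state condition for $A^{+}$ (and, untwisting the conjugacy, for $A$).

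\emph{Main obstacle.} The delicate point is Step 3: rigorously executing the Jensen computation in the continuous-coordinate setting requires integrability of $\log J$ and a careful treatment of the disintegration, which is only measurable a priori. In the locally constant case (Theorem \ref{thm:LedrappierInvariancePrinciple}) this is immediate via the Markov operator. In the present setting one must approximate $J$ by conditional expectations on the filtration of finite-coordinate cylinders and pass to the limit by a (reverse) martingale argument; here the fiber bunching hypothesis is essential, both to provide the H\"older holonomies of Proposition \ref{prop:HolonomiesFB} and to control the distortion of the projective action on scales comparable to the stable contraction of $\sigma$, ensuring the approximation is uniform enough for Jensen's inequality to carry through.
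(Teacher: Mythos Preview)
Your overall strategy---reduce via stable holonomies to a one-sided cocycle, define a Furstenberg-type entropy, and apply the Ledrappier inequality $0\le h\le 2L(A)=0$---is exactly the route the paper indicates (``follows similar lines as the proof of Theorem~\ref{thm:LedrappierInvariancePrinciple}''), and the paper itself gives no further detail beyond citing \cite{AvVi10}. So in spirit you match the paper.

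There is, however, a genuine error in Step~2 that, taken literally, makes the argument circular. You assert the ``fiberwise relation'' $A^{+}(x^{+})_{*}\,m^{+}_{x^{+}}=m^{+}_{\sigma(x^{+})}$ for the \emph{projected} disintegration and then use it to define $J$. But this relation is precisely the conclusion you are after (you say so yourself in the equality clause); if it held, then $(A^{+}(x^{+}))^{-1}_{*}\,m^{+}_{\sigma(x^{+})}=m^{+}_{x^{+}}$ and $J\equiv 1$, so $h(m^{+})=0$ for free. The equivariance $A(x)_{*}m_{x}=m_{\sigma(x)}$ holds only for the \emph{two-sided} disintegration (item~\ref{prop:propertiesMeasuresProductSpace-item2} of Proposition~\ref{prop:propertiesMeasuresProductSpace}); after projecting to $\Sigma^{+}$ one has merely the Markov (averaged) relation
\[
m^{+}_{y^{+}}=\sum_{i=1}^{\kappa} p_{i}\,\big(A^{+}(i,y^{+})\big)_{*}\,m^{+}_{(i,y^{+})},
\]
because $\sigma$ maps each $W^{s}_{\text{loc}}(x^{+})$ \emph{into} but not onto $W^{s}_{\text{loc}}(\sigma(x^{+}))$. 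It is this averaged identity that furnishes the absolute continuity $\big(A^{+}(x^{+})\big)_{*}m^{+}_{x^{+}}\ll m^{+}_{\sigma(x^{+})}$ needed to define $J$, and $h(m^{+})=0$ then forces each summand to equal the left-hand side, which is the desired fiberwise invariance. A minor additional point: your appeal to item~\ref{prop:properties-suStates-item4} of Proposition~\ref{prop:properties-suStates} in Step~3 is misplaced, since that item assumes $L(A)>0$; you must rely on the direct Jensen computation you mention parenthetically.
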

Recall that for locally constant cocycles $A\in \SL_2(\R)^{\kappa}$ the invariance principle states, under the assumption $L(A)=0$, that any stationary measure is fixed through the action of all the matrices $A_i$. Here, this invariance property is by the fact that both holonomies preserves the disintegrations of any measure $m\in \mathcal{M}_{\mu}(F_A)$. The proof of Theorem \ref{thm:invariancePrincipleFB} follows similar lines as the proof of Theorem \ref{thm:LedrappierInvariancePrinciple}.

\subsection{Typical fiber bunched cocycles}

The invariance principle provides a criterion to check positivity of the Lyapunov exponent: it is enough to find a single measure $m\in \mathcal{M}_{\mu}(F_A)$ which is not an $su$-state. Certainly, such a criterion is not very useful in practice. Our aim is to present a criterion for positivity of the Lyapunov exponent in the same lines as Furstenberg's criterion in the context of locally constant cocycles.

Let $A\in C^{\gamma}(\Sigma,\, \SL_2(\R))$ be a cocycle admitting families of linear stable and unstable holonomies. One necessary condition to guarantee positivity the Lyapunov exponent of the cocycle $A$ is the existence of some periodic point $q\in \Sigma$, of period $k$, which is a \emph{pinching} periodic point, i.e., the matrix $A^k(q)$ is hyperbolic. In this case, we have two eigen-directions denoted by $\hat{v_1}(q), \hat{v_2}(q)\in \bP^1$ for the matrix $A^k(q)$.

Assume for a moment that $A$ admits an $su$-state $m\in \mathcal{M}_{\mu}(F_A)$. In particular, by continuity of the disintegration it makes sense to consider the projective measure $m_q$. Observe that, this is an invariant measure for the matrix $A^k(q)$ and so, it must to have the following form,
\begin{align*}
    m_q = a\, \delta_{\hat v_1(q)} + b\, \delta_{\hat v_2(q)},
\end{align*}
with $a,b\geq 0$, $a+b=1$. Then, using the invariance of $m$ by the  holonomies and by the cocycle it is not hard to see that for any $z\in \Sigma$, there exist directions $\hat v_1(z), \hat v_2(z)\in \bP^1$ such that 
\begin{align*}
    m_z = a\, \delta_{\hat v_1(z)} + b\, \delta_{\hat v_2(z)}.
\end{align*}
Furthermore, the set of continuous sections $\{\hat v_1,\hat v_2\in C^0(\Sigma,\, \bP^1)\}$ is $F_A$-invariant in the sense that for every $z\in \Sigma$, $A(z)\, \hat v_i(z)\in \{\hat v_1(\sigma(z)),\, \hat v_2(\sigma(z))\}$ for $i=1,2$. So, if we want to guarantee that $su$-states are not allowed in our system we should ask that this type invariant set does not exists. 

Now we introduce a concept incompatible with the existence of $F_A$-invariant sets of continuous sections. Let $z\in W^u_{\text{loc}}(q)$ be a point homoclinic related to $q$ meaning that $z^-=q^-$ and for some $l\geq 1$, $\sigma^l(z)\in W^s_{\text{loc}}(q)$. Assume that the map $H^s_{\sigma^l(z),q}\, A^l(z)\, H^u_{q,z}$ \emph{twists} the invariant subspaces of $A^k(q)$, meaning that
\begin{align}
\label{eq:300323.1}
    H^s_{\sigma^l(z),q}\, A^l(z)\, H^u_{q,z}\left(\left\{
        \hat v_1(q),\, \hat v_2(q)
    \right\}\right)\cap \left\{
        \hat v_1(q),\, \hat v_2(q)
    \right\} = \varnothing.
\end{align}
In particular, equation \eqref{eq:300323.1} implies that invariant sets of continuous sections as above do not exist. 

The above discussion motivates the following definition: we say that the cocycle $A$ is \emph{pinching} if there exists a periodic point $q$ as above such that $A^k(q)$ is hyperbolic, and we say that $A$ is \emph{twisting} if there exists a homoclinic point $z$ related to $q$ such that the property in \eqref{eq:300323.1} is satisfied.

A weaker form of twisting is sufficient for most of the results discussed in these notes. We call  $\rho=\{x_0,\cdots, x_{n+1}\}$ an  $su$-\emph{loop} if 
$x_{i+1}\in W^*_{\text{loc}}(x_i)$, for $*\in\{s,u\}$ and $i=0,\dots, n$ with $x_0=x_{n+1}$. We say that a pinching cocycle is \emph{weak twisting} if there exists an $su$-loop with $x_0=q$ such that the map
$H_{\rho}=H^{*_{n}}_{x_n,x_{n+1}}\circ \cdots \circ H^{*_{0}}_{x_0,x_{1}}$, where $*_i=s$ or $u$ according to the definition of the loop, satisfies 
\begin{align}
\label{eq:weak.twits}
    H_{\rho}\left(\left\{
        \hat v_1(q),\, \hat v_2(q)
    \right\}\right)\cap \left\{
        \hat v_1(q),\, \hat v_2(q)
    \right\} = \varnothing.
\end{align}
Of course twisting implies weak twisting. Observe that the pinching and weak twisting condition also implies that there are no $su$-states.

\begin{figure}[ht]
    \centering
    \includegraphics[width=\textwidth]{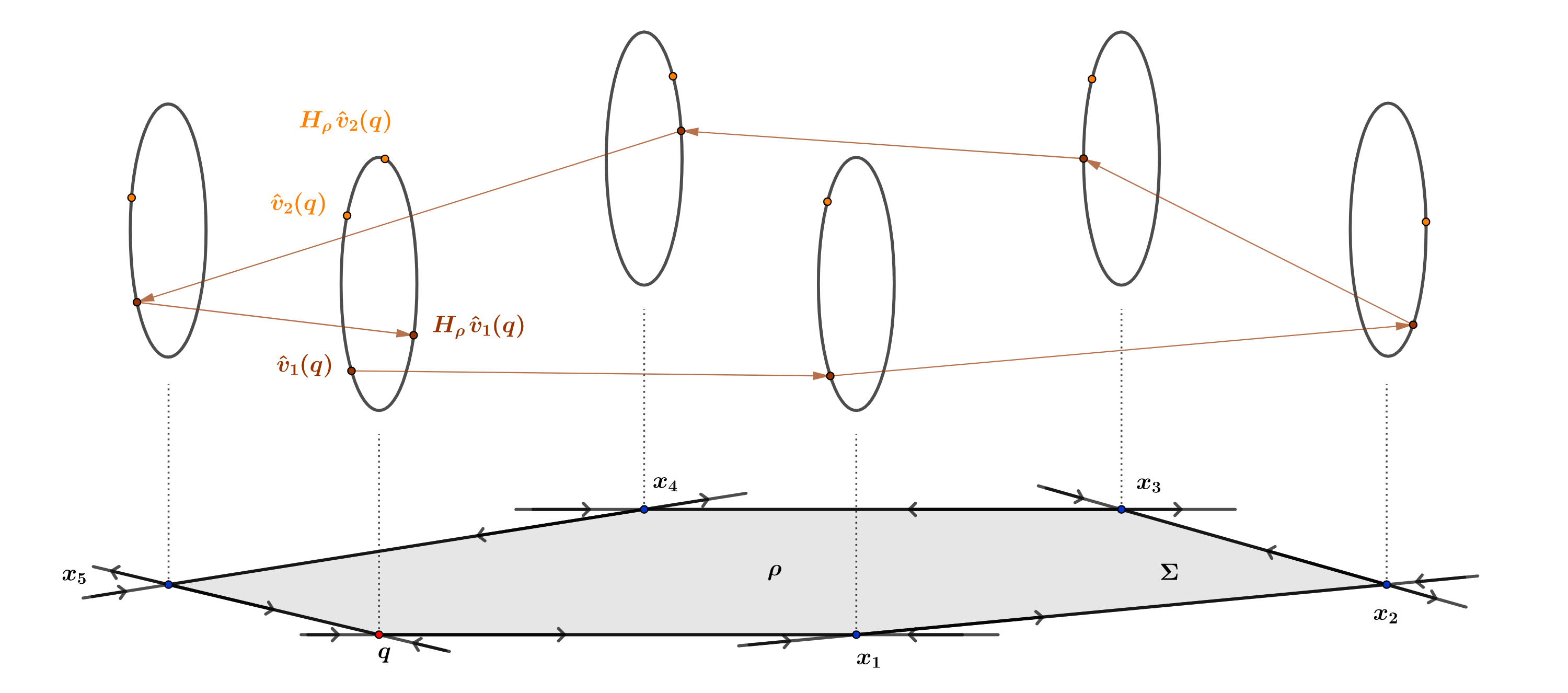}
    \caption{Weak Twisting.}
    \label{pic:Weak Twisting}
\end{figure}

\begin{problem}
    Does weak twisting imply twisting?
\end{problem}

\begin{remark}
\normalfont
    Note that when speaking about twisting property it is assumed that the cocycle in consideration admits families of stable and unstable holonomies.
\end{remark}
The next proposition summarizes the above the discussion:
\begin{proposition}
\label{prop:propertiesPinchingTwisting}
    Let $A\in C^{\gamma}(\Sigma,\, \SL_2(\R))$ be a $\gamma$-fiber bunched cocycle.
    \begin{enumerate}
        \item\label{prop:propertiesPinchingTwisting-item1} The set of pinching and twisting cocycles is an open and dense subset of  $FB^{\gamma}(\Sigma)$;
        
        \item\label{prop:propertiesPinchingTwisting-item2} If $A$ is pinching and weak twisting, then there are no $su$-states;

        \item\label{prop:propertiesPinchingTwisting-item3} If $L(A)>0$, then $A$ is pinching;

        \item\label{prop:propertiesPinchingTwisting-item4} Assume that $L(A)>0$. $A$ is weak twisting if and only if there are no $su$-states;
        
        \item\label{prop:propertiesPinchingTwisting-item5} If $A$ is pinching and weak twisting, then $L(A)>0$. Moreover, there is only one $u$-state and one $s$-state, namely $m^u$ and $m^s$ respectively;

        \item\label{prop:propertiesPinchingTwisting-item6} Let $A$ be a pinching and twisting cocycle depending only on the positive coordinates. If $m\in \mathcal{M}_{\mu}(F_A)$ is an $u$-state and $m^+$ is its projection on $\Sigma^+\times \bP^1$ with a continuous disintegration $\{m^+_{x^+}\}_{x^+}$, then for every $x^+\in \Sigma^+$, $m^+_{x^+}(\{v\})=0$ for every $v\in \mathbb{P}^1$. A similar property holds for $A$ depending only on the negative coordinates;
    \end{enumerate}
\end{proposition}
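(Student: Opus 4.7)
The plan is to address the items in an order where each relies on previous ones. Item \ref{prop:propertiesPinchingTwisting-item3} is obtained via a periodic-approximation argument: if $L(A)>0$, using the specification/shadowing property of the shift together with Birkhoff convergence one produces a periodic point $q$ of period $k$ with $\tfrac{1}{k}\log\|A^k(q)\|>L(A)/2>0$, so $A^k(q)$ has $|\tr|>2$ and is hyperbolic. Item \ref{prop:propertiesPinchingTwisting-item2} is the argument sketched in the text preceding the proposition: for an $su$-state $m$, Proposition \ref{prop:continuity-SUStates} item \ref{prop:continuity-SUStates-item2} makes $x\mapsto m_x$ continuous, so $m_q$ is a well-defined $A^k(q)$-invariant probability on $\bP^1$, hence supported in $\{\hat v_1(q),\hat v_2(q)\}$; for every $su$-loop $\rho$ based at $q$, $(H_\rho)_\ast m_q=m_q$ forces $H_\rho(\operatorname{supp} m_q)=\operatorname{supp} m_q$, contradicting \eqref{eq:weak.twits}.

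Item \ref{prop:propertiesPinchingTwisting-item5} combines the invariance principle with the previous items: if $L(A)=0$, Theorem \ref{thm:invariancePrincipleFB} makes every element of the nonempty set $\mathcal{M}_\mu(F_A)$ an $su$-state, contradicting item \ref{prop:propertiesPinchingTwisting-item2}; Proposition \ref{prop:properties-suStates} item \ref{prop:properties-suStates-item3} then gives uniqueness because the alternative that every $m$ is a $u$-state would make $m^s$ both an $s$- and $u$-state, once again contradicting item \ref{prop:propertiesPinchingTwisting-item2}. For item \ref{prop:propertiesPinchingTwisting-item4}, the direction ``weak twisting $\Rightarrow$ no $su$-states'' is items \ref{prop:propertiesPinchingTwisting-item3} and \ref{prop:propertiesPinchingTwisting-item2} combined; for the converse I argue by contrapositive: if $A$ is pinching but not weak twisting, then every $su$-loop $\rho$ at $q$ satisfies $H_\rho(\{\hat v_1(q),\hat v_2(q)\})\cap\{\hat v_1(q),\hat v_2(q)\}\neq\varnothing$, and this consistency allows us to propagate the probability $\tfrac12(\delta_{\hat v_1(q)}+\delta_{\hat v_2(q)})$ via iteration of $A$ and both holonomies into a globally defined continuous $su$-invariant disintegration $x\mapsto m_x$, thereby producing an $su$-state.

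Item \ref{prop:propertiesPinchingTwisting-item1} is proved by two local $C^\gamma$-perturbations together with an openness argument. For density, first perturb $A$ near a chosen periodic orbit to make $A^k(q)$ hyperbolic (hyperbolicity is open and dense in $\SL_2(\R)$, and the perturbation can be kept small enough to remain in $FB^\gamma(\Sigma)$); then perturb near a homoclinic point $z$ of $q$ to move $H^s_{\sigma^l(z),q}A^l(z)H^u_{q,z}$ into a generic position ensuring \eqref{eq:300323.1}. Openness follows from Proposition \ref{prop:HolonomiesFB}: on $FB^\gamma(\Sigma)$ the holonomies depend $C^\gamma$-continuously on $A$, so both the hyperbolicity of $A^k(q)$ and the disjointness \eqref{eq:300323.1} are open conditions.

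Item \ref{prop:propertiesPinchingTwisting-item6} is the most delicate and the expected main obstacle. Let $m$ be a $u$-state and $m^+$ its projection on $\Sigma^+\times\bP^1$, with continuous disintegration $\{m^+_{x^+}\}$ from Proposition \ref{prop:continuity-SUStates} item \ref{prop:continuity-SUStates-item1}. Define
\begin{align*}
t(x^+):=\max_{v\in\bP^1}m^+_{x^+}(\{v\})
\quad\text{and}\quad
\Gamma(x^+):=\{v\colon m^+_{x^+}(\{v\})=t(x^+)\};
\end{align*}
the cocycle invariance $A(x^+)_\ast m^+_{x^+}=m^+_{\sigma(x^+)}$ gives $t\circ\sigma=t$ and $A(x^+)\Gamma(x^+)=\Gamma(\sigma(x^+))$, so by ergodicity of $\sigma$ both $t$ and $|\Gamma|$ are a.e.\ constant. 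Suppose for contradiction $t\equiv t_0>0$; then $\Gamma$ has uniformly finite cardinality, and $\Gamma(q^+)$ is a finite $A^k(q)$-invariant set, forcing $\Gamma(q^+)\subseteq\{\hat v_1(q),\hat v_2(q)\}$. The hard step is upgrading the a.s.\ $A$-invariance of $\Gamma$ to genuine $u$-invariance of the atomic part of the \emph{lifted} disintegration $\{m_x\}$, so that along the homoclinic loop $q\to z\to\sigma^l(z)\to q$ (using $(\sigma^l(z))^+=q^+$ because $\sigma^l(z)\in W^s_{\mathrm{loc}}(q)$) we obtain $H^s_{\sigma^l(z),q}A^l(z)H^u_{q,z}(\Gamma(q))\subseteq\{\hat v_1(q),\hat v_2(q)\}$, violating \eqref{eq:300323.1}. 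This upgrade exploits the explicit form of $m_x$ in terms of $m^+_{x^+}$ granted by Proposition \ref{prop:propertiesMeasuresProductSpace} item \ref{prop:propertiesMeasuresProductSpace-item3} under the assumption that $A$ depends only on positive coordinates, which is the technical heart of the argument.
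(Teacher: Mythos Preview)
Your treatment of items \ref{prop:propertiesPinchingTwisting-item2}, \ref{prop:propertiesPinchingTwisting-item3} and \ref{prop:propertiesPinchingTwisting-item5} matches the paper's approach (for item \ref{prop:propertiesPinchingTwisting-item3} the paper simply cites \cite{Ka2011}, which is precisely the periodic-approximation result you sketch), and your outlines for items \ref{prop:propertiesPinchingTwisting-item1} and \ref{prop:propertiesPinchingTwisting-item6} are in line with the references \cite{BoGoVi2003} and \cite{BoVi2004} that the paper invokes.

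The genuine gap is in your converse direction of item \ref{prop:propertiesPinchingTwisting-item4}. You assert that the failure of weak twisting --- i.e.\ $H_\rho(\hat V)\cap\hat V\neq\varnothing$ for every $su$-loop $\rho$ based at $q$, where $\hat V=\{\hat v_1(q),\hat v_2(q)\}$ --- is enough ``consistency'' to propagate $\tfrac12(\delta_{\hat v_1(q)}+\delta_{\hat v_2(q)})$ along holonomies and the cocycle into a well-defined $su$-state. But path-independence of this propagation requires $(H_\rho)_\ast\bigl(\tfrac12\delta_{\hat v_1}+\tfrac12\delta_{\hat v_2}\bigr)=\tfrac12\delta_{\hat v_1}+\tfrac12\delta_{\hat v_2}$ for \emph{every} loop, i.e.\ $H_\rho(\hat V)=\hat V$ setwise, which is strictly stronger than nonempty intersection. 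A single loop with $H_\rho(\hat v_1)=\hat v_1$ but $H_\rho(\hat v_2)\notin\hat V$ already breaks your construction.

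The paper closes exactly this gap with a case analysis. Arguing by contradiction (assume $L(A)>0$, no $su$-states, and not weak twisting), it first disposes of the two situations in which a propagation \emph{does} work: either some $\hat v\in\hat V$ is fixed by all loops (propagate $\delta_{\hat v}$), or $\hat V$ is setwise preserved by all loops (propagate the uniform measure on $\hat V$); both yield an $su$-state, contradicting the hypothesis. In the remaining situation one finds a loop $\rho_1$ with $H_{\rho_1}(\hat v_1)\notin\hat V$, hence $H_{\rho_1}(\hat v_2)\in\hat V$, and then, according to whether $H_{\rho_1}(\hat v_2)$ equals $\hat v_1$ or $\hat v_2$, composes $\rho_1$ with a suitable iterate $A^n(q)$ and possibly a second loop $\rho_2$ to manufacture an $su$-loop sending \emph{both} $\hat v_1$ and $\hat v_2$ outside $\hat V$. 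This contradicts the assumed failure of weak twisting and is the combinatorial step missing from your argument.
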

\begin{proof}[Outline of the proof and references:]
    For a proof of item \ref{prop:propertiesPinchingTwisting-item1} see \cite[Theorem~7]{BoGoVi2003}. Item \ref{prop:propertiesPinchingTwisting-item2} is a consequence of the discussion just before the statement. For \ref{prop:propertiesPinchingTwisting-item3} see \cite[Theorem~1.4]{Ka2011}.
    
    One side of \ref{prop:propertiesPinchingTwisting-item4} is a direct consequence of items \ref{prop:propertiesPinchingTwisting-item2} and \ref{prop:propertiesPinchingTwisting-item3}. Now, assume that $L(A)>0$ and that there are no $su$-states. Let $q\in \Sigma$ be a $\sigma$-periodic given by item \ref{prop:propertiesPinchingTwisting-item3}. For simplicity assume that $\sigma(q) = q$. Let $\hat V = \{\hat v_1,\, \hat v_2\}\subset \bP^1$ be the set of invariant directions of the matrix $A(q)$.

    If $A$ is not twisting, then we have that, 
    \begin{align}
    \label{eq:120423.1}
        H_{\rho}(\hat V) \cap \hat V \neq \varnothing
        \quad
        \text{ for every $su$-loop } \rho.
    \end{align}
    If either there exists $\hat v\in \hat V$ such that $H_{\rho}\, \hat v = \hat v$ for every $su$-loop $\rho$, or $H_{\rho}(\hat V) = \hat V$, for every $su$-loop $\rho$ we are able to build a $su$-state just considering the atomic measure with even weights in each case contradicting the assumption that there are no $su$-states. Therefore, we may assume that these cases do not happen. In particular, we can find a $su$-loop $\rho_1$ such that $H_{\rho_1}(\hat v_1)\notin \hat V$. By \eqref{eq:120423.1}, we have that $H_{\rho_1}(\hat v_2)\in \hat V$. Consider the following cases:
    
    \noindent
    \emph{Case 1:} Assume that $H_{\rho_1}(\hat v_2) = \hat v_1$. In this case, there exists $n\in \N$ such that the map $H_{\rho_1}\, A^n(q)\, H_{\rho_1}$ (also associated to an $su$-loop) satisfies $H_{\rho_1}\, A^n(q)\, H_{\rho_1}(\hat v_1)\notin \hat V$, for $i=1,2$.

    \noindent
    \emph{Case 2:} Assume that $H_{\rho_1}(\hat v_2) = \hat v_2$. Observe that in this case there exists an $su$-loop $\rho_2$ such that $H_{\rho_2}(\hat v_2)\notin \hat V$. Indeed, we already excluded the case where $H_{\rho}(\hat v_2) = \hat v_2$, for every $su$-loop $\rho$. If $\rho$ is an $su$-loop such that $H_{\rho}(\hat v_2) = \hat v_1$, then $H_{\rho_1}\, H_{\rho}(\hat v_2) = H_{\rho_1}(\hat v_1)\notin \hat V$ by the initial assumption. Let $\rho_2$ be an $su$-loop such that $H_{\rho_2}(\hat v_2)\notin \hat V$.

    \noindent
    \emph{Case 2.1:} Assume that $H_{\rho_1}(\hat v_2) = \hat v_2$ and $H_{\rho_2}(\hat v_1) = \hat v_1$. Here, there exists $n\in \N$ such that the map $H_{\rho_1}\, A^n(q)\, H_{\rho_2}$  satisfies $H_{\rho_1}\, A^n(q)\, H_{\rho_2}(\hat v_i)\notin \hat V$, for $i=1,2$.

    \noindent
    \emph{Case 2.2:} Assume that $H_{\rho_1}(\hat v_2) = \hat v_2$ and $H_{\rho_2}(\hat v_1) = \hat v_2$. Exactly the same as Case 1 with $\rho_2$ and $\hat v_2$ instead of $\rho_1$ and $\hat v_1$.

    In any of the above described cases, we obtain a contradiction with the assumption \eqref{eq:120423.1}. This concludes the proof of item \ref{prop:propertiesPinchingTwisting-item4}.
    
    Using the invariance principle, Theorem \ref{thm:invariancePrincipleFB}, and item \ref{prop:properties-suStates-item3} of Proposition \ref{prop:properties-suStates}, we have that item \ref{prop:propertiesPinchingTwisting-item5} holds. For a proof of item \ref{prop:propertiesPinchingTwisting-item6} see \cite[Proposition~5.1]{BoVi2004}.
\end{proof}

We say that a pinching $\gamma$-fiber bunched cocycle $A\in C^{\gamma}(\Sigma,\, \SL_2(\R))$ is \emph{quasi-twisting}, if there exists only one $u$-state in $\mathcal{M}_{\mu}(F_A)$.

\begin{proposition}
\label{prop:quasi-twistingDiagonal}
    Let $A\in C^{\gamma}(\Sigma,\, \SL_2(\R))$ be a $\gamma$-fiber bunched cocycle with $L(A)>0$. Then either $A$ (or $A^{-1}$) is quasi-twisting or $A$ is continuously conjugated to a diagonal cocycle.
\end{proposition}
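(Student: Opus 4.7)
The plan is to assume that neither $A$ nor $A^{-1}$ is quasi-twisting and to produce a non-ergodic $su$-state in $\mathcal{M}_\mu(F_A)$; then the conclusion will be delivered by item \ref{prop:continuity-SUStates-item4} of Proposition \ref{prop:continuity-SUStates}.

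First I would translate the failure of quasi-twisting of $A$ and $A^{-1}$ into statements about $s$- and $u$-states of $A$. The failure of quasi-twisting of $A$ means there are at least two $u$-states in $\mathcal{M}_\mu(F_A)$. For $A^{-1}$, note that it is also $\gamma$-fiber bunched (the fiber bunching condition is symmetric in $A$ and $A^{-1}$ because $\norm{B}=\norm{B^{-1}}$ for $B\in\SL_2(\R)$), and by the uniqueness in Proposition \ref{prop:HolonomiesFB} a direct computation identifies its unstable holonomies (as a cocycle over $\sigma^{-1}$) with the stable holonomies of $A$ (over $\sigma$); also $\mathcal{M}_\mu(F_{A^{-1}})=\mathcal{M}_\mu(F_A)$ since the invariance conditions are equivalent. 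Hence $A^{-1}$ not being quasi-twisting is equivalent to $A$ admitting more than one $s$-state. Invoking item \ref{prop:properties-suStates-item3} of Proposition \ref{prop:properties-suStates} for both $s$- and $u$-states, every $m\in\mathcal{M}_\mu(F_A)$ is simultaneously an $s$-state and a $u$-state, i.e.\ an $su$-state.

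In particular the canonical measures $m^u$ and $m^s$ from item \ref{prop:propertiesMeasuresProductSpace-item4} of Proposition \ref{prop:propertiesMeasuresProductSpace} are both $su$-states. They are ergodic: the maps $x\mapsto (x,\hat e^u(x))$ and $x\mapsto (x,\hat e^s(x))$ are measurable $F_A$-equivariant isomorphisms onto their supports and the base $(\Sigma,\sigma,\mu)$ is ergodic. They are also distinct because $L(A)>0$ implies $\hat e^u(x)\ne \hat e^s(x)$ for $\mu$-a.e.\ $x$. The convex combination $m:=\tfrac12 m^u+\tfrac12 m^s$ is therefore non-ergodic, and it remains an $su$-state because its disintegration $\tfrac12(\delta_{\hat e^u(x)}+\delta_{\hat e^s(x)})$ satisfies the holonomy-invariance conditions coordinatewise inherited from $m^u$ and $m^s$. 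Item \ref{prop:continuity-SUStates-item4} of Proposition \ref{prop:continuity-SUStates} then yields that $A$ is continuously conjugated to a diagonal cocycle.

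The main obstacle I expect is the careful identification of the $u$-holonomies of $A^{-1}$ with the $s$-holonomies of $A$, since this relies on the uniqueness of H\"older holonomies in the fiber-bunched setting and on correctly swapping the time direction for the base $\sigma$ and the stable/unstable sets. Once that bookkeeping is verified, the rest of the argument is a direct application of the machinery of $su$-states already developed in Propositions \ref{prop:properties-suStates} and \ref{prop:continuity-SUStates}.
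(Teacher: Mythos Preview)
Your argument is correct and uses the same core ingredients as the paper: the dichotomy of item \ref{prop:properties-suStates-item3} in Proposition \ref{prop:properties-suStates} together with item \ref{prop:continuity-SUStates-item4} of Proposition \ref{prop:continuity-SUStates}. The organization differs slightly: the paper first invokes item \ref{prop:propertiesPinchingTwisting-item4} of Proposition \ref{prop:propertiesPinchingTwisting} (the characterization via weak twisting) to produce an $su$-state and then splits on its ergodicity, whereas you bypass weak twisting entirely and go straight from ``neither $A$ nor $A^{-1}$ quasi-twisting'' to ``every measure is an $su$-state'' via the $s/u$-state dichotomy. Your route is a bit more direct and makes the logical structure of the trichotomy (unique $u$-state / unique $s$-state / everything is an $su$-state) more transparent; the paper's route has the minor advantage of simultaneously recording that (weak) twisting already forces quasi-twisting. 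The bookkeeping you flag about identifying the unstable holonomies of $A^{-1}$ over $\sigma^{-1}$ with the stable holonomies of $A$ is indeed the only point requiring care, and your justification via the uniqueness in Proposition \ref{prop:HolonomiesFB} is the right one.
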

\begin{proof}[Outline of the proof:]
    If $L(A)>0$ and $A$ is not weak twisting, there exists a measure $m\in \mathcal{M}_{\mu}(F_A)$ which is a $su$-state (see item \ref{prop:propertiesPinchingTwisting-item4} of Proposition \ref{prop:propertiesPinchingTwisting}). If $m$ is non-ergodic, then, by item \ref{prop:continuity-SUStates-item4} of Proposition \ref{prop:continuity-SUStates}, $A$ is conjugated to a diagonal.  Otherwise, $m\in \{m^s,\, m^u\}$ and so either $A$ or $A^{-1}$ is quasi-twisting. If $A$ is twisting, then in particular $A$ is quasi-twisting.
\end{proof}

\begin{example}[Pinching and twisting for locally constant cocycles]
\label{ex:pinchingTwistingLCC}
\normalfont
    Let $A = (A_1,\ldots, A_{\kappa})\in \SL_2(\R)^{\kappa}$ be a strongly irreducible, locally constant cocycle such that the semi-group generated by $A_1,\ldots, A_{\kappa}$ is unbounded. Then, by Furstenberg's criterion, $L(A)>0$. An argument similar to the proof of item \ref{prop:propertiesPinchingTwisting-item4} Proposition \ref{prop:propertiesPinchingTwisting} guarantees that $A$ is pinching and twisting. In this case  \eqref{eq:300323.1} gives only the cocycle matrix since the holonomies are identity matrices.
\end{example}

\subsection{Continuity of the Lyapunov exponent for fiber bunched cocycles}

It is very useful to think in pinching and twisting as the equivalent conditions to unboundedness and strong irreducibility in the Furstenberg's criterion. Using that interpretation it is natural to wonder if pinching and weak twisting $\gamma$-fiber bunched cocycles are continuity points of Lyapunov exponent in the $C^{\gamma}$ topology. That is indeed the case. Consider $A_k$ converging to $A$ and let $m^k\in\mathcal{M}_{\mu}(F_{A_k})$ ergodic $u$-states realizing the Lyapunov exponent, i.e., 
\begin{align}
\label{eq:040423.2}
    L(A_k) = \int_{\Sigma\times\bP^1}\, \log\norm{A_k(x)\, v}\, d\, m^k(x,\hat v).
\end{align}
This is possible since if $L(A_k)>0$ we may choose $m^k = m^u_{A_k}$ and in the case that $L(A_k)=0$, by the invariance principle Theorem \ref{thm:invariancePrincipleFB}, any ergodic measure $m^k$ works.

Up to a subsequence, we may assume that $m^k$ converges, and so it must converge to some measure $m\in \mathcal{M}_{\mu}(F_A)$. It is possible to show that limit is also an $u$-state for $A$. But, by the pinching and weak twisting condition on $A$, there is only one $u$-state, namely $m^u$. So,
\begin{align}
\label{eq:030423.1}
    L(A_k)
    = \int_{\Sigma\times\bP^1}\, \log\norm{A_k(x)\, v}\, d\, m^k(x,\hat v)
    \to \int_{\Sigma\times\bP^1}\, \log\norm{A(x)\, v}\, d\, m^u(x,\hat v)
    = L(A).
\end{align}
The above discussion is summarized in the next proposition.
\begin{proposition}
\label{prop:continuityPinchingTwisting}
    If $A\in C^{\gamma}(\Sigma,\, \SL_2(\R))$ is a pinching and weak twisting $\gamma$-fiber bunched cocycle, then $L$ is continuous at $A$ in the $C^{\gamma}$-topology.
\end{proposition}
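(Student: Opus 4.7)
The plan is exactly the one outlined in the paragraph preceding the statement, turned into a rigorous argument. By item \ref{prop:basicProperties-item3} of Proposition \ref{prop:basicProperties}, $L$ is upper semi-continuous, so it suffices to prove lower semi-continuity at $A$. Take a sequence $A_k \to A$ in $C^{\gamma}(\Sigma,\SL_2(\R))$. Since fiber bunching is an open condition, we may assume that every $A_k$ is $\gamma$-fiber bunched, so that item \ref{prop:properties-suStates-item2} of Proposition \ref{prop:properties-suStates} and Theorem \ref{thm:invariancePrincipleFB} apply. For each $k$ pick an ergodic $u$-state $m^k \in \mathcal{M}_{\mu}(F_{A_k})$ such that
\begin{align*}
    L(A_k) = \int_{\Sigma\times\bP^1} \log\norm{A_k(x)\, v}\, d\, m^k(x,\hat v);
\end{align*}
when $L(A_k)>0$ set $m^k = m^u_{A_k}$ (Proposition \ref{prop:properties-suStates}, item \ref{prop:properties-suStates-item4}), while when $L(A_k)=0$, by the invariance principle (Theorem \ref{thm:invariancePrincipleFB}) every ergodic element of $\mathcal{M}_{\mu}(F_{A_k})$ is an $su$-state and the integral above vanishes for all of them.

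By compactness of $\mathcal{M}_{\mu}(F_A)$ and $C^0$-continuity of the integrand, along a subsequence $m^k \overset{\ast}{\rightharpoonup} m \in \mathcal{M}_{\mu}(F_A)$. The central (and main) step is to show that this limit $m$ is itself a $u$-state for $A$. The key input here is that for $\gamma$-fiber bunched cocycles the unstable holonomies depend continuously on the cocycle in the $C^{\gamma}$-topology: in fact, by the uniform estimates behind Proposition \ref{prop:HolonomiesFB}, the defining limits $A^n(\sigma^{-n}(y))\, A^n(\sigma^{-n}(x))^{-1}$ converge uniformly in $(x,y)$ with $y\in W^u_{\text{loc}}(x)$, uniformly on a $C^{\gamma}$-neighborhood of $A$, so $H^{u,A_k}_{x,y}\to H^{u,A}_{x,y}$ uniformly. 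One then tests the defining relation $(H^{u,A_k}_{x,y})_*\, m^k_x = m^k_y$ against continuous functions supported on boxes in local unstable sets, paying attention to the fact that disintegrations are only measurable in general. The cleanest route is to pass to the one-sided quotient: the projections $m^{k,+}$ onto $\Sigma^+\times\bP^1$ have H\"older continuous disintegrations by item \ref{prop:continuity-SUStates-item1} of Proposition \ref{prop:continuity-SUStates} (after conjugating through the $u$-holonomies to a cocycle depending only on negative coordinates, via Proposition \ref{prop:reducitionToOneSided}), so weak-$\ast$ convergence propagates to the disintegrations and the limiting measure is again $u$-invariant under holonomies.

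Granted that $m$ is a $u$-state for $A$, Proposition \ref{prop:propertiesPinchingTwisting} (item \ref{prop:propertiesPinchingTwisting-item5}) applied to the pinching and weak twisting cocycle $A$ gives $L(A)>0$ and identifies the unique $u$-state as $m = m^u$. Then passing to the limit in
\begin{align*}
    L(A_k) = \int_{\Sigma\times\bP^1}\, \log\norm{A_k(x)\, v}\, d\, m^k(x,\hat v)
\end{align*}
is legitimate because $(x,\hat v)\mapsto \log\norm{A_k(x)\, v}$ converges uniformly to $\log\norm{A(x)\, v}$ (both are bounded continuous, with $A_k \to A$ in sup norm), and $m^k \overset{\ast}{\rightharpoonup} m^u$; item \ref{prop:properties-suStates-item4} of Proposition \ref{prop:properties-suStates} then yields $L(A_k)\to L(A)$. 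Since this holds along any convergent subsequence, $L(A_k)\to L(A)$ for the original sequence, proving continuity at $A$.

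The main obstacle is the $u$-state persistence step: handling the disintegrations of $m^k$ and $m$ carefully, since the disintegration map $x\mapsto m_x$ is a priori only measurable, so weak-$\ast$ convergence of the measures on $\Sigma\times\bP^1$ does not automatically transfer invariance under holonomies. The fiber bunching hypothesis is precisely what saves the day, both through the continuous dependence of holonomies and through item \ref{prop:continuity-SUStates-item1} of Proposition \ref{prop:continuity-SUStates} that promotes $u$-states to measures with H\"older disintegrations on the one-sided quotient.
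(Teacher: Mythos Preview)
Your argument is essentially the paper's own proof (the discussion immediately preceding the proposition), fleshed out with the $u$-state persistence step that the paper only asserts with ``It is possible to show.'' The structure---pick $u$-states $m^k$ realizing $L(A_k)$, extract a weak-$\ast$ limit, identify it as the unique $u$-state $m^u$ via item~\ref{prop:propertiesPinchingTwisting-item5} of Proposition~\ref{prop:propertiesPinchingTwisting}, then pass to the limit in the Furstenberg-type integral---is identical.

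One small slip to fix: to invoke item~\ref{prop:continuity-SUStates-item1} of Proposition~\ref{prop:continuity-SUStates} as stated (for $u$-states when $A(x)=A(x^+)$), you should conjugate via the \emph{stable} holonomies to a cocycle depending only on the \emph{positive} coordinates, not via the $u$-holonomies to negative coordinates as you wrote; this is exactly what the paper does in the subsequent discussion of Theorem~\ref{thm:BBB2018}. With that correction your outline of the persistence step (uniform dependence of holonomies on the cocycle from Proposition~\ref{prop:HolonomiesFB}, plus uniform H\"older bounds on the one-sided disintegrations) is the standard route.
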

\begin{example}
\normalfont
    If $A$ is an irreducible locally constant $\gamma$-fiber bunched cocycle, we claim that $A$ is a continuity point of the Lyapunov exponent in the H\"older topology, $\gamma>0$ (recall that the same is not true for $\gamma=0$). Indeed, if $L(A)=0$, there is nothing to prove. Assume that $L(A)>0$, then by item \ref{prop:propertiesofTypicalCocycles-item4} of Proposition \ref{prop:propertiesofTypicalCocycles} $A$ is strongly irreducible. By Example \ref{ex:pinchingTwistingLCC}, we see that $A$ is pinching and twisting. The claim follows by Proposition \ref{prop:continuityPinchingTwisting}.

    More generally, assume that $A\in C^{\gamma}(\Sigma,\, \SL_2(\R))$ is a $\gamma$-fiber bunched cocycle with $A(x) = A(x^+)$, for every $x\in \Sigma$. It was proved in \cite[Theorem A]{ViJa2019} that if additionally $A$ is pinching and weak twisting, then $A$ is a continuity point of the Lyapunov exponent in the $C^0$-topology. A consequence of this result is that for non-invertible base dynamics, Bochi-Mane Theorem \ref{thm:BoMa} is no longer true.
\end{example}

Now, assume that the fiber bunched cocycles $A$ is not weak twisting and we investigate the continuity of the Lyapunov exponent at $A$.

Again, it is enough to consider the case $L(A)>0$. In this case, using item \ref{prop:propertiesPinchingTwisting-item4} of Proposition \ref{prop:propertiesPinchingTwisting}, there exists a measure $\tilde{m}\in \mathcal{M}_{\mu}(F_A)$ which is a $su$-state. If $\tilde{m}\in \{m^s,\, m^u\}$ and this is the unique $su$-state, then either $A$ or $A^{-1}$ is quasi-twisting and a similar argument to prove continuity of $L$ used in the weak twisting case above works. For that reason, we may assume that, up to continuous conjugacy, $A$ is diagonal of the form (see Proposition \ref{prop:quasi-twistingDiagonal})
\begin{align*}
    A(x) = \begin{pmatrix}
        a(x) & 0 \\
        0 & a(x)^{-1}
    \end{pmatrix}.
\end{align*}
Consider $A_k\to A$ and $m^k\to m$ as above. In particular, by the fact that $A$ is diagonal, we have that $m$ must to have the following disintegration
\begin{align*}
    m_x = a\, \delta_{\hat e_1} + b\, \delta_{\hat e_2},
\end{align*}
where $a,b\geq 0$, $a+b=1$ and $\hat e_1,\hat e_2$ are the unitary vectors of the canonical basis of $\R^2$.

A technical issue that appears in the non locally constant case is that, even though the $u$-states for $A_k$, $m^k$, converges to an $u$-state, $m$, for $A$ we are not able to guarantee that the disintegrations $m^k_x$ converges to $m_x$ for $x\in \Sigma$.  That is mainly due to the lack of regularity of these disintegrations (which in general are only measurable). To overcome this issue we use the existence of stable holonomies (uniformly in a neighborhood of $A$) to change coordinates and restrict to the case where $A_k$ and $A$ only depends on the positive coordinates (see Proposition \ref{prop:reducitionToOneSided}). This is very useful by the following reason: now we are able to project the measures $m^k$ and $m$ in the unilateral system $\Sigma^+\times\bP^1$ obtaining measures $m^{k,+}$ (ergodic) and $m^+$, invariant under $F^+_{A_k}$ and $F^+_A$ respectively, such that 
\begin{enumerate}
    \item\label{item:proofBBB1} The disintegrations $\{m^{k,+}_{x^+}\}_{x^+\in \Sigma^+}$ and $\{m^+_{x^+}\}_{x^+\in \Sigma^+}$ are continuous disintegrations of $m^{k,+}$ and $m^+$;

    \item\label{item:proofBBB2} For every $x^+\in \Sigma^+$, $m^{k,+}_{x^+}$ converges to $m^+_{x^+}$ in the weak* topology.
\end{enumerate}
Now, we proceed similarly to the locally constant case.  Elaborating more on that, first we use a variation of the "energy argument" (using the fact that $A$ is diagonal and $L(A)>0$) to guarantee that the probabilities $m^{k,+}_{x^+}$ must to be atomic.

The proof follows by a classification of the atomic ergodic measures $m^{k,+}$ that can approach $m$. By item \ref{prop:propertiesPinchingTwisting-item3} of Proposition \ref{prop:propertiesPinchingTwisting}, there exists a periodic point $q\in \Sigma$, such that $A^l(q)$ is a hyperbolic matrix. So, $A_k^l(q)$ is also hyperbolic for every $k$ sufficiently large (we assume that for every $k$). This, jointly with the continuity of the disintegration of $m^{k,+}$ in $\Sigma^+$, produces constraints for the structure of the measures $m^{k,+}$. In either case, the expression \eqref{eq:030423.1} is verified. This is summarized in the following result due to Backes, Butler and Brown.
\begin{theorem}[L. Backes, C. Butler, B. Brown, \cite{BBB18}]
\label{thm:BBB2018}
    The map $FB^{\gamma}(\Sigma)\ni A\mapsto L(A)$ is continuous.  
\end{theorem}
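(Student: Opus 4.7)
The overall plan is to mimic the Bocker--Viana continuity argument (Theorem 2.12) in the fiber bunched setting, using the invariance principle, holonomies, and the dichotomy provided by Proposition \ref{prop:quasi-twistingDiagonal}. By upper semi-continuity (Proposition \ref{prop:basicProperties} item \ref{prop:basicProperties-item3}) it suffices to prove $\liminf_k L(A_k) \geq L(A)$ whenever $A_k \to A$ in $FB^{\gamma}(\Sigma)$. The case $L(A) = 0$ is automatic, so assume $L(A) > 0$. For each $k$ pick an ergodic $u$-state $m^k \in \mathcal{M}_\mu(F_{A_k})$ realizing $L(A_k)$: this is $m^u_{A_k}$ when $L(A_k) > 0$ and, when $L(A_k) = 0$, any ergodic lift works because Theorem \ref{thm:invariancePrincipleFB} forces it to be an $su$-state and hence the integral in \eqref{eq:040423.2} vanishes. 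After passing to a subsequence, $m^k \rightharpoonup m$, and $m \in \mathcal{M}_\mu(F_A)$.

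The first substep is to promote $m$ to an $u$-state for $A$. Since $A_k \to A$ in $C^{\gamma}$ and fiber bunching is open, Proposition \ref{prop:HolonomiesFB} guarantees that the unstable holonomies $H^{u,k}_{x,y}$ converge uniformly to $H^u_{x,y}$ on compact pieces of unstable sets. Combined with the fact that each $m^k$ is an $u$-state for $A_k$, a standard weak-$\ast$ limit argument (as in \cite[Proposition~4.4]{BoVi2004}) shows that $m$ is an $u$-state for $A$.

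Now I split according to the dichotomy in Proposition \ref{prop:quasi-twistingDiagonal}. If $A$ (or, symmetrically, $A^{-1}$) is pinching and quasi-twisting, then $m^u$ (respectively $m^s$) is the unique $u$-state, so $m = m^u$ and the convergence
\begin{equation*}
L(A_k) = \int \log\|A_k(x)v\|\, dm^k \longrightarrow \int \log\|A(x)v\|\, dm^u = L(A)
\end{equation*}
follows from the uniform convergence of the integrands, exactly as in Proposition \ref{prop:continuityPinchingTwisting}. The genuinely hard case — and the main obstacle — is when neither $A$ nor $A^{-1}$ is quasi-twisting. Then Proposition \ref{prop:quasi-twistingDiagonal} gives a continuous conjugacy bringing $A$ to a diagonal cocycle $\mathrm{diag}(a(x), a(x)^{-1})$; since continuous conjugacy preserves both $L$ and the topology on $FB^{\gamma}$, I may assume $A$ itself is diagonal. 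Then $\mathcal{M}_\mu(F_A) = [m^u, m^s]$ with $m^u = \int \delta_{\hat{e}_1}\, d\mu$ and $m^s = \int \delta_{\hat{e}_2}\, d\mu$, so the only obstruction to $m = m^u$ is that $m^k$ might leak mass onto $m^s$, which would drive $L(A_k)$ strictly below $L(A)$.

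To rule this leak out, I use holonomies to change coordinates so that $A$ and (for $k$ large) all the $A_k$ depend only on the positive coordinates; this is legitimate by Proposition \ref{prop:reducitionToOneSided} applied with uniformly chosen stable holonomies. Projecting down to $\Sigma^+ \times \bP^1$ gives $F_A^+$-invariant measures $m^{k,+}, m^+$ which, by Proposition \ref{prop:continuity-SUStates} item \ref{prop:continuity-SUStates-item1}, admit H\"older continuous disintegrations $x^+ \mapsto m^{k,+}_{x^+}$, $m^+_{x^+}$, and the disintegrations converge pointwise in the weak-$\ast$ topology. Now I implement the \emph{energy argument} (see \cite[Section~10.4]{Vi2014} and the discussion preceding Theorem \ref{thm:BoVi2017}): because $A$ is diagonal with $L(A) > 0$, the direction $\hat{e}_2$ is a $p$-expanding fixed point of the projective action, which by pinching (Proposition \ref{prop:propertiesPinchingTwisting} item \ref{prop:propertiesPinchingTwisting-item3}, applied through the conjugacy) persists as a hyperbolic periodic orbit for nearby $A_k$; a uniform energy estimate forces $m^{k,+}_{x^+}$ to be atomic, concentrated on sections converging to $\hat{e}_1$ and $\hat{e}_2$. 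Finally, the identity $L(A_k|_{\hat e_2}) = -L(A_k) < 0$ contradicts the inequality $\int \log\|A_k v\|\, dm^k \geq 0$ that would follow were any mass to land on the $\hat{e}_2$-section, forcing $m = m^u$ and hence \eqref{eq:030423.1}. This concludes the argument. The central technical hurdle throughout is the passage from weak-$\ast$ convergence of measures to pointwise convergence of the disintegrations along continuous sections, which is precisely what the one-sided reduction via holonomies plus continuity of disintegrations for $u$-states is designed to deliver.
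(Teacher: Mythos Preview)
Your approach is essentially identical to the paper's: reduce to $L(A)>0$, pick ergodic $u$-states $m^k$ realizing $L(A_k)$, pass to a limit $u$-state, split via Proposition~\ref{prop:quasi-twistingDiagonal} into the quasi-twisting and diagonal cases, and in the diagonal case use the one-sided reduction (Proposition~\ref{prop:reducitionToOneSided}) together with continuity of the $u$-state disintegrations (Proposition~\ref{prop:continuity-SUStates}) and the energy argument to force atomicity of $m^{k,+}_{x^+}$.

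The one genuine slip is in your final contradiction step. The displayed identity ``$L(A_k|_{\hat e_2}) = -L(A_k)$'' is meaningless: $\hat e_2$ is invariant for $A$, not for $A_k$, and even the nearby $A_k$-invariant section $\hat v_k$ (produced by the atomic structure and the hyperbolic periodic orbit) carries no such identity. The argument the paper actually runs, paralleling the Bocker--Viana proof, is: if $m^{k,+}$ is supported on a single continuous invariant section $\hat v_k$ with $\hat v_k \to \hat e_2$, then
\[
L(A_k) \;=\; \int_{\Sigma}\log\norm{A_k(x)\,v_k(x)}\,d\mu(x)\;\longrightarrow\; \int_{\Sigma}\log\norm{A(x)\,e_2}\,d\mu(x)\;=\;-L(A)\;<\;0,
\]
contradicting $L(A_k)\ge 0$. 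You also omit the case where $m^{k,+}$ lives on \emph{two} invariant sections (the hyperbolic periodic orbit bounds the number of atoms by $2$); the paper disposes of this by an analogous argument. These are imprecisions in the write-up rather than structural gaps in the strategy.
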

This result shows that the problem of understanding the Lyapunov exponent function in the $C^{\gamma}$-topology, $\gamma>0$, is completely different from the problem in the $C^0$-topology. Recall that in the $C^{0}$-topology,  cocycles with non-zero Lyapunov exponent that are not uniformly hyperbolic (e.g., Example \ref{ex:fiberBunchedFamily}) are discontinuity points of $L$. Another important consequence of Theorem \ref{thm:BBB2018} is that now we have open sets of non-hyperbolic cocycles inside $C^{\gamma}(\Sigma,\SL_2(\R))$ in which the Lyapunov exponent does not vanish.

\begin{example}[Discontinuity example]
\normalfont
    In Bocker-Viana example (Example \ref{ex:BockerVianaExample}), it is possible to give a set of parameters $1\leq \alpha\leq \beta$ for which $A_{\alpha,\beta}$ is a discontinuity point of the Lyapunov exponent in the $C^{\gamma}$-topology. Indeed, a direct adaptation of the argument in \cite[Theorem~9.22]{Vi2014} shows that if $\alpha> \zeta^{2\gamma}$, then $A_{\alpha,\beta}$ is a discontinuity point of the Lyapunov exponent (recall that $A_{\alpha,\beta}$ is $\gamma$-fiber bunched for $\beta< \zeta^{\gamma/2}$).
\end{example}

\emph{Non-uniformly fiber bunched cocycles:}
As mentioned before, the existence of uniform holonomies in a neighborhood of the reference cocycle is the main ingredient in the proof of Theorem \ref{thm:BBB2018}. So, it is important to understand more general conditions that guarantee the existence of holonomies in a stable way. A common approach in non-uniformly hyperbolic dynamics is to try to flexibilize a uniform concept such as $\gamma$-fiber bunched considering its asymptotic version: a cocycle $A$ is called \emph{non-uniformly} $\gamma$ \emph{fiber bunched} if ''it has small Lyapunov exponent", more precisely, if $A$ is $\gamma$-H\"older continuous and
\begin{align*}
    L(A) < \frac{\gamma}{2}\log\zeta.
\end{align*}
Recall that $\zeta>1$ comes from the metric on $\Sigma$. The notion of non-uniformly $\gamma$-fiber bunched cocycles was first explored in \cite{Vi2008} in which many of its properties were established (the term dominated was used instead of non-uniformly fiber bunched). We list a few of these properties in the next proposition.
\begin{proposition}
    It holds that,
    \begin{enumerate}
        \item If $A$ is $\gamma$-fiber bunched, then $A$ is non-uniformly $\gamma$-fiber bunched;

        \item Non-uniformly $\gamma$-fiber bunched is an open condition in $C^{\gamma}(\Sigma,\, \SL_2(\R))$;

        \item If $A$ is non-uniformly $\gamma$-fiber bunched, then $A$ admits (non-uniformly) stable and unstable holonomies.
    \end{enumerate}
\end{proposition}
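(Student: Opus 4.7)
My plan addresses the three items in turn. Part (1) is immediate from Proposition~\ref{prop:basicProperties}, item \ref{prop:basicProperties-item2}: if $A$ is $\gamma$-fiber bunched with integer witness $n$, then
\begin{align*}
    L(A) \leq \frac{1}{n}\int_\Sigma \log\norm{A^n(x)}\, d\mu(x) \leq \frac{1}{n}\log\sup_{x\in\Sigma}\norm{A^n(x)} < \frac{\gamma}{2}\log\zeta.
\end{align*}
For Part (2), since the inclusion $C^\gamma(\Sigma,\SL_2(\R))\hookrightarrow C^0(\Sigma,\SL_2(\R))$ is continuous, the upper semi-continuity of $L$ in the $C^0$ topology (Proposition~\ref{prop:basicProperties}, item \ref{prop:basicProperties-item3}) transfers to the $C^\gamma$ topology, and the strict inequality $L(A) < \frac{\gamma}{2}\log\zeta$ then defines an open set in $C^\gamma(\Sigma,\SL_2(\R))$.

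For Part~(3), my goal is to produce the stable holonomy by a Cauchy-telescoping argument in the spirit of Example~\ref{ex:holonomiesDiagonal} and Proposition~\ref{prop:HolonomiesFB}. Define, for $y\in W^s_{\text{loc}}(x)$,
\begin{align*}
    H_n(x,y) := A^n(y)^{-1}\, A^n(x).
\end{align*}
A direct use of the cocycle identity gives
\begin{align*}
    H_{n+1}(x,y) - H_n(x,y) = A^n(y)^{-1}\bigl[\, A(\sigma^n y)^{-1} A(\sigma^n x) - \id\, \bigr]\, A^n(x).
\end{align*}
Since $A$ is $\gamma$-H\"older and $y\in W^s_{\text{loc}}(x)$ forces $d(\sigma^n x, \sigma^n y)\leq \zeta^{-n}$ for $n\geq 0$, the bracketed term has norm at most $C\,\zeta^{-n\gamma}$, and using $\norm{A^n(y)^{-1}} = \norm{A^n(y)}$ (valid in $\SL_2(\R)$), one obtains
\begin{align*}
    \norm{H_{n+1}(x,y) - H_n(x,y)} \leq C\,\zeta^{-n\gamma}\,\norm{A^n(x)}\,\norm{A^n(y)}.
\end{align*}

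The decisive step is to convert the hypothesis $L(A) < \frac{\gamma}{2}\log\zeta$ into summability of these differences. By Kingman's ergodic theorem, $\frac{1}{n}\log\norm{A^n(z)}\to L(A)$ for $\mu$-a.e.\ $z\in \Sigma$; choosing $\varepsilon > 0$ with $2L(A) + 2\varepsilon < \gamma\log\zeta$, there exists a measurable constant $C_z$ such that $\norm{A^n(z)}\leq C_z\, e^{n(L(A)+\varepsilon)}$ for all $n\geq 0$. Consequently, for $\mu\times\mu$-a.e.\ pair $(x,y)$ in the relation $\{y\in W^s_{\text{loc}}(x)\}$ (applying Fubini with respect to the product disintegration of $\mu$ along positive/negative coordinates), the telescoping series is dominated by a geometric one with ratio $e^{2L(A)+2\varepsilon -\gamma\log\zeta} < 1$, so $H^s_{x,y}:=\lim_n H_n(x,y)$ exists in $\SL_2(\R)$. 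The properties $H^s_{x,x}=\id$, $H^s_{x,y}=H^s_{z,y}\circ H^s_{x,z}$, and $H^s_{\sigma x,\sigma y} = A(y)\, H^s_{x,y}\, A(x)^{-1}$ follow by passing to the limit in the corresponding identities for $H_n$. The unstable holonomy is constructed symmetrically as $H^u_{x,y}:=\lim_n A^n(\sigma^{-n}y)\, A^n(\sigma^{-n}x)^{-1}$ for $y\in W^u_{\text{loc}}(x)$.

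The main obstacle is the non-uniformity of the estimates: unlike the $\gamma$-fiber bunched case treated in Proposition~\ref{prop:HolonomiesFB}, the constants $C_z$ depend measurably on $z$ and are not globally bounded. As a result, the holonomies are only defined almost everywhere and are a priori merely measurable (rather than H\"older continuous). Care is needed to ensure that the full-measure sets on which the limits exist are compatible with the equivalence relations $W^s_{\text{loc}}$ and $W^u_{\text{loc}}$ and with the cocycle identities; this requires intersecting the relevant full-measure sets using $\sigma$-invariance and saturating them along stable and unstable leaves. These technicalities separate the non-uniform construction from its uniform counterpart.
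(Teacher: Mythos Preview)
The paper does not supply its own proof of this proposition; it merely states the result and refers the reader to \cite{Vi2008}, where the notion was introduced (under the name ``dominated'') and these properties were established. There is therefore no in-paper argument to compare against.

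Your proposal is correct and is precisely the standard construction carried out in \cite{Vi2008}. Parts (1) and (2) are handled cleanly. For Part (3), your telescoping bound
\[
    \norm{H_{n+1}(x,y)-H_n(x,y)}\leq C\,\zeta^{-n\gamma}\,\norm{A^n(x)}\,\norm{A^n(y)}
\]
together with the pointwise Kingman estimate $\norm{A^n(z)}\leq C_z\,e^{n(L(A)+\varepsilon)}$ for $\mu$-a.e.\ $z$ is exactly the mechanism used in \cite{Vi2008}. The Fubini step you describe is legitimate here because $\mu=p^{\Z}$ has genuine product structure along the partition into local stable sets, so the full-measure Kingman set intersects $\mu^s_{x^+}$-almost every point of $W^s_{\text{loc}}(x)$ for $\mu$-a.e.\ $x$. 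Your closing paragraph correctly identifies the essential difference from Proposition~\ref{prop:HolonomiesFB}: the constants $C_z$ are only measurable and the holonomies are defined only on a full-measure $s$-saturated set, with no uniform H\"older estimate of the type~\eqref{eq:betaHolderHolonomies}. This is exactly the sense in which the proposition speaks of ``(non-uniformly)'' holonomies.
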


\begin{example}[Non-uniformly fiber bunched cocycles]
\label{ex:non-uniformlyFiberBunched}
\normalfont
    Let $A_{\alpha,\beta}$, $1\leq \alpha\leq \beta$ be the cocycle given in the  Bocker-Viana Example \ref{ex:BockerVianaExample}. Recall that,
    \begin{align*}
        L(A_{\alpha,\beta}) = |p_1\,\log\beta - p_2\, \log\alpha|.
    \end{align*}
    So, $A_{\alpha, \beta}$ is non-uniformly $\gamma$-fiber bunched as long as $|p_1\,\log\beta - p_2\, \log\beta|< \frac{\gamma}{2}\, \log\zeta$. We could consider, for instance, a probability vector $p = (p_1,p_2)$ such that $p_1/p_2$ is very close to $\log\alpha/\log\beta$. So, this family provides examples of cocycles that are not fiber bunched (we could consider $\beta > \zeta^{\gamma/2}$), but still non-uniformly $\gamma$-fiber bunched.
\end{example}

The main goal in \cite{Vi2008}, which initiated the study of non-uniformly fiber bunched cocycles, was to obtain the next result.
\begin{theorem}[M. Viana, \cite{Vi2008}]
\label{thm:Vi2008}
    The set of cocycles $A\in C^{\gamma}(\Sigma,\, \SL_2(\R))$ with positive Lyapunov exponent is open and dense in $C^{\gamma}(\Sigma,\, \SL_2(\R))$.
\end{theorem}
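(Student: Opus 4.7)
The plan is to establish openness and density separately, and in each case to split according to whether the cocycle is non-uniformly $\gamma$-fiber bunched or not.

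\textbf{Openness.} Assume $L(A)>0$. I would distinguish two regimes. If $L(A)<\frac{\gamma}{2}\log\zeta$, then $A$ is non-uniformly $\gamma$-fiber bunched; this is an open condition and within it the cocycle admits stable and unstable holonomies that depend continuously on the cocycle. An adaptation of Theorem \ref{thm:BBB2018} (whose proof only uses the existence of uniformly Hölder holonomies in a neighborhood) then gives continuity of $L$ at $A$, so $\{L>0\}$ is open at $A$. If instead $L(A)\geq \frac{\gamma}{2}\log\zeta$, the Lyapunov exponent exceeds the Hölder fiber-bunching threshold, the measurable Oseledets sections of Proposition \ref{prop:basicProperties} become Hölder continuous, and together they define an invariant dominated splitting. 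For $\SL_2(\R)$ cocycles a dominated splitting is equivalent to uniform hyperbolicity, which is an open condition and provides a uniform lower bound on $L$ throughout a neighborhood.

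\textbf{Density.} Given $A\in C^{\gamma}(\Sigma,\SL_2(\R))$ with $L(A)=0$, I want to exhibit $B$ close to $A$ with $L(B)>0$. Since $L(A)=0<\frac{\gamma}{2}\log\zeta$, $A$ is non-uniformly $\gamma$-fiber bunched and so admits families of stable and unstable holonomies. The goal is to produce $B$ close to $A$ which is pinching and weak twisting; by item \ref{prop:propertiesPinchingTwisting-item5} of Proposition \ref{prop:propertiesPinchingTwisting}, this will force $L(B)>0$.

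The construction has two steps. First, fix a periodic orbit $q$ of period $k$ and make a small $C^{\gamma}$ perturbation of $A$ supported near the orbit of $q$ to turn the return matrix $A^k(q)$ into a hyperbolic one. Since pinching is an open property, the resulting cocycle $A_1$ remains pinching under any sufficiently small further perturbation. Second, choose a homoclinic point $z$ to $q$ with $\sigma^l(z)\in W^s_{\text{loc}}(q)$, and modify $A_1$ on a cylinder around the finite segment $z,\sigma(z),\dots,\sigma^l(z)$ so that
\[
    H^s_{\sigma^l(z),q}\, B^l(z)\, H^u_{q,z}\bigl(\{\hat v_1(q),\hat v_2(q)\}\bigr)\cap \{\hat v_1(q),\hat v_2(q)\}=\varnothing,
\]
i.e.\ the (weak) twisting condition \eqref{eq:weak.twits} holds. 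Both steps can be realized by finite-dimensional local perturbations.

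The main obstacle lies in implementing the twisting step in $C^{\gamma}$ rather than in $C^0$. A $C^{\gamma}$ perturbation supported on a cylinder of depth $n$ is constrained in magnitude by a factor of order $\zeta^{-\gamma n}$, while the holonomies $H^s_{x,y}, H^u_{x,y}$ themselves depend on infinitely many coordinates of the cocycle and are altered by the perturbation. One must therefore: (i) choose the cylinder of support deep enough for the $C^{\gamma}$ magnitude to stay small, yet not so deep that the induced movement of $\hat v_1(q),\hat v_2(q)$ becomes negligible; and (ii) use the Hölder dependence of the holonomies on the cocycle (Proposition \ref{prop:HolonomiesFB}, extended to the non-uniformly fiber bunched regime) to ensure that the open twisting condition, once achieved, survives the coupled change of holonomies. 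Once $B$ is both pinching and weak twisting, Proposition \ref{prop:propertiesPinchingTwisting} yields $L(B)>0$, concluding density.
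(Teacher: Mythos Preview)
Your openness argument contains a genuine error. In the case $L(A)\geq \frac{\gamma}{2}\log\zeta$ you assert that the Oseledets sections become H\"older and define a dominated, hence uniformly hyperbolic, splitting. This is false. The Bocker--Viana cocycle $A_{\alpha,\beta}$ of Example~\ref{ex:BockerVianaExample}, with parameters chosen so that $|p_1\log\beta-p_2\log\alpha|\geq \frac{\gamma}{2}\log\zeta$, is a direct counterexample: the Oseledets directions are the constant sections $\hat e_1,\hat e_2$ (in particular H\"older), yet the cocycle is explicitly not uniformly hyperbolic. A large Lyapunov exponent gives no pointwise control on $\|A^n(x)\|$ along individual orbits and therefore cannot by itself produce a cone field or a domination estimate. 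Your first case also hides a gap: Theorem~\ref{thm:BBB2018} is about \emph{uniformly} fiber bunched cocycles, and its extension to the non-uniform regime (Theorem~\ref{thm:FreijoMarin}) requires an extra hypothesis---a family of uniform stable holonomies---that does not follow from $L(A)<\frac{\gamma}{2}\log\zeta$ alone. Note, incidentally, that the paper's own sketch addresses only the density half; openness in Viana's theorem comes from exhibiting an \emph{open} pinching--twisting condition inside $\{L>0\}$, not from proving continuity of $L$ at every point with $L(A)>0$.

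For density your outline points in the right direction but misses where the real work lies. When $L(A)=0$ the cocycle is only \emph{non-uniformly} fiber bunched: holonomies are not defined on all of $\Sigma$ but only on large-measure Pesin-type sets, and there is no a priori reason a chosen periodic point $q$ or homoclinic point $z$ belongs to such a set. Proposition~\ref{prop:propertiesPinchingTwisting} is stated for uniformly fiber bunched cocycles and cannot be invoked as written. As the paper's sketch stresses, the core of Viana's argument is precisely to construct a positive-measure set with local product structure, containing periodic and homoclinic points, on which the non-uniform holonomies are defined and behave well; only then can one formulate a pinching--twisting condition there, perturb to achieve it, and apply a version of the invariance principle adapted to that set to destroy the zero exponent. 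Your two ``finite-dimensional local perturbations'' presuppose this structure rather than build it.
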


The idea for proving this result is to start with a cocycle with zero Lyapunov exponent. Then it is automatically non-uniformly fiber bunched. With this condition non-uniform holonomies can be constructed in large measure sets. The difficult part is to construct this large measure set having product structure and periodic points. Once this is done using a version of the invariance principle on this set and a pinching and twisting condition the zero exponent can be destroyed.

Even though Theorem \ref{thm:Vi2008} guarantees that the set of zero Lyapunov exponent is rare inside of $C^{\gamma}(\Sigma,\, \SL_2(\R))$, we still could have density of the set of cocycle with small Lyapunov exponent outside of the uniformly hyperbolic in $C^{\gamma}(\Sigma,\, \SL_2(\R))$. That would be a similar phenomena to what happens in the case of $C^0$-topology and Theorem \ref{thm:BoMa}.

\begin{problem}[Viana]
    Are non-uniformly $\gamma$-fiber bunched cocycles dense  in $C^{\gamma}(\Sigma,\SL_2(\R))$ outside of the uniformly hyperbolic cocycles?
\end{problem}

The concept of non-uniform fiber bunched was further explored to extend the result of continuity of the Lyapunov exponent.
\begin{theorem}[C. Freijo, K. Marin, \cite{FrMa2021}]
\label{thm:FreijoMarin}
    Assume that $A$ is non-uniformly $\gamma$-fiber bunched admitting a family of uniform stable holonomies. Then $A$ is a continuity point of the Lyapunov exponent in the $C^{\gamma}$-topology. 
\end{theorem}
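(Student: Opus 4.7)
The plan is to adapt the proof of Theorem \ref{thm:BBB2018} due to Backes--Butler--Brown, using the uniform stable holonomies to perform the one-sided reduction and invoking non-uniform fiber bunching wherever the original argument relied on uniform unstable holonomies. By the upper semi-continuity of $L$ (item \ref{prop:basicProperties-item3} of Proposition \ref{prop:basicProperties}), it suffices to establish $\liminf_k L(A_k) \geq L(A)$ for every sequence $A_k \to A$ in $C^\gamma(\Sigma, \SL_2(\R))$.

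First I would carry out the one-sided reduction. Since both ``uniform stable holonomies'' and ``non-uniformly $\gamma$-fiber bunched'' are open conditions in $C^\gamma(\Sigma,\SL_2(\R))$, Proposition \ref{prop:reducitionToOneSided} provides a continuous change of coordinates after which $A$ and each $A_k$ (for $k$ large) depend only on the positive coordinates $x^+$, preserving the $C^{\gamma'}$-topology for some $\gamma'\leq \gamma$. For each $k$ choose an ergodic $m^k \in \mathcal{M}_{\mu}(F_{A_k})$ satisfying $L(A_k) = \int \log \norm{A_k(x)\, v}\, dm^k$: for $L(A_k) > 0$ take $m^k = m^{u,A_k}$ (item \ref{prop:properties-suStates-item4} of Proposition \ref{prop:properties-suStates}); when $L(A_k) = 0$ any ergodic lift works, since $\norm{B^{-1}} = \norm{B}$ in $\SL_2(\R)$ forces $\bigl| \int \log\norm{A_k\, v}\, dm^k \bigr| \leq L(A_k) = 0$. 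Pass to a subsequential weak* limit $m \in \mathcal{M}_{\mu}(F_A)$. Uniform convergence $A_k \to A$ combined with weak* convergence yields $\lim_k L(A_k) = \int \log\norm{A(x)\, v}\, dm$, and the problem reduces to showing this integral equals $L(A)$.

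By item \ref{prop:propertiesMeasuresProductSpace-item4} of Proposition \ref{prop:propertiesMeasuresProductSpace}, when $L(A) > 0$ the integral $\int \log\norm{A\, v}\, dm$ is bounded above by $L(A)$ with equality exactly when $m = m^{u,A}$ (the case $L(A)=0$ is automatic). Following Proposition \ref{prop:quasi-twistingDiagonal}, I would split into two cases. If $A$ is quasi-twisting, then $m^{u,A}$ is the unique $u$-state of $A$, so it suffices to show that the limit measure $m$ is an $u$-state for $A$, which then forces $m = m^{u,A}$. If $A$ is continuously conjugate to a diagonal cocycle, the one-sided reduction makes the dynamics essentially one-dimensional, and I would mimic the atomic-measure classification used in the proof of Theorem \ref{thm:BoVi2017}: an energy argument forces the disintegrations $m^{k,+}_{x^+}$ to be atomic, and by carefully tracking where their atoms accumulate one checks directly that $\int \log \norm{A\, v}\, dm = L(A)$.

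The main obstacle is the quasi-twisting case, where identifying $m$ as an $u$-state for $A$ requires passing to the limit in the invariance identity $(H^{u,A_k}_{x,y})_*\, m^k_x = m^k_y$ on unstable sets, although the unstable holonomies $H^{u,A_k}$ are only non-uniform. I would address this with a Lusin/Egorov-style uniformization: using the strict inequality $L(A) < (\gamma/2)\log \zeta$ and the openness of non-uniform fiber bunching, for each $\varepsilon > 0$ one produces a compact set $\Lambda_\varepsilon \subset \Sigma$ with $\mu(\Lambda_\varepsilon) > 1 - \varepsilon$ on which the unstable holonomies of every $A_k$ (for $k$ sufficiently large) are defined and equicontinuous uniformly in $k$. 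On $\Lambda_\varepsilon$ the $u$-state identity passes to the limit, and sending $\varepsilon \to 0$ upgrades $m$ to a $\mu$-a.e.\ $u$-state for $A$. Securing this uniform equicontinuity across the family $\{A_k\}$ is the delicate technical core of the argument: it is here that the hypothesis of uniform stable holonomies is essential, because it guarantees that the ambient geometry underlying the one-sided reduction, and hence the setting in which the non-uniform unstable holonomies are being compared, does not degenerate along the sequence.
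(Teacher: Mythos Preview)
Your proposal is essentially correct and follows the same route as the paper's own sketch: adapt the Backes--Butler--Brown argument by using the assumed uniform stable holonomies for the one-sided reduction, and replace the uniform unstable holonomies by a Pesin/Lusin-type uniformization on large-measure sets $\Lambda_\varepsilon$ coming from the non-uniform fiber bunching, on which the convergence of the disintegrations $m^{k,+}_{x^+}$ can be secured. One small caveat: your dichotomy from Proposition~\ref{prop:quasi-twistingDiagonal} is ``$A$ quasi-twisting, or $A^{-1}$ quasi-twisting, or $A$ conjugate to diagonal'', not just the first and third options; the paper accordingly tracks both $m^{k,+}$ and $m^{k,-}$, so in the case where only $A^{-1}$ is quasi-twisting you should run the symmetric argument with $s$-states, which is where the interplay between the uniform stable holonomies and the non-uniform unstable ones becomes slightly asymmetric.
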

In the same spirit as in the Pesin's theory, non-uniformly fiber-bunched cocycles admits very large sets of the base space where the holonomies are indeed uniform. In this large probability sets, we may work as in the Theorem \ref{thm:BBB2018} since we have uniformity of the holonomies. The main issue is to have a control of the lack of uniformity in the the small probability sets to obtain the properties \ref{item:proofBBB1} and \ref{item:proofBBB2} above. This is handled by a careful analysis of the convergence of the disintegrations $m^{k,+}$ and $m^{k,-}$ in these large probability sets.

A nice consequence of the techniques obtained in \cite{FrMa2021} is the following: if $A$ is an irreducible  non-uniformly $\gamma$-fiber bunched cocycle, then $A$ is a continuity point of the Lyapunov exponent in the $C^{\gamma}$-topology. Indeed, assuming that $L(A)>0$ and so $A$ is strongly irreducible, by Example \ref{ex:pinchingTwistingLCC}, we see that $A$ is pinching and twisting. In particular, there is only one $u$-state, namely $m^u$. However, the following fact may be extended from the context of fiber bunched cocycles to the non-uniformly fiber bunched cocycles: if $A_k$ is a sequence converging to $A$ in the $C^{\gamma}$-topology and $m^k$ is a sequence of $u$-states for $A_k$ realizing $L(A_k)$ (equation \eqref{eq:040423.2}), then, up to a subsequence, the limit of $m^k$ is still a $u$-state for $A$. This is enough to conclude the continuity.

For cocycles which are not non-uniformly fiber-bunched(large Lyapunov exponent) the problem of understanding the continuity properties of the Lyapunov exponent is still open. However, a few examples of discontinuity have been explored in the literature.
\begin{example}[C. Butler, \cite{Bu2018}]
\normalfont
In the Bocker-Viana's Example \ref{ex:BockerVianaExample}, assume that $\alpha = \beta > 1$ and write $A_{\beta} := A_{\alpha, \beta}$. For the probability vector $p = (p_1, p_2)$, assume that $p_1\in (1/2, 1)$. Then, in \cite{Bu2018}, Butler showed that $A_{\beta}$ is a discontinuity of the Lyapunov exponent if $\beta^{p_1 - p_2}\geq \zeta^{\gamma/2}$. Recall, from Example \ref{ex:non-uniformlyFiberBunched}, that $A_{\beta}$ is non-uniformly fiber bunched if $\beta^{p_1 - p_2} < \zeta^{\gamma/2}$. So, as long as $A_{\beta}$ is not non-uniformly $\gamma$-fiber bunched this cocycle is a discontinuity point of the Lyapunov exponent. 
\end{example}
\begin{remark}
\normalfont
    The arguments used \cite{Bu2018} may also be used to guarantee that if $\beta> \zeta^{\gamma}$, then $A_{\beta}$ is a discontinuity point of the Lyapunov exponent in the $C^{\gamma}$-topology. This is a slight improvement of the range presented in Example \ref{ex:BockerVianaExample}. Recall that in this case, the $\gamma$-fiber bunched condition is equivalent to $\beta < \zeta^{\gamma/2}$.
\end{remark}
\begin{remark}
\normalfont
    Observe that the Theorem \ref{thm:FreijoMarin} (or the comment that follows it) does not cover the case of $A_{\beta}$ for any $\beta>1$, even if we assume the non-uniform fiber bunched condition ($A_{\beta}$ is reducible).  
\end{remark}
\begin{problem}
    Is $A_{\beta}$, defined above, a continuity point of the Lyapunov exponent in the $C^{\gamma}$-topology, for $\beta\in (\zeta^{\gamma/2},\,\zeta^{\gamma})$ and $\beta^{p_1 - p_2} < \zeta^{\gamma/2}$?
\end{problem}

\subsection{Modulus of continuity of LE for fiber bunched cocycles}
Now we come back to the world of (uniform) fiber-bunched cocycles to discuss the modulus of continuity of the Lyapunov exponent. The machine developed in \cite{DuKl2016} to obtain H\"older regularity of the Lyapunov through large deviation estimates may also be applied to the context of cocycles $A:\Sigma\to\SL_2(\R)$ depending on infinitely many coordinates. That is the content of the next theorem.
\begin{theorem}[P. Duarte, S. Klein, M. Poletti, \cite{DuKlPo2022}]
\label{thm:DuKlPo2022}
    Assume that the $\gamma$-fiber bunched cocycle $A$ satisfies the pinching and twisting condition. Then, there exist $\theta>0$ and a neighborhood $\mathcal{U}\subset C^{\gamma}(\Sigma,\, \SL_2(\R))$ of $A$ such that, restricted to this neighborhood, the Lyapunov exponent function $L$ is $\theta$-H\"older continuous.
\end{theorem}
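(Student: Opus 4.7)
The plan is to extend the three-step strategy of Duarte--Klein behind Theorem \ref{thm:DuKl2016} to the fiber-bunched setting, the principal new ingredient being the use of holonomies to reduce the problem to a one-sided setup where a quasi-compact Markov operator can be analyzed.

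\textbf{Step 1 (Reduction and stability of hypotheses).} Choose a $C^{\gamma}$-neighborhood $\mathcal{U}$ of $A$ on which the hypotheses persist: fiber-bunching is open by definition, pinching is open because hyperbolicity of periodic matrices is preserved, and twisting is open by Hölder continuity of the holonomies (Proposition \ref{prop:HolonomiesFB}). By Proposition \ref{prop:reducitionToOneSided} applied uniformly in $\mathcal{U}$, each $B \in \mathcal{U}$ is continuously conjugated, via the stable holonomies, to a cocycle $B^{+}$ depending only on the positive coordinates, with the conjugation depending Hölder continuously on $B$. Since Lyapunov exponents are conjugation-invariant, it suffices to prove the theorem for cocycles depending only on $x^{+}\in\Sigma^{+}$.

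\textbf{Step 2 (Quasi-compactness of the Markov operator).} On the one-sided skew-product $\Sigma^{+}\times\bP^{1}$ introduce the operator
\[
Q_{B}\Phi(x^{+},\hat v) := \sum_{i=1}^{\kappa} p_{i}\, \Phi\bigl(i\cdot x^{+},\, B^{+}(i\cdot x^{+})\,\hat v\bigr),
\]
whose continuous fixed points, by Proposition \ref{prop:propertiesMeasuresProductSpace} and Proposition \ref{prop:continuity-SUStates}, correspond to $u$-states of $B^{+}$. The pinching/twisting of $A$ gives, by Proposition \ref{prop:propertiesPinchingTwisting}, positivity of $L(A)$ and uniqueness of the $u$-state $m^{u}$. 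Combining this uniqueness with a Furstenberg--Kifer type argument yields uniform (in $\hat v$) convergence $\tfrac{1}{n}\int\log\|A^{n}(x)v\|\,d\mu \to L(A)$, which in turn drives the exponential decay
\[
\sup_{\hat u\neq\hat v}\int_{\Sigma}\left(\frac{d(B^{n}(x)\hat u,\, B^{n}(x)\hat v)}{d(\hat u,\hat v)}\right)^{\theta} d\mu(x)\leq C\, e^{-\lambda n}
\]
for some small $\theta>0$. Coupled with the fiber-bunched bound \eqref{eq:fiberBunchedInequality}, this gives a spectral gap for $Q_{B}$ on an anisotropic Hölder space of functions that are $\theta$-Hölder in $\hat v$ and Hölder in $x^{+}$, uniformly for $B\in\mathcal{U}$.

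\textbf{Step 3 (Large deviations and avalanche principle).} Quasi-compactness of $Q_{B}$, together with standard Nagaev-type perturbation of the leading eigenvalue, produces uniform large-deviation estimates: there exist $C,\kappa,\varepsilon_{0}>0$ with
\[
\mu\!\left(\left\{x:\left|\tfrac{1}{n}\log\|B^{n}(x)\|-L(B)\right|>\varepsilon\right\}\right)\leq C\, e^{-\kappa\varepsilon^{2} n}
\]
for all $B\in\mathcal{U}$, $\varepsilon\in(0,\varepsilon_{0})$, $n\geq 1$. Applying the avalanche principle at the dyadic scale $n\sim\log(1/\|A-B\|)$ (as in Step~3 of the proof of Theorem \ref{thm:DuKl2016}), one controls $|L(A)-L(B)|$ by a finite-scale difference $|\tfrac{1}{n}\log\|A^{n}\|-\tfrac{1}{n}\log\|B^{n}\||$ outside a set of exponentially small measure, producing the announced Hölder exponent.

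\textbf{Main obstacle.} The critical difficulty is Step 2: in the locally constant case the Markov operator acts on $C^{\theta}(\bP^{1})$ and the uniformity of the convergence $\tfrac{1}{n}\log\|A^{n}v\|\to L(A)$ follows from Furstenberg--Kifer's non-random Oseledets theorem. In our setting, $Q_{B}$ acts on functions of $(x^{+},\hat v)$ and one must design a norm that captures Hölder regularity in both variables, with $\gamma$-regularity in $x^{+}$ interacting correctly with the $\theta$-regularity in $\hat v$ via the fiber-bunched inequality, while proving the spectral gap uniformly for $B$ varying in $\mathcal{U}$. Establishing the required uniform asymptotics from pinching/twisting (equivalently, that every $F_{A}$-invariant measure integrates $\log\|A(x)v\|$ to $\pm L(A)$, with the $+L(A)$ value realized only by $m^{u}$) is where the core work of the proof concentrates.
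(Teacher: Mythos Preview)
Your proposal is correct and follows essentially the same three-step approach as the paper: reduce via holonomies to a one-sided cocycle, establish a spectral gap for a Markov operator on an anisotropic H\"older space over $\Sigma^{\pm}\times\bP^1$ (the paper uses exactly the norm $[\varphi]_\theta = \sup_{x^-}[\varphi(x^-,\cdot)]_\theta + \sup_{\hat u}[\varphi(\cdot,\hat u)]_\theta$ that you anticipate in your ``Main obstacle''), and then pass to uniform large deviation estimates and the avalanche principle. The only slip is a directional one: the paper reduces via the \emph{unstable} holonomies to cocycles depending on $x^-$, so that the forward Markov operator $Q_A\varphi(x^-,\hat v)=\sum_i p_i\,\varphi((x^-,i),\,A(x^-)\hat v)$ has $m^u$ as its stationary measure, whereas your reduction via stable holonomies to $\Sigma^+$ with the operator as written (applying $B^+(i\cdot x^+)$ rather than its inverse) does not have the $u$-state as a fixed point---you should either invert the matrix or swap to the $s$-state; this is a symmetric choice and does not affect the argument.
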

In order to obtain large deviation estimates in this context we follow the same lines of the proof of Theorem \ref{thm:DuKl2016}. Below, we mention some of the required adaptations.

To start, we use the families of unstable holonomies, which are uniform in a neighborhood of $A$ where the fiber bunched condition holds, to reduce the analysis to maps depending only on negative coordinates (see Proposition \ref{prop:reducitionToOneSided}) i.e., it is enough to assume that $A(x) = A(x^-)$, for every $x\in \Sigma$, and guarantee that there exist constants $\delta, C, \kappa, \varepsilon_0 > 0$ such that for every cocycle $B\in C^{\gamma}(\Sigma, \SL_2(\R))$, $\norm{A-B}_{C^{\gamma}}< \delta$, with $B(x) = B(x^-)$, for every $x\in \Sigma$, and for every $\varepsilon\in (0,\varepsilon_0)$ we have that 
\begin{align*}
    \mu\left(\left\{
        x\in \Sigma\colon\,
        \left|
            \frac{1}{n}\log\norm{B^n(x)} - L(B)
        \right|>\varepsilon
    \right\}\right)\leq C\, e^{-\kappa\varepsilon^2\, n}.
\end{align*}
After this reduction, we proceed with the analysis of the spectral properties of the Markov operator, but differently of the Step 1 in Theorem \ref{thm:DuKl2016}, the cocycle $A$ may depend on infinitely many coordinates, so there is a small adaptation on the definition of the Markov operator for this system. Indeed, set $Q:C^0(\Sigma^-\times \bP^1)\to C^0(\Sigma^-\times\bP^1)$ given by
\begin{align*}
    Q_A(\varphi)(x^-, \hat v) := \sum_{i=1}^{\kappa}\, p_i\,\varphi((x^-,i), A(x^-)\, \hat v).
\end{align*}
%where $\sigma(x^-) = \{(x^-, i)\in \Sigma^-\colon\, i=1,\ldots, \kappa\}$. 
It is not hard to see that the measure $m^u$ on $\Sigma^-\times\bP^1$ is a fixed point of the adjoint action $Q^*$ (this is the definition of stationary measure in this context). 

% The spectral analysis of $Q_A$ is concluded once we notice that: first for $\theta\in (0,1)$ there exists $C>0$ such that for every $\varphi\in C^{\theta}(\Sigma^-\times\bP^1)$,
% \begin{align*}
%     [Q^n_A(\varphi)]_{\theta}
%     &\leq [\varphi]_{\theta}\,
%     \sup_{x\in \Sigma}\, \sup_{\hat u\neq \hat v}\int_{\Sigma}\,\left( 
%         \frac{
%             d(A^n(x)\, \hat u,\, A^n(x)\, \hat v)
%         }{d(\hat u, \hat v)}
%     \right)^{\theta}\, d\mu(x) + C\,\norm{\varphi}_{\infty}\\
%     &=: [\varphi]_{\theta}\, K_{n,\theta}(A) + C\,\norm{\varphi}_{\infty}.
% \end{align*}
The spectral analysis of $Q_A$ is concluded once we notice that:  
First, defining
\begin{align*}
	K_{n,\theta}(A)
	&:=
	\sup_{x\in \Sigma}\, \sup_{\hat u\neq \hat v}\int_{\Sigma}\,\left( 
	\frac{
		d(A^n(x)\, \hat u,\, A^n(x)\, \hat v)
	}{d(\hat u, \hat v)}
	\right)^{\theta}\, d\mu(x) ,
\end{align*}	 
for any $\theta\in (0,1)$ there exists $C>0$ such that for every $\varphi\in C^{\theta}(\Sigma^-\times\bP^1)$,
\begin{align*}
    [Q^n_A(\varphi)]_{\theta}
    &\leq  K_{n,\theta}(A)\, [\varphi]_{\theta}   + C\,\norm{\varphi}_{\infty}. 
\end{align*}
Here, we are using the slightly different $\theta$-H\"older norm,
\begin{align*}
    [\varphi]_{\theta}
    := \sup_{x^-\in \Sigma^-}\, [\varphi(x^-,\cdot)]_{\theta} + \sup_{\hat u\in \bP^1}\, [\varphi(\cdot, \hat u)]_{\theta}.
\end{align*}
Second, equivalently to the Step 1 of Theorem \ref{thm:DuKl2016} (see equation \eqref{eq:220323.7} and the  discussion that follows), the positivity of $L(A)$ and the pinching and twisting assumption is invoked to guarantee the exponential decay of the quantities $K_{n,\theta}(A)$ for some $\theta_0\in (0,1)$. So, using these  considerations there exist constants $\lambda, C>0$ such that for every $\varphi\in C^{\theta_0}(\Sigma^-\times\bP^1)$ and every $n$ sufficiently large,
\begin{align*}
    [Q^n(\varphi)]_{\theta_0} \leq e^{-\lambda n}[\varphi]_{\theta_0} + C\, \norm{\varphi}_{\infty}.
\end{align*}
These ingredients suffice to guarantee the (uniform in a neighborhood of $A$) spectral gap of the operator $Q_A$. To conclude the regularity of the Lyapunov exponent, we follow the rest of the steps mentioned in the proof of Theorem \ref{thm:DuKl2016}.

\begin{remark}
\normalfont
    Similar large deviation principle and limit theorems for this class of cocycles were obtained by Park and Piraino \cite{ParkPir22}.
\end{remark}

It is natural to wonder what happens if we no longer assume the pinching and twisting condition. 
\begin{problem}
    What is the modulus of continuity of the Lyapunov exponent near a fiber bunched diagonal cocycle cocycle (see Example \ref{ex:BockerVianaExample})?
\end{problem}
The solution of this problem would give an extension of the Theorem \ref{thm:DuKl2020} from the locally constant cocycle to a the fiber bunched context.

The precise modulus of continuity of the Lyapunov exponent is still far from being established. However, we could consider a generalization of Theorem \ref{thm:BeDuCaFrKl2022} to the context of $\gamma$-fiber bunched cocycles. To better pose the problem, we adapt the concept of heteroclinic tangency introduced in Section \ref{subsection:sharpModulusContinuity}.

We say that a cocycle $A\in C^{\gamma}(\Sigma,\, \SL_2(\R))$ admits a \emph{heteroclinic tangency} if there exist periodic points $q_1, q_2\in \Sigma$, of period $m_1,\, m_2$ respectively, and a heteroclinic point $z\in W^u_{\text{loc}}(q_1)\cap \sigma^{-k}(W^s_{\text{loc}}(q_2))$ such that
\begin{enumerate}
    \item[(i)] $A^{m_i}(q_i)$ is hyperbolic for $i=1,2$;

    \item[(ii)] For any $i=1,2$, let $\hat v_+(q_i), \hat v_-(q_i)\in \bP^1$ be the eigen-directions $A^{m_i}(q_i)$ associated respectively with the largest and smallest (in absolute value) eigenvalues. Then,
    \begin{align*}
        H^s_{z, q_2}\, A^k(z)\, H^u_{q_1,z}\, \hat v_+(q_1) = \hat v_-(q_2).
    \end{align*}
\end{enumerate}

\begin{problem}
    Let $A\in C^{\gamma}(\Sigma,\, \SL_2(\R))$ be a pinching and twisting $\gamma$-fiber bunched cocycle admitting a heteroclinic tangency. In that context, by Theorem \ref{thm:DuKlPo2022}, we know that $L$ is $\theta$-H\"older continuous in a neighborhood of $A$ and some $\theta>0$. Does that imply that $\theta < H(\mu)/L(A)$?
\end{problem}

\section{Conclusion and general models}
Positivity and continuity of the Lyapunov exponent in the classical setting of random products of matrices is well understood. The main elements in their study are the stationary measures and the structure of the group of matrices generated by the cocycle. Most of the techniques used in the locally constant case may be adapted, using holonomies, to deal with  H\"older cocycles.

The strategy to study these cocycles uses holonomies to reduce the analysis, up to conjugacy, to cocycles only depending on the positive coordinates.  These can be analyzed as cocycles over one-sided dynamics that, with proper adaptations,  share many properties with the locally constant cocycles. The similarity is expressed  for instance  in the properties of invariant measures by the action of the cocycle in the one-sided product space. Indeed,  $u$-states have in the general setting the central role
played by	the stationary measures in the locally constant case. These measures have a nice regularity which allowed us to recover many of the properties presented earlier for the stationary measures. However, for general cocycles, beyond the locally constant ones, the lack of independence in the matrix products along the orbit is an important issue  which requires suitable adaptations. Concepts such as strong irreducibility or  non compactness of the group generated by the  matrices of the cocycle are reformulated in terms of periodic points  and $su$-loops. These mentioned adaptations are only possible for cocycles admitting families of stable and unstable linear holonomies. Not much is known  for more general cocycles.

Most of the results mentioned in the previous section can be stated in some more general context. We could consider on the base measures that are not necessarily Bernoulli, but have some ``good" product structure which include Markov shifts. Nevertheless, 
because we did not want to focus on the structure of the measure but instead on the important properties of the fiber action, we preferred to state the results for Bernoulli measures which automatically satisfy all the hypothesis in  the previous theorems.

Now to finish we recall some recent results and open problems on more general base dynamics with some hyperbolicity.

\subsection{More general dynamics}

Recall that, by the classical construction of Markov partitions for Anosov maps, see for example \cite{Bow75}, Anosov diffeomorphisms can be semi-conjugated to Markov shifts. Via this conjugacy the previous results can be extended to linear cocycles over Anosov diffeomorphisms.

\begin{center}
    \emph{So, what happens beyond uniform hyperbolic base maps?}
\end{center}

Beyond uniform hyperbolicity, the two main classes of dynamics with some hyperbolicity that have been more studied are the \emph{partially hyperbolic} and the \emph{non-uniformly hyperbolic diffeomorphisms}.

There is some advance in the study of continuity and positivity of the exponents for cocycles over these maps.

For non-uniformly hyperbolic base dynamics with measures with product structure we have the following result: 
\begin{itemize}
    \item  There exists an open and dense set of H\"older cocycles with positive Lyapunov exponent, see \cite{Vi2008}.
\end{itemize}
\begin{problem}
    Is the Lyapunov exponent continuous with in the space of H\"older fiber bunched cocycles over non uniformly hyperbolic maps?
\end{problem}
In this case, fiber bunched means that the  inequality \eqref{eq:110423.1} is satisfied with $e^{\lambda}$ instead of $\zeta$, where $\lambda$ is  the minimum of the  Lyapunov exponents of the  base map's derivative.

By results in \cite{Sa13} and \cite{Ov19}, non-uniformly hyperbolic base dynamics may be codified using a symbolic model such as the one presented in these notes, see \cite{BPV19} where this idea was used. The drawback is that such symbolic dynamics now could have countably infinitely many symbols. In this case, the problem of continuity of the Lyapunov exponent for more general non-uniformly hyperbolic maps may be approached  from the symbolic point of view but now over non-compact spaces, see  \cite{ViSan}.

Below, we mention a few results in the context where the base dynamics is a partially hyperbolic map.
\begin{itemize}
    \item Existence of an open and dense set of H\"older cocycles with positive Lyapunov exponent. For volume preserving accessible maps, see \cite{ASV13}. For skew products, see \cite{Pol18};
    
    \item Criterion for continuity of the Lyapunov spectrum for skew products, see \cite{PoV18};
    
    \item Openness of the set of fiber bunched H\"older cocycles with positive Lyapunov exponent. For volume preserving accessible maps, see \cite{BPS18}.
\end{itemize}
Fiber bunched for partially hyperbolic maps means that the inequality \eqref{eq:fiberBunchedInequality} is satisfied with $\zeta$ equal to the minimum of the expansion of the diffeomorphism along the unstable direction and its inverse along the stable.

For partially hyperbolic maps, there are examples of H\"older cocycles with positive exponent accumulated by cocycles with zero exponent, even in the fiber bunched case, see~\cite{BPS18}. These examples are constructed over a direct product of an Anosov with a rotation, in particular they are not accessible. For volume preserving  accessible diffeomorphisms, this type of discontinuity can not happen, as stated in \cite{BPS18}. 
\begin{problem}
    Assume that the base dynamics is a partially hyperbolic volume preserving accessible diffeomorphism. Does the Lyapunov exponent varies continuously in the space of H\"older fiber bunched cocycles.
\end{problem}

Let us introduce now a class
of partially hyperbolic systems that we call \emph{mixed models}. These are maps of the form
\begin{align*}
	f:\Sigma \times S^1\to \Sigma\times S^1,
	\quad
	f(x,t)=(\sigma(x),t+\theta_x), 
\end{align*} 
where $S^1$ is the unit circle and
$\theta:\Sigma\to S^1$ is a continuous function. These maps preserve the measure  $\mu\times dt$, where $dt$ is the Lebesgue measure on $S^1$. In the same way that the full Bernoulli shift  $(\Sigma, \sigma, \mu)$ provides a basic model for hyperbolic systems, mixed models  are a basic class of partially hyperbolic systems with compact fibers where the action is elliptic.

\begin{remark}
\normalfont
    Linear cocycles over torus rotations, known as quasi periodic cocycles, have been extensively studied.    
    The lack of hyperbolicity in the base dynamics requires completely different techniques from the ones mentioned in these notes. For results regarding positivity and continuity of the Lyapunov exponent in this context, see \cite{DuKl2016}, \cite{DuKl2017} (for the Schr\"odinger context \cite{Da2017}) and references therein.
\end{remark}

An interesting class of linear cocycles over mixed models is the so called  \emph{mixed random quasi-periodic cocycles}, which corresponds to random products of quasi periodic cocycles. Consider a finite measure $\sum_{i=1}^{\kappa}\, p_i\delta_{(\theta_i,\, A_i)}$, where $(\theta_i,\, A_i)$ defines a quasi-periodic cocycle. Analogously to the definition of random cocycles, we consider a   ``locally constant" mixed cocycle, defined  by $A(x,t) := A_{x_0}(t)$, over the mixed  base dynamics, $f(x,t) = (\sigma(x),\, t+\theta_{x_0})$. 

Recently, this type of model has been attracted a lot of attention. We list a few results in this regard.
\begin{itemize}
    \item Existence of open and dense set of quasi-periodic cocycles $(A_1,\ldots, A_{\kappa})$ in which the Lyapunov exponent function is continuous and positive; see \cite{BePo21}.
    \item A criterion for positivity of the exponents, see \cite{CaiDK}.
    \item Uniform upper semi-continuity of the exponents \cite{CaiDK2022}.

    \item Analiticity of the Lyapunov exponent with respect to the probability vector \cite{BeSaTa}.
\end{itemize}

In the space of cocycles $A:\Sigma\times S^1\to \SL_2(\R)$ depending only on $x_0$ and smoothly on $t\in S^1$, there are examples of discontinuity of the Lyapunov exponent function, see \cite[section 5.2]{BPS18}. Here, the discontinuity comes from the discontinuity of smooth quasi periodic cocycles, see \cite{WaY13}.

It is known that for quasi periodic analytic cocycles, the exponent varies continuously, see \cite{AVJitSad2014}. So, we have the following question:
\begin{problem}
    Is the Lyapunov exponent a continuous function in the space of analytic cocycles (with respect to the circle coordinate) among the mixed random quasi periodic cocycles?
\end{problem}
As mentioned in \cite{BePo21}, there are open and dense conditions to guarantee continuity of the exponents (a generalization of the pinching and twisting condition). So, it is natural to ask the following:
\begin{problem}
    In the set of pinching and twisting cocycles (see the definition \cite{BePo21}) of the mixed random quasi periodic cocycles, does the exponent vary H\"older continuously?
\end{problem}

As we can see, there are many open questions to be explored about the behavior of the Lyapunov exponent function, especially beyond some restricted classes such as the random locally constant cocycles, the fiber bunched cocycles over hyperbolic maps and the quasi-periodic cocycles.

%%%%%%%%%%%%%%%%%%%%%%%%%%%%%%%%%%%%%%%%%%%%%%%%%%%%%%%%%%
\subsection*{Acknowledges:} The authors would like to express their gratitude to P. Duarte for the invitation to write this survey and for his valuable suggestions that helped to improve the quality of text. The authors would like to thank the organizers of the workshop ``New trends in Lyapunov exponents", where discussions on the elaboration of this material began. The authors would also like to thank Graccyela Salcedo by the careful reading and suggestions on the final version of the text.  The gratitude of the authors also extends to Łukasz Rzepnicki for providing Picture \ref{pic:bernoulliConvolutions}.

Both authors were supported by FCT-Funda\c{c}\~{a}o para a Ci\^{e}ncia e a Tecnologia through the project  PTDC/MAT-PUR/29126/2017. J. B. was supported by the Center of Excellence ``Dynamics, Mathematical Analysis and Artificial Intelligence" at Nicolaus Copernicus University in Toruń. M.P. was  supported by CNPQ, Instituto Serrapilheira, grant ``Jangada Din\^amica: Impulsionando Sistemas Din\^amicos na Regi\~ao Nordeste" and the Coordena\c{c}\~ao de Aperfei\c{c}oamento de Pessoal de N\'ivel Superior - Brasil (CAPES) - Finance Code 001.

\bibliographystyle{alpha}
\bibliography{bib.bib}

\information

\end{document}